\newtheorem{theorem}{Theorem}[section]
\newtheorem{lemma}[theorem]{Lemma}
\newtheorem{proposition}[theorem]{Proposition}
\newtheorem{corollary}[theorem]{Corollary}
\newtheorem{definition}[theorem]{Definition}
\newtheorem{remark}[theorem]{Remark}
\newcommand{\Hom}{\mathrm{Hom}}
\newcommand{\C}{\mathcal{C}}
\newcommand{\T}{\mathcal{T}}
\newcommand{\Mod}{\mathrm{Mod}}
\begin{document}
\title{Triangular Matrix Categories II: Recollements and functorially finite subcategories}

\author{Alicia Le\'on-Galeana, Mart\'in Ortiz-Morales, Valente Santiago Vargas}

\thanks{The author thanks project PAPIIT-Universidad Nacional Aut\'onoma de M\'exico IA105317}
\date{March 9, 2019}
\subjclass{2000]{Primary 18A25, 18E05; Secondary 16D90,16G10}}
\keywords{Triangular Matrix Rings, Recollements, Functor categories,  Auslander-Reiten theory, Dualizing varieties}
\dedicatory{}
\maketitle

\begin{abstract}
In this paper we continue the study of triangular matrix categories $\mathbf{\Lambda}=\left[ \begin{smallmatrix}
\mathcal{T} & 0 \\ 
M & \mathcal{U}
\end{smallmatrix}\right]$ initiated in \cite{LeOS}. First, given an additive category $\mathcal{C}$ and an ideal $\mathcal{I}_{\mathcal{B}}$ in $\mathcal{C}$, we prove a well known result that there is a canonical recollement $\xymatrix{\mathrm{Mod}(\mathcal{C}/\mathcal{I}_{\mathcal{B}})\ar[r]_{} &  \mathrm{Mod}(\mathcal{C})\ar[r]_{}\ar@<-1ex>[l]_{}\ar@<1ex>[l]_{}  &   \mathrm{Mod}(\mathcal{B})\ar@<-1ex>[l]_{}\ar@<1ex>[l]_{}}$. We show that given a recollement between functor categories we can induce a new recollement between triangular matrix categories, this is a generalization of a result given by  Chen and Zheng in \cite[theorem 4.4]{Chen}. In the case of dualizing $K$-varieties we can restrict the recollement we obtained to the categories of finitely presented functors. Given a dualizing variety $\mathcal{C}$, we describe the maps category of $\mathrm{mod}(\mathcal{C})$ as  modules over a triangular matrix category and we study its Auslander-Reiten sequences and contravariantly finite subcategories, in particular we generalize several results from \cite{MVOM}. Finally, we prove a generalization of a result due to {Smal\o} (\cite[Theorem 2.1]{Smalo}), which give us a way of construct functorially finite subcategories in the category $\mathrm{Mod}\Big(\left[ \begin{smallmatrix}
\mathcal{T} & 0 \\ 
M & \mathcal{U}
\end{smallmatrix}\right]\Big)$ from those of $\mathrm{Mod}(\mathcal{T})$ and $\mathrm{Mod}(\mathcal{U})$.

\end{abstract}
\section{Introduction}
A recollement of abelian categories is an exact sequence of abelian categories where both the inclusion and the quotient functors admit left and right adjoints.  Recollements first appeared in the context of triangulated categories in the construction of the category of perverse sheaves on a singular space by Beilinson, Bernstein and Deligne (see \cite{Belinson}); they were trying to axiomatize the Grothendieck's six functors for derived categories of sheaves.  In representation theory, recollements were used by Cline, Parshall and Scott to study module categories of finite dimensional algebras over a field (see \cite{Parshall}). They appear in connection with quasi-hereditary algebras and highest weight categories.
Recollements of triangulated categories also have appeared in the work of Angeleri-H\"ugel, Koenig and Liu in connection with tilting theory and homological conjectures of derived categories of rings (see \cite{Lidia1}, \cite{Lidia2} and \cite{Lidia3}).\\ 
In the context of abelian categories, recollements were studied by Franjou and Pirashvili in \cite{Franjou}, motivated by the work of MacPherson-Vilonen in derived category of perverse sheaves (see  \cite{Macpherson}).  Several homological properties of recollements of abelian categories have been amply studied (see \cite{Psaro1},  \cite{Psaro2}, \cite{Psaro3}, \cite{Yasuaki}).\\
It should be noted that recollements of abelian categories appear naturally in various settings in representation theory. For example any idempotent element  $e$ in a ring $R$ with unit induces a canonical recollement between the module categories over the rings $R$, $R/ReR$ and $eRe$. In fact, in \cite{Psaro2},  Psaroudakis and  Vitoria, studied recollements of module categories and they showed that a recollement whose terms are module categories is equivalent to one induced by an idempoten element.\\
In the context of comma categories we point out that Chen and Zhen, studied conditions under which a recollement relative to abelian categories induces a new recollement relative to abelian categories and comma categories and they applied their results to deduce results about recollements in categories of modules over triangular matrices rings (see \cite{Chen}).\\
On the other hand, rings of the form $\left[\begin{smallmatrix}
T & 0 \\ 
M & U
\end{smallmatrix}\right]$ where $T$ and $U$ are rings and $M$ is a $T$-$U$-bimodule have appeared often in the study of the representation theory of artin rings and algebras (see for example \cite{AusPlatRei},\cite{DlabRingel}, \cite{Gordon}, \cite{Green1}, \cite{Green2}). Such a rings appear naturally in the study of homomorphic images of hereditary  artin algebras.\\ 
These types of algebras have been amply studied. For example, Zhu
considered the triangular matrix algebra  $\Lambda:=\left[\begin{smallmatrix}
T & 0 \\ 
M & U
\end{smallmatrix}\right]$ where $T$ and $U$ are quasi-hereditary algebras and he proved that under suitable conditions on $M$, $\Lambda$ is quasi-hereditary algebra (see \cite{Bin}).  In the paper \cite{Zhang},  the triangular matrix algebra of rank two was extended to the one of rank $n$ and obtained that there is a relation between the morphism category and the module category of the corresponding matrix algebra.\\
Also, in this direction let us recall the following result due to {Smal\o}: Let $T$ and $U$ be artin algebras, consider the matrix algebra $\Lambda:=\left[\begin{smallmatrix}
T & 0 \\ 
M & U
\end{smallmatrix}\right]$ and denote by $\mathrm{mod}(\Lambda)^{\mathcal{X}}_{\mathcal{Y}}$ the full subcategory of $\mathrm{mod}(\Lambda)$ whose objects are $U$-morphisms $f:M\otimes_{T}A\longrightarrow B$ with $A\in \mathcal{X}$ and $B\in \mathcal{Y}$ where $\mathcal{X}\subseteq \mathrm{mod}(T)$ and $\mathcal{Y}\subseteq \mathrm{mod}(U)$ are subcategories. Then, $\mathrm{mod}(\Lambda)^{\mathcal{X}}_{\mathcal{Y}}$ is functorially finite in $\mathrm{mod}(\Lambda)$ if and only if $\mathcal{X}$ and $\mathcal{Y}$  are functorially finite in $\mathrm{mod}(T)$ and $\mathrm{mod}(U)$ respectively (see \cite[Theorem 2.1]{Smalo}).\\
In the paper \cite{LeOS}, given two additive categories $\mathcal{U}$ and $\mathcal{T}$ and $M\in \mathrm{Mod}(\mathcal{U}\otimes \mathcal{T}^{op})$ we constructed  the matrix category $\mathbf{\Lambda}:=\left[\begin{smallmatrix}
\mathcal{T} & 0 \\ 
M & \mathcal{U}
\end{smallmatrix}\right]$ and we studied several of its properties. In particular we proved that there exists and equivalence of categories $\Big( \mathrm{Mod}(\mathcal{T}), \mathbb{G}\mathrm{Mod}(\mathcal{U})\Big) \simeq
\mathrm{Mod}(\mathbf{\Lambda})$. In this paper, one of the main results is a generalization of the result in \cite[theorem 4.4]{Chen}, that given a recollement between functor categories we can induce a recollement between modules over certain triangular matrix categories $\mathrm{Mod}\Big(\left[ \begin{smallmatrix}
\mathcal{T} & 0 \\ 
M & \mathcal{U}
\end{smallmatrix}\right]\Big)$. We also show that in the case of dualizing $K$-varieties we can restric that recollement to the category of finitely presented modules  $\mathrm{mod}(\mathbf{\Lambda})$ (see \ref{Recoll1} and \ref{Recollfinitos}). Finally, we prove an analogous of the  {Smal\o}'s result mentioned above, but for the context of dualizing varieties (see Theorem \ref{Smalogeneral}).\\
We now give a brief description of the contents on this paper.
\begin{itemize}
\item In section 2, we recall basic concepts and properties of the category $\mathrm{Mod}(\mathcal{C})$,  and some properties of dualizing varieties and comma categories that will be use throughout the paper.

\item In section 3, we recall the notion of recollement and we show that there is a recollement coming from a triple adjoint defined by M. Auslander in  \cite{AuslanderRep1}. This result is well known (see for example \cite[Example 3.12]{Psaro3}), but we give a proof by the convenience of the reader.

\item In section 4, we show how construct recollements in the category of modules over triangular matrix categories $\mathrm{Mod}\Big(\left[ \begin{smallmatrix}
\mathcal{T} & 0 \\ 
M & \mathcal{U}
\end{smallmatrix}\right]\Big)$. In this section we prove a generalization of a theorem due to Chen and Zhen  (\cite[Theorem 4.4]{Chen}), that given a recollement in functor categories we can induce a recollement between modules over triangular matrix categories.

\item In section 5, we study the category $\mathrm{maps}(\mathrm{Mod}(\mathcal{C})):=\Big(\mathrm{Mod}(\mathcal{C}),\mathrm{Mod}(\mathcal{C})\Big)$ of maps of the category $\mathrm{Mod}(\mathcal{C})$ and we give in this setting a description of the functor $\widehat{\Theta}:
\Big(\mathrm{Mod}(\mathcal{T}),\mathbb{G}\mathrm{Mod}(\mathcal{U})\Big)\longrightarrow \Big(\mathrm{Mod}(\mathcal{U}^{op}),\overline{\mathbb{G}}\mathrm{Mod}(\mathcal{T}^{op})\Big)$
constructed in \cite[Proposition 4.9]{LeOS} (see \ref{descrimaps}). We also give a description of the projective and injective objects of the category $\mathrm{maps}(\mathrm{mod}(\mathcal{C}))$  when $\mathcal{C}$ is a dualizing variety and we also describe its radical (see  \ref{projinjec}).

\item In section 6, we study the Auslander-Reiten translate in comma categories. So, we construct
 $(-)^{\ast}:\big(\mathrm{Mod}(\mathcal{T}),\mathbb{G}\mathrm{Mod}(\mathcal{U})\big)\rightarrow \big(\mathrm{Mod}(\mathcal{U}^{op}),\overline{\mathbb{G}}\mathrm{Mod}(\mathcal{T}^{op})\big)$ and we describe how it acts on the category $\mathrm{proj}\big(\mathrm{Mod}(\mathcal{T}),\mathbb{G}\mathrm{Mod}(\mathcal{U})\big)$, of finitely generated projective objects (see \ref{dualproyec}, \ref{descripcionmor} and \ref{funtorestrellita}). We also describe the action of the Auslander-Reiten translate in the category of maps $\mathrm{maps}(\mathrm{mod}(\mathcal{C}))=\Big(\mathrm{mod}(\mathcal{C}),\mathrm{mod}(\mathcal{C})\Big)$ when $\mathcal{C}$ is a dualizing variety (see \ref{ARsequncedesc}).

\item In section 7, we generalize some results from \cite{MVOM}. We consider a dualizing $K$-variety $\mathcal{C}$ and we study $\mathbf{\Lambda}=\left[\begin{smallmatrix}
 \mathcal C& 0 \\ 
 \widehat{\mathbbm{Hom}}& \mathcal C
\end{smallmatrix}\right]$.  We construct almost split sequences  in $\mathrm{mod}(\mathbf{\Lambda})$ that arise from almost split sequences in $\mathrm{mod}(\mathcal C)$ (see \ref{ARseq1} and \ref{ARseq2}). We also study almost split sequences in $ \mathrm{mod}(\mathrm{mod}(\mathcal{C})^{op})$ coming from almost split sequences with certain conditions in $\mathrm{maps}(\mathrm{mod}(\mathcal{C}))$ (see \ref{ARseq3}).

\item In section 8, we study contravariantly and functorially finite subcategories in $\mathrm{Mod}\Big(\left[ \begin{smallmatrix}
\mathcal{T} & 0 \\ 
M & \mathcal{U}
\end{smallmatrix}\right]\Big)$. In particular, we prove a generalization of the result of {Smal\o}  in \cite[Theorem 2.1]{Smalo}, which give us a way of construct functorally finite subcategories in the category $\mathrm{Mod}\Big(\left[ \begin{smallmatrix}
\mathcal{T} & 0 \\ 
M & \mathcal{U}
\end{smallmatrix}\right]\Big)$ of modules over a triangular matrix category (see \ref{Smalogeneral}). Finally, we see that the categories of monomorphisms and epimorphisms in $\mathrm{maps}(\mathrm{mod}(\mathcal{C}))=\Big(\mathrm{mod}(\mathcal{C}),\mathrm{mod}(\mathcal{C})\Big)$  are funtorially finite (see \ref{monoepifun}).

\end{itemize}
\section{Preliminaries}
\subsection{Categorical Foundations and Notations}
We recall that a category $\C$ together with an abelian group structure on each of the sets of morphisms $\C(C_{1},C_{2})$ is called  \textbf{preadditive category}  provided all the composition maps
$\C(C,C')\times \C(C',C'')\longrightarrow \C(C,C'')$
in $ \C $ are bilinear maps of abelian groups. A covariant functor $ F:\C_{1}\longrightarrow \C_{2} $ between  preadditive categories $ \C_{1} $ and $ \C_{2} $ is said to be \textbf{additive} if for each pair of objects $ C $ and $ C' $ in $ \C_{1}$, the map $ F:\C_{1}(C,C')\longrightarrow \C_{2}(F(C),F(C')) $ is a morphism of abelian groups. Let $\mathcal C$ and $\mathcal D$  be preadditive categories and $\mathbf{Ab}$ the category of abelian groups. A functor $F: \mathcal C\times \mathcal D\rightarrow \mathbf{Ab}$ is called \textbf{biadditive} if
$F:\mathcal C(C,C')\times \mathcal D(D,D')\rightarrow \mathbf{Ab} (F(C,D),F(C',D'))$ is bi additive, that is,  $F(f+f',g)=F(f,g)+F(f',g)$ and $F(f,g+g')=F(f,g)+F(f,g')$.\\
If $ \C $ is a  preadditive category we always considerer its opposite  category $ \C^{op}$ as a preadditive category by letting $\C^{op} (C',C)= \C(C,C') $. We follow the usual convention of identifying each contravariant  functor $F$ from a category $ \C $ to $ \mathcal{D} $ with the covariant functor $F$ from  $ \C^{op} $ to $ \mathcal{D}$.

\subsection{The category $\mathrm{Mod}(\mathcal{C})$}
Throughout this section $\mathcal{C}$ will be an arbitrary skeletally small preadditive category, and $\mathrm{Mod}(\mathcal{C})$ will denote the \textit{category of covariant functors} from $\mathcal{C}$ to  the category of abelian groups $ \mathbf{Ab}$, called the category of $\mathcal{C}$-modules. This category has as objects  the functors from $\mathcal C$ to $\mathbf{Ab}$, and  and a morphism $ f:M_{1}\longrightarrow M_{2} $ of $ \C $-modules is a natural transformation,  that is, the set of morphisms $\mathrm{Hom}_\mathcal C(M_1,M_2)$ from $M_1$ to $M_2$  is given by $\mathrm{Nat} (M_{1}, M_{2} )$.  We sometimes we will write for short, $\mathcal{C}(-,?)$
instead of $\mathrm{Hom}_{\mathcal{C}}(-,?)$ and when it is clear from the context we will use just $(-,?).$\\
We now recall some of properties of the category $ \Mod(\C) $, for more details consult  \cite{AuslanderRep1}. The category $\Mod(\C) $ is an abelian with the following properties:
\begin{enumerate}
\item A sequence
\[
\begin{diagram}
\node{M_{1}}\arrow{e,t}{f}
 \node{M_{2}}\arrow{e,t}{g}
  \node{M_{3}}
\end{diagram}
\]
is exact in $ \Mod(\C) $ if and only if
\[
\begin{diagram}
\node{M_{1}(C)}\arrow{e,t}{f_{C}}
 \node{M_{2}(C)}\arrow{e,t}{g_{C}}
  \node{M_{3}(C)}
\end{diagram}
\]
is an exact sequence of abelian groups for each $ C$ in $\C $.

\item Let $ \lbrace M_{i}\rbrace_{i\in I} $ be a family of $ \C $-modules indexed by the set $ I $.
The $ \C $-module $ \underset{\i\in I}\amalg M_{i}$ defined by $  (\underset{i\in I}\amalg M_{i})\ (C)=\underset{i\in I}\amalg \ M_{i}(C)$ for all $ C $ in $ \C $, is a direct sum for the family $ \lbrace M_{i}\rbrace_{i\in I} $ in $ \Mod(\C) $, where $\underset{i\in I} \amalg M_{i}(C)  $ is the direct sum in $ \mathbf{Ab} $ of the family of abelian groups $ \lbrace M_{i}(C)\rbrace_{i\in I} $.  The $ \C $-module $ \underset{\i\in I}\prod M_{i}$ defined by $(\underset{i\in I}\prod M_{i})\ (C)=\underset{i\in I}\prod M_{i}(C)   $ for all $C$ in $\C$, is a product for the family $ \lbrace M_{i}\rbrace_{i\in I} $ in $\Mod(\C)$, where $ \underset{i\in I}\prod M_{i}(C)  $ is the product in $\mathbf{Ab}$.

\item  For each $C$ in $\C $, the $\C$-module $(C,-)$ given by $(C,-)(X)=\C(C,X)$ for each $X$ in $\C$, has the property that for each $\C$-module $M$, the map $\left( (C,-),M\right)\longrightarrow M(C)$ given by $f\mapsto f_{C}(1_{C})$ for each $\C$-morphism $f:(C,-)\longrightarrow M$ is an isomorphism of abelian groups. We will often consider this isomorphism an identification.
Hence
\begin{enumerate}
\item The functor $ P:\C\longrightarrow \Mod(\C) $ given by $ P(C)=(C,-) $ is fully faithful.
\item For each family $\lbrace  C_{i}\rbrace _{i\in I}$ of objects in $ \C $, the $ \C $-module $ \underset{i\in I}\amalg P(C_{i}) $ is a projective $ \C $-module.
\item Given a $ \C $-module $ M $, there is a family $ \lbrace C_{i}\rbrace_{i\in I} $ of objects in $ \C $ such that there is an epimorphism $ \underset{i\in I}\amalg P(C_{i})\longrightarrow M\longrightarrow 0 $.
\end{enumerate}
\end{enumerate}

\subsection{Change of Categories}
The results that appears in this subsection  are directly taken from \cite{AuslanderRep1}. Let $ \mathcal{C} $ be a skeletally small category. There is a unique (up to isomorphism)  functor $ \otimes_{\mathcal{C}}:\mathrm{Mod}(\mathcal{C}^{op})\times \mathrm{Mod}(\mathcal{C})\longrightarrow \mathbf{Ab} $ called the \textbf{tensor product}. The abelian group $\otimes_{\mathcal{C}}(A,B) $  is denoted by $A\otimes_{\mathcal{C}} B $ for all $\mathcal{C}^{op}$-modules $A$ and all $\mathcal{C}$-modules $B$.
\begin{proposition}\label{AProposition0} The tensor product has the following properties:
\begin{enumerate}
\item
\begin{enumerate}
\item For each $ \mathcal{C}$-module $B$, the functor $\otimes_{\mathcal{C}}B:\mathrm{Mod}(\mathcal{C}^{op})\longrightarrow \mathbf{Ab}$ given by $(\otimes_{\mathcal{C}}B)(A)=A\otimes_{\mathcal{C}}B$ for all $\mathcal{C}^{op}$-modules $A$ is right exact.
\item For each $ \mathcal{C}^{op}$-module $A$, the functor $ A\otimes_{\mathcal{C}}:\mathrm{Mod}(\mathcal{C})\longrightarrow \mathbf{Ab}$ given by $(A\otimes_{\mathcal{C}})(B)=A\otimes_{\mathcal{C}}B $ for all $ \mathcal{C}$-modules $B$ is right exact.
\end{enumerate}

\item For each $ \mathcal{C}^{op}$-module $A$ and each $ \mathcal{C}$-module $B$, the functors $A\otimes_{\mathcal{C}}$ and $\otimes_{\mathcal{C}}B $ preserve arbitrary sums.

\item For each object $C$ in $ \mathcal{C} $ we have $ A\otimes_{\mathcal{C}}(C,-) =A(C)$ and $(-,C)\otimes_{\mathcal{C}}B=B(C)$ for all $\mathcal{C}^{op}$-modules $A$ and all $\mathcal{C}$-modules $B$.
\end{enumerate}
\end{proposition}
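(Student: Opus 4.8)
I would work throughout with the explicit realization of the tensor product as a coend. For $A\in\Mod(\C^{op})$ and $B\in\Mod(\C)$ set
\[
A\otimes_{\C}B:=\Coker\Big(\bigoplus_{f\in\C(C,C')}A(C')\otimes_{\mathbb{Z}}B(C)\xrightarrow{\ \varphi-\psi\ }\bigoplus_{C\in\C}A(C)\otimes_{\mathbb{Z}}B(C)\Big),
\]
where the source runs over all morphisms $f\colon C\to C'$ of $\C$, the component $\varphi$ sends the summand indexed by $f$ into the $C$-summand via $A(f)\otimes\mathrm{id}_{B(C)}$, and $\psi$ sends it into the $C'$-summand via $\mathrm{id}_{A(C')}\otimes B(f)$. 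That this is biadditive in $(A,B)$ and coincides (up to isomorphism) with the tensor product singled out just before the statement is part of its construction, so I would only recall this; the point is that with this description all three items become statements about how $\Coker$, coproducts, and the evaluation functors $A\mapsto A(C)$ and $B\mapsto B(C)$ interact.

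I would prove item (3) first, since it is the normalization that pins the functor down. For $B=(C_{0},-)$ one has $B(C)=\C(C_{0},C)$; given a class $\sum_i a_i\otimes g_i$ in $\bigoplus_C A(C)\otimes\C(C_{0},C)$ with $g_i\colon C_{0}\to C_i$ and $a_i\in A(C_i)$, applying $\varphi-\psi$ to $\sum_i a_i\otimes 1_{C_{0}}$ with the $i$-th term placed in the summand indexed by $g_i$ shows that $\sum_i a_i\otimes g_i$ is congruent modulo $\im(\varphi-\psi)$ to $\sum_i A(g_i)(a_i)\otimes 1_{C_{0}}\in A(C_{0})\otimes\C(C_{0},C_{0})$. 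Hence the map $A\otimes_{\C}(C_{0},-)\to A(C_{0})$, $a\otimes g\mapsto A(g)(a)$, is a well-defined surjection, and it is injective because every class has a representative in the $C_{0}$-summand on which the map restricts to $a\otimes 1_{C_{0}}\mapsto a$. This yields $A\otimes_{\C}(C_{0},-)\cong A(C_{0})$, and the companion isomorphism $(-,C_{0})\otimes_{\C}B\cong B(C_{0})$ follows by the symmetric computation (the coend is symmetric under interchanging $\C$ and $\C^{op}$).

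For item (2), fix $B$. Coproducts in $\Mod(\C^{op})$ are computed objectwise, so $A\mapsto A(C)$ preserves arbitrary coproducts; $(-)\otimes_{\mathbb{Z}}B(C)$ preserves coproducts of abelian groups; coproducts commute with coproducts; and $\Coker$, being a colimit, commutes with coproducts. Substituting $\bigoplus_j A_j$ into the displayed formula and commuting the coproduct over $j$ past each of these operations gives $\big(\bigoplus_j A_j\big)\otimes_{\C}B\cong\bigoplus_j (A_j\otimes_{\C}B)$; the statement for $A\otimes_{\C}(-)$ is the mirror argument. For item (1)(a), fix $B$ and start from a short exact sequence $0\to A'\to A\to A''\to 0$ in $\Mod(\C^{op})$; it is exact objectwise, and since $(-)\otimes_{\mathbb{Z}}B(C)$ is right exact, the source and target coproducts of the displayed presentation, formed for $A'$, $A$, $A''$, assemble into a commutative ladder with right-exact rows. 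A diagram chase (equivalently, the snake lemma together with right exactness of $\Coker$) then shows that $A'\otimes_{\C}B\to A\otimes_{\C}B\to A''\otimes_{\C}B\to 0$ is exact, which is item (1)(a); item (1)(b) is the same argument carried out in the second variable.

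The only genuinely delicate points are the two identifications in item (3) and keeping the two variances straight in the right-exactness ladder of item (1); everything else reduces to the slogan that colimits commute with colimits. If one prefers to avoid coends altogether, an alternative is to take as given the characterization (part of the construction preceding the proposition) of $\otimes_{\C}B$ as the unique right-exact, coproduct-preserving functor $\Mod(\C^{op})\to\mathbf{Ab}$ extending $(C,-)\mapsto B(C)$ along the representables: then item (3) and the $\otimes_{\C}B$-halves of items (1) and (2) are immediate, and the remaining halves amount to showing the bifunctor is \emph{balanced}, i.e.\ that $A\otimes_{\C}B$ may be computed from a projective presentation of either variable, which is the usual comparison/double-complex argument, exactly as for $\Tor$ over a ring. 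Either route, the substantive work is the \emph{balancing}/co-Yoneda step, not the formal properties themselves.
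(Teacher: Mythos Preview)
The paper does not supply a proof of this proposition: it is quoted as background from Auslander's \cite{AuslanderRep1} (``The results that appear in this subsection are directly taken from \cite{AuslanderRep1}''), so there is no paper-internal argument to compare against. Your proof via the explicit coend presentation is the standard one and is correct; the co-Yoneda computation for item~(3), the ``colimits commute with colimits'' reduction for item~(2), and the right-exactness ladder for item~(1) are all sound. The only place I would tighten the exposition is the injectivity claim in item~(3): rather than saying every class has a representative in the $C_{0}$-summand, it is cleaner to observe that $a\mapsto[a\otimes 1_{C_{0}}]$ defines a map $A(C_{0})\to A\otimes_{\C}(C_{0},-)$ which is visibly a two-sided inverse to the map you wrote down, since this sidesteps having to argue directly that no nonzero element of the $C_{0}$-summand lies in $\im(\varphi-\psi)$. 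Your alternative route via the universal property (unique right-exact coproduct-preserving extension of $B$ along the Yoneda embedding, plus balancing) is also fine and is in fact closer in spirit to how Auslander phrases the construction.
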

Suppose now that $ \mathcal{C}' $ is a subcategory of the skeletally small category $ \mathcal{C} $.
 We use the tensor product of $ \mathcal{C}'$-modules, to describe the left adjoint $ \mathcal{C}\otimes_{\mathcal{C}'} $ of the restriction functor $ \mathrm{res}_{\mathcal{C}'}:\mathrm{Mod}(\mathcal{C})\longrightarrow \mathrm{Mod}(\mathcal{C}')$.\\
Define the functor $ \mathcal{C}\otimes_{\mathcal{C}'}:\mathrm{Mod}\left( \mathcal{C}'\right) \longrightarrow \mathrm{Mod}\left( \mathcal{C}\right)  $ by $ (\mathcal{C}\otimes_{\mathcal{C}'}M)\left( C\right) =\left(-,C\right) \mid_{\mathcal{C}'}\otimes_{\mathcal{C}'} M $ for all $ M\in \mathrm{Mod}\left(\mathcal{C}'\right)  $ and $ C\in\mathcal{C}.$ Using the properties of the tensor product it is not difficult to establish the following proposition.

\begin{proposition}
$\textnormal{\cite[Proposition 3.1]{AuslanderRep1}}$ \label{Rproposition1}
Let $ \mathcal{C}' $ a subcategory of the skeletally small category $\mathcal{C}$. Then the functor $ \mathcal{C}\otimes_{\mathcal{C}'}:\mathrm{Mod}\left( \mathcal{C}'\right) \longrightarrow \mathrm{Mod}\left( \mathcal{C}\right)  $ satisfies:

\begin{enumerate}
\item $\mathcal{C}\otimes_{\mathcal{C}'}$ is right exact and preserves sums;

\item The composition $ \mathrm{Mod}\left( \mathcal{C}'\right) \overset{\mathcal{C}\otimes_{\mathcal{C}'}}\longrightarrow \mathrm{Mod}\left( \mathcal{C}\right)\overset{\mathrm{res}_{\mathcal{C}'}}\longrightarrow \mathrm{Mod}\left( \mathcal{C}'\right)    $ is the identity on $ \mathrm{Mod}\left( \mathcal{C}'\right); $

\item For each object $ C'\in \mathcal{C}' $, we have $ \mathcal{C}\otimes_{\mathcal{C}'}\mathcal{C}'\left(C',-\right) =\mathcal{C}\left(C', -\right); $

\item For each $\mathcal{C'}$-module $M$ and each $ \mathcal{C} $-module $ N $, the restriction map \[ \mathcal{C}\left( \mathcal{C}\otimes_{\mathcal{C}'}M,N\right)\longrightarrow \mathcal{C}'\left( M,N\mid_{\mathcal{C}'}\right)\] is an isomorphism;
\item $ \mathcal{C}\otimes_{\mathcal{C}'} $ is a fully faithful functor;

\item $\mathcal{C}\otimes_{\mathcal{C}'}$ preserves projective objects.
\end{enumerate}
\end{proposition}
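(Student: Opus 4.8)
The plan is to deduce all six items from Proposition~\ref{AProposition0} together with the fact that exactness of sequences and arbitrary (co)products in $\mathrm{Mod}(\mathcal{C})$ and in $\mathrm{Mod}(\mathcal{C}')$ are detected objectwise. Write $F:=\mathcal{C}\otimes_{\mathcal{C}'}$ and $G:=\mathrm{res}_{\mathcal{C}'}$, so that $(FM)(C)=(-,C)\!\mid_{\mathcal{C}'}\otimes_{\mathcal{C}'}M$ for every $M\in\mathrm{Mod}(\mathcal{C}')$ and $C\in\mathcal{C}$. For \emph{(1)}: for fixed $C$ the assignment $M\mapsto(-,C)\!\mid_{\mathcal{C}'}\otimes_{\mathcal{C}'}M$ is the functor $A\otimes_{\mathcal{C}'}(-)$ attached to the $(\mathcal{C}')^{op}$-module $A=(-,C)\!\mid_{\mathcal{C}'}$, hence right exact and sum-preserving by Proposition~\ref{AProposition0}(1)(b) and~(2); since exactness and coproducts of $\mathcal{C}$-modules are detected objectwise, $F$ is right exact and preserves sums. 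For \emph{(3)}: evaluating at $C$ and using Proposition~\ref{AProposition0}(3) in the form $A\otimes_{\mathcal{C}'}\mathcal{C}'(C',-)=A(C')$ gives $(F\,\mathcal{C}'(C',-))(C)=\mathcal{C}(C',C)$, naturally in $C$, i.e. $F\,\mathcal{C}'(C',-)=\mathcal{C}(C',-)$. For \emph{(2)}: by~(3), $GF\,\mathcal{C}'(C',-)=\mathcal{C}(C',-)\!\mid_{\mathcal{C}'}=\mathcal{C}'(C',-)$ for $C'\in\mathcal{C}'$; since $F$ and $G$ commute with sums, the resulting natural transformation $GF\Rightarrow\mathrm{id}$ is an isomorphism on every coproduct of representable $\mathcal{C}'$-modules, and as any $M$ admits a presentation $\amalg_{j}\mathcal{C}'(C'_{j},-)\to\amalg_{i}\mathcal{C}'(C'_{i},-)\to M\to0$, applying the right exact functor $GF$ and comparing with $\mathrm{id}$ through the five lemma yields $GF=\mathrm{id}$.

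The crux is \emph{(4)}, the isomorphism $\Hom_{\mathcal{C}}(FM,N)\cong\Hom_{\mathcal{C}'}(M,GN)$, natural in $M\in\mathrm{Mod}(\mathcal{C}')$ and $N\in\mathrm{Mod}(\mathcal{C})$. I would first treat $M=\mathcal{C}'(C',-)$: by~(3) and the Yoneda isomorphism, $\Hom_{\mathcal{C}}(F\,\mathcal{C}'(C',-),N)=\Hom_{\mathcal{C}}(\mathcal{C}(C',-),N)\cong N(C')$, while $\Hom_{\mathcal{C}'}(\mathcal{C}'(C',-),GN)\cong(GN)(C')=N(C')$, and the two Yoneda identifications are compatible. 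Both sides of the desired isomorphism carry coproducts in the variable $M$ to products, so it extends to all free $\mathcal{C}'$-modules. For arbitrary $M$ I would take a free presentation as above, apply $F$ (right exact, by~(1)) and then $\Hom_{\mathcal{C}}(-,N)$ to get a left exact sequence, and apply $\Hom_{\mathcal{C}'}(-,GN)$ to the presentation to get another; the last two terms are identified naturally by the free case, so the five lemma identifies the remaining terms, naturality in $N$ being transparent throughout. This objectwise-to-global bookkeeping, together with the verification that the two Yoneda identifications genuinely agree, is the only point that demands care.

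Items \emph{(5)} and \emph{(6)} are then formal. From~(4) and~(2), $\Hom_{\mathcal{C}}(FM,FM')\cong\Hom_{\mathcal{C}'}(M,GFM')=\Hom_{\mathcal{C}'}(M,M')$; unwinding the adjunction identifies this composite with the map induced by $F$ on Hom-groups (equivalently: by~(2) the unit of $F\dashv G$ is invertible, and a left adjoint with invertible unit is fully faithful), so $F$ is fully faithful. For \emph{(6)}: $G=\mathrm{res}_{\mathcal{C}'}$ is exact, since exactness is objectwise in both functor categories, so its left adjoint $F$ preserves projectives; alternatively, by~(3) and~(1), $F$ sends a coproduct of representable $\mathcal{C}'$-modules to a coproduct of representable $\mathcal{C}$-modules and it preserves direct summands, hence carries every projective $\mathcal{C}'$-module to a projective $\mathcal{C}$-module.
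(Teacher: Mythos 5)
Your argument is correct and takes essentially the route the paper intends: the paper offers no proof of Proposition~\ref{Rproposition1}, simply citing \cite[Proposition 3.1]{AuslanderRep1} and remarking that it follows from the tensor product properties of Proposition~\ref{AProposition0}, which is exactly how you proceed (and it matches the technique the paper does write out for the analogous Propositions~\ref{Rproposition3} and~\ref{Rproposition4}: objectwise reduction, presentations by representables, Yoneda, and a comparison diagram). One small simplification: item (2) is immediate by evaluating at $C'\in\mathcal{C}'$, since $\left(\mathcal{C}\otimes_{\mathcal{C}'}M\right)(C')=\left(-,C'\right)\mid_{\mathcal{C}'}\otimes_{\mathcal{C}'}M=M(C')$ by Proposition~\ref{AProposition0}(3) (using that $\mathcal{C}'$ is full), so your presentation-plus-five-lemma step for (2) --- which in any case would first require exhibiting the natural transformation comparing $\mathrm{res}_{\mathcal{C}'}\circ(\mathcal{C}\otimes_{\mathcal{C}'})$ with the identity that you invoke --- can be bypassed entirely.
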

Having described the left adjoint $ \mathcal{C}\otimes_{\mathcal{C}'} $ of the restriction functor $ \mathrm{res}_{\mathcal{C}'}:\mathrm{Mod}\left(\mathcal{C}\right) \longrightarrow \mathrm{Mod}\left( \mathcal{C}'\right),$ we now describe its right adjoint.\\
Let $ \mathcal{C}' $ be a full subcategory of the category $ \mathcal{C} $. Define the functor $ \mathcal{C}'\left( \mathcal{C},-\right):\mathrm{Mod}\left( \mathcal{C}'\right) \longrightarrow \mathrm{Mod}\left( \mathcal{C}\right)$ by $\mathcal{C}'\left( \mathcal{C},M\right)\left( X\right) =\mathcal{C}'\left( \left(X,-\right) \mid_{\mathcal{C}'},M\right)    $ for all $ \mathcal{C}' $-modules $ M $ and all objects $ X $ in $ \mathcal{C}.$ We have  the following proposition.

\begin{proposition} 
$\textnormal{\cite[Proposition 3.4]{AuslanderRep1}}$ 
\label{Rproposition2}
Let $ \mathcal{C}' $ a subcategory of the skeletally small category $\mathcal{C}$. Then the functor $ \mathcal{C}'\left( \mathcal{C},-\right) :\mathrm{Mod}\left( \mathcal{C}'\right) \longrightarrow \mathrm{Mod}\left( \mathcal{C}\right)$ has the following properties:
\begin{enumerate}

\item $ \mathcal{C}'\left( \mathcal{C},-\right)  $ is left exact and preserves inverse limits;

\item The composition $\mathrm{Mod}\left( \mathcal{C}'\right) \overset{\mathcal{C}'\left( \mathcal{C},-\right) }\longrightarrow \mathrm{Mod}\left( \mathcal{C}\right)\overset{\mathrm{res}_{\mathcal{C}'}}\longrightarrow \mathrm{Mod}\left( \mathcal{C}'\right)    $ is the identity on $\mathrm{ Mod}\left( \mathcal{C}'\right);$

\item For each $\mathcal{C}' $-module $M $ and $\mathcal{C}$-module $N$, the restriction map 
$$\mathcal{C}\left( N,\mathcal{C}'\left( \mathcal{C},M\right) \right) \longrightarrow \mathcal{C}'\left( N\mid_{\mathcal{C}'},M\right)$$ is an isomorphism;

\item $\mathcal{C}'\left( \mathcal{C},-\right)  $ is a fully faithful functor;
\item $\mathcal{C}'\left( \mathcal{C},-\right) $ preserves injective objects.
\end{enumerate}
\end{proposition}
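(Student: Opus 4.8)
The plan is to reduce everything to part (3), which asserts that $\mathcal{C}'(\mathcal{C},-)$ is right adjoint to the restriction functor $\mathrm{res}_{\mathcal{C}'}\colon\mathrm{Mod}(\mathcal{C})\to\mathrm{Mod}(\mathcal{C}')$; once this adjunction is in hand the remaining four items are formal consequences. I would first record that $G:=\mathcal{C}'(\mathcal{C},-)$ really takes values in $\mathrm{Mod}(\mathcal{C})$: for $X\in\mathcal{C}$ a morphism $X\to Y$ induces $(Y,-)\mid_{\mathcal{C}'}\to(X,-)\mid_{\mathcal{C}'}$ in $\mathrm{Mod}(\mathcal{C}')$, hence an abelian group map $\mathcal{C}'\bigl((X,-)\mid_{\mathcal{C}'},M\bigr)\to\mathcal{C}'\bigl((Y,-)\mid_{\mathcal{C}'},M\bigr)$, so $G(M)$ is a covariant functor $\mathcal{C}\to\mathbf{Ab}$, and this is visibly functorial in $M$.

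To prove (3), I would first treat $N$ representable, say $N=(X,-)$. Yoneda's lemma gives $\mathrm{Hom}_{\mathcal{C}}\bigl((X,-),G(M)\bigr)\cong G(M)(X)=\mathcal{C}'\bigl((X,-)\mid_{\mathcal{C}'},M\bigr)=\mathrm{Hom}_{\mathcal{C}'}\bigl(N\mid_{\mathcal{C}'},M\bigr)$, naturally in $X$ and $M$. For an arbitrary $\mathcal{C}$-module $N$, choose a projective presentation $\coprod_{j}(X_{j},-)\to\coprod_{i}(X_{i},-)\to N\to 0$, which exists by iterating property 3(c) of $\mathrm{Mod}(\mathcal{C})$ recalled in Subsection~2.2. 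The functor $\mathrm{res}_{\mathcal{C}'}$ is exact (exactness in $\mathrm{Mod}(\mathcal{C})$ and $\mathrm{Mod}(\mathcal{C}')$ is checked objectwise, and restriction only forgets some evaluations) and sends coproducts to coproducts, so applying $\mathrm{res}_{\mathcal{C}'}$ preserves the presentation. Now apply the two contravariant left exact functors $\mathrm{Hom}_{\mathcal{C}}(-,G(M))$ and $\mathrm{Hom}_{\mathcal{C}'}\bigl((-)\mid_{\mathcal{C}'},M\bigr)$ to the presentation; both turn it into a left exact sequence, both convert the coproducts into products, and on each representable $(X_{i},-)$, $(X_{j},-)$ they agree by the previous step compatibly with the differentials. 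Hence their kernels agree, producing a natural isomorphism $\mathrm{Hom}_{\mathcal{C}}(N,G(M))\cong\mathrm{Hom}_{\mathcal{C}'}(N\mid_{\mathcal{C}'},M)$; tracking the construction shows it is natural in $N$ and $M$. This is the adjunction, and it is the analogue of part (4) of Proposition~\ref{Rproposition1} with the roles of the two adjoints swapped.

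The other items now follow by general nonsense. (1): a right adjoint preserves all small limits, so in particular $G$ is left exact and preserves inverse limits. (2): for $C'\in\mathcal{C}'$, fullness of $\mathcal{C}'$ in $\mathcal{C}$ gives $(C',-)\mid_{\mathcal{C}'}=\mathcal{C}'(C',-)$, so $\bigl(\mathrm{res}_{\mathcal{C}'}G(M)\bigr)(C')=\mathrm{Hom}_{\mathcal{C}'}\bigl(\mathcal{C}'(C',-),M\bigr)\cong M(C')$ by Yoneda, naturally in $C'$ and $M$; hence $\mathrm{res}_{\mathcal{C}'}\circ G\cong\mathrm{Id}_{\mathrm{Mod}(\mathcal{C}')}$. (5): since the left adjoint $\mathrm{res}_{\mathcal{C}'}$ is exact, for injective $I$ the functor $\mathrm{Hom}_{\mathcal{C}}(-,G(I))\cong\mathrm{Hom}_{\mathcal{C}'}(\mathrm{res}_{\mathcal{C}'}(-),I)$ is exact, so $G(I)$ is injective. (4): combining (3) and (2), $\mathrm{Hom}_{\mathcal{C}}(G(M),G(M'))\cong\mathrm{Hom}_{\mathcal{C}'}(\mathrm{res}_{\mathcal{C}'}G(M),M')\cong\mathrm{Hom}_{\mathcal{C}'}(M,M')$, and checking that this composite is the map induced by $G$ shows $G$ is fully faithful (equivalently, that the counit of the adjunction is a natural isomorphism, which is immediate from (2)).

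I expect the only real work to be the dévissage in (3): one must check that the isomorphism obtained on representables is genuinely compatible with the maps of the projective presentation and with arbitrary coproducts, and that left exactness of the two $\mathrm{Hom}$-functors permits passing to kernels via the Five Lemma. This is a naturality bookkeeping exercise rather than a source of genuine difficulty; everything else is either the definition of the functor $\mathcal{C}'(\mathcal{C},-)$ or a standard property of adjoint pairs.
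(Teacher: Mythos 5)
Your argument is correct: establishing the adjunction (3) via Yoneda on representables followed by a d\'evissage along a presentation by coproducts of representables, and then deducing (1), (2), (4) and (5) as formal consequences of the adjoint pair $(\mathrm{res}_{\mathcal{C}'},\mathcal{C}'(\mathcal{C},-))$, is exactly the standard proof; the paper itself gives no proof here, quoting Auslander's Proposition 3.4, whose argument is the same. Your comparison-of-presentations step is moreover the very technique the paper employs in the proofs of Propositions \ref{Rproposition3} and \ref{Rproposition4}, so your write-up is fully consistent with the paper's methods (the only cosmetic point being that "identity" in (2) means equality under the Yoneda identification, which your isomorphism provides).
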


\subsection{Dualizing varietes and Krull-Schmidt Categories}

The subcategory of $\mathrm{Mod}(\mathcal{C})$ consisting of
all finitely generated projective objects, $\mathfrak{p}(\mathcal{C})$, is a skeletally small additive category in which idempotents split, the functor $P:\mathcal{C}\rightarrow \mathfrak{p}(\mathcal{C})$, $P(C)=\mathcal{C}(C,-)$, is fully faithful and induces by restriction $\mathrm{res}:\mathrm{Mod}(\mathfrak{p}(\mathcal{C}))\rightarrow \mathrm{Mod}(\mathcal{C})$, an equivalence of categories. For this reason, we may assume that our categories are skeletally small, additive categories, such that idempotents split. Such categories were called \textbf{annuli varieties} in \cite{Auslander2}, for
short, varieties.\\
To fix the notation, we recall known results on functors and categories that we use through the paper, referring for the proofs to the papers by Auslander and Reiten \cite{Aus, AuslanderRep1, Auslander2}.

\begin{definition}
Let  $\mathcal{C}$ be a variety. We say $\mathcal{C}$ has \textbf{pseudokernels}; if given a map $f:C_1\rightarrow C_0$, there exists a map $g:C_2 \rightarrow C_1$ such that the sequence of morphisms  $\mathcal{C}(-, C_2 )\xrightarrow{(-,g)}\mathcal{C}( -,C_1 )\xrightarrow{(-,f)}\mathcal{C}(-, C_0 )$ is exact in $\mathrm{Mod}(\mathcal{C}^{op})$.
\end{definition}


Now, we recall some results from \cite{Auslander2}.

\begin{definition}
Let $R$ be a commutative artin ring. An $R$-variety $\mathcal{C}$, is a variety such that $\mathcal{C}(C_{1},C_{2})$ is an $R$-module, and the composition is $R$-bilinear. An $R$-variety $\mathcal{C}$ is $\mathbf{Hom}$-\textbf{finite}, if for each pair of objects $C_{1},C_{2}$ in $\mathcal{C},$ the $R$-module $\mathcal{C}(C_{1},C_{2})$ is finitely generated. We denote by $(\mathcal{C},\mathrm{mod}(R))$, the full subcategory of $(\mathcal{C},\mathrm{
\mathrm{Mod}}(R))$ consisting of the $\mathcal{C}$-modules such that; for
every $C$ in $\mathcal{C}$ the $R$-module $M(C)$ is finitely generated. 
\end{definition}

Suppose $\mathcal{C}$ is a Hom-finite $R$-variety. If $M:\mathcal{C}\longrightarrow \mathbf{Ab}$ is a $\mathcal{C}$-module, then for each $C\in \mathcal{C}$ the abelian group $M(C)$ has a structure of $\mathrm{End}_{\mathcal{C}}(C)^{op}$-module and hence as an $R$-module since $\mathrm{End}_{\mathcal{C}}(C)$ is an $R$-algebra. Further if $f:M\longrightarrow M'$ is a morphism of $\mathcal{C}$-modules it is easy to show that $f_{C}:M(C)\longrightarrow M'(C)$ is a morphism of $R$-modules for each $C\in \mathcal{C}$. Then, $\mathrm{\mathrm{Mod}}(\mathcal{C})$ is an $R$-variety, which we identify with the category of
covariant functors $(\mathcal{C},\mathrm{Mod}(R))$. Moreover, the
category $(\mathcal{C},\mathrm{mod}(R))$ is abelian and the inclusion $(\mathcal{C},\mathrm{mod}(R))\rightarrow (\mathcal{C},\mathrm{\mathrm{Mod}}(R))$ is exact.

\begin{definition}
Let $\mathcal{C}$ be a Hom-finite $R$-variety. We denote by $\mathrm{mod}(\mathcal{C})$ the full subcategory of $\mathrm{Mod}(\mathcal{C})$ whose objects are the  $\textbf{finitely presented functors}$.
That is, $M\in \mathrm{mod}(\mathcal{C})$ if and only if, there exists an exact sequence in $\mathsf{Mod}(\mathcal{C})$
$$\xymatrix{P_{1}\ar[r] & P_{0}\ar[r] & M\ar[r] & 0,}$$
where $P_{1}$ and $P_{0}$ are finitely generated projective $\mathcal{C}$-modules. 
\end{definition}
It is easy to see that if $\mathcal{C}$ has finite coproducts, then
a functor $M$ is finitely presented if there exists an exact
sequence
\begin{equation*}
\mathcal{C}( -,C_1 )\rightarrow \mathcal{C}(-, C_0 )\rightarrow M\rightarrow 0
\end{equation*}
It was proved in \cite{Auslander2}  that $\mathrm{mod}(C)$ is abelian if and only if $\mathcal{C}$ has pseudokernels.\\
Consider the functors $\mathbb{D}_{\mathcal{C}^{op}}:(\mathcal{C}^{op},\mathrm{mod}
(R))\rightarrow (\mathcal{C},\mathrm{mod}(R))$, and $\mathbb{D}_{\mathcal{C}}:(\mathcal{C},\mathrm{mod}(R))\rightarrow (\mathcal{C}^{op},\mathrm{mod}(R))$, which are defined as
follows: for any object $C$ in $\mathcal{C}$, $\mathbb{D}(M)(C)=\mathrm{Hom}
_{R}(M(C),I(R/r)) $, with $r$ the Jacobson radical of $R$, and $I(R/r)$ is
the injective envelope of $R/r$. The functor $\mathbb{D}_{\mathcal{C}}$ defines a duality between $(
\mathcal{C},\mathrm{mod}(R))$ and $(\mathcal{C}^{op},\mathrm{mod}(R))$. We know that  since $\mathcal{C}$ is Hom-finite, $\mathrm{mod}(\mathcal{C})$ is a subcategory of $(\mathcal{C},\mathrm{mod}(R))$. Then we have the following definition due to Auslander and Reiten (see \cite{Auslander2}).

\begin{definition}\label{dualizinvar}
An $\mathrm{Hom}$-finite $R$-variety $\mathcal{C}$ is \textbf{dualizing}, if
the functor
\begin{equation}\label{duality}
\mathbb{D}_{\mathcal{C}}:(\mathcal{C},\mathrm{mod}(R))\rightarrow (\mathcal{C}^{op},\mathrm{mod}(R))
\end{equation}
induces a duality between the categories $\mathrm{mod}(\mathcal{C})$ and $
\mathrm{mod}(\mathcal{C}^{op}).$
\end{definition}

It is clear from the definition that for dualizing categories $\mathcal{C}$
the category $\mathrm{mod}(\mathcal{C})$ has enough injectives. To finish, we recall the following definition:

\begin{definition}
An additive category $\mathcal{C}$ is \textbf{Krull-Schmidt}, if every
object in $\mathcal{C}$ decomposes in a finite sum of objects whose
endomorphism ring is local.
\end{definition}

Asumme that $R$ is a commutative ring and $R$ is a dualizing $R$-variety.
Since the endomorphism ring of each object in $\mathcal C$ is an artin algebra, it follows that $\mathcal C$ is a 
Krull-Schmidt category \cite[p.337]{Auslander2}, moreover,  we have that for a dualizing  variety the
finitely presented functors have projective covers \cite[Cor. 4.13]{AuslanderRep1}, \cite[Cor. 4.4]{Krause}.
The following result appears in \cite[Prop. 2.6]{Auslander2}

\begin{theorem}
Let $\mathcal{C}$ a dualizing $R$-variety. Then $\mathrm{mod}(
\mathcal{C})$ is a dualizing variety.
\end{theorem}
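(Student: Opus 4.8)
**Proof plan for the theorem "If $\mathcal{C}$ is a dualizing $R$-variety, then $\mathrm{mod}(\mathcal{C})$ is a dualizing variety."**

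The plan is to verify the defining conditions of a dualizing variety for the category $\mathcal{A}:=\mathrm{mod}(\mathcal{C})$ step by step, using the hypotheses on $\mathcal{C}$. First I would check that $\mathcal{A}$ is a variety, i.e.\ a skeletally small additive category with split idempotents: skeletal smallness is inherited from $\mathrm{Mod}(\mathcal{C})$, finite coproducts of finitely presented functors are finitely presented, and idempotents split in $\mathrm{mod}(\mathcal{C})$ because they split in the ambient abelian category $\mathrm{Mod}(\mathcal{C})$ and a direct summand of a finitely presented functor is again finitely presented (here one uses that $\mathcal{C}$ has pseudokernels, so $\mathrm{mod}(\mathcal{C})$ is abelian, hence closed under summands). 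Next I would equip $\mathcal{A}$ with its $R$-structure: for $M,N\in\mathrm{mod}(\mathcal{C})$ the group $\mathrm{Hom}_{\mathcal{C}}(M,N)=\mathrm{Nat}(M,N)$ is an $R$-module with $R$-bilinear composition, since $\mathrm{Mod}(\mathcal{C})$ is already an $R$-variety. The content here is $\mathrm{Hom}$-finiteness: I would take a finite presentation $\mathcal{C}(-,C_1)\to\mathcal{C}(-,C_0)\to M\to 0$, apply $\mathrm{Hom}_{\mathcal{C}}(-,N)$ to get an exact sequence $0\to\mathrm{Hom}_{\mathcal{C}}(M,N)\to\mathrm{Hom}_{\mathcal{C}}(\mathcal{C}(-,C_0),N)\to\mathrm{Hom}_{\mathcal{C}}(\mathcal{C}(-,C_1),N)$, and by Yoneda the last two terms are $N(C_0)$ and $N(C_1)$, which are finitely generated $R$-modules because $N\in\mathrm{mod}(\mathcal{C})\subseteq(\mathcal{C},\mathrm{mod}(R))$; hence $\mathrm{Hom}_{\mathcal{C}}(M,N)$ is a submodule of a finitely generated module over the artin ring $R$, so it is finitely generated.

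The main point is then to identify $\mathrm{mod}(\mathcal{A})=\mathrm{mod}(\mathrm{mod}(\mathcal{C}))$ and produce the duality. I would first observe that $\mathcal{A}=\mathrm{mod}(\mathcal{C})$ has pseudokernels: given $f:M\to N$ in $\mathrm{mod}(\mathcal{C})$, since $\mathrm{mod}(\mathcal{C})$ is abelian its kernel $K=\ker f$ lies in $\mathrm{mod}(\mathcal{C})$, and (using that $\mathrm{mod}(\mathcal{C})$ has projective covers, or at least enough projectives coming from $\mathfrak{p}(\mathcal{C})$) there is an epimorphism from a finitely generated projective onto $K$, which furnishes the required pseudokernel. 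Consequently $\mathrm{mod}(\mathcal{A})$ is abelian. Now the key structural fact, which I would invoke from Auslander's work, is that the finitely presented functors on $\mathrm{mod}(\mathcal{C})$ are computed via the "defect"/coherence machinery; concretely, every $F\in\mathrm{mod}(\mathrm{mod}(\mathcal{C}))$ fits in an exact sequence $(\;,N_1)\to(\;,N_0)\to F\to 0$ with $N_i\in\mathrm{mod}(\mathcal{C})$, and by a standard argument one shows $\mathrm{mod}(\mathrm{mod}(\mathcal{C}))$ is equivalent to the category of "coherent" functors, which in the dualizing setting is well-behaved.

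The duality itself: I would construct $\mathbb{D}_{\mathcal{A}}:(\mathcal{A},\mathrm{mod}(R))\to(\mathcal{A}^{op},\mathrm{mod}(R))$ by the same formula $\mathbb{D}(F)(M)=\mathrm{Hom}_R(F(M),I(R/r))$ as in Definition \ref{dualizinvar}. Since $\mathbb{D}_R=\mathrm{Hom}_R(-,I(R/r))$ is an exact contravariant equivalence on $\mathrm{mod}(R)$, the functor $\mathbb{D}_{\mathcal{A}}$ is an exact duality between $(\mathcal{A},\mathrm{mod}(R))$ and $(\mathcal{A}^{op},\mathrm{mod}(R))$ whose square is naturally isomorphic to the identity; the only thing to check is that it carries $\mathrm{mod}(\mathcal{A})$ into $\mathrm{mod}(\mathcal{A}^{op})$. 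For this I would take $F\in\mathrm{mod}(\mathcal{A})$ with presentation $(\;,N_1)\to(\;,N_0)\to F\to 0$, $N_i\in\mathrm{mod}(\mathcal{C})$, apply the exact functor $\mathbb{D}_{\mathcal{A}}$ to obtain $0\to\mathbb{D}_{\mathcal{A}}F\to\mathbb{D}_{\mathcal{A}}(\;,N_0)\to\mathbb{D}_{\mathcal{A}}(\;,N_1)$, and then identify $\mathbb{D}_{\mathcal{A}}$ of a representable functor: $\mathbb{D}_{\mathcal{A}}(\mathrm{Hom}_{\mathcal{A}}(\;,N_i))$ is, by $\mathrm{Hom}$-finiteness of $\mathcal{A}$ and the duality $\mathbb{D}_{\mathcal{C}}$ on $\mathcal{C}$ itself, a finitely presented functor on $\mathcal{A}^{op}$ (it is, up to the coherence-functor dictionary, the functor represented by an object of $\mathrm{mod}(\mathcal{C}^{op})$). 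Dually $\mathbb{D}_{\mathcal{A}^{op}}$ sends $\mathrm{mod}(\mathcal{A}^{op})$ to $\mathrm{mod}(\mathcal{A})$, and the two are mutually inverse up to natural isomorphism.

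The step I expect to be the main obstacle is the last one: correctly identifying $\mathbb{D}_{\mathcal{A}}$ of a representable functor $\mathrm{Hom}_{\mathcal{A}}(\;,N)$ as a finitely presented functor, i.e.\ controlling the interaction between the "second-level" functor category $\mathrm{mod}(\mathrm{mod}(\mathcal{C}))$ and the duality. This requires either a careful bookkeeping with two simultaneous projective presentations (one of $N$ in $\mathrm{mod}(\mathcal{C})$, one of the ambient functor in $\mathrm{mod}(\mathrm{mod}(\mathcal{C}))$) together with the fact that $\mathbb{D}_{\mathcal{C}}$ already interchanges projective presentations over $\mathcal{C}$ and over $\mathcal{C}^{op}$, or else an appeal to the general principle that $\mathrm{mod}(\mathcal{C})$ dualizing is inherited from $\mathcal{C}$ dualizing via the $5$-term snake/defect argument in \cite{Auslander2}. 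Everything else — smallness, additivity, split idempotents, $R$-linearity, $\mathrm{Hom}$-finiteness, pseudokernels — is a routine, if somewhat lengthy, diagram-and-Yoneda verification.
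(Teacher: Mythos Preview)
The paper does not actually prove this theorem: it simply quotes it as \cite[Prop.~2.6]{Auslander2} without argument. So there is no ``paper's own proof'' to compare your attempt against. Your plan is a reasonable outline of the argument one finds in Auslander--Reiten's original paper, and you have correctly located the only nontrivial step, namely showing that $\mathbb{D}_{\mathcal{A}}$ of a representable $\mathrm{Hom}_{\mathcal{A}}(-,N)$ is finitely presented over $\mathcal{A}^{op}$.

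One small caution on that step: your sentence ``it is, up to the coherence-functor dictionary, the functor represented by an object of $\mathrm{mod}(\mathcal{C}^{op})$'' is not literally correct --- $\mathbb{D}_{\mathcal{A}}\mathrm{Hom}_{\mathcal{A}}(-,N)$ is not a representable functor in general. What one actually shows (and this is the content of the cited proposition in \cite{Auslander2}) is that it is finitely \emph{presented}, using that $\mathcal{A}=\mathrm{mod}(\mathcal{C})$ has both enough projectives and enough injectives (the latter because $\mathcal{C}$ is dualizing), together with the Auslander--Reiten/defect formalism linking $\mathbb{D}_{\mathcal{A}}\mathrm{Hom}_{\mathcal{A}}(-,N)$ to $\mathrm{Ext}^1$ and the transpose. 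Your instinct to fall back on ``the $5$-term snake/defect argument in \cite{Auslander2}'' is exactly right; that is how the proof goes, and it is not a purely formal bookkeeping matter.
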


\subsection{Tensor Product of Categories}
If $\mathcal{C}$ and $\mathcal{D}$ are additive categories, B. Mitchell defined in \cite{Mitchell} the  $\textbf{tensor product}$  $\mathcal{C}\otimes\mathcal{D} $  of two additive categories, whose objects are those of $\mathcal{C}\times \mathcal{D}$ and the abelian group of morphism from $(C,D)$ to $(C',D')$ is the ordinary tensor product of abelian groups $\mathcal{C}(C,C')\otimes_{\mathbb{Z}}\mathcal{D}(D,D')$. Since that the tensor product of abelian groups is associative and commutative and the composition in $\mathcal{C}$ and $\mathcal{D}$ is $\mathbb{Z}$-bilinear then the bilinear composition in $\mathcal{C}\otimes \mathcal{D}$ is given as follows:
\begin{equation*}
  (f_{2}\otimes g_{2})\circ (f_{1}\otimes g_{1}):=(f_{2}\circ f_{1})\otimes(g_{2}\circ g_{1})
\end{equation*}
for all $f_{1}\otimes g_1\in \mathcal{C}(C,C')\otimes \mathcal{D}(D,D')$ and  $f_{2}\otimes g_2\in\mathcal{C}(C',C'')\otimes \mathcal{D}(D',D'')$.
\bigskip

\subsection{The Path Category of a Quiver}

A quiver $\Delta$ consists of a set of vertices $\Delta_0$  and a set of arrows $\Delta_1$ which is the disjoint union of sets 
$\Delta(x,y)$ where the elements of $\Delta(x,y)$ are the arrows $\alpha:x\rightarrow y$ from the vertex $x$ to the vertex $y$. Given a quiver $\Delta$ its path category $\mathrm{Pth}\Delta$ has as objects the vertices of $\Delta$  and the morphisms $x\rightarrow y$ are paths  from $x$ to $y$ which are by definition the formal compositions $\alpha_n\cdots\alpha_1$ where $\alpha_1$ stars in $x$, $\alpha_n$  ends in $y$ and the end point of $\alpha_i$ coincides with the start point of $\alpha_{i+1}$ for all $i\in\{1,\ldots,n-1\}$. The positive integer $n$ is called the length of the path. There is a path $\xi_x$ of length $0$ for each vertex to itself. The composition in $\mathrm{Pth}\Delta$ of paths of positive length is just concatenations whereas the $\xi_x$ act as identities.

Given a quiver $\Delta$  and a field $K$, an additive  $K$-category $K\Delta$ is associated to $\Delta$ by taking as  objects  of $K\Delta $  the direct sum of indecomposable objects. The indecomposable objects in $K\Delta$ are given by the vertices of $\Delta$ and  given $x,y\in\Delta_0$  the set of maps from $x$ to $y$ is given by the  $K$-vector space with basis the set of all paths from $x$ to $y$. The composition in $K\Delta$ is of course obtained by $K$-linear extension of the composition in $\mathrm{Pth} \Delta$, that is, the product of two composable paths is defined to be the corresponding composition, the product of two non-composable paths is, by definition, zero. In this way we obtain an associative $K$-algebra which has unit element if and only if $\Delta_0$ is finite (the unit element is given by $\sum _{x\in \Delta_0}\xi_x$).\\
In $K\Delta$, we denote by $K\Delta^{+}$ the ideal generated by all arrows and by $(K\Delta^+)^n$ the ideal generated by all paths of length $\ge n$.\\
Given vertices $x,y\in\Delta_0$, a finite linear combination $\sum_w\lambda_ww, \lambda_w\in K$ where $w$ are paths of length $\ge 2$ from $x$ to $y$, is called a relation on $\Delta$. It can be seen that any ideal $I\subset (K\Delta^+)^2$ can be generated, as an ideal, by relations. If  $I$ is generated as an ideal by the set $\{\rho_i| i\}$ of relations, we write $I=\langle \rho_i| i\rangle$.\\
Given a quiver $\Delta=(\Delta_0,\Delta_1)$ a representation $V=(V_x,f_\alpha)$ of $\Delta$ over $K$ is given by vector spaces $V_x$ for all $x\in \Delta_0$, and linear maps $f_\alpha:V_x\rightarrow V_y$, for any arrow $\alpha:x\rightarrow y$.   The category of representations of $\Delta$ is the category with objects the representations, and a morphism of representations  $h=(h_x): V\rightarrow V'$ is given by maps $h_x:V_x\rightarrow V'_x$ $(x\in\Delta_0)$ such that $h_yf_\alpha=f_\alpha'h_x$ for any $\alpha:x\rightarrow y$. The category of representations of $\Delta$ y denoted by $\mathrm{Rep}(\Delta)$.\\
Given a set of relations $\langle \rho_{i}\mid i\rangle$ of $\Delta$,  we denote by $K\Delta/\langle\rho_i| i\rangle$ the path category given by the quiver $\Delta$ and relations $\rho_i$.  The category  of functors $\mathrm{Mod}(K\Delta/\langle \rho_i| i\rangle)=(K\Delta/\langle \rho_i| i\rangle, \mathrm{Mod \ K})$ can be identified with the representations  of $\Delta$ satisfying the relations $\rho_i$ which is denoted by $\mathrm{Rep}(\Delta,\{\rho_i|i\})$,  (see \cite[p. 42]{RingelTame}).

\subsection{Quotient category and the radical of a category}

A  $\textbf{two}$ $\textbf{sided}$ $\textbf{ideal}$  $I(-,?)$ is an additive subfunctor of the two variable functor $\mathcal{C}(-,?):\mathcal{C}^{op}\otimes\mathcal{C}\rightarrow\mathbf{Ab}$ such that: (a) if $f\in I(X,Y)$ and $g\in\mathcal C(Y,Z)$, then  $gf\in I(X,Z)$; and (b)
if $f\in I(X,Y)$ and $h\in\mathcal C(U,X)$, then  $fh\in I(U,Z)$. If $I$ is a two-sided ideal, then we can form the $\textbf{quotient category}$  $\mathcal{C}/I$ whose objects are those of $\mathcal{C}$, and where $(\mathcal{C}/I)(X,Y):=\mathcal{C}(X,Y)/I(X,Y)$. Finally the composition is induced by that of $\mathcal{C}$ (see \cite{Mitchell}). There is a canonical projection functor $\pi:\mathcal{C}\rightarrow \mathcal{C}/I$ such that:
\begin{enumerate}

\item $\pi(X)=X$, for all $X\in \mathcal{C}$.

\item For all $f\in \mathcal{C}(X,Y)$, $\pi(f)=f+I(X,Y):=\bar{f}.$
\end{enumerate}

Based on the Jacobson radical of a ring, we introduce the radical of an additive category. This concept goes back to work of Kelly (see \cite{Kelly}).

\begin{definition}
The (Jacobson) $\textbf{radical}$ of an additive category $\mathcal{C}$ is the two-sided ideal $\mathrm{rad}_{\mathcal C}$ in $\mathcal{C}$ defined by the formula
$$\mathrm{rad}_{\mathcal {C}}(X,Y)=\{h\in\mathcal{C}(X,Y)\mid 1_X-gh\text{ is invertible for any } g\in\mathcal{C}(Y,X)\} $$
for all objects $X$ and $Y$ of $\mathcal{C}$.

\end{definition}
\subsection{Comma Categories}
If $\mathcal{A}$ and $\mathcal{B}$ are abelian categories and $F:\mathcal{A}\rightarrow\mathcal{B}$ is an additive functor. The $\textbf{comma category}$ $(\mathcal{B},F \mathcal{A})$ is the category whose objects are triples $(B,f,A)$ where $f:B\rightarrow FA$; and whose morphisms between the objects $(B,f,A)$ and $(B',f',A')$ are pair $(\beta,\alpha)$ of morphisms in $\mathcal{B}\times\mathcal{A}$ such that the diagram

\[
\begin{diagram}
\node{B}\arrow{e,t}{\beta}\arrow{s,l}{f}
 \node{B'}\arrow{s,r}{f'}\\
\node{FA}\arrow{e,t}{F\alpha}
 \node{FA'}
\end{diagram}
\]
is commutative in $\mathcal{B}$ (see \cite{Robert}).

\section{Recollements in Functor Categories induced by an Auslander's triple adjoint}

We recall some basic definitions. Consider functors $F:\mathcal{C}\rightarrow\mathcal D$ and $G:\mathcal{D}\rightarrow\mathcal{C}$. We say that $F$ is $\textbf{left adjoint}$ to 
$G$ or that $G$ is $\textbf{right adjoint}$ to $F$, and  that $(F,G)$ is an adjoint pair if there is a natural equivalence
$$\eta =\Big\{\eta _{X,Y}:\eta _{X,Y}:\mathrm{Hom}_{\mathcal D}(FX,Y)\rightarrow \mathrm{Hom}_{\mathcal C}(X,GY)\Big\}_{X\in\mathcal C,Y\in\mathcal D }$$
between the functors $\mathrm{Hom}_{\mathcal D}(F(-),-)$ and $\mathrm{Hom}_{\mathcal C}(-,G(-))$. For every
$X\in\mathcal C$ and $Y\in\mathcal D$,  we set $\epsilon_X:=\eta_{X,FX}(1_{FX}):X\rightarrow GFX$ and
 $\delta_Y:=\eta^{-1}_{GY,Y}(1_{GY}):FGY\rightarrow Y$. Moreover, 
 $\epsilon=\{\epsilon_X\}_{X\in\mathcal C}:1_{\mathcal C}\rightarrow GF$ and 
 $\delta=\{\delta_Y\}_{Y\in\mathcal D}: FG\rightarrow 1_{\mathcal D}$ are natural transformations.
 
\begin{definition}\label{leftrigtrecol}
Let $\mathcal{A}$, $\mathcal{B}$ and $\mathcal{C}$ be abelian categories
\begin{itemize}
\item[(a)]  The diagram
$$\xymatrix{\mathcal{C}\ar@<-2ex>[rr]_{i_{\ast}} & & \mathcal{A}\ar@<-2ex>[rr]_{j^{!}}\ar@<-2ex>[ll]_{i^{\ast}} & &  \mathcal{B}\ar@<-2ex>[ll]_{j_{!}}}$$
is a called a  $\textbf{left}$ $\textbf{recollement}$ if the  additive functors $i^*,i_*,j_!$ and $j^{!}$ satisfy the following conditions:
\begin{itemize}
\item[(LR1)] $(i^*,i_*)$ and $(j_!,j^!)$ are adjoint pairs;
\item[(LR2)] $j^!i_*=0$;
\item[(LR3)] $i_*,j_!$ are full embedding functors.
\end{itemize}

\item[(b)] The diagram
$$\xymatrix{\mathcal{C}\ar@<2ex>[rr]^{i_{!}} & & \mathcal{A}\ar@<2ex>[rr]^{j^{\ast}}\ar@<2ex>[ll]^{i^{!}} & &  \mathcal{B}\ar@<2ex>[ll]^{j_{\ast}}}$$
is  called a  $\textbf{right recollement}$ if the additive functors $i_!,i^{!},j^*$  and $j_*$ satisfy  the following 
conditions:
\begin{itemize}
\item[(RR1)] $(i_!,i^!)$ and $(j^*,j_*)$ are adjoint pairs;
\item[(RR2)] $j^*i_!=0$;
\item[(RR3)] $i_!,j_*$ are full embedding functors.
\end{itemize}

\end{itemize}
\end{definition}

\begin{definition}\label{recolldef}
Let $\mathcal{A}$, $\mathcal{B}$ and $\mathcal{C}$ be abelian categories. Then the diagram

$$\xymatrix{\mathcal{B}\ar[rr]|{i_{\ast}=i_{!}} & &
\mathcal{A}\ar[rr]|{j^{!}=j^{\ast}}\ar@/^2pc/[ll]^{i^{!}}\ar@/_2pc/[ll]_{i^{\ast}}
& & \mathcal{C}\ar@/^2pc/[ll]^{j_{\ast}}\ar@/_2pc/[ll]_{j_{!}}}$$
is called a $\textbf{recollement}$, if the additive functors $i^{\ast},i_{\ast}=i_{!}, i^{!},j_{!},j^{!}=j^{*}$ and $j_{*}$ satisfy the following conditions:
\begin{itemize}
\item[(R1)] $(i^{*},i_{*}=i_{!},i^{!})$ and $(j_{!},j^{!}=j^{*},j_{*})$ are adjoint  triples, i.e. $(i^{*},i_{*})$, $(i_{!},i^{!})$  $(j_{!},j^{!})$ and $(j^{*},j_{*})$ are adjoint pairs;
\item[(R2)] $j^{*}i_{*}=0$;
\item[(R3)] $i_{*}, j_{!},j_{*}$ are full embedding functors.
\end{itemize}
\end{definition}
By the above definitions we see that a recollement can be seen as the gluing of  a left recollement and a right recollement, and if 
a left recollement and a right recollement satisfy that $i_{*}=i_{!}$ and $j^{!}=j^{*}$ then they can be glued to form a recollement.

Let $\mathcal{C}$ be an a additive category and $\mathcal{B}$ be a full additive subcategory of $\mathcal{C}$.  Maurice Auslander introduced  in \cite{AuslanderRep1} three functors such  that,  according with Propositions \ref{Rproposition1} and \ref{Rproposition2}, together form an adjoint triple
$(\mathcal{C}\otimes_{\mathcal{B}},\mathrm{res}_{\mathcal{B}},\mathcal{B}(\mathcal{C},-) )$

\begin{equation}\label{opo}
 \xymatrix{  \mathrm{Mod}(\mathcal{C}) \ar[rrr]_{\mathrm{res}_{\mathcal B} } &&& \ar@(d,d)[lll]^{\mathcal{B}(\mathcal{C},-)  }  \ar@(u,u)[lll]^{\mathcal{C}\otimes_{\mathcal{B}}} \mathrm{ Mod}(\mathcal{B})}
\end{equation}

In this subsection we show how to extend the adjoint triple (\ref{opo}) to a recollement of functor categories. Similar results are given in \cite{Yasuaki}, but we present them in a slightly different way.\\
Before defining a new triple adjoint, we need some preparatory results. Let $\mathcal{B}$ a full additive  subcategory of $ \mathcal{C} $. For all pair of objects $ C,C'\in\mathcal{C} $ we denote by $ I_{\mathcal{B}}\left( C,C'\right)$ the abelian subgroup of $ \mathcal{C}\left( C,C'\right)  $ whose elements are morphism $ f:C\longrightarrow C' $ which factor through $ \mathcal{B}.$ It is not hard to see that under these conditions $ I_{\mathcal{B}}\left( -,?\right)  $ is a two sided ideal of $\mathcal{C}\left( -,?\right) $.
Thus we can considerer the quotient category $ \mathcal{C}/I_{\mathcal{B}} $.\\
The canonical functor $ \pi:\mathcal{C}\longrightarrow \mathcal{C}/I_{\mathcal{B}} $ induces an exact functor  by restriction $  \mathrm{res}_{ \mathcal{C}}:\mathrm{Mod} \left( \mathcal{C}/I_{\mathcal{ B}}\right)\longrightarrow \mathrm{Mod} \left( \mathcal{C}\right)   $ defined by $  \mathrm{res}_{ \mathcal{C}}(F):=F\circ\pi $ for all $ F\in \mathsf{Mod} \left( \mathcal{C}/I_{\mathcal{ B}}\right) $. Thus,
\begin{itemize}
\item[(i)] $  \mathrm{res}_{ \mathcal{C}}(F)(C) =F\left( C\right)  $ for all $ C\in \mathcal{C} $. 
\item[(ii)] $  \mathrm{res}_{ \mathcal{C}}(F)\left( f\right) =F\left( \overline{f}\right)  $ for all $f\in \mathcal{C}\left( C,C'\right) $.
\end{itemize}

We denote by $ \mathcal{K} $ the full subcategory of $ \mathrm{Mod}(\mathcal{C})$ whose objects are the functors in $ \mathrm{Mod}(\mathcal{C})$ that vanish in $ \mathcal{B}$. That is, $ \mathcal{K}=\lbrace F\in \mathrm{Mod}(\mathcal{C})\mid \mathrm{res}_{\mathcal{B}}(F)=0\rbrace  $. We prove that $ \mathcal{K} $ is isomorphic to the category $ \mathrm{Mod}(\mathcal{C}/I_{\mathcal{B}})$

\begin{lemma} \label{ideal1}
Let $\mathcal{B}$ be a full additive  subcategory of $\mathcal{C}$.
\begin{itemize}
\item[(i)] Let $F$ be a $\mathcal{C}/I_{\mathcal{B}} $-module, then $  \mathrm{res}_{ \mathcal{C}}(F)\in \mathcal{K}$.
\item[(ii)] Let $ G\in \mathcal K $ be, then $ G(f)=0 $ for all $ f\in I_{\mathcal{B}} (C,C')$ and for all $ C,C'\in \mathcal{C} $.
\end{itemize}
\end{lemma}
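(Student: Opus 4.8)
The plan is to verify the two assertions directly from the definitions, unwinding what it means for a functor to vanish on $\mathcal{B}$ and what the morphisms in the quotient category $\mathcal{C}/I_{\mathcal{B}}$ look like. Recall $I_{\mathcal{B}}(C,C')$ consists of those $f\colon C\to C'$ that factor through an object of $\mathcal{B}$, i.e. $f = h\circ g$ with $g\colon C\to B$, $h\colon B\to C'$ for some $B\in\mathcal{B}$.

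For part (i): let $F$ be a $\mathcal{C}/I_{\mathcal{B}}$-module and set $G = \mathrm{res}_{\mathcal{C}}(F) = F\circ\pi$. I must show $\mathrm{res}_{\mathcal{B}}(G) = 0$, i.e. $G(B) = 0$ for every $B\in\mathcal{B}$. The key observation is that in $\mathcal{C}/I_{\mathcal{B}}$ the identity morphism $1_B$ of an object $B\in\mathcal{B}$ becomes zero: indeed $1_B$ factors through $B\in\mathcal{B}$ itself, so $1_B\in I_{\mathcal{B}}(B,B)$ and hence $\overline{1_B} = 0$ in $(\mathcal{C}/I_{\mathcal{B}})(B,B)$. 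Since $\pi$ is the identity on objects, $G(B) = F(B)$ and the identity endomorphism of $F(B)$ is $F(\overline{1_B}) = F(0) = 0$; an abelian group whose identity map is zero is the zero group, so $G(B) = 0$. This gives $G\in\mathcal{K}$.

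For part (ii): let $G\in\mathcal{K}$, so $G(B) = 0$ for all $B\in\mathcal{B}$, and let $f\in I_{\mathcal{B}}(C,C')$, say $f = h\circ g$ with $g\colon C\to B$ and $h\colon B\to C'$, $B\in\mathcal{B}$. Then by functoriality $G(f) = G(h)\circ G(g)$ is a composite of abelian group homomorphisms $G(C)\xrightarrow{G(g)} G(B)\xrightarrow{G(h)} G(C')$ that factors through $G(B) = 0$, hence $G(f) = 0$.

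Neither step presents a real obstacle; the only point requiring a moment's care is the argument in (i) that $G(B)$ is forced to be zero because its identity endomorphism vanishes — this is where the hypothesis that $F$ is an \emph{additive} functor on a preadditive category is used (so that $F(0) = 0$ as a morphism, and a group with zero identity is trivial). After this lemma one typically continues by checking that $\mathrm{res}_{\mathcal{C}}$ is fully faithful and that every object of $\mathcal{K}$ arises this way, yielding the promised isomorphism $\mathcal{K}\cong\mathrm{Mod}(\mathcal{C}/I_{\mathcal{B}})$, but that lies beyond the present statement.
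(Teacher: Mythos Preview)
Your proof is correct and follows essentially the same approach as the paper: for (i) the paper observes that $1_{\mathrm{res}_{\mathcal{C}}(F)(B)} = F(\overline{1_B}) = F(\overline{0}) = 0$, forcing $\mathrm{res}_{\mathcal{C}}(F)(B)=0$, and for (ii) it simply writes ``It is clear'', so your factoring argument through $G(B)=0$ is exactly the intended elaboration.
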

\begin{proof}
(i) Let $B\in \mathcal{B} $. Then 
\[
1_{ \mathrm{res}_{ \mathcal{C}}(F)(B)}= \mathrm{res}_{ \mathcal{C}}(F)(1_{B})=F(\overline{1_{B}})=F(1_{B}+I_{\mathcal{B}}(B,B))=F(\overline{0})=0.
\]
Thus $  \mathrm{res}_{ \mathcal{C}}(F)(B)=0.$

(ii) It is clear.
\end{proof}
Now, we define a functor $\alpha:\mathcal{K}\longrightarrow  \mathrm{Mod}(\mathcal{C}/I_{\mathcal{ B}}) $ given by $ \alpha(G)(C)=G(C) $ for all $ G\in \mathcal{K} $ and for all $ C\in \mathcal{C}$ and also $ \alpha(G)(\overline{f})=G(f) $ for all $ \overline{f}=f+I_{\mathcal{B}}(C,C')\in \mathcal{C}/I_{\mathcal{ B}}(C,C')$.
It is clear that $ \alpha(G)$ is well defined in morphisms. Indeed, if $ \overline{f}=\overline{g}\in \mathcal{C}/I_{\mathcal{ B}}(C,C') $ then $ f-g\in I_{\mathcal{B}}(C,C') $ therefore by Lemma \ref{ideal1}, we get that  $  0=G(f-g)=G(f)-G(g) $, thus $ \alpha(G)(\overline{f})=\alpha(G)(\overline{g})$.

\begin{lemma}\label{recollF}
The functors $\mathrm{res}_{ \mathcal{C}}:\mathrm{Mod}(\mathcal{C}/I_{\mathcal{B}})\longrightarrow \mathrm{Mod}(\mathcal{C}) $ and $ \alpha:\mathcal{K}\longrightarrow \mathrm{Mod}(\mathcal{C}/I_{\mathcal{B}}) $ induce an isomorphism of categories between the categories $\mathrm{Mod}(\mathcal{C}/I_{\mathcal{B}}) $ and $ \mathcal{K}$. In this way $  \mathrm{res}_{ \mathcal{C}} $ is a full embedding functor which essential image is $\mathcal{K}$, and $\mathcal{K}=\mathrm{Ker}( \mathrm{res}_{ \mathcal{B}})$.
\end{lemma}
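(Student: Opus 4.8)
The plan is to establish the isomorphism of categories by showing that $\mathrm{res}_{\mathcal{C}}$ and $\alpha$ are mutually inverse when $\mathrm{res}_{\mathcal{C}}$ is corestricted to its essential image $\mathcal{K}$. First I would verify that $\alpha$ is a well-defined functor $\mathcal{K}\to\mathrm{Mod}(\mathcal{C}/I_{\mathcal{B}})$: on objects it is clear, and on morphisms one checks that a natural transformation $\eta:G\to G'$ of $\mathcal{C}$-modules in $\mathcal{K}$ gives a natural transformation $\alpha(\eta):\alpha(G)\to\alpha(G')$ with the same components $\eta_C$, the naturality squares for $\overline{f}$ being exactly the naturality squares for $f$ in the original transformation; functoriality of $\alpha$ (identities and composition) is then immediate since nothing changes at the level of objects and components. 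By Lemma \ref{ideal1}(i), $\mathrm{res}_{\mathcal{C}}$ indeed lands in $\mathcal{K}$.

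Next I would compute the two composites. For $F\in\mathrm{Mod}(\mathcal{C}/I_{\mathcal{B}})$, the functor $\alpha(\mathrm{res}_{\mathcal{C}}(F))$ has $\alpha(\mathrm{res}_{\mathcal{C}}(F))(C)=\mathrm{res}_{\mathcal{C}}(F)(C)=F(C)$ and $\alpha(\mathrm{res}_{\mathcal{C}}(F))(\overline{f})=\mathrm{res}_{\mathcal{C}}(F)(f)=F(\overline{f})$, so $\alpha\circ\mathrm{res}_{\mathcal{C}}=\mathrm{id}$ on the nose; likewise it is the identity on morphisms. Conversely, for $G\in\mathcal{K}$, $\mathrm{res}_{\mathcal{C}}(\alpha(G))(C)=\alpha(G)(C)=G(C)$ and $\mathrm{res}_{\mathcal{C}}(\alpha(G))(f)=\alpha(G)(\overline{f})=G(f)$, using that $\alpha(G)$ is well-defined on $\overline{f}$ precisely because $G(f)=0$ for $f\in I_{\mathcal{B}}$ by Lemma \ref{ideal1}(ii). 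So $\mathrm{res}_{\mathcal{C}}\circ\alpha=\mathrm{id}$ on $\mathcal{K}$. This gives the isomorphism of categories between $\mathrm{Mod}(\mathcal{C}/I_{\mathcal{B}})$ and $\mathcal{K}$.

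Finally I would record the consequences: since $\mathrm{res}_{\mathcal{C}}$ is (the inclusion $\mathcal{K}\hookrightarrow\mathrm{Mod}(\mathcal{C})$ precomposed with) an equivalence onto $\mathcal{K}$, it is a full embedding with essential image exactly $\mathcal{K}$; and the identity $\mathcal{K}=\mathrm{Ker}(\mathrm{res}_{\mathcal{B}})$ is just the defining description of $\mathcal{K}=\{F\in\mathrm{Mod}(\mathcal{C})\mid\mathrm{res}_{\mathcal{B}}(F)=0\}$ together with the observation that $\mathrm{res}_{\mathcal{B}}(F)=0$ means $F$ vanishes on all objects and morphisms of $\mathcal{B}$, i.e. $F\in\mathrm{Ker}(\mathrm{res}_{\mathcal{B}})$. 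I do not anticipate a genuine obstacle here — the proof is essentially bookkeeping — but the one point that needs a little care is the well-definedness of $\alpha$ on morphisms together with checking that the passage $G\mapsto\alpha(G)$ really does respect the quotient relations on $\mathcal{C}/I_{\mathcal{B}}$, which is exactly where Lemma \ref{ideal1}(ii) is used and was already isolated in the discussion preceding the statement.
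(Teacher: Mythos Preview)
Your proposal is correct and follows essentially the same approach as the paper: both verify directly that the composites $\alpha\circ\mathrm{res}_{\mathcal{C}}$ and $\mathrm{res}_{\mathcal{C}}\circ\alpha$ are the identities on objects and morphisms, invoking Lemma~\ref{ideal1} at the appropriate points. Your write-up is in fact more complete than the paper's, since you spell out the well-definedness of $\alpha$ on morphisms and the final consequences, which the paper dismisses with ``The rest of the proof is clear.''
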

\begin{proof}
Let $ F\in \mathrm{Mod}(\mathcal{C}/I_{\mathcal{B}}) $. The for all $ C\in \mathcal{C} $ we have $(\alpha\circ  \mathrm{res}_{ \mathcal{C}})(F)(C)=F(C)$ and for all $ \overline{f}\in \mathcal{C}/I_{\mathcal{B}}(C,C') $ we obtain 

\begin{align*}
(\alpha\circ  \mathrm{res}_{ \mathcal{C}})(F)(\overline{f}) & = \alpha( \mathrm{res}_{ \mathcal{C}}(F))(\overline{f})\quad\quad  \quad (\mathrm{res}_{ \mathcal{C}}(F)\in \mathcal{K}\,\,\, by\,\,\, \ref{ideal1}) \\
& = \mathrm{res}_{ \mathcal{C}}(F)(f)\\
& = F(\overline{f}).
\end{align*}
On the other hand, let $ G\in \mathcal{K}$ be, then for all $ C\in \mathcal{C}/I_{\mathcal{B}} $ we have
 $( \mathrm{res}_{ \mathcal{C}}\circ\alpha)(G)(C)= \mathrm{res}_{ \mathcal{C}}(\alpha(G))(C)= \mathrm{res}_{ \mathcal{C}}(G(C))=G(C)$
and for all $ f\in \mathrm{Hom}_{\mathcal{C}}(C,C')$

\begin{align*}
( \mathrm{res}_{ \mathcal{C}}\circ\alpha)(G)(f)&= \mathrm{res}_{ \mathcal{C}}(\alpha(G))(f) = (\alpha(G))(\overline{f})= G(f).
\end{align*}
It follows that $ \alpha\circ \mathrm{res}_{ \mathcal{C}}=1_{\mathrm{Mod}(\mathcal{C})} $ and $  \mathrm{res}_{\mathcal{C}}\circ\alpha=1_{\mathcal{K}} $. The rest of the proof is clear.
\end{proof}
\bigskip

Now we will construct a triple adjoint $ \left( \mathcal{C}/I_{\mathcal{B}}\otimes_{\mathcal{C}}, \mathrm{res}_{\mathcal{C}},\mathcal{C}(\mathcal{C}/I_{\mathcal{B},-})\right)$:

\begin{equation}\label{op}
 \xymatrix{\mathrm{Mod}(\mathcal{C}/I_{\mathcal{B}}) \ar[rrr]_{\mathrm{res}_{\mathcal{C}} } &&& \ar@(d,d)[lll]^{\mathcal{C}( \mathcal{C}/I_{\mathcal{B}},-)}  \ar@(u,u)[lll]^{\mathcal{C}/I_{\mathcal{B}}\otimes_{\mathcal{C}}} \mathrm{Mod}(\mathcal{C}).}
\end{equation}
In order to construct this, we will need some preparatory results.

\begin{lemma}\label{cociente1}
Let $C,C'\in\mathcal C$. 
\begin{itemize}
\item[(i)] Asume $ f\in \mathrm{Hom}_{\mathcal{C}}(C,C') $. Then we have a morphism of $ \mathcal{C}^{op} $-modules 
\[
\frac{\mathcal{C}}{I_{\mathcal{B}}}(-,f):\frac{\mathcal{C}(-,C)}{I_{\mathcal{B}}(-,C)}\longrightarrow \frac{\mathcal{C}(-,C')}{I_{\mathcal{B}}(-,C')} 
\]
such that for all $ Z\in \mathcal{C} $ 
\[
\frac{\mathcal{C}}{I_{\mathcal{B}}}(Z,f):\frac{\mathcal{C}(Z,C)}{I_{\mathcal{B}}(Z,C)}\longrightarrow \frac{\mathcal{C}(Z,C')}{I_{\mathcal{B}}(Z,C')}, \quad\quad \overline{h}\longmapsto  \overline{h\circ f}
\]
\item[(ii)]  $\overline{f}=f+I_{\mathcal{B}}(C,C')=f'+I_{\mathcal{B}}(C,C') \in\mathcal{C}(C,C')/I_{\mathcal{B}}(C,C') $ implies  that $\frac{\mathcal{C}}{I_{\mathcal{B}}}(-,f)=\frac{\mathcal{C}}{I_{\mathcal{B}}}(-,f')$.
\end{itemize}
The same holds in the category of $\mathcal C$-modules.
\end{lemma}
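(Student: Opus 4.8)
The plan is to prove Lemma \ref{cociente1} by direct verification, mimicking the standard fact that the representable functor construction $\mathcal{C}(-,f)$ passes to the quotient category $\mathcal{C}/I_{\mathcal{B}}$. First I would address part (i): for a fixed $f\in\mathrm{Hom}_{\mathcal C}(C,C')$, I define for each $Z\in\mathcal C$ the map $\mathcal{C}(Z,C)/I_{\mathcal{B}}(Z,C)\to\mathcal{C}(Z,C')/I_{\mathcal{B}}(Z,C')$ by $\overline{h}\mapsto\overline{h\circ f}$ (using the paper's composition-order convention, where $h\circ f$ makes sense because $h\in\mathcal C(Z,C)$ composed with $f\in\mathcal C(C,C')$). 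The key point is well-definedness: I must check that if $h\in I_{\mathcal B}(Z,C)$, i.e. $h$ factors through an object of $\mathcal B$, then $h\circ f$ also factors through $\mathcal B$, hence lies in $I_{\mathcal B}(Z,C')$; this is immediate since $h=h_2h_1$ with $h_1:Z\to B$, $h_2:B\to C$ gives $h\circ f=(f\circ h_2)\circ h_1$ factoring through $B$. (Equivalently this is just the statement that $I_{\mathcal B}$ is a two-sided ideal, already noted in the excerpt just before $\mathcal C/I_{\mathcal B}$ was formed.) Additivity of $\overline{h}\mapsto\overline{h\circ f}$ follows from bilinearity of composition, so each component is a homomorphism of abelian groups.

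Next I would verify naturality in $Z$, i.e. that the collection $\{\frac{\mathcal C}{I_{\mathcal B}}(Z,f)\}_{Z}$ is a morphism of $\mathcal C^{op}$-modules. Given $g\in\mathcal C(Z',Z)$ one must check the square relating precomposition by $\overline g$ and postcomposition by $f$ commutes, which reduces to associativity of composition in $\mathcal C$ (and the fact that the ideal structure is compatible, so everything descends to the quotient). This is a routine diagram chase. For part (ii), I would observe that if $f-f'\in I_{\mathcal B}(C,C')$ then for any $\overline h\in\mathcal C(Z,C)/I_{\mathcal B}(Z,C)$ we have $h\circ f-h\circ f'=h\circ(f-f')\in I_{\mathcal B}(Z,C')$ again because $I_{\mathcal B}$ is a two-sided ideal; hence $\overline{h\circ f}=\overline{h\circ f'}$, so the two natural transformations agree componentwise, proving $\frac{\mathcal C}{I_{\mathcal B}}(-,f)=\frac{\mathcal C}{I_{\mathcal B}}(-,f')$.

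Finally, the last sentence ``the same holds in the category of $\mathcal C$-modules'' I would dispatch by the standard dual argument: replacing $\mathcal C$ by $\mathcal C^{op}$ (noting $I_{\mathcal B}$ remains a two-sided ideal of $\mathcal C^{op}$ since ``factoring through $\mathcal B$'' is a self-dual condition), one obtains the covariant representable $\frac{\mathcal C}{I_{\mathcal B}}(f,-):\frac{\mathcal C(C',-)}{I_{\mathcal B}(C',-)}\to\frac{\mathcal C(C,-)}{I_{\mathcal B}(C,-)}$ with the same well-definedness and naturality checks, and the analogue of (ii).

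There is really no serious obstacle here; the statement is a bookkeeping lemma and the only thing that needs genuine (if trivial) attention is the repeated use of the two-sided ideal property of $I_{\mathcal B}$ to guarantee that composition with $f$ descends to the quotient and is independent of the representative of $\overline f$. The mild nuisance is keeping the composition-order convention of the paper consistent throughout (the paper writes $gf$ for ``first $f$, then $g$'' in the ideal axioms but writes $\overline h\mapsto\overline{h\circ f}$ here), so I would be careful to state the maps in a way that typechecks, but this is purely notational.
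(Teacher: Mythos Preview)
Your proposal is correct and is exactly the routine verification the paper has in mind; the paper's own proof is simply the word ``Straightforward.'' You have supplied precisely the details (well-definedness via the two-sided ideal property of $I_{\mathcal B}$, additivity, naturality, and independence of the representative of $\overline f$) that the authors left implicit.
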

\begin{proof}
Straightforward.
\end{proof}

\begin{definition}
We define the functor $ \frac{\mathcal{C}}{I_{\mathcal{B}}}\otimes_{\mathcal{C}}:\mathrm{Mod}(\mathcal{C})\longrightarrow\mathrm{Mod}(\mathcal{C}/I_{\mathcal{B}}) $ as follows: 
$\left( \frac{\mathcal{C}}{I_{\mathcal{B}}}\otimes _{\mathcal{C}}M\right)(C):= \frac{\mathcal{C}(-,C)}{I_{\mathcal{B}}(-,C)}\otimes_{\mathcal{C}}M$  for all $M\in \mathrm{Mod}(\mathcal{C})$
and  $\left( \frac{\mathcal{C}}{I_{\mathcal{B}}}\otimes _{\mathcal{C}}M\right)(\overline{f})=\frac{\mathcal{C}}{I_{\mathcal{B}}}(-,f)\otimes_{\mathcal{C}}M$ for all $\overline{f}=f+I_{\mathcal{B}}(C,C')\in \frac{\mathcal{C}(C,C')}{I_{\mathcal{B}}(C,C')}$.
\end{definition}

So we establish the following proposition.

\begin{proposition}\label{Rproposition3}
Let $\mathcal C$ be an additive category and $\mathcal B$ be a full additive subcategory of $\mathcal{C}$. Then the functor ${\mathcal{C}/I_{\mathcal{B}}}\otimes_{\mathcal{C}}:\mathrm{Mod}\left( \mathcal{C}\right) \longrightarrow \mathrm{Mod}\left( {\mathcal{C}/I_{\mathcal{B}}}\right) $ satisfies:
\begin{itemize}
\item[(i)] $\frac{\mathcal{C}}{I_{\mathcal{B}}}\otimes_{\mathcal{C}} $ is right exact and preserves sums.
\item [(ii)]$ \frac{\mathcal{C}}{I_{\mathcal{B}}}\otimes_{\mathcal{C}}\mathcal{C}\left( C,-\right)  = \frac{\mathcal{C}(C,-)}{I_{\mathcal{B}}(C,-)} $.
\item[(iii)] For all $M\in\mathrm{Mod}(\mathcal C)$ and $N\in\mathrm{Mod}(\mathcal C/I_{\mathcal B})$ there exists a natural isomorphism $ \left( \frac{\mathcal{C}}{I_{\mathcal{B}}}\otimes_{\mathcal{C}}M,N \right)\cong \left( M,\mathrm{res}_{\mathcal C}(N)\right)$.
\end{itemize}
\end{proposition}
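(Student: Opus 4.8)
The plan is to prove the three assertions in sequence, since (ii) and (iii) rely on (i) through the generic behaviour of the tensor product recalled in Proposition \ref{AProposition0}. All three follow the same pattern as the proof of Proposition \ref{Rproposition1}, evaluating everything objectwise.

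First I would prove (i). For right exactness, let $M'\to M\to M''\to 0$ be an exact sequence in $\mathrm{Mod}(\mathcal{C})$. Exactness in $\mathrm{Mod}(\mathcal{C}/I_{\mathcal{B}})$ is checked objectwise by property (1) of $\mathrm{Mod}(-)$, so I must show that
\[
\frac{\mathcal{C}(-,C)}{I_{\mathcal{B}}(-,C)}\otimes_{\mathcal{C}}M'\to \frac{\mathcal{C}(-,C)}{I_{\mathcal{B}}(-,C)}\otimes_{\mathcal{C}}M\to \frac{\mathcal{C}(-,C)}{I_{\mathcal{B}}(-,C)}\otimes_{\mathcal{C}}M''\to 0
\]
is exact for each $C\in\mathcal{C}$; this is exactly the right exactness of $\bigl(\mathcal{C}(-,C)/I_{\mathcal{B}}(-,C)\bigr)\otimes_{\mathcal{C}}-$, which is part (1)(b) of Proposition \ref{AProposition0} applied to the fixed $\mathcal{C}^{op}$-module $\mathcal{C}(-,C)/I_{\mathcal{B}}(-,C)$. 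Preservation of sums is likewise objectwise: $\mathrm{Mod}(\mathcal{C}/I_{\mathcal{B}})$-coproducts are computed objectwise by property (2), and then part (2) of Proposition \ref{AProposition0} gives that $\bigl(\mathcal{C}(-,C)/I_{\mathcal{B}}(-,C)\bigr)\otimes_{\mathcal{C}}-$ commutes with arbitrary sums. One small point to record is that the assignment $C\mapsto \bigl(\frac{\mathcal{C}}{I_{\mathcal{B}}}\otimes_{\mathcal{C}}M\bigr)(C)$ is genuinely a functor on $\mathcal{C}/I_{\mathcal{B}}$ and that the displayed maps are natural in $C$ — but this is precisely what Lemma \ref{cociente1} was set up to guarantee, so I would just cite it.

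Next, (ii) is a direct computation using part (3) of Proposition \ref{AProposition0}: for each $C'\in\mathcal{C}$,
\[
\Bigl(\tfrac{\mathcal{C}}{I_{\mathcal{B}}}\otimes_{\mathcal{C}}\mathcal{C}(C,-)\Bigr)(C') = \tfrac{\mathcal{C}(-,C')}{I_{\mathcal{B}}(-,C')}\otimes_{\mathcal{C}}\mathcal{C}(C,-) = \tfrac{\mathcal{C}(C,C')}{I_{\mathcal{B}}(C,C')},
\]
where the last equality is $A\otimes_{\mathcal{C}}(C,-)=A(C)$ with $A=\mathcal{C}(-,C')/I_{\mathcal{B}}(-,C')$. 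I would then check that this identification is natural in $C'$ — i.e. compatible with $\frac{\mathcal{C}}{I_{\mathcal{B}}}(-,f)\otimes_{\mathcal{C}}\mathcal{C}(C,-)$ on the left and $\frac{\mathcal{C}}{I_{\mathcal{B}}}(C,f)$ on the right — which is routine from the explicit formula $\overline{h}\mapsto\overline{h\circ f}$ in Lemma \ref{cociente1}.

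Finally, (iii) is where I expect the only real (though still mild) bookkeeping. Given $M\in\mathrm{Mod}(\mathcal{C})$ and $N\in\mathrm{Mod}(\mathcal{C}/I_{\mathcal{B}})$, I would first reduce to the representable case: write $M$ as a cokernel of a map $\coprod_{j}\mathcal{C}(C_j,-)\to\coprod_{i}\mathcal{C}(C_i,-)$ (property (3)(c)), note that both sides of the claimed isomorphism are left exact contravariant in $M$ and send coproducts to products (the left side by part (i), the right side since $\mathrm{res}_{\mathcal{C}}$ and $\mathrm{Hom}$ behave well), and then it suffices to produce a natural isomorphism when $M=\mathcal{C}(C,-)$. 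In that case, by (ii),
\[
\Bigl(\tfrac{\mathcal{C}}{I_{\mathcal{B}}}\otimes_{\mathcal{C}}\mathcal{C}(C,-),\,N\Bigr)=\Bigl(\tfrac{\mathcal{C}(C,-)}{I_{\mathcal{B}}(C,-)},\,N\Bigr)\cong N(C)\cong M(C)\cong\bigl(\mathcal{C}(C,-),\mathrm{res}_{\mathcal{C}}(N)\bigr),
\]
using Yoneda in $\mathrm{Mod}(\mathcal{C}/I_{\mathcal{B}})$ for the middle isomorphism — here one uses that $\mathcal{C}(C,-)/I_{\mathcal{B}}(C,-)$ is exactly the representable functor $(\mathcal{C}/I_{\mathcal{B}})(C,-)$, together with $\mathrm{res}_{\mathcal{C}}(N)(C)=N(C)$ — and Yoneda in $\mathrm{Mod}(\mathcal{C})$ for the last. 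Chasing naturality through these identifications and then extending along the presentation of $M$ gives the general statement. The main obstacle, such as it is, is purely organizational: making sure the isomorphism constructed for representables is natural in both $M$ and $N$ so that the limit/colimit extension is legitimate; alternatively, one can bypass this by directly invoking that $\frac{\mathcal{C}}{I_{\mathcal{B}}}\otimes_{\mathcal{C}}-$ is the composite of $\mathcal{C}\otimes_{\mathcal{C}}-$-type constructions and appealing to the adjunction in Proposition \ref{Rproposition1}(4), but the representable reduction above is cleanest.
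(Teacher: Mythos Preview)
Your proposal is correct and follows essentially the same approach as the paper: objectwise arguments via Proposition~\ref{AProposition0} for (i) and (ii), and for (iii) a reduction to representables through a projective presentation of $M$ followed by Yoneda on both sides. The paper simply writes out the resulting commutative diagram explicitly rather than phrasing it as ``both sides are left exact contravariant in $M$ and send coproducts to products'', but the underlying argument is the same.
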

\begin{proof}
(i) Let $M\longrightarrow N\longrightarrow L\longrightarrow 0$ be an exact sequence of $ \mathcal{C} $-modules.  Let us consider the $\mathcal{C}^{op}$-module $\frac{\mathcal{C}}{I_{\mathcal{B}}}(-,C)$, by \ref{AProposition0} we have that $ \frac{\mathcal{C}}{I_{\mathcal{B}}}(-,C)\otimes_{\mathcal{C}}-:\mathrm{Mod}(\mathcal{C})\longrightarrow \mathbf{Ab} $ is a right exact functor- Then, we obtain the following exact sequence
$$\frac{\mathcal{C}}{I_{\mathcal{B}}}(-,C)\otimes_{\mathcal{C}}M\longrightarrow\frac{\mathcal{C}}{I_{\mathcal{B}}}(-,C)\otimes_{\mathcal{C}}N\longrightarrow\frac{\mathcal{C}}{I_{\mathcal{B}}}(-,C)\otimes_{\mathcal{C}}L\longrightarrow 0,
$$
that is,
$$
\left( \frac{\mathcal{C}}{I_{\mathcal{B}}}\otimes_{\mathcal{C}}M\right) (C)\longrightarrow\left( \frac{\mathcal{C}}{I_{\mathcal{B}}}\otimes_{\mathcal{C}}N\right) (C)\longrightarrow\left( \frac{\mathcal{C}}{I_{\mathcal{B}}}\otimes_{\mathcal{C}}L\right) (C)\longrightarrow 0
$$
is exact for all $ C\in \mathcal{C}/I_{\mathcal{B}}$. The rest follows from the fact that $ \frac{\mathcal{C}(-,C)}{I_{\mathcal{B}}(-,C)}\otimes_{\mathcal{C}}-:\mathrm{Mod}(\mathcal{C})\longrightarrow \mathbf{Ab} $ preserves sums, for all $ C\in \mathcal{C}/I_{\mathcal{B}}$.

(ii) It follows from Proposition \ref{AProposition0} (3). Indeed,
 we have that $\Big(\frac{\mathcal{C}}{I_{\mathcal{B}}}\otimes_{\mathcal{C}}\mathcal{C}\left( C,-\right)\Big)(C')=\frac{\mathcal{C}(-,C')}{I_{\mathcal{B}}(-,C')}\otimes_{\mathcal{C}}\mathcal{C}\left( C,-\right)=\frac{\mathcal{C}(C,C')}{I_{\mathcal{B}}(C,C')}=\frac{\mathcal{C}(C,-)}{I_{\mathcal{B}}(C,-)}(C').$ In the same way, they coincide in morphisms.

(iii) Assume  that there is an exact sequence 
\begin{equation}\label{recollB}
\underset{i\in I}\coprod\mathcal{C}\left( C_{i},-\right) \longrightarrow \underset{j\in J}\coprod\mathcal{C}\left(C_{j},-\right)\longrightarrow M\longrightarrow 0
\end{equation} 
of $ \mathcal{C} $-modules with the $ C_{i} $ and $ C_{j} $ in the objects of $ \mathcal{C}. $ By (i) and (ii)  we get an exact sequence of $ \frac{\C}{I_{\mathcal B}}$-modules.
\begin{eqnarray}\label{recollA}
\underset{i\in I}\coprod \frac{\C(C_{i},-)}{I_{\mathcal B}(C_{i},-)} \longrightarrow \underset{j\in J}\coprod \frac{\C(C_{j},-)}{I_{\mathcal B}(C_{j},-)} \longrightarrow  \frac{\C}{I_{\mathcal B}}\otimes_{\mathcal{C}}M\longrightarrow 0 
\end{eqnarray}

Then, after applying $ (-,N) $ to (\ref{recollA})  and $(-,\mathrm{res}_\mathcal C(N))$ to (\ref{recollB}) respectively we get,  by Yoneda's Lemma  in the category of $\mathcal C/I_{\mathcal B}$-modules, the following commutative diagram.

$$\xymatrix{0\ar[r] & (\frac{\mathcal{C}}{I_{\mathcal{B}}}\otimes_{\mathcal{C}}M,N)\ar[r]\ar@{=}[d]& (\underset{j\in J}\coprod \frac{(C_{j},-)}{I_{\mathcal{B}}(C_{j},-)},N)\ar[r]\ar[d]^{\cong}& ( \underset{i\in I}\coprod \frac{(C_{i},-)}{I_{\mathcal{B}}(C_{i},-)} ,N)\ar[d]^{\cong}\\ 
0\ar[r] & ( \frac{\mathcal{C}}{I_{\mathcal{B}}}\otimes_{\mathcal{C}}M,N )\ar[r]\ar@{=}[d] & \prod_{j\in J}\big( \frac{(C_{j},-)}{I_{\mathcal{B}}(C_{j},-)},N\big)\ar[r]\ar[d]^{\cong} & \prod_{i\in I}\big(\frac{(C_{i},-)}{I_{\mathcal{B}}(C_{i},-)} ,N\big)\ar[d]^{\cong}\\
0\ar[r] & ( \frac{\C}{I_{\mathcal B}}\otimes_{\mathcal{C}}M,N )\ar[r]\ar[d] & \prod_{j\in J}N(C_j)\ar[r]\ar[d]^{\cong} & \prod_{i\in I}N(C_i)\ar[d]^{\cong}\\
0\ar[r] & (M,\mathrm{res}_{\mathcal{C}}(N))\ar[r] & \big(\underset{j\in J}\coprod\left(C_{i},-\right),\mathrm{res}_{\mathcal{C}}(N)\big)\ar[r] &  \big(\underset{i\in I}\coprod\left( C_{i},-\right),\mathrm{res}_{\mathcal{C}}(N) \big)}$$
\end{proof}

Finally we define the right adjoint adjoint $\mathcal{C}(\frac{\mathcal{C}}{I_{\mathcal{B}}},-)$ of the functor  $\mathrm{res}_{\mathcal{C}}:\mathrm{Mod} (\mathcal{C}/I_{\mathcal{ B}})\longrightarrow \mathrm{Mod}(\mathcal{C})$.\\

\begin{definition}
We define the functor $\mathcal{C}(\frac{\mathcal{C}}{I_{\mathcal{B}}},-):\mathrm{Mod}\left(\mathcal{C}\right) \longrightarrow \mathrm{Mod}\left(\mathcal{C}/I_{\mathcal{B}}\right) $ by: 
$\mathcal{C}(\frac{\mathcal{C}}{I_{\mathcal{B}}},M)(C)=\mathcal{C}\left(\frac{\mathcal{C}(C,-)}{I_{\mathcal{B}}(C,-)},M\right)$ for all $ M\in \mathrm{Mod}(\mathcal{C})$ and  $C\in \C/I_{\mathcal{B}}$ and 
$\mathcal{C}(\frac{\mathcal{C}}{I_{\mathcal{B}}},M)(\overline{f})=\mathcal{C}\left( \frac{\mathcal{C}}{I_{\mathcal{B}}}(f,-),M\right) 
$ for all $\overline{f}=f+I_{\mathcal{B}}(C,C')\in \C(C,C')/I_{\mathcal{B}}(C,C') $ with $C,C'$ in $\mathcal{C}$.
\end{definition}

So we establish the following proposition.
\begin{proposition}\label{Rproposition4}
Let $\mathcal{C}$ be an a additive category and $\mathcal{B}$ be a full additive subcategory of $\mathcal{C}$. Then the functor $\mathcal{C}(\frac{\mathcal{C}}{I_{\mathcal{B}}},-):\mathrm{Mod}\left( \mathcal{C}\right) \longrightarrow \mathrm{Mod}\left( \mathcal{C}/I_{\mathcal{B}}\right) $ satisfies:
\begin{itemize}
\item[ (i)] $\mathcal{C}(\frac{\mathcal{C}}{I_{\mathcal{B}}},-)$ is left exact.
\item[(ii)] For all $M\in\mathrm{Mod}(\mathcal{C})$ and $N\in\mathrm{Mod}(\mathcal{C}/I_{\mathcal{B}})$ there exists a natural isomorphism  $\frac{\mathcal{C}}{I_{\mathcal{B}}}\Big(N,\mathcal{C}(\frac{\mathcal{C}}{I_{\mathcal{B}}},M)\Big) \longrightarrow \mathcal{C}\Big(\mathrm{res}_{\mathcal{C}}(N),M\Big)$.
\end{itemize}
\end{proposition}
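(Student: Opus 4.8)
The plan is to mimic the structure of the proof of Proposition \ref{Rproposition3}, but dualized: instead of presenting the $\mathcal{C}$-module $M$ by projectives and applying $(-,N)$, we will use that $\mathcal{C}(\frac{\mathcal{C}}{I_{\mathcal{B}}},-)$ is built from the representable $\frac{\mathcal{C}}{I_{\mathcal{B}}}$-modules, and reduce everything to Yoneda. First, for (i): left-exactness is checked objectwise. Given an exact sequence $0\to M'\to M\to M''$ in $\mathrm{Mod}(\mathcal{C})$, evaluating $\mathcal{C}(\frac{\mathcal{C}}{I_{\mathcal{B}}},-)$ at an object $C$ gives $\mathcal{C}\big(\frac{\mathcal{C}(C,-)}{I_{\mathcal{B}}(C,-)},-\big)$ applied to that sequence; since $\mathrm{Hom}$ out of a fixed object is always left exact in $\mathrm{Mod}(\mathcal{C})$ (this is part of the abelian-category structure recalled in Section 2), exactness is preserved at each $C$, and exactness in $\mathrm{Mod}(\mathcal{C}/I_{\mathcal{B}})$ is tested objectwise by property (1) of $\mathrm{Mod}(-)$. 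One only has to note that $\frac{\mathcal{C}(C,-)}{I_{\mathcal{B}}(C,-)}$ is indeed a $\mathcal{C}$-module (it is $\mathrm{res}_{\mathcal{C}}$ of a representable $\mathcal{C}/I_{\mathcal{B}}$-module, via Lemma \ref{cociente1} and the identification in Lemma \ref{recollF}), so that the $\mathrm{Hom}$ makes sense.

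For (ii), the approach is to construct the adjunction isomorphism on representables first and then extend by a presentation argument. Take $N\in\mathrm{Mod}(\mathcal{C}/I_{\mathcal{B}})$ and choose a projective presentation $\coprod_{i\in I}\frac{\mathcal{C}(C_i,-)}{I_{\mathcal{B}}(C_i,-)}\to\coprod_{j\in J}\frac{\mathcal{C}(C_j,-)}{I_{\mathcal{B}}(C_j,-)}\to N\to 0$ in $\mathrm{Mod}(\mathcal{C}/I_{\mathcal{B}})$, which exists because the representables $\frac{\mathcal{C}(C,-)}{I_{\mathcal{B}}(C,-)}=\frac{\mathcal{C}}{I_{\mathcal{B}}}\otimes_{\mathcal{C}}\mathcal{C}(C,-)$ generate (using Proposition \ref{Rproposition3}(ii) and that $\frac{\mathcal{C}}{I_{\mathcal{B}}}\otimes_{\mathcal{C}}$ is right exact and preserves sums and projectives). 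Applying $\frac{\mathcal{C}}{I_{\mathcal{B}}}\big(-,\mathcal{C}(\frac{\mathcal{C}}{I_{\mathcal{B}}},M)\big)$ to this presentation gives a left-exact sequence, and applying $\mathcal{C}\big(\mathrm{res}_{\mathcal{C}}(-),M\big)$ to it also gives a left-exact sequence since $\mathrm{res}_{\mathcal{C}}$ is exact (noted after Lemma \ref{ideal1}). Then I build a commutative ladder between these two sequences, in the style of the big diagram in the proof of \ref{Rproposition3}: the middle and right vertical maps are the natural isomorphisms on representables, and the Five Lemma (or just a diagram chase) produces the desired isomorphism on the left, i.e. on $N$.

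The crux is therefore the isomorphism on a single representable: one must exhibit a natural isomorphism
$$\frac{\mathcal{C}}{I_{\mathcal{B}}}\Big(\tfrac{\mathcal{C}(C,-)}{I_{\mathcal{B}}(C,-)},\,\mathcal{C}(\tfrac{\mathcal{C}}{I_{\mathcal{B}}},M)\Big)\;\cong\;\mathcal{C}\Big(\mathrm{res}_{\mathcal{C}}\big(\tfrac{\mathcal{C}(C,-)}{I_{\mathcal{B}}(C,-)}\big),\,M\Big).$$
By Yoneda in $\mathrm{Mod}(\mathcal{C}/I_{\mathcal{B}})$ the left-hand side is $\mathcal{C}(\frac{\mathcal{C}}{I_{\mathcal{B}}},M)(C)=\mathcal{C}\big(\frac{\mathcal{C}(C,-)}{I_{\mathcal{B}}(C,-)},M\big)$, which by definition of $\mathrm{res}_{\mathcal{C}}$ is exactly the right-hand side, so this step is essentially a bookkeeping identification rather than real work — provided one is careful that $\mathrm{res}_{\mathcal{C}}$ of the representable $\mathcal{C}/I_{\mathcal{B}}$-module $\frac{\mathcal{C}(C,-)}{I_{\mathcal{B}}(C,-)}$ is the $\mathcal{C}$-module with the same underlying data, as guaranteed by Lemma \ref{recollF}. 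The main obstacle I anticipate is purely technical: checking naturality of the representable isomorphism in $C$ (so that it upgrades to a morphism of $\mathcal{C}/I_{\mathcal{B}}$-modules and the ladder commutes), which requires tracing through Lemma \ref{cociente1}(i)--(ii) to see that $\frac{\mathcal{C}}{I_{\mathcal{B}}}(f,-)$ induces the correct map on both sides; this is routine but must be done with care. I would end the proof with "The rest of the proof is straightforward" after the diagram, as the authors do elsewhere.
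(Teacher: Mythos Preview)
Your proposal is correct and follows essentially the same approach as the paper: for (i) you check left exactness objectwise via left exactness of $\mathrm{Hom}$, and for (ii) you take a projective presentation of $N$ by representables in $\mathrm{Mod}(\mathcal{C}/I_{\mathcal{B}})$, identify the adjunction on representables via Yoneda (exactly the chain of isomorphisms the paper writes out as display (\ref{recollE})), and then compare the two left-exact sequences obtained by applying $\frac{\mathcal{C}}{I_{\mathcal{B}}}\big(-,\mathcal{C}(\frac{\mathcal{C}}{I_{\mathcal{B}}},M)\big)$ and $\mathcal{C}(\mathrm{res}_{\mathcal{C}}(-),M)$ in a commutative ladder. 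The paper does precisely this, down to applying $\mathrm{res}_{\mathcal{C}}$ to the presentation to get the second row; your anticipated ``obstacle'' of checking naturality is indeed routine and the paper leaves it implicit as well.
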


\begin{proof}
(i) Let $0\longrightarrow M\longrightarrow N\longrightarrow L$ a exact sequence of $ \mathcal{C} $-modules. Since $\mathcal{C}\left(\frac{\mathcal{C}(X,-)}{I_{\mathcal{B}}(X,-)},- \right) 
$ is a left exact functor, for all $ X\in \mathcal{C}/I_{\mathcal{B}}$, then we have the following exact sequence
\[
0\longrightarrow\mathcal{C}\left(\frac{\mathcal{C}(X,-)}{I_{\mathcal{B}}(X,-)},M\right)\longrightarrow \mathcal{C}\left(\frac{\mathcal{C}(X,-)}{I_{\mathcal{B}}(X,-)},N \right)\longrightarrow\mathcal{C}\left(\frac{\mathcal{C}(X,-)}{I_{\mathcal{B}}(X,-)},L \right)
\]
(ii) Let $ N $ be a $ \mathcal{C}/I_{\mathcal{B}} $-module. Then exist an exact sequence

\begin{eqnarray}\label{recollC}
\underset{i\in I}\coprod \frac{\mathcal{C}(X_i,-)}{I_{\mathcal{B}}(X_i,-)}\longrightarrow \underset{j\in J}\coprod\frac{\mathcal{C}(X_j,-)}{I_{\mathcal{B}}(X_j,-)}\longrightarrow N\longrightarrow 0 
\end{eqnarray}

of $ \mathcal{C}/I_{\mathcal{B}}$-modules with the $ X_{i},X_{j} \in\mathcal{C}/I_{\mathcal{B}} $. First note that for all family $\{X_i\}_{i\in I}$ of objects in $\mathcal C$  it follows from the Yoneda's  Lemma that 
\begin{eqnarray}\label{recollE}
\frac{\mathcal{C}}{I_{\mathcal{B}}}\Big(\underset{i\in I}\coprod \frac{\mathcal{C}}{I_{\mathcal{B}}}\big(X_{i},-\big),\mathcal{C}\big(\frac{\mathcal{C}}{I_{\mathcal{B}}},M\big)\Big) 
&\cong&\prod_{i\in I}\frac{\mathcal{C}}{I_{\mathcal{B}}}\Big(\frac{\mathcal{C}}{I_{\mathcal{B}}}\big(X_{i},-\big),\mathcal{C}\big(\frac{\mathcal{C}}{I_{\mathcal{B}}},M\big)\Big)\\ 
                          &\cong&\prod_{i\in I}\mathcal{C}\Big(\frac{\mathcal{C}}{I_{\mathcal{B}}},M\Big)(X_i)\notag\quad (\text{Yoneda's lemma})\\
                             &\cong& \prod_{i\in I}\mathcal{C}\Big(\frac{\mathcal{C}(X_i,-)}{I_{\mathcal{B}}(X_i,-)},M\Big)\notag\quad (\text{def. of}\,\, \mathcal{C}(\frac{C}{I_{\mathcal{B}}},-))\\
                              &\cong&\mathcal{C}\Big(\coprod_{i\in I}\frac{\mathcal{C}(X_i,-)}{I_{\mathcal{B}}(X_i,-)},M\Big)\notag
                              \end{eqnarray}
                              
On the other hand, we apply the  functor $\mathrm{res}_{\mathcal{C}} $ to (\ref{recollC}) we have the follow exact sequence of $\mathcal{C}$-modules
\begin{equation}\label{recollD}
\underset{i\in I}\coprod \frac{\mathcal{C}(-,X_i)}{I_{\mathcal{B}}(-,X_i)}\longrightarrow \underset{j\in J}\coprod\frac{\mathcal{C}(-,X_j)}{I_{\mathcal{B}}(-,X_j)}\longrightarrow \mathrm{res}_{\mathcal{C}}(N)\longrightarrow 0
\end{equation}
                              
Then after applying  $\frac{\mathcal{C}}{I_{\mathcal{B}}}\Big(-,\mathcal{C}(\frac{\mathcal{C}}{I_{\mathcal{B}}},M)\Big) $ to (\ref{recollC}), $\mathcal{C}(-,M)$ to (\ref{recollD})  and using (\ref{recollE})
 we have the following commutative diagram
 
$$\xymatrix{0\ar[r] & \frac{\mathcal{C}}{I_{\mathcal{B}}}\Big(N, \mathcal{C}(\frac{\mathcal{C}}{I_{\mathcal{B}}},M) \Big)\ar[r]\ar[d] & 
\prod_{j\in I}\mathcal{C}(\frac{\mathcal{C}}{I_{\mathcal{B}}},M)(X_j)\ar[r]\ar[d]^{\cong} & 
\prod_{i\in I}\mathcal{C}(\frac{\mathcal{C}}{I_{\mathcal{B}}},M)(X_i)\ar[d]^{\cong}\\
0\ar[r] & \mathcal{C}(\mathrm{res}_\mathcal C(N),M)\ar[r] & \prod_{i\in I}\mathcal{C}\big(\frac{(X_{j},-)}{I_{\mathcal B}(X_{j},-)} ,M\big)\ar[r] & \prod_{i\in I}\mathcal{C}\big(\frac{(X_{i},-)}{I_{\mathcal B}(X_{i},-)} ,M\big)}$$
 \end{proof}
 
Now we are ready to prove the main result of this section.

\begin{theorem}\label{recollementfunctor}
Let $\mathcal{C}$ be an a additive category and $\mathcal{B}$ be a full additive subcategory of $\mathcal{C}$. Then there is a recollement:
\begin{displaymath} 
\xymatrix{ \mathrm{Mod}(\mathcal{C}/I_{\mathcal{B}}) \ar[rrr]_{\mathrm{res}_{\mathcal C}} &&& \ar@(d,d)[lll]^{\mathcal{C}(\mathcal{C}/I_{\mathcal{B}},-) }  \ar@(u,u)[lll]^{\mathcal{C}/I_{\mathcal{B}}\otimes_{\mathcal{C}} }  \mathrm{Mod}(\mathcal{C}) \ar[rrr]_{\mathrm{res}_{\mathcal B} } &&& \ar@(d,d)[lll]^{\mathcal{B}(\mathcal{C},-)  }  \ar@(u,u)[lll]^{\mathcal{C}\otimes_{\mathcal{B}}} \mathrm{ Mod}(\mathcal{B})}
\end{displaymath}
\end{theorem}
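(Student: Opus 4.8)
The plan is to assemble the recollement from two halves already prepared in the section. The right-hand adjoint triple $(\mathcal{C}\otimes_{\mathcal{B}},\mathrm{res}_{\mathcal{B}},\mathcal{B}(\mathcal{C},-))$ is exactly the one displayed in \eqref{opo}, coming from Propositions \ref{Rproposition1} and \ref{Rproposition2}; the left-hand adjoint triple $(\mathcal{C}/I_{\mathcal{B}}\otimes_{\mathcal{C}},\mathrm{res}_{\mathcal{C}},\mathcal{C}(\mathcal{C}/I_{\mathcal{B}},-))$ is the one displayed in \eqref{op}, whose adjunction isomorphisms are Propositions \ref{Rproposition3}(iii) and \ref{Rproposition4}(ii). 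So condition (R1) — that $(i^{*},i_{*})$, $(i_{!},i^{!})$, $(j_{!},j^{!})$, $(j^{*},j_{*})$ are adjoint pairs — is already in hand, with $i^{*}=\mathcal{C}/I_{\mathcal{B}}\otimes_{\mathcal{C}}$, $i_{*}=i_{!}=\mathrm{res}_{\mathcal{C}}$, $i^{!}=\mathcal{C}(\mathcal{C}/I_{\mathcal{B}},-)$, $j_{!}=\mathcal{C}\otimes_{\mathcal{B}}$, $j^{!}=j^{*}=\mathrm{res}_{\mathcal{B}}$, $j_{*}=\mathcal{B}(\mathcal{C},-)$.

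Next I would verify (R3), the full-embedding conditions. That $\mathrm{res}_{\mathcal{C}}$ is a full embedding is Lemma \ref{recollF}, which moreover identifies its essential image with $\mathcal{K}=\mathrm{Ker}(\mathrm{res}_{\mathcal{B}})$; that $\mathcal{C}\otimes_{\mathcal{B}}$ is fully faithful is Proposition \ref{Rproposition1}(v), and that $\mathcal{B}(\mathcal{C},-)$ is fully faithful is Proposition \ref{Rproposition2}(iv). (One also notes that full faithfulness of the right adjoint $\mathcal{B}(\mathcal{C},-)$, respectively the left adjoint $\mathcal{C}\otimes_{\mathcal{B}}$, is equivalent to the counit, respectively unit, of the corresponding adjunction being an isomorphism, which is what parts (2) of Propositions \ref{Rproposition1} and \ref{Rproposition2} say; the same remark applies to $\mathrm{res}_{\mathcal{C}}$ via Lemma \ref{recollF}.) For (R2) I must check $j^{*}i_{*}=\mathrm{res}_{\mathcal{B}}\circ\mathrm{res}_{\mathcal{C}}=0$: but for $F\in\mathrm{Mod}(\mathcal{C}/I_{\mathcal{B}})$ and $B\in\mathcal{B}$ we have $\mathrm{res}_{\mathcal{C}}(F)(B)=F(B)$, and the computation in Lemma \ref{ideal1}(i) shows $F(B)=0$ since $1_{B}\in I_{\mathcal{B}}(B,B)$, so $\mathrm{res}_{\mathcal{B}}\circ\mathrm{res}_{\mathcal{C}}=0$.

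Finally I would record the one compatibility that genuinely glues the two triples into a single recollement in the sense of Definition \ref{recolldef}: the essential image of $i_{*}=\mathrm{res}_{\mathcal{C}}$ must coincide with the kernel of $j^{*}=\mathrm{res}_{\mathcal{B}}$. This is precisely the last assertion of Lemma \ref{recollF}, namely $\mathcal{K}=\mathrm{Ker}(\mathrm{res}_{\mathcal{B}})$ together with the identification of $\mathcal{K}$ with the essential image of $\mathrm{res}_{\mathcal{C}}$. Once this is in place, exactness of the sequence of abelian categories at the middle term follows formally from the adjunctions and (R2)–(R3), and the diagram is a recollement. I do not anticipate a serious obstacle; the only point requiring care is bookkeeping — matching the six functors in \eqref{opo} and \eqref{op} correctly to the roles $i^{*},i_{*},i^{!},j_{!},j^{*},j_{*}$ in Definition \ref{recolldef}, and invoking Lemma \ref{recollF} both for the full-embedding property of $\mathrm{res}_{\mathcal{C}}$ and for the image-equals-kernel compatibility. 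Everything else is a direct citation of the propositions already established in this section and in the preliminaries.
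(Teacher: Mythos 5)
Your proposal is correct and follows essentially the same route as the paper: the paper's proof also just verifies (R1) via Propositions \ref{Rproposition1}--\ref{Rproposition4}, (R2) and the full-embedding part of (R3) via Lemma \ref{recollF} (which rests on Lemma \ref{ideal1}), and the remaining full embeddings via Propositions \ref{Rproposition1} and \ref{Rproposition2}. Your additional check that the essential image of $\mathrm{res}_{\mathcal C}$ equals $\mathrm{Ker}(\mathrm{res}_{\mathcal B})$ goes beyond what Definition \ref{recolldef} requires, but it is harmless and also supplied by Lemma \ref{recollF}.
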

\begin{proof}
\begin{itemize}
\item[(R1)] By Propositions \ref{Rproposition1}, \ref{Rproposition2}, \ref{Rproposition3} and \ref{Rproposition4}, the triples  $ \Big(\mathcal{C}\otimes_{\mathcal{B}},\mathrm{res}_\mathcal B,\mathcal{B}(\mathcal{C},-)\Big) $ and $ \Big(\mathcal{C}/I_{\mathcal{B}}\otimes_{\mathcal{C}}, \mathrm{res}_\mathcal C,\mathcal{C}(\mathcal{C}/I_{\mathcal{B}},-)\Big)$ are adjoint triples.
\item [(R2)]  By Lemma \ref{recollF}, we have $ \mathrm{res}_\mathcal B \mathrm{res}_\mathcal C=0$.
\item [(R3)] By Lemma \ref{recollF} and Propositions \ref{Rproposition1} and \ref{Rproposition2},  $ \mathrm{res}_{\mathcal C} $, $ \mathcal{C}\otimes_{\mathcal{B}} $ and $ \mathcal{B}(\mathcal{C},-) $ are full embedding functors.

\end{itemize}
\end{proof}

It is worth to mention that in the context of dualizing $K$-varieties,
Y, Ogawa in \cite[Theorem 2.5]{Yasuaki} have proved the following result.

\begin{theorem}
Let $\mathcal{C}$ be a dualizing $K$-variety and $\mathcal{B}$ a functorially finite subcategory of $\mathcal{C}$. Then the recollement in \ref{recollementfunctor}, restrics to a recollement:
\begin{displaymath} 
\xymatrix{ \mathrm{mod}(\mathcal{C}/I_{\mathcal{B}}) \ar[rrr]_{\mathrm{res}_{\mathcal C}} &&& \ar@(d,d)[lll]^{\mathcal{C}(\mathcal{C}/I_{\mathcal{B}},-) }  \ar@(u,u)[lll]^{\mathcal{C}/I_{\mathcal{B}}\otimes_{\mathcal{C}} }  \mathrm{mod}(\mathcal{C}) \ar[rrr]_{\mathrm{res}_{\mathcal B} } &&& \ar@(d,d)[lll]^{\mathcal{B}(\mathcal{C},-)  }  \ar@(u,u)[lll]^{\mathcal{C}\otimes_{\mathcal{B}}} \mathrm{mod}(\mathcal{B})}
\end{displaymath}
\end{theorem}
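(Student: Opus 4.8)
The strategy is to show that each of the six functors in the recollement of Theorem \ref{recollementfunctor} restricts to the subcategories of finitely presented functors, and then invoke the general principle that a recollement of abelian categories restricts to full abelian subcategories once all functors preserve them and the adjunctions are inherited. The restriction of the adjunction isomorphisms is automatic once the functors land in the right subcategories, so the whole problem reduces to six "finite presentation is preserved" statements; the conditions (R2) and (R3) are inherited for free since they are identities/fullness statements that hold on the smaller categories as soon as the functors are well-defined there.

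First I would handle the three functors on the $\mathcal B$-side. The functor $\mathcal{C}\otimes_{\mathcal B}:\mathrm{Mod}(\mathcal{B})\to\mathrm{Mod}(\mathcal{C})$ is right exact, preserves sums, and sends $\mathcal{B}(B,-)$ to $\mathcal{C}(B,-)$ by Proposition \ref{Rproposition1}, so it takes a finite presentation $\mathcal{B}(-,B_1)\to\mathcal{B}(-,B_0)\to M\to 0$ to one of the same shape over $\mathcal{C}$; hence it maps $\mathrm{mod}(\mathcal{B})$ into $\mathrm{mod}(\mathcal{C})$ (this uses that $\mathcal{B}$, being functorially finite in the dualizing variety $\mathcal{C}$, is itself a dualizing variety, so $\mathrm{mod}(\mathcal{B})$ makes sense and has pseudokernels). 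For $\mathrm{res}_{\mathcal B}$ one uses that $\mathcal{B}$ is \emph{covariantly finite} in $\mathcal{C}$: a right $\mathcal{B}$-approximation of $C_0$ gives, after applying $\mathrm{res}_{\mathcal B}$ to a presentation $\mathcal{C}(-,C_1)\to\mathcal{C}(-,C_0)\to M\to 0$, a way to replace the representable pieces by finitely presented $\mathcal{B}$-modules; this is exactly the standard argument that restriction along a functorially finite subcategory preserves finite presentation. For the right adjoint $\mathcal{B}(\mathcal{C},-)$ one dualizes: using the duality $\mathbb D$ and the fact that $\mathcal{B}(\mathcal{C},-)$ is, up to the dualities on $\mathrm{mod}(\mathcal{C})$ and $\mathrm{mod}(\mathcal{B})$, intertwined with $\mathcal{C}\otimes_{\mathcal B}$ on the opposite categories, finite presentation is preserved by the already-established statement for the tensor functor. (Alternatively one checks directly that $\mathcal{B}(\mathcal{C},-)$ applied to $\mathcal{C}(C,-)$ is $\mathcal{B}((C,-)\!\mid_{\mathcal B},-)$, which lies in $\mathrm{mod}(\mathcal{B})$ because a left $\mathcal{B}$-approximation of $C$ yields a finite presentation.)

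Next I would do the same three verifications on the $\mathcal{C}/I_{\mathcal B}$-side, where the key extra input is that $I_{\mathcal B}$ is generated, in each $\mathcal{C}(C,C')$, by maps factoring through the finitely many indecomposables of $\mathcal{B}$; concretely, for objects $C$ the quotient $\mathcal{C}(C,-)/I_{\mathcal B}(C,-)$ is the cokernel of $\mathcal{C}(C_{\mathcal B},-)\to\mathcal{C}(C,-)$ where $C\to C_{\mathcal B}$ is a left $\mathcal{B}$-approximation, hence it lies in $\mathrm{mod}(\mathcal{C})$, and similarly $\mathcal{C}(C,-)/I_{\mathcal B}(C,-)$ restricted to $\mathcal{C}/I_{\mathcal B}$ lies in $\mathrm{mod}(\mathcal{C}/I_{\mathcal B})$ (this also shows $\mathcal{C}/I_{\mathcal B}$ is a dualizing variety so that $\mathrm{mod}(\mathcal{C}/I_{\mathcal B})$ is meaningful). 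Granting this, $\mathcal{C}/I_{\mathcal B}\otimes_{\mathcal C}$ preserves finite presentations by right exactness plus Proposition \ref{Rproposition3}(ii); $\mathrm{res}_{\mathcal C}:\mathrm{Mod}(\mathcal{C}/I_{\mathcal B})\to\mathrm{Mod}(\mathcal{C})$ preserves them because $\mathrm{res}_{\mathcal C}\big(\mathcal{C}(C,-)/I_{\mathcal B}(C,-)\big)$ is exactly that finitely presented cokernel; and the right adjoint $\mathcal{C}(\mathcal{C}/I_{\mathcal B},-)$ is handled by the $\mathbb D$-duality argument again, dualizing the statement for $\mathcal{C}/I_{\mathcal B}\otimes_{\mathcal C}$.

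Finally I would assemble: since all six functors restrict, the natural isomorphisms of Propositions \ref{Rproposition1}, \ref{Rproposition2}, \ref{Rproposition3}, \ref{Rproposition4} restrict verbatim to the $\mathrm{mod}$-level, giving the two adjoint triples (R1); the equality $\mathrm{res}_{\mathcal B}\,\mathrm{res}_{\mathcal C}=0$ (R2) is inherited; and $\mathrm{res}_{\mathcal C}$, $\mathcal{C}\otimes_{\mathcal B}$, $\mathcal{B}(\mathcal{C},-)$ remain full embeddings (R3) since fullness and faithfulness are tested on morphisms, which are unchanged. The main obstacle is the bookkeeping in the preceding paragraph: proving that $\mathcal{C}/I_{\mathcal B}$ and $\mathcal{B}$ are again dualizing varieties and that $\mathrm{res}_{\mathcal C}$ (on both sides) and the Hom-type right adjoints preserve finite presentation — all of which hinge precisely on the hypothesis that $\mathcal{B}$ is functorially finite in $\mathcal{C}$, so that both one-sided approximations exist. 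This is the one place the functorial-finiteness assumption is genuinely used, and it is the crux that must be argued carefully rather than waved through; the rest is formal transfer of structure from Theorem \ref{recollementfunctor}.
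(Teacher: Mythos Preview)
The paper does not actually prove this theorem: immediately before the statement it writes ``Y.~Ogawa in \cite[Theorem 2.5]{Yasuaki} have proved the following result'' and then states the theorem without proof. So there is no proof in the paper to compare against; the result is quoted from the literature.

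Your plan is the standard argument and is essentially what one finds in Ogawa's paper: verify that each of the six functors preserves finite presentation (using right exactness plus the effect on representables for the tensor-type functors, approximations for the restriction functors, and the duality $\mathbb{D}$ to transfer to the right adjoints), then observe that (R1)--(R3) are inherited formally. One small slip: when you handle $\mathrm{res}_{\mathcal B}$ you correctly invoke covariant finiteness of $\mathcal B$, but then say ``a right $\mathcal B$-approximation of $C_0$''; in the paper's conventions (Section~8) covariant finiteness means the existence of \emph{left} $\mathcal B$-approximations, and it is a left approximation $C\to B_0$ that yields the epimorphism $\mathcal{C}(B_0,-)\!\mid_{\mathcal B}\twoheadrightarrow \mathcal{C}(C,-)\!\mid_{\mathcal B}$ you want. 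Dually, the argument for $\mathcal{B}(\mathcal{C},-)$ and for finite presentation of $\mathcal{C}(-,C)/I_{\mathcal B}(-,C)$ uses right approximations (contravariant finiteness). With that correction the outline is sound; the only genuinely nontrivial points---that $\mathcal{B}$ and $\mathcal{C}/I_{\mathcal B}$ are again dualizing (so that $\mathrm{mod}$ is abelian there) and that the quotients $\mathcal{C}(C,-)/I_{\mathcal B}(C,-)$ are finitely presented---are exactly where functorial finiteness enters, as you identified.
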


\section{Another Recollement}
Our purpose  in this section is to prove the following Theorem which generalizes the results given by Q. Chen and M. Zheng in \cite[Theo. 4.4]{Chen}.

\begin{theorem}\label{Recoll1}
Let $\mathcal {R,S, C}$ and $\mathcal{T}$ be aditive categories. For any $M\in\mathrm{Mod}(\mathcal{R}\otimes \mathcal{T}^{op})$,  consider the matrix categories $\mathbf{\Lambda}:=\left(\begin{smallmatrix}
\mathcal{T} &0\\
M& \mathcal{R}
\end{smallmatrix}\right )$  $\mathbf{\Lambda}^!:=\left (\begin{smallmatrix}
\mathcal{T} &0\\
j_{!}(M)& \mathcal{S}
\end{smallmatrix}\right )$, $\mathbf{\Lambda}^*:=\left (\begin{smallmatrix}
\mathcal{T} &0\\
j_{*}(M)& \mathcal{S}
\end{smallmatrix}\right ),$  where the bimodules $j_{!}(M)$ and $j_{\ast}(M)$ are constructed as in \ref{otrobimodulo}.
\begin{enumerate}
\item [(a)]
If the diagram 
$$\xymatrix{\mathrm{Mod}(\mathcal{C})\ar@<-2ex>[rr]_{i_{\ast}} & & \mathrm{Mod}(\mathcal{S})\ar@<-2ex>[rr]_{j^{!}}\ar@<-2ex>[ll]_{i^{\ast}} & &  \mathrm{Mod}(\mathcal{R})\ar@<-2ex>[ll]_{j_{!}}}$$
is a left recollement, then there is a left recollement
$$\xymatrix{\mathrm{Mod}(\mathcal{C})\ar@<-2ex>[rr]_{\tilde{i_{\ast}}} & & \mathrm{Mod}(\mathbf{\Lambda}^{!})\ar@<-2ex>[rr]_{\tilde{j^{!}}}\ar@<-2ex>[ll]_{\tilde{i^{\ast}}} & &  \mathrm{Mod}(\mathbf{\Lambda})\ar@<-2ex>[ll]_{\tilde{j_{!}}}}$$

\item [(b)] If the diagram 
$$\xymatrix{\mathrm{Mod}(\mathcal{C})\ar@<2ex>[rr]^{i_{!}} & & \mathrm{Mod}(\mathcal{S})\ar@<2ex>[rr]^{j^{\ast}}\ar@<2ex>[ll]^{i^{!}} & &  \mathrm{Mod}(\mathcal{R})\ar@<2ex>[ll]^{j_{\ast}}}$$
is a right recollement, then there is a right recollement
$$\xymatrix{\mathrm{Mod}(\mathcal{C})\ar@<2ex>[rr]^{\tilde{i_{!}}} & & \mathrm{Mod}(\mathbf{\Lambda}^{\ast})\ar@<2ex>[rr]^{\tilde{j^{\ast}}}\ar@<2ex>[ll]^{\tilde{i^{!}}} & & \mathrm{Mod}(\mathbf{\Lambda}).\ar@<2ex>[ll]^{\tilde{j_{\ast}}}}$$
\end{enumerate}
\end{theorem}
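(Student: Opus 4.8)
The strategy is to transport the given recollement between the functor categories $\mathrm{Mod}(\mathcal S)$ and $\mathrm{Mod}(\mathcal R)$ (via $\mathrm{Mod}(\mathcal C)$) to the triangular matrix categories by using the equivalence $\mathrm{Mod}\big(\left[\begin{smallmatrix}\mathcal T & 0\\ M & \mathcal U\end{smallmatrix}\right]\big)\simeq\big(\mathrm{Mod}(\mathcal T),\mathbb G\mathrm{Mod}(\mathcal U)\big)$ established in \cite{LeOS}. Under this identification an object of $\mathrm{Mod}(\mathbf\Lambda)$ is a triple $(A,f,B)$ with $A\in\mathrm{Mod}(\mathcal T)$, $B\in\mathrm{Mod}(\mathcal R)$ and $f:\mathbb G(A)=M\otimes_{\mathcal T}A\to B$ a morphism in $\mathrm{Mod}(\mathcal R)$; similarly for $\mathbf\Lambda^{!}$ and $\mathbf\Lambda^{*}$ with $\mathcal R$ replaced by $\mathcal S$ and $M$ replaced by $j_!(M)$ resp.\ $j_*(M)$. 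I would first spell out precisely the construction of the bimodules $j_!(M),j_*(M)$ referenced in \ref{otrobimodulo}: applying the functor $i^{\ast},i_{\ast},j_{!},\dots$ componentwise (in the $\mathcal R$-variable) to $M\in\mathrm{Mod}(\mathcal R\otimes\mathcal T^{op})$ produces the required objects of $\mathrm{Mod}(\mathcal S\otimes\mathcal T^{op})$, and one records the natural isomorphisms coming from the adjunctions, e.g.\ $j^{!}\circ j_{!}\cong\mathrm{id}$, $j^{!}\,i_{\ast}=0$, and the counit/unit identities — these compatibilities are what make everything below go through.

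Next I would define the six functors $\widetilde{i^{\ast}},\widetilde{i_{\ast}},\widetilde{j^{!}},\widetilde{j_{!}}$ (and in part (b) the $*$-decorated versions) on triples. The natural definitions are: $\widetilde{j_{!}}(A,f,B):=(A,\,\eta\circ j_{!}(f),\,j_{!}(B))$ where $\eta$ identifies $j_{!}(M)\otimes_{\mathcal T}A$ with $j_{!}(M\otimes_{\mathcal T}A)$ (one needs $j_{!}$ applied in the $\mathcal R$-variable to commute with $-\otimes_{\mathcal T}A$, which holds because $j_{!}$ is a left adjoint and hence right exact and sum-preserving, matching Proposition \ref{AProposition0}); $\widetilde{j^{!}}(A,g,B'):=(A,\,j^{!}(g)\circ(\text{comparison}),\,j^{!}(B'))$; $\widetilde{i_{\ast}}(X):=(0,\,0,\,i_{\ast}(X))$ using $\mathbb G(0)=0$; and $\widetilde{i^{\ast}}(A,f,B):=i^{\ast}(B)$, possibly composed appropriately so that it is left adjoint to $\widetilde{i_{\ast}}$. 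One then checks the three axioms of a left recollement: \textbf{(LR1)} the adjunctions $(\widetilde{i^{\ast}},\widetilde{i_{\ast}})$ and $(\widetilde{j_{!}},\widetilde{j^{!}})$ follow by assembling the hom-set bijections of $(i^{\ast},i_{\ast})$ and $(j_{!},j^{!})$ with the description of morphisms in a comma category — a morphism $(A,f,B)\to(A',f',B')$ being a commuting square — so the adjunction bijection is built coordinatewise; \textbf{(LR2)} $\widetilde{j^{!}}\,\widetilde{i_{\ast}}=0$ is immediate from $j^{!}i_{\ast}=0$ and the vanishing of the $\mathcal T$-component; \textbf{(LR3)} full faithfulness of $\widetilde{i_{\ast}}$ and $\widetilde{j_{!}}$ reduces to that of $i_{\ast}$ and $j_{!}$ together with the observation that the $\mathcal T$-component is carried along isomorphically. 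Part (b) is entirely dual, working with $j_{*}(M)$, the adjoint pairs $(i_{!},i^{!})$ and $(j^{*},j_{*})$, and the fact that $j_{*}$ (a right adjoint) is left exact; here one uses instead that $\Hom_{\mathcal T}$-type comparison maps $j_{*}(M\otimes_{\mathcal T}A)$ versus $j_{*}(M)\otimes_{\mathcal T}A$ behave correctly — and if tensor does not commute with $j_{*}$ on the nose, one reformulates the comma category on the $\mathbb G$-side using the right-adjoint description so that the needed compatibility becomes the left-exactness of $j_{*}$.

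The main obstacle I anticipate is exactly the compatibility between the functors $i^{\ast},i_{\ast},j_{!},j^{!}$ acting in the $\mathcal R$-variable and the functor $\mathbb G=M\otimes_{\mathcal T}-$ (resp.\ its analogues for $j_!(M),j_*(M)$): one must produce canonical, natural isomorphisms such as $j_{!}(M)\otimes_{\mathcal T}A\cong j_{!}(M\otimes_{\mathcal T}A)$ and verify they are compatible with the structure maps $f$, so that $\widetilde{j_{!}},\widetilde{j^{!}}$ are well-defined functors between the comma categories and the adjunction isomorphisms glue correctly. This is where right-exactness/sum-preservation of left adjoints (and left-exactness of right adjoints) enters decisively, and it is the technical heart that generalizes \cite[Theorem 4.4]{Chen} from rings to additive categories; once these comparison isomorphisms are in place, the verification of (LR1)–(LR3) (resp.\ (RR1)–(RR3)) is a routine diagram chase.
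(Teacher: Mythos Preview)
Your direct construction is a legitimate alternative, but it differs from the paper's route and has a notational slip and an unfinished step in (b).

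\textbf{Comparison with the paper.} The paper does \emph{not} build the six functors by hand. Instead it invokes Chen's abstract result (stated here as Theorem~\ref{RecollJ}): once one has left exact functors $G_1:\mathscr A\to\mathscr D$, $G_2:\mathscr B\to\mathscr D$ such that the pair $(G_2,G_1)$ is \emph{compatible} (in the sense of \cite[Definition~3.2]{Chen}) with the adjoint pair $(j_!,j^!)$, the induced left recollement on comma categories is automatic. The paper's actual work is therefore concentrated in Lemma~\ref{RecollG}, which shows that the Hom-type functors $\mathbb G_1(B)(T)=\mathrm{Hom}_{\mathrm{Mod}(\mathcal R)}(M_T,B)$ and $\mathbb G_2(L)(T)=\mathrm{Hom}_{\mathrm{Mod}(\mathcal S)}(j_!(M_T),L)$ form a compatible pair as soon as the unit of $(j_!,j^!)$ is the identity on each $M_T$; this in turn follows from full faithfulness of $j_!$. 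Your approach instead works on the tensor side and verifies (LR1)--(LR3) directly. What the paper's route buys is brevity and a uniform treatment of (a) and (b): Lemma~\ref{RecollG} applies verbatim with $(F,G)=(j_*,j^*)$ for part (b), since $j_*$ is again fully faithful so its unit can be normalised to the identity on $M_T$. What your route buys is explicitness: one sees concretely what $\widetilde{j_!}$, $\widetilde{j^!}$, $\widetilde{i_*}$, $\widetilde{i^*}$ do on objects.

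\textbf{Two points to fix.} First, in the paper's conventions $\mathbb G$ is the Hom functor $\mathbb G(B)(T)=\mathrm{Hom}_{\mathrm{Mod}(\mathcal U)}(M_T,B)$ and objects of the comma category are morphisms $A\to\mathbb G(B)$ in $\mathrm{Mod}(\mathcal T)$; the tensor-type functor you write as $\mathbb G$ is the left adjoint $\mathbb F$ (see \cite[Section~5]{LeOS} and Proposition~\ref{adjunotro} here). This is harmless for the argument but should be corrected. Second, and more substantively, your handling of part (b) is incomplete: you correctly observe that $j_*$ need not commute with the colimit defining $\mathbb F$, and you gesture at ``reformulating on the $\mathbb G$-side'', but this reformulation is exactly the content of Lemma~\ref{RecollH} and Lemma~\ref{RecollG} --- the natural transformations $\xi:\mathbb G_1\to\mathbb G_2\circ j_*$ and $\rho:\mathbb G_2\to\mathbb G_1\circ j^*$ come for free from functoriality of $\mathrm{Hom}$, with no exactness or colimit-preservation needed. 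If you carry out that switch, you will essentially have reproduced the paper's proof; if you stay on the tensor side for (b), you owe an argument that the comparison map $j_*(M)\otimes_{\mathcal T}A\to j_*(M\otimes_{\mathcal T}A)$ (which is generally not invertible) nevertheless suffices to define $\widetilde{j_*}$ and to produce the adjunction, and that is where your plan currently has a gap.
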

\bigskip

We adapt the arguments given in  \cite{Chen} to prove Theorem \ref{Recoll1}. Thus,  we first recall some notation  and results of \cite{LeOS}.\\

In \cite{LeOS} the notion of triangular matrix category was introduced. For convenience of the reader, we recall briefly these concepts.  Let $\mathcal{U,T}$ additive categories and $M\in\mathrm{Mod}(\mathcal U\otimes\mathcal {T}^{op})$,  the \textbf{triangular matrix category} $\mathbf{\Lambda}=\left[ \begin{smallmatrix}
\mathcal{T} & 0 \\ M & \mathcal{U}
\end{smallmatrix}\right]$ is defined  as follows.

\begin{enumerate}
\item [(a)] The class of objects of this category are matrices $ \left[
\begin{smallmatrix}
T & 0 \\ M & U
\end{smallmatrix}\right]  $ where the objects $ T $ and $U$ are in  $\mathcal{T} $ and $\mathcal{U} $ respectively.

\item [(b)] Given a pair of objects in
$\left[ \begin{smallmatrix}
T & 0 \\
M & U
\end{smallmatrix} \right] ,  \left[ \begin{smallmatrix}
T' & 0 \\
M & U'
\end{smallmatrix} \right]$ in
$\mathbf{\Lambda}$ , 

$$\mathsf{ Hom}_{\mathbf{\Lambda}}\left (\left[ \begin{smallmatrix}
T & 0 \\
M & U
\end{smallmatrix} \right] ,  \left[ \begin{smallmatrix}
T' & 0 \\
M & U'
\end{smallmatrix} \right]  \right)  := \left[ \begin{smallmatrix}
\mathsf{Hom}_{\mathcal{T}}(T,T') & 0 \\
M(U',T) & \mathsf{Hom}_{\mathcal{U}}(U,U')
\end{smallmatrix} \right].$$
\end{enumerate}
The composition is given by
\begin{eqnarray*}
\circ&:&\left[  \begin{smallmatrix}
{\mathcal{T}}(T',T'') & 0 \\
M(U'',T') & {\mathcal{U}}(U',U'')
\end{smallmatrix}  \right] \times \left[
\begin{smallmatrix}
{\mathcal{T}}(T,T') & 0 \\
M(U',T) & {\mathcal{U}}(U,U')
\end{smallmatrix} \right]\longrightarrow\left[
\begin{smallmatrix}
{\mathcal{T}}(T,T'') & 0 \\
M(U'',T) & {\mathcal{U}}(U,U'')\end{smallmatrix} \right] \\
&& \left( \left[ \begin{smallmatrix}
t_{2} & 0 \\
m_{2} & u_{2}
\end{smallmatrix} \right], \left[
\begin{smallmatrix}
t_{1} & 0 \\
m_{1} & u_{1}
\end{smallmatrix} \right]\right)\longmapsto\left[
\begin{smallmatrix}
t_{2}\circ t_{1} & 0 \\
m_{2}\bullet t_{1}+u_{2}\bullet m_{1} & u_{2}\circ u_{1}
\end{smallmatrix} \right].
\end{eqnarray*}
We recall that $ m_{2}\bullet t_{1}:=M(1_{U''}\otimes t_{1}^{op})(m_{2})$ and
$u_{2}\bullet m_{1}=M(u_{2}\otimes 1_{T})(m_{1})$,
and given 
an object $\left[
\begin{smallmatrix}
T & 0 \\
M & U
\end{smallmatrix} \right]\in \mathbf{\Lambda}$, the identity morphism is given by $1_{\left[
\begin{smallmatrix}
T & 0 \\
M & U
\end{smallmatrix} \right]}:=\left[
\begin{smallmatrix}
1_{T} & 0 \\
0 & 1_{U}
\end{smallmatrix} \right].$

In \cite[Theorem 3.14]{LeOS} it is proved the following  result.

\begin{theorem}\label{equivalenceLEOS}
Let $\mathcal{U}$ and $\mathcal{T}$  be additive categories and $ M\in \mathrm{Mod}(\mathcal{U}\otimes \mathcal{T}^{op})$. Then,  there exists a functor $\mathbb{G}:\mathrm{Mod}(\mathcal{U})\longrightarrow \mathrm{Mod}(\mathcal{T})$ for which there is an equivalence of categories 
$$\Big( \mathrm{Mod}(\mathcal T), \mathbb{G}\mathrm{Mod}(\mathcal U)\Big)\cong\mathrm{Mod}\big(\left[ \begin{smallmatrix}
\mathcal T & 0 \\
M & \mathcal U
\end{smallmatrix} \right] \big).$$
\end{theorem}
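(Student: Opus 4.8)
The plan is to produce the functor $\mathbb{G}$ explicitly and then exhibit mutually quasi-inverse functors between $\mathrm{Mod}(\mathbf{\Lambda})$ and the comma category. For $N\in\mathrm{Mod}(\mathcal{U})$, fixing the second variable of $M$ gives a $\mathcal{U}$-module $M(-,T)$ for each $T\in\mathcal{T}$, and I would set $\mathbb{G}(N)(T):=\mathrm{Hom}_{\mathcal{U}}\big(M(-,T),N\big)$, letting a morphism $t:T\to T'$ act by precomposition with the natural transformation $M(1\otimes t^{op}):M(-,T')\to M(-,T)$ (recall $M$ is contravariant in $\mathcal{T}$). One checks this is a $\mathcal{T}$-module and that $N\mapsto\mathbb{G}(N)$ is additive and functorial, which settles the existence of $\mathbb{G}$; its left adjoint is $M\otimes_{\mathcal{T}}-$, mirroring the classical triangular-matrix-ring picture, although this adjunction is not strictly needed below.

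Next I would define $\Psi\colon\big(\mathrm{Mod}(\mathcal{T}),\mathbb{G}\mathrm{Mod}(\mathcal{U})\big)\to\mathrm{Mod}(\mathbf{\Lambda})$. On an object $(B,f,A)$ (so $B\in\mathrm{Mod}(\mathcal{T})$, $A\in\mathrm{Mod}(\mathcal{U})$, $f\colon B\to\mathbb{G}(A)$ a natural transformation) put $\Psi(B,f,A)\big(\left[\begin{smallmatrix}T&0\\ M&U\end{smallmatrix}\right]\big):=B(T)\oplus A(U)$, and for a morphism $\left[\begin{smallmatrix}t&0\\ m&u\end{smallmatrix}\right]$ with $t\in\mathcal{T}(T,T')$, $u\in\mathcal{U}(U,U')$, $m\in M(U',T)$ let it act by $\left[\begin{smallmatrix}B(t)&0\\ \varphi_m&A(u)\end{smallmatrix}\right]$, where $\varphi_m\colon B(T)\to A(U')$ sends $b$ to $f_T(b)_{U'}(m)$ (evaluate the $\mathcal{U}$-module morphism $f_T(b)\colon M(-,T)\to A$ at $U'$, then at $m$). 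Preservation of identities is immediate since $\varphi_0=0$; the heart of the argument is preservation of composition, i.e. that the matrix attached to $\left[\begin{smallmatrix}t_2&0\\ m_2&u_2\end{smallmatrix}\right]\circ\left[\begin{smallmatrix}t_1&0\\ m_1&u_1\end{smallmatrix}\right]=\left[\begin{smallmatrix}t_2t_1&0\\ m_2\bullet t_1+u_2\bullet m_1&u_2u_1\end{smallmatrix}\right]$ equals the product of the two matrices. Additivity of $m\mapsto\varphi_m$ reduces this to the identities $\varphi_{m_2}\circ B(t_1)=\varphi_{m_2\bullet t_1}$ and $A(u_2)\circ\varphi_{m_1}=\varphi_{u_2\bullet m_1}$: the first follows from naturality of $f$ with respect to $t_1$ (unwinding that $\mathbb{G}(A)(t_1)$ is precomposition with $M(1\otimes t_1^{op})$) together with $m_2\bullet t_1=M(1_{U''}\otimes t_1^{op})(m_2)$, and the second from naturality of the $\mathcal{U}$-module morphism $f_T(b)$ with respect to $u_2$ together with $u_2\bullet m_1=M(u_2\otimes 1_T)(m_1)$. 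On morphisms $\Psi$ sends a pair $(\beta,\alpha)$ to $\{\beta_T\oplus\alpha_U\}$, whose naturality over $\mathbf{\Lambda}$ is exactly the condition $\mathbb{G}(\alpha)\circ f=f'\circ\beta$.

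Conversely I would build $\Phi\colon\mathrm{Mod}(\mathbf{\Lambda})\to\big(\mathrm{Mod}(\mathcal{T}),\mathbb{G}\mathrm{Mod}(\mathcal{U})\big)$ by restriction along the embeddings $\mathcal{T}\hookrightarrow\mathbf{\Lambda}$, $T\mapsto\left[\begin{smallmatrix}T&0\\ M&0\end{smallmatrix}\right]$, and $\mathcal{U}\hookrightarrow\mathbf{\Lambda}$, $U\mapsto\left[\begin{smallmatrix}0&0\\ M&U\end{smallmatrix}\right]$ (zero objects exist since $\mathcal{T},\mathcal{U}$ are additive). For $H\in\mathrm{Mod}(\mathbf{\Lambda})$ set $B(T):=H\big(\left[\begin{smallmatrix}T&0\\ M&0\end{smallmatrix}\right]\big)$, $A(U):=H\big(\left[\begin{smallmatrix}0&0\\ M&U\end{smallmatrix}\right]\big)$, and obtain $f\colon B\to\mathbb{G}(A)$ by currying the maps $H\big(\left[\begin{smallmatrix}0&0\\ m&0\end{smallmatrix}\right]\big)\colon B(T)\to A(U)$, $m\in M(U,T)$, into a natural transformation $B(T)\to\mathrm{Hom}_{\mathcal{U}}(M(-,T),A)$; the required naturalities in $T$ and $U$ follow from functoriality of $H$ applied to morphisms of the forms $\left[\begin{smallmatrix}t&0\\ 0&0\end{smallmatrix}\right]$ and $\left[\begin{smallmatrix}0&0\\ 0&u\end{smallmatrix}\right]$. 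On morphisms $\Phi$ takes $\eta\colon H\to H'$ to the pair obtained by evaluating $\eta$ at $\left[\begin{smallmatrix}T&0\\ M&0\end{smallmatrix}\right]$ and $\left[\begin{smallmatrix}0&0\\ M&U\end{smallmatrix}\right]$.

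Finally I would check $\Phi\Psi\cong\mathrm{id}$ and $\Psi\Phi\cong\mathrm{id}$. The first is essentially on the nose: $\Phi\Psi(B,f,A)$ recovers $B(T)\oplus A(0)=B(T)$ and $B(0)\oplus A(U)=A(U)$ since $A(0)=0=B(0)$ by additivity, and the pairing used to reconstruct $f$ is the defining one. For the second, the biproduct decomposition $\left[\begin{smallmatrix}T&0\\ M&U\end{smallmatrix}\right]\cong\left[\begin{smallmatrix}T&0\\ M&0\end{smallmatrix}\right]\oplus\left[\begin{smallmatrix}0&0\\ M&U\end{smallmatrix}\right]$ in $\mathbf{\Lambda}$ and additivity of $H$ give a natural isomorphism $H\big(\left[\begin{smallmatrix}T&0\\ M&U\end{smallmatrix}\right]\big)\cong B(T)\oplus A(U)$, and one verifies that under this identification $H$ acts on a general morphism $\left[\begin{smallmatrix}t&0\\ m&u\end{smallmatrix}\right]$ by the matrix $\left[\begin{smallmatrix}B(t)&0\\ \varphi_m&A(u)\end{smallmatrix}\right]$ prescribed by $\Psi$, by decomposing $\left[\begin{smallmatrix}t&0\\ m&u\end{smallmatrix}\right]$ into its components $\left[\begin{smallmatrix}t&0\\ 0&0\end{smallmatrix}\right]$, $\left[\begin{smallmatrix}0&0\\ m&0\end{smallmatrix}\right]$, $\left[\begin{smallmatrix}0&0\\ 0&u\end{smallmatrix}\right]$ and using functoriality of $H$. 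The one genuine obstacle is the composition-compatibility verification for $\Psi$: it is the only point where the twisted composition of $\mathbf{\Lambda}$ and the bimodule structure of $M$ must be reconciled with ordinary matrix multiplication, and it has to be matched entry by entry; everything else is routine naturality bookkeeping.
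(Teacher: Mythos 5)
Your construction is correct and coincides with the one the paper itself uses: the paper does not reprove this theorem but cites \cite[Theorem 3.14]{LeOS}, and the functor $\mathbb{G}$ you define, $\mathbb{G}(A)(T)=\mathrm{Hom}_{\mathrm{Mod}(\mathcal{U})}(M_{T},A)$ with $t$ acting through $\bar{t}=M(1\otimes t^{op})$, is exactly the one recalled in Remark \ref{defofG1}. Your equivalence via $B(T)\oplus A(U)$ with the matrix action $\left[\begin{smallmatrix} B(t) & 0\\ \varphi_{m} & A(u)\end{smallmatrix}\right]$, the two naturality identities $\varphi_{m_{2}}\circ B(t_{1})=\varphi_{m_{2}\bullet t_{1}}$ and $A(u_{2})\circ\varphi_{m_{1}}=\varphi_{u_{2}\bullet m_{1}}$, and the biproduct decomposition $\left[\begin{smallmatrix} T&0\\ M&U\end{smallmatrix}\right]\cong\left[\begin{smallmatrix} T&0\\ M&0\end{smallmatrix}\right]\oplus\left[\begin{smallmatrix} 0&0\\ M&U\end{smallmatrix}\right]$ reproduce the argument of the cited source, so no further comment is needed.
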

In \ref{defofG1} we will recall  briefly  the definition of the functor  above mentionated.

\begin{remark}\label{defofG1}
Let $\mathcal{R,S,T}$ additive categories and consider an additive functor  $ M \in \mathrm{Mod}(\mathcal{R}\otimes\mathcal{T}^{op})$. For all $ T\in \mathcal{T} $ we have the functor $ M_{T} :\mathcal{R}\longrightarrow \mathbf{Ab} $ defined as follows:
\begin{enumerate}
\item $ M_{T}(R):=M(R,T) $, for all $ R\in \mathcal{R} $.
\item $ M_{T}(r):=M(r\otimes 1_{T}): M_{T}(R)\longrightarrow M_{T}(R')$, for all $ r\in  \mathrm{Hom}_{\mathcal{R}}(R,R'). $
\end{enumerate}
Also for all $ t\in  \mathrm{Hom}_{\mathcal{T}}(T,T') $ we have a morphism of $ \mathcal{R} $-modules $ \bar{t}:M_{T'}\longrightarrow M_{T} $ such that $ \bar{t}=\lbrace [\bar{t}]_{R}:M_{T'}(R)\longrightarrow M_{T}(R)\rbrace_{ R\in \mathcal{R}} $ where $ [\bar{t}]_{R}=M(1_{R}\otimes t^{op}):M(R,T')\longrightarrow M(R,T) $.\\
So we have the functor $\mathbb{G}_{1}:\mathrm{Mod}(\mathcal{R})\longrightarrow \mathrm{Mod}(\mathcal{T}) $ as follows:
\begin{enumerate}
\item $\mathbb{G}_{1}(B)(T):= \mathrm{Hom}_{\mathrm{Mod}(\mathcal{R})}(M_{T},B) $, for all $ B\in \mathrm{Mod}(\mathcal{R}) $ and for all $ T\in \mathcal{T} $. Moreover $ \mathbb{G}_{1}(B)(t):=\mathrm{ Hom}_{\mathrm{Mod}(\mathcal{R})}(\bar{t},B) $ for all $ B\in \mathrm{Mod}(\mathcal{R}) $ and for all $ t\in \mathrm{Hom}_{\mathcal{T}}(T,T') $.
\item If $ \eta:B\longrightarrow B' $ is a morphism of $ \mathcal{R} $-modules,   $\mathbb{G}_{1}(\eta) :\mathbb{G}_{1}(B)\longrightarrow \mathbb{G}_{1}(B')$  is defined by \  \ $\mathbb{G}_{1}(\eta)= \left\lbrace [\mathbb{G}_{1}(\eta)]_{{T}}:\mathbb{G}_{1}(B)(T)\longrightarrow \mathbb{G}_{1}(B')(T)\right\rbrace _{T\in \mathcal{T}} $, with \  \ 
$[\mathbb{G}_{1}(\eta)]_{{T}}:=\mathrm{Hom}_{\mathrm{Mod}(\mathcal{R})}(M_{T},\eta)$.
\end{enumerate}

Hence we have the comma category $ \Big( \mathrm{Mod}(\mathcal{T}),\mathbb{G}_{1}\mathrm{Mod}(\mathcal{R})\Big) $ and  a equivalence of categories
$$ \Big( \mathrm{Mod}(\mathcal{T}),\mathbb{G}_{1}\mathrm{Mod}(\mathcal{R})\Big)\xrightarrow{\sim} \mathrm{Mod}(\left[ \begin{smallmatrix}
\mathcal T & 0 \\
M & \mathcal{R}
\end{smallmatrix} \right] ).$$

Similarly, given $M \in \mathrm{Mod}(\mathcal{R}\otimes\mathcal{T}^{op})$ we have $\overline{M}\in \mathrm{Mod}(\mathcal{T}^{op}\otimes\mathcal{R})$ and a functor $\overline{\mathbb{G}}:\mathrm{Mod}(\mathcal{T}^{op})\longrightarrow \mathrm{Mod}(\mathcal{R}^{op})$ (see section 4 in \cite{LeOS}).

\end{remark}

\begin{definition}\label{otrobimodulo}
Let $F:\mathrm{Mod}(\mathcal{R})\rightarrow \mathrm{Mod}(\mathcal{S}) $ an additive functor and $M\in\mathrm{Mod}(\mathcal{R}\otimes \mathcal{T}^{op})$. We define a bimodule in 
 $\mathrm{Mod}(\mathcal{S}\otimes\mathcal{T}^{op})$ denoted  by $N:=F(M)$ as follows, the functor $N=F (M) :\mathcal{S}\otimes\mathcal{T}^{op}\rightarrow \mathbf{Ab}$ is given by:\\
(i) $N(S,T):=F(M_{T})(S)$ for all $(S,T)\in \mathcal{S}\otimes\mathcal{T}^{op}$.\\
 (ii) Let $g\otimes t^{op}:(S,T)\rightarrow (S',T')$ where $g:S\rightarrow S'$ in $\mathcal{S}$ and $t:T'\rightarrow T$ in $\mathcal{T}$. Since $\bar{t}:M_{T}\rightarrow M_{T'}$ is a morphism of $\mathcal{R}$-modules, then $F(\bar{t}):F(M_{T})\rightarrow F(M_{T'})$ is a morphism of $\mathcal{S}$-modules. Thus we have the following commutative diagram.
      \[
      \begin{diagram}
      \node{F(M_{T})(S)}\arrow{e,t}{[F(\bar{t})]_{S}}\arrow{s,l}{F(M_{T})(g)}
       \node{F(M_{T'})(S)}\arrow{s,r}{F(M_{T'})(g)}\\
      \node{F(M_{T})(S')}\arrow{e,b}{[F(\bar{t})]_{S'}}
       \node{F(M_{T'})(S')}
      \end{diagram}
      \]
Hence we define $N(g\otimes t^{op}):=F(M_{T'})(g)\circ [F(\bar{t})]_{S}=[F(\bar{t})]_{S'}\circ F(M_{T})(g).$
\end{definition}

Now, that we have a bimodule $N\in \mathrm{Mod}(\mathcal{S}\otimes \mathcal{T}^{op})$ we define a functor $\mathbb{G}_{2}$ similar to $\mathbb{G}_{1}$. For convenience of the reader  we repeat its construction.

\begin{remark}\label{defofG2}
We define a functor $\mathbb{G}_{2}:\mathrm{Mod}(\mathcal{S})\rightarrow \mathrm{Mod}(\mathcal{T})$ as follows:

\begin{enumerate}
\item $\mathbb{G}_{2}(L)(T):= \mathrm{Hom}_{\mathrm{Mod}(\mathcal{S})}(N_{T},L) $, for all $ L\in \mathrm{Mod}(\mathcal{S}) $ and for all $ T\in \mathcal{T} $, where $N_{T}:=F(M_{T})\in \mathrm{Mod}(\mathcal{S})$. Moreover $\mathbb{G}_{2}(L)(t):=\mathrm{Hom}_{	\mathrm{Mod}(\mathcal{S})}(F(\bar{t}),L) $ for all $ L\in \mathrm{Mod}(\mathcal{S}) $ and for all $ t\in \mathrm{Hom}_{\mathcal{T}}(T',T) $.
\item If $ \gamma:L\longrightarrow L' $ is a morphism of $ \mathcal{S} $-modules we define $\mathbb{G}_{2}(\gamma) :\mathbb{G}_{2}(L)\longrightarrow \mathbb{G}_{2}(L')$ as: \\
$\mathbb{G}_{2}(\gamma)= \left\lbrace [\mathbb{G}_{2}(\gamma)]_{{T}}:=\mathrm{Hom}_{\mathrm{Mod}(\mathcal{S})}(N_{T},\gamma):\mathbb{G}_{2}(L)(T)\longrightarrow \mathbb{G}_{2}(L')(T)\right\rbrace _{T\in \mathcal{T}} $.
\end{enumerate}
\end{remark}
Since $N=F(M) \in \mathrm{Mod}(\mathcal{S}\otimes \mathcal{T}^{op})$ , we have the comma category $ \Big( \mathrm{Mod}(\mathcal{T}),\mathbb{G}_{2}\mathrm{Mod}(\mathcal{S})\Big)  $, 
and we have an equivalence of categories
$$ \Big( \mathrm{Mod}(\mathcal{T}),\mathbb{G}_{2}\mathrm{Mod}(\mathcal{S})\Big)\xrightarrow{\sim}\mathrm{Mod}(\left[ \begin{smallmatrix}
\mathcal T & 0 \\
F(M )& \mathcal{S}
\end{smallmatrix} \right] ).$$
\\
For all $ B\in \mathrm{Mod}(\mathcal{R})$ and $T\in \mathcal{T}$, $ F $ defines a mapping 
\[
F_{M_{T},B}:\mathrm{Hom}_{\mathrm{Mod}(\mathcal{R})}(M_{T},B)\longrightarrow \mathrm{Hom}_{\mathrm{Mod}(\mathcal{S})}(FM_{T},FB), \ f\mapsto F(f).
\] 
Similarly, for all $ L\in \mathrm{Mod}(\mathcal{S})$ and $T\in \mathcal{T}$, $ G $ defines a mapping 
\[
G_{N_{T},L}:\mathrm{Hom}_{\mathrm{Mod}(\mathcal{S})}(N_{T},L)\longrightarrow \mathrm{Hom}_{\mathrm{Mod}(\mathcal{R})}(GN_{T},GL), \ g\mapsto G(g).
\] 

In this way we have the following lemma.
 
\begin{lemma}\label{RecollH}
Let $M\in \mathrm{Mod}(\mathcal{R}\otimes \mathcal{T}^{op})$ be.
\begin{itemize}
\item[(a1)] For all $B\in \mathrm{Mod}(\mathcal{R})$,  the family $F_{M,B}:=\{F_{M_{T},B}:\mathbb{G}_1(B)(T)\rightarrow (\mathbb{G}_2 \circ F)(B)(T)\}_{T\in\mathcal T}$ is natural transformation, that is, 
$F_{M,B}:\mathbb{G}_1(B)\rightarrow (\mathbb{G}_2\circ F)(B)$ is a morphism of $\mathcal T$-modules.
                
\item[(a2)]  $ \xi:=F_{M,-}:\mathbb{G}_{1}\longrightarrow \mathbb{G}_{2}\circ F $ is a natural transformation.
\end{itemize}

\begin{itemize}
\item[(b1)] Suppose that $M_{T}=G(N_{T})$ for all $T\in \mathcal{T}$.  Then for all $L\in \mathrm{Mod}(\mathcal{S}) $,  the family $G_{N,L}:=\{G_{N_{T},L}:\mathbb{G}_2(L)(T)\rightarrow (\mathbb{G}_1 \circ G)(L)(T)\}_{T\in\mathcal T}$ is a natural transformation, that is, $G_{N,L}:\mathbb{G}_2(L)\rightarrow (\mathbb{G}_1\circ G)(L)$ is a morphism of $\mathcal T$-modules.

\item[(b2)] $\rho:=G_{N,-}=\mathbb{G}_{2}\longrightarrow \mathbb{G}_{1}\circ G$ is a natural transformation.
\end{itemize}
\end{lemma}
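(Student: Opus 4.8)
The plan is to verify each of the four assertions directly from the definitions of $\mathbb{G}_1$, $\mathbb{G}_2$, and the mappings $F_{M_T,B}$, $G_{N_T,L}$, since the content of the lemma is precisely that the obvious ``apply $F$'' (resp. ``apply $G$'') operation is compatible with all the structure maps. For (a1), I would fix $B\in\mathrm{Mod}(\mathcal R)$ and a morphism $t\in\mathrm{Hom}_{\mathcal T}(T,T')$, and check that the square
\[
\xymatrix{
\mathbb{G}_1(B)(T)\ar[r]^-{F_{M_T,B}}\ar[d]_{\mathbb{G}_1(B)(t)} & (\mathbb{G}_2\circ F)(B)(T)\ar[d]^{(\mathbb{G}_2\circ F)(B)(t)}\\
\mathbb{G}_1(B)(T')\ar[r]^-{F_{M_{T'},B}} & (\mathbb{G}_2\circ F)(B)(T')
}
\]
commutes. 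Unwinding \ref{defofG1} and \ref{defofG2}, the left vertical is precomposition with $\bar t\colon M_{T'}\to M_T$, the right vertical is precomposition with $F(\bar t)\colon F(M_{T'})\to F(M_T)=N_{T'}\to N_T$, and both horizontals send $f\mapsto F(f)$. So the claim reduces to $F(f)\circ F(\bar t)=F(f\circ\bar t)$, which is just functoriality of $F$. Then (a2) follows immediately: naturality of $\xi=F_{M,-}$ in the variable $B$ amounts to $F(\eta\circ f)=F(\eta)\circ F(f)$ for $\eta\colon B\to B'$, again functoriality of $F$; combined with (a1) this says $\xi$ is a natural transformation of functors $\mathrm{Mod}(\mathcal R)\to\mathrm{Mod}(\mathcal T)$.

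For (b1) and (b2) the argument is formally identical, with $G$ in place of $F$ and $N_T$ in place of $M_T$; the only point where the extra hypothesis $M_T=G(N_T)$ for all $T\in\mathcal T$ is used is in identifying the target: $(\mathbb{G}_1\circ G)(L)(T)=\mathrm{Hom}_{\mathrm{Mod}(\mathcal R)}(M_T,GL)=\mathrm{Hom}_{\mathrm{Mod}(\mathcal R)}(GN_T,GL)$, so that $G_{N_T,L}\colon\mathrm{Hom}_{\mathrm{Mod}(\mathcal S)}(N_T,L)\to\mathrm{Hom}_{\mathrm{Mod}(\mathcal R)}(GN_T,GL)$ indeed lands in the right group. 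One must also check that under this identification the structure map of $(\mathbb{G}_1\circ G)(L)$ associated to $t$ is precomposition with $G(F(\bar t))$ and that this agrees, via $M_T=G(N_T)$, with precomposition by $\overline{t}$ on the $M_T$'s; this is a compatibility that should be recorded, as it is the one place the hypothesis genuinely intervenes. Modulo that bookkeeping, commutativity of the relevant square and naturality in $L$ are once more just functoriality of $G$.

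I do not expect any real obstacle here: the lemma is a sequence of diagram chases whose commutativity is forced by the functoriality of $F$ and $G$ together with the explicit formulas for $\mathbb{G}_1$ and $\mathbb{G}_2$. The only mildly delicate point — and the one I would write out with care — is the identification of targets in part (b), i.e. making precise that the hypothesis $M_T=G(N_T)$ is meant as an equality of $\mathcal R$-modules compatible with the morphisms $\bar t$, so that the diagram defining $\mathbb{G}_1\circ G$ on morphisms genuinely matches the one defining $\mathbb{G}_2$. Everything else is routine, which is presumably why the authors will be content to leave the details to the reader.
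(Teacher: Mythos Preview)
Your proposal is correct and follows essentially the same approach as the paper: both reduce (a1) and (a2) to the functoriality identities $F(f)\circ F(\bar t)=F(f\circ\bar t)$ and $F(\eta)\circ F(f)=F(\eta\circ f)$, and then declare (b1), (b2) analogous. Your remark that the hypothesis $M_T=G(N_T)$ must be read as compatible with the structure maps $\bar t$ is a useful caution that the paper leaves implicit; in the application (Lemma~\ref{RecollG}) this compatibility is supplied by the assumption that the unit $\varepsilon_{M_T}$ is the identity.
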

\begin{proof}
Since $F(M_{T})=N_{T}$,  for all $ B\in \mathrm{Mod}(\mathcal{R}) $  and $T\in\mathcal T$  we have $\mathbb{G}_1(B)(T)=\mathrm{Hom}_{\mathrm{Mod}(\mathcal{R})}(M_{T},B)$ and
 $(\mathbb{G}_2\circ F)(B)(T)=\mathrm{Hom}_{\mathrm{Mod}(\mathcal{S})}(F(M_{T}),F(B))$.\\
(a1) Let $ t\in \mathrm{Hom}_{\mathcal{T}}(T,T') $ and  $B\in \mathrm{Mod}(S) $. We have to show that the following diagram commutes
\[
\begin{diagram}
\node{\mathbb{G}_{1}(B)(T)}\arrow{e,t}{F_{M_{T},B}}\arrow{s,l}{\mathbb{G}_{1}(B)(t)}
 \node{\mathbb{G}_{2}(FB)(T)}\arrow{s,r}{\mathbb{G}_{2}(FB)(t)}\\
\node{\mathbb{G}_{1}(B)(T')}\arrow{e,b}{F_{M_{T'},B}}
 \node{\mathbb{G}_{2}(FB)(T').}
 \end{diagram}
\]
Note that if $ \varphi\in \mathbb{G}_{1}(B)(T) $ then
\[
(\mathbb{G}_{2}(FB)(t)\circ F_{M_{T},B})(\varphi)=\mathrm{Hom}_{\mathrm{Mod}(\mathcal{S})}(F(\overline{t}),FB)F(\varphi)=F(\varphi)F(\overline{t})
\]
and
\[
(F_{M_{T'},B}\circ \mathbb{G}_{1}(B)(t))(\varphi)=F_{M_{T'},B}\left( \mathrm{Hom}_{\mathrm{Mod}(\mathcal{R})}(\overline{t},B)(\varphi) \right)=F_{M_{T'},B}(\varphi\overline{t})=F(\varphi\overline{t}). 
\]
Proving that the diagram commutes.
\bigskip 

(a2) Let $ f\in \mathrm{Hom}_{\mathrm{Mod}(\mathcal{R})}(B,B') $ then we have to show the the following diagram commutes
\[
\begin{diagram}
\node{\mathbb{G}_{1}(B)}\arrow{e,t}{F_{M,B}}\arrow{s,l}{\mathbb{G}_{1}(f)}
 \node{\mathbb{G}_{2}F(B)}\arrow{s,r}{\mathbb{G}_{2}F(f)}\\
\node{\mathbb{G}_{1}(B')}\arrow{e,b}{F_{M,B'}}
 \node{\mathbb{G}_{2}F(B')}
 \end{diagram}
\]
For all $ T\in \mathcal{T} $ and for all $ \varphi\in \mathbb{G}_{1}(B)(T) $ we obtain the equalities.
\begin{align*}
([\mathbb{G}_{2}F(f)]_{T}\circ F_{M_T,B})(\varphi)= [\mathbb{G}_{2}F(f)]_{T}\left( F_{M_{T},B}(\varphi)\right) & =\mathrm{Hom}_{\mathrm{Mod}(\mathcal{S})}(F(M_{T}),F(f))F(\varphi)\\
& =F(f)F(\varphi)
\end{align*}
and 
\begin{align*}
([F_{M_T,B'}] \circ[\mathbb{G}_{1}(f)]_{T})(\varphi) =F_{M_{T},B'}\left( \mathrm{Hom}_{\mathrm{Mod}(\mathcal{R})}(M_{T},f)(\varphi)\right)=F(f\varphi)
\end{align*}
Proving that the previous diagram is commutative.\\
$(b1)$ and $(b2)$. Suppose that $M_{T}=G(N_{T})$ for all $T\in \mathcal{T}$. Then, for all  $L\in \mathrm{Mod}(\mathcal{S})$ and $T\in\mathcal T$ 
we have $\mathbb{G}_2(L)(T)=\mathrm{Hom}_{\mathrm{Mod}(\mathcal{S})}(N_{T},L)$ and
 $(\mathbb{G}_1\circ G) (L)(T)=\mathrm{Hom}_{\mathrm{Mod}(\mathcal{R})}(G(N_{T}),G(L))$. Therefore the prove of $(b1)$ and $(b2)$  is similar to $(a1)$ and $(a2)$.
\end{proof}

\begin{definition}
$\textnormal{\cite[Definition 3.2]{Chen}}$
Let $G_1: \mathscr{A}\rightarrow  \mathscr {D}$, $G_2: \mathscr {B}\rightarrow  \mathscr {D}$,  $F: \mathscr {A}\rightarrow  \mathscr {B}$ and 
$H: \mathscr B\rightarrow  \mathscr A$ be additive functors. Assume that $(F,H)$ is an adjoint pair, with $\eta$ being the adjugant equivalence. We say that the pair $(G_1,G_2)$ is compatible with the adjoint pair $(F,H)$ if there exist two natural transformations
$$ \xi:G_1\rightarrow G_2 F$$
and
$$ \rho:G_2\rightarrow G_1 H$$ 
such that $\rho_Y$ is a monomorphism and $G_1(\eta_{X,Y}(f))=\rho_YG_2(f)\xi_X$ for every $X\in  \mathscr{A}$, $Y\in  \mathscr{B}$ and $f\in\mathrm{Hom}_\mathscr {B}(FX,Y)$.
\end{definition}

The induced recollement of Theorem \ref{Recoll1} is described in the following theorem which details appear in \cite[Lemma 3.3, Lemma 3.4, Lemma 3.5]{Chen}.
 
\begin{theorem}
Let   $\mathscr{A,  B, C, }$ and  $\mathscr{D}$ abelian categories.
\begin{itemize}
\item[(i)]  Consider additive  functors $F: \mathscr {A}\rightarrow  \mathscr {B}$, $H: \mathscr{B}\rightarrow  \mathscr{A}$, $G_1: \mathscr{A}\rightarrow  \mathscr{D}$, $G_2: \mathscr{B}\rightarrow  \mathscr{D}$.  If $(F,H)$ is an adjoint pair and $(G_1,G_2)$ is compatible with the adjoint pair $(F,H)$, then $F$ and $H$ induce additive functors
$$ \widetilde{F}:(\mathscr{D},G_1\mathscr A)\rightarrow (\mathscr{D},G_2\mathscr B)\text{   and   }    \widetilde{H}:(\mathscr{D},G_2 \mathscr B)\rightarrow (\mathscr{D},G_1 \mathscr A)$$
such that  $(\widetilde{F},\widetilde{H})$ is an adjoint  pair. The pair $(\widetilde{F},\widetilde{H})$ is defined as follows: for every $(D,\varphi,A)\in(\mathscr{D},G_1\mathscr A)$, we set $\widetilde{F}((D,\varphi,A)=(D,\xi_{A}\circ \varphi,F(A))$, and for each 
$(f,g)\in \mathrm{Hom}_{(\mathscr{D},G_1 \mathscr A)}((D,\varphi,A),(D',\varphi', A'))$,  we set $\widetilde{F}((f,g))=(f,F(g))$. Similarly, $\widetilde{H}$ is defined. 

 \item[(ii)]  Consider additive  functors $F: \mathscr {A}\rightarrow  \mathscr {C}$, $H: \mathscr{C}\rightarrow  \mathscr {A}$ and $G: \mathscr {A}\rightarrow  \mathscr {D}$. 
 \begin{itemize}
\item [(a)]  If $(F,H)$ is an adjoint pair, then $F$ and $H$ induce additive functors $\widetilde{F}:(\mathscr {D}, G \mathscr {A})\rightarrow \mathscr C$ and $\widetilde{H}:\mathscr C\rightarrow(\mathscr {D}, G \mathscr {A})$ respectively, such that
 $(\widetilde{F},\widetilde{G})$ is an adjoint pair.  The pair  $(\widetilde{F},\widetilde{H})$ is defined as follows: for every $(D,\varphi,A)\in (\mathscr D, G \mathscr A)$ and $(f,g)\in 
   \mathrm{Hom}_{(\mathscr{D},G\mathscr A)}((D,\varphi,A),(D',\varphi',A'))$, $\widetilde{F}((D,\varphi,A))=F(A)$ and $\widetilde{F}((f,g))=F(g)$, and for every $C\in\mathscr C$ and
    $h\in\mathrm{Hom}_{\mathscr C}(C,C')$, $\widetilde{H}(C)=(GH(C), \mathrm{id}_{GH(C)},H(C))$ and $\widetilde{H}(h)=(GH(h),H(h))$.
    
\item[(b)]  If $(H,F)$ is an adjoint pair, then $F$ y $H$ induce additive functors $\widetilde{F}:(\mathscr {D}, G \mathscr {A})\rightarrow \mathscr C$ and $\widetilde{H}:\mathscr C\rightarrow(\mathscr {C}, G \mathscr {A})$ respectively, such that
  $(\widetilde{H},\widetilde{F})$ is an adjoint pair.  The pair  $(\widetilde{H},\widetilde{F})$ is defined as follows:  for every $C\in\mathscr C$ and $h\in\mathrm{Hom}_{\mathscr C}(C,C')$, $\widetilde{H}(C)=(0, 0,H(C))$ and $\widetilde{H}(h)=(0,H(h))$; and $\widetilde F$ is the same one in (a).
  
 \end{itemize}

\end{itemize}
\end{theorem}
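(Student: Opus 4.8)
The plan is to carry out, in this functor-categorical setting, the argument of \cite[Lemmas 3.3, 3.4, 3.5]{Chen}: in each of the three cases one writes down the induced functors $\widetilde{F}$ and $\widetilde{H}$ explicitly, checks that they are well defined and additive, and then exhibits the adjunction isomorphism by a direct manipulation of hom-sets in the relevant comma categories, the verification of naturality being routine. Recall that a morphism in a comma category $(\mathscr{D},G\mathscr{A})$ from $(D,\varphi,A)$ to $(D',\varphi',A')$ is a pair $(f,g)$ with $G(g)\circ\varphi=\varphi'\circ f$; the only substantial point throughout is to check that the bijections we write down respect these commutativity constraints.

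For part (i), well-definedness of $\widetilde{F}$ on objects follows because $\xi\colon G_1\to G_2F$ gives $\xi_A\colon G_1(A)\to G_2F(A)$, so for $(D,\varphi,A)\in(\mathscr{D},G_1\mathscr{A})$ the map $\xi_A\circ\varphi\colon D\to G_2F(A)$ is a legitimate structure morphism; on morphisms, the naturality square of $\xi$ shows that $(f,F(g))$ is again a comma-category morphism. Dually I would set $\widetilde{H}(D,\psi,B):=(D,\rho_B\circ\psi,H(B))$ and $\widetilde{H}(f,g):=(f,H(g))$, well defined by naturality of $\rho$. For the adjunction, a morphism $\widetilde{F}(D,\varphi,A)\to(D',\psi,B)$ is a pair $(f,g)$ with $f\colon D\to D'$, $g\colon F(A)\to B$ and $G_2(g)\circ\xi_A\circ\varphi=\psi\circ f$, whereas a morphism $(D,\varphi,A)\to\widetilde{H}(D',\psi,B)$ is a pair $(f,g')$ with $g'\colon A\to H(B)$ and $G_1(g')\circ\varphi=\rho_{B}\circ\psi\circ f$. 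The bijection I would use between these sets is $(f,g)\mapsto(f,\eta_{A,B}(g))$: by compatibility $G_1(\eta_{A,B}(g))=\rho_B\circ G_2(g)\circ\xi_A$, so composing the first constraint with $\rho_B$ on the left yields the second; conversely, since $\rho_B$ is a monomorphism, the second constraint yields the first. As $\eta_{A,B}$ is itself bijective and the $\mathscr{D}$-coordinate is untouched, this is a bijection, and it is natural in both variables by naturality of $\eta$.

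For part (ii)(a), $\widetilde{F}$ is just the composite of the canonical projection $(\mathscr{D},G\mathscr{A})\to\mathscr{A}$ with $F$, hence additive; and $\widetilde{H}(C):=(GH(C),\mathrm{id}_{GH(C)},H(C))$, $\widetilde{H}(h):=(GH(h),H(h))$ is well defined since $GH(h)\circ\mathrm{id}=\mathrm{id}\circ GH(h)$. A morphism $(f,g)\colon(D,\varphi,A)\to\widetilde{H}(C)$ satisfies $G(g)\circ\varphi=f$, so $f$ is determined by $g$ and every $g\colon A\to H(C)$ arises; hence $\Hom_{(\mathscr{D},G\mathscr{A})}\big((D,\varphi,A),\widetilde{H}(C)\big)\cong\Hom_{\mathscr{A}}(A,H(C))\cong\Hom_{\mathscr{C}}(F(A),C)=\Hom_{\mathscr{C}}\big(\widetilde{F}(D,\varphi,A),C\big)$, using the adjunction $(F,H)$, and naturality is immediate. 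For part (ii)(b), with $\widetilde{H}(C):=(0,0,H(C))$ and $\widetilde{H}(h):=(0,H(h))$, a morphism $(f,g)\colon\widetilde{H}(C)\to(D,\varphi,A)$ forces $f\colon 0\to D$ to be zero and the commutativity constraint becomes vacuous, so $\Hom_{(\mathscr{D},G\mathscr{A})}\big(\widetilde{H}(C),(D,\varphi,A)\big)\cong\Hom_{\mathscr{A}}(H(C),A)\cong\Hom_{\mathscr{C}}(C,F(A))=\Hom_{\mathscr{C}}\big(C,\widetilde{F}(D,\varphi,A)\big)$ via the adjunction $(H,F)$.

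The main obstacle — indeed essentially the only place where the hypotheses do real work — is the verification in part (i) that applying the adjugant equivalence $\eta_{A,B}$ to the $\mathscr{B}$-component of a comma-category morphism again produces a comma-category morphism, and conversely. This is exactly what the compatibility condition on $(G_1,G_2)$ is designed to ensure, and the converse direction is precisely where the monomorphism hypothesis on $\rho_B$ is indispensable. In parts (ii)(a) and (ii)(b) the structure maps of $\widetilde{H}(C)$ are chosen (the identity, respectively zero) so that the comma-category commutativity constraint trivializes, which is why no extra hypothesis is needed there.
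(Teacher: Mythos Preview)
Your proposal is correct and follows exactly the approach the paper takes: the paper does not supply its own proof of this theorem but simply states that the details appear in \cite[Lemma 3.3, Lemma 3.4, Lemma 3.5]{Chen}, and your argument is precisely a clean execution of those lemmas. In particular, your identification of the compatibility condition (and the monomorphism hypothesis on $\rho_B$) as the crux of part~(i), and the observation that in parts~(ii)(a) and~(ii)(b) the structure map of $\widetilde{H}(C)$ is chosen to trivialize the comma-category constraint, match the logic of the cited reference.
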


The proof of  Theorem \ref{Recoll1} is based in the following result.

\begin{theorem}\cite[Theo. 3.6]{Chen}\label{RecollJ}
Let $\mathscr{A,B, C}$ and $\mathscr{D}$ be abelian categories, and let  $G_{1}:\mathscr{A}\longrightarrow \mathscr{D}$ and 
$G_{2}:\mathscr{B}\longrightarrow \mathscr{D}$ be left exact additive functors.
\begin{enumerate}
\item [(a)] If the diagram 
$$\xymatrix{\mathscr{C}\ar@<-2ex>[rr]_{i_{\ast}} & & \mathscr{A}\ar@<-2ex>[rr]_{j^{!}}\ar@<-2ex>[ll]_{i^{\ast}} & &  \mathscr{B}\ar@<-2ex>[ll]_{j_{!}}}$$
is a left recollement where, $(G_{2},G_{1})$ is compatible with the adjoint $(j_{!},j^{!})$,  then there is a left recollement
$$\xymatrix{\mathscr{C}\ar@<-2ex>[rr]_{\tilde{i_{\ast}}} & & \Big(\mathscr{D},G_{1}\mathscr{A}\Big)\ar@<-2ex>[rr]_{\tilde{j^{!}}}\ar@<-2ex>[ll]_{\tilde{i^{\ast}}} & &  \Big(\mathscr{D},G_{2}\mathscr{B}\Big)\ar@<-2ex>[ll]_{\tilde{j_{!}}}}$$

\item [(b)] If the diagram 
$$\xymatrix{\mathscr{C}\ar@<2ex>[rr]^{i_{!}} & & \mathscr{A}\ar@<2ex>[rr]^{j^{\ast}}\ar@<2ex>[ll]^{i^{!}} & &  \mathscr{B}\ar@<2ex>[ll]^{j_{\ast}}}$$
is a right recollement, where $(G_{1},G_{2})$ is compatible with the adjoint $(j^{\ast},j_{\ast})$, then there is a right recollement
$$\xymatrix{\mathscr{C}\ar@<2ex>[rr]^{\tilde{i_{!}}} & & \Big(\mathscr{D},G_{1}\mathscr{A}\Big)\ar@<2ex>[rr]^{\tilde{j^{\ast}}}\ar@<2ex>[ll]^{\tilde{i^{!}}} & & \Big(\mathscr{D},G_{2}\mathscr{B}\Big)\ar@<2ex>[ll]^{\tilde{j_{\ast}}}}$$
\end{enumerate}
\end{theorem}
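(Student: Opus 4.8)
\emph{Proof plan.} The idea is to deduce Theorem \ref{RecollJ} formally from the construction theorem stated just above it: one feeds each of the two adjoint pairs of the given recollement into that theorem, obtaining induced functors between the comma categories, and then checks the axioms of Definition \ref{leftrigtrecol} for the resulting diagram. As a preliminary, note that $(\mathscr D,G_1\mathscr A)$ and $(\mathscr D,G_2\mathscr B)$ are abelian categories, since $G_1$ and $G_2$ are left exact additive functors between abelian categories (the classical fact about comma/trivial-extension categories, see \cite{Robert}); this is the only place the left-exactness hypothesis is used.

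For part (a): the pair $(i^*,i_*)$ with $i^*\colon\mathscr A\to\mathscr C$ and $i_*\colon\mathscr C\to\mathscr A$ is adjoint, so part (ii)(a) of the construction theorem, applied with $F:=i^*$, $H:=i_*$ and $G:=G_1$, produces an adjoint pair $(\widetilde{i^*},\widetilde{i_*})$ with $\widetilde{i^*}(D,\varphi,A)=i^*(A)$ and $\widetilde{i_*}(C)=(G_1i_*(C),\,\mathrm{id}_{G_1i_*(C)},\,i_*(C))$. The pair $(j_!,j^!)$ with $j_!\colon\mathscr B\to\mathscr A$ and $j^!\colon\mathscr A\to\mathscr B$ is adjoint and $(G_2,G_1)$ is compatible with it; hence part (i) of the construction theorem, applied with $F:=j_!$, $H:=j^!$ and the compatible pair $(G_2,G_1)$ — the variances matching because $j_!$ goes $\mathscr B\to\mathscr A$ — produces an adjoint pair $(\widetilde{j_!},\widetilde{j^!})$ with $\widetilde{j_!}(D,\psi,B)=(D,\xi_B\psi,j_!B)$ and $\widetilde{j^!}(D,\varphi,A)=(D,\rho_A\varphi,j^!A)$, where $\xi\colon G_2\to G_1j_!$ and $\rho\colon G_1\to G_2j^!$ are the natural transformations furnished by the compatibility, with $\rho$ componentwise monic. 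These two adjoint pairs give (LR1) for the new diagram.

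It remains to verify (LR2) and (LR3). For (LR2): $\widetilde{j^!}\widetilde{i_*}(C)=(G_1i_*(C),\,\rho_{i_*(C)},\,j^!i_*(C))$, and since $j^!i_*=0$ the monomorphism $\rho_{i_*(C)}$ lands in $G_2(0)=0$, forcing $G_1i_*(C)=0$; thus $\widetilde{j^!}\widetilde{i_*}=0$. For (LR3): $\widetilde{i_*}$ is a full embedding because the counit of $(\widetilde{i^*},\widetilde{i_*})$ is, componentwise, the counit of $(i^*,i_*)$ — indeed $\widetilde{i^*}\widetilde{i_*}(C)=i^*i_*(C)$ — which is invertible since $i_*$ is a full embedding; and $\widetilde{j_!}$ is a full embedding because the unit of $(\widetilde{j_!},\widetilde{j^!})$ has identity $\mathscr D$-component and $\mathscr B$-component the unit of $(j_!,j^!)$, invertible since $j_!$ is a full embedding. (The identification of these units and counits is read off from the proof of the construction theorem, using the relation $G_1(\eta_{X,Y}(f))=\rho_YG_2(f)\xi_X$.) This establishes (a). Part (b) is entirely parallel: now $(i_!,i^!)$ is adjoint, so part (ii)(b) gives $(\widetilde{i_!},\widetilde{i^!})$ with $\widetilde{i_!}(C)=(0,0,i_!(C))$; $(j^*,j_*)$ is adjoint with $(G_1,G_2)$ compatible with it, so part (i) gives $(\widetilde{j^*},\widetilde{j_*})$ with $\widetilde{j_*}(D,\psi,B)=(D,\rho_B\psi,j_*B)$; then (RR1) holds, (RR2) is immediate since $\widetilde{i_!}$ has zero $\mathscr D$-component and $j^*i_!=0$, and (RR3) follows as above — $\widetilde{i_!}$ is visibly a full embedding, and $\widetilde{j_*}$ is a full embedding via the counit of $(j^*,j_*)$.

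I expect the main difficulty to be the bookkeeping: orienting $j_!$ (resp. $j^*$) correctly when invoking part (i) of the construction theorem, and pinning down the precise form of the units and counits of the induced adjunctions so that (LR3)/(RR3) and the vanishing (LR2)/(RR2) fall out — in particular, noticing that the requirement in the compatibility definition that $\rho$ be a monomorphism is exactly what forces $G_1i_*=0$, hence $\widetilde{j^!}\widetilde{i_*}=0$, in (LR2).
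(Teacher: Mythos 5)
Your proposal is correct and follows essentially the only available route, which is also the source's: the paper gives no proof of this statement (it is quoted from Chen--Zheng, whose Theorem 3.6 is assembled from their Lemmas 3.3--3.5 exactly as you do), so your argument reconstructs the cited proof, including the correct use of left exactness only to guarantee that the comma categories are abelian. The verifications you defer do go through: the compatibility identity evaluated at identity morphisms yields $G_2(\epsilon_B)=\rho_{j_!B}\circ\xi_B$ for the unit $\epsilon$ of $(j_!,j^!)$ (dually for the counit of $(j^*,j_*)$), which is exactly what makes your unit/counit identifications morphisms of the comma category and makes componentwise invertible maps invertible there, while your observation that monicity of $\rho$ forces $G_1i_*=0$ is precisely how (LR2) is obtained.
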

\bigskip

In order to prove Theorem \ref{Recoll1}  we need the following result which generalizes \cite[Lemma 4.2]{Chen}.
\begin{lemma}\label{RecollG}
Let $ \mathcal{R},\, \mathcal{S}  $ and $ \mathcal{T} $ be abelian categories and $ F:\mathrm{Mod}(\mathcal{R})\longrightarrow \mathrm{Mod}(\mathcal{S}) $ and $ G:\mathrm{Mod}(\mathcal{S}) \longrightarrow \mathrm{Mod}(\mathcal{R})$ be additive functors. For $ M\in \mathrm{Mod}(\mathcal{R}\otimes \mathcal{T}^{op}) $ considere the additive functors $\mathbb{G}_{1}:\mathsf{Mod}(\mathcal{R})\longrightarrow \mathrm{Mod}(\mathcal{T}) $ and $ \mathbb{G}_{2}:\mathrm{Mod}(\mathcal{S})\longrightarrow \mathrm{Mod}(\mathcal{T}) $ as we have defined in \ref{defofG1} and \ref{defofG2} where $N_{T}=F(M_{T} )$. If $ (F,G) $ is an adjoint pair and its unit $ \varepsilon:1_{\mathsf{Mod}(\mathcal{R})}\longrightarrow GF $ satisfies $ \varepsilon_{M_{T}}=1_{M_{T}}$  for all $T\in \mathcal{T}$, then the pair $ (\mathbb{G}_{1},\mathbb{G}_{2})  $ is compatible with $(F,G) $.
\end{lemma}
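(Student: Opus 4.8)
The plan is to verify the definition of compatibility directly, using the natural transformations $\xi = F_{M,-}\colon \mathbb{G}_1 \to \mathbb{G}_2\circ F$ and $\rho = G_{N,-}\colon \mathbb{G}_2 \to \mathbb{G}_1\circ G$ produced by Lemma \ref{RecollH}. Note that Lemma \ref{RecollH}(b) requires the hypothesis $M_T = G(N_T)$ for all $T$; since $N_T = F(M_T)$ by construction, the assumption $\varepsilon_{M_T} = 1_{M_T}$ (with $\varepsilon$ the unit of the adjunction $(F,G)$) is exactly what guarantees $M_T \xrightarrow{\sim} GF(M_T) = G(N_T)$, so $\rho$ is well-defined; moreover that isomorphism is what will make $\rho_L$ a monomorphism (in fact an isomorphism onto its image) for every $\mathcal{S}$-module $L$. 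So the first step is to record that $\xi$ and $\rho$ exist as natural transformations and that $\rho_L$ is a monomorphism.

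The second and main step is to check the compatibility identity $\mathbb{G}_1(\eta_{B,L}(f)) = \rho_L\circ \mathbb{G}_2(f)\circ \xi_B$ for every $B\in \mathrm{Mod}(\mathcal{R})$, $L\in \mathrm{Mod}(\mathcal{S})$ and $f\in \mathrm{Hom}_{\mathrm{Mod}(\mathcal{S})}(F(B),L)$, where $\eta$ is the adjugant equivalence of $(F,G)$. Since all the functors and natural transformations involved are defined componentwise over objects $T\in\mathcal{T}$, it suffices to evaluate at each $T$ and chase an element $\varphi\in \mathbb{G}_1(B)(T) = \mathrm{Hom}_{\mathrm{Mod}(\mathcal{R})}(M_T, B)$. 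Unwinding the definitions: $\xi_B$ at $T$ sends $\varphi$ to $F(\varphi)\colon N_T = F(M_T)\to F(B)$; then $\mathbb{G}_2(f)$ at $T$ post-composes with $f$ to give $f\circ F(\varphi)\colon N_T\to L$; then $\rho_L = G_{N,-}$ at $T$ applies $G$, giving $G(f\circ F(\varphi)) = G(f)\circ GF(\varphi)\colon G(N_T)\to G(L)$, and since $G(N_T) = M_T$ via $\varepsilon_{M_T} = 1_{M_T}$ we may read this as a morphism $M_T \to G(L)$. On the other side, the adjugant $\eta_{B,L}(f)$ is, by the standard formula, $G(f)\circ \varepsilon_B\colon B\to G(L)$, and $\mathbb{G}_1$ of it at $T$ precomposes with $M_T$, sending $\varphi$ to $G(f)\circ \varepsilon_B\circ \varphi\colon M_T\to G(L)$. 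So the identity to prove reduces to $G(f)\circ \varepsilon_B\circ\varphi = G(f)\circ GF(\varphi)\circ \varepsilon_{M_T}$, i.e. to $\varepsilon_B\circ\varphi = GF(\varphi)\circ\varepsilon_{M_T}$, which is precisely the naturality square of the unit $\varepsilon\colon 1_{\mathrm{Mod}(\mathcal{R})}\to GF$ applied to the morphism $\varphi\colon M_T\to B$. Combined with $\varepsilon_{M_T} = 1_{M_T}$, this gives exactly what is needed.

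The main obstacle I anticipate is purely bookkeeping: keeping straight the identification $G(N_T) = M_T$ induced by $\varepsilon_{M_T} = 1_{M_T}$ and making sure that the formula used for the adjugant $\eta_{B,L}$ (namely $\eta_{B,L}(f) = G(f)\circ\varepsilon_B$) is consistent with the conventions fixed in the preliminaries, where $\varepsilon_X = \eta_{X,FX}(1_{FX})$ is the unit; any sign or side error there would break the element chase. Once the bookkeeping is pinned down, the verification collapses to the naturality of $\varepsilon$, which is automatic. So the proof is: invoke Lemma \ref{RecollH} for existence of $\xi,\rho$ and the monomorphism property of $\rho$, then evaluate the compatibility equation at an arbitrary $T$ and $\varphi$, reduce both sides as above, and conclude by naturality of the unit together with the hypothesis $\varepsilon_{M_T} = 1_{M_T}$.
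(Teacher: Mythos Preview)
Your proposal is correct and follows essentially the same route as the paper: invoke Lemma~\ref{RecollH} for $\xi$ and $\rho$, then verify the compatibility identity componentwise at each $T\in\mathcal{T}$ by an element chase that reduces to the adjunction. The only substantive difference is cosmetic: you reduce the identity to naturality of the unit $\varepsilon$ via the formula $\eta_{B,L}(f)=G(f)\circ\varepsilon_B$, whereas the paper phrases the same computation as naturality of the adjugant $\eta$ in the first variable; these are equivalent.

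One point you should sharpen: your justification that $\rho_L$ is a monomorphism is too vague as written. The identification $M_T\cong G(N_T)$ alone does not make $\beta\mapsto G(\beta)$ injective; what does is that, under the hypothesis $\varepsilon_{M_T}=1_{M_T}$, the component $[\rho_L]_T=G_{N_T,L}$ actually \emph{coincides} with the adjunction bijection $\eta_{M_T,L}$ (since $\eta_{M_T,L}(\beta)=G(\beta)\circ\varepsilon_{M_T}=G(\beta)$). The paper makes exactly this identification explicit, and you should too.
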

\begin{proof}
Since $ (F,G) $ is an adjoint pair there exist a natural equivalence 
\[
\eta:=\!\!\lbrace \eta_{B,L}\!:\!\mathrm{Hom}_{\mathrm{Mod}(\mathcal{S})}(FB,L)\rightarrow \mathrm{Hom}_{\mathrm{Mod}(\mathcal{R})}(B,GL)\rbrace_{B\in \mathrm{Mod}(\mathcal{R}),\,\,L\in \mathrm{Mod}(\mathcal{S})}
\]
By Lemma \ref{RecollH} (a2) we have natural transformations $ \xi:=F_{M,-}:\mathbb{G}_{1}\longrightarrow \mathbb{G}_{2}F $. Since $ \varepsilon_{M_{T}}=1_{M_{T}}$, we have that $G(N_{T})=M_{T}$ for all $T\in \mathcal{T}$ and by \ref{RecollH} (b2), we have a natural transformation $\rho:=G_{N,-}=\mathbb{G}_{2}\longrightarrow \mathbb{G}_{1}G$.\\
First we will see that for all $L\in \mathrm{Mod}(\mathcal S)$ the morphism 
$$\rho_{L}:\mathbb{G}_{2}(L)\longrightarrow \mathbb{G}_{1}G(L)$$
is a monomorphism in $\mathrm{Mod}(\mathcal{T})$. Indeed, for $T\in \mathcal{T}$ we have to show that 
$$[\rho_{L}]_{T}:=G_{N_{T},L}:\mathrm{Hom}_{\mathrm{Mod}(\mathcal{S})}(N_{T},L)\rightarrow \mathrm{Hom}_{\mathrm{Mod}(\mathcal {R})}(M_{T},G(L))$$
is a monomorphism. Consider the morphism
$$\eta_{M_{T},L}:\mathrm{Hom}_{\mathrm{Mod}(\mathcal{S})}(F(M_{T}),L)\longrightarrow \mathrm{Hom}_{\mathrm{Mod}(\mathcal{R})}(M_{T},G(L)).$$
We assert that $\eta_{M_{T},L}=G_{N_{T},L}.$ Indeed, let 
$\beta \in  \mathrm{Hom}_{\mathrm{Mod}(\mathcal{S})}(F(M_{T}),L)$. 
Then we have the following commutative diagram
$$\xymatrix{\mathrm{Hom}_{\mathrm{Mod}(\mathcal{S})}(
F(M_{T}),F(M_{T}))\ar[rr]^{\eta_{M_{T},F(M_{T})}}\ar[d]_{\mathrm{Hom}_{\mathrm{Mod}(\mathcal{S})}(F(M_{T}),\beta)} & & \mathrm{Hom}_{\mathrm{Mod}(\mathcal{R})}(M_{T},GF(M_{T}))\ar[d]^{\mathrm{Hom}_{\mathrm{Mod}(\mathcal{R})}(M_{T},G(\beta))}\\
\mathrm{Hom}_{\mathrm{Mod}(\mathcal{S})}(F(M_{T}),L)\ar[rr]_{\eta_{M_{T},L}} & & \mathrm{Hom}_{\mathrm{Mod}(\mathcal{R})}(M_{T},G(L))}$$
Then
\begin{eqnarray*}
G(\beta)\circ \big( \eta_{M_{T},F(M_{T})}(1_{F(M_{T})})\big)& = &\big(\Hom_{\mathrm{Mod}(\mathcal{R})}(M_{T},G(\beta))\circ\eta_{M_{T},F(M_{T})}\big)(1_{F(M_{T})})\\
& = &\big(\eta_{M_{T},L}\circ \Hom_{\mathrm{Mod}(\mathcal{S})}(N_{T},\beta)\big)(1_{F(M_{T})})\\
& = &\eta_{M_{T},L}\Big(\Hom_{\mathrm{Mod}(\mathcal{S})}(N_{T},\beta)\big)(1_{F(M_{T})})\Big)\\
&=&\eta_{M_{T},L}(\beta\circ 1_{F(M_{T})})\\
&= & \eta_{M_{T},L}(\beta)
\end{eqnarray*}
Since $1_{M_{T}}=\varepsilon_{M_{T}}= \eta_{M_{T},F(M_{T})}(1_{F(M_{T})})$ then $ G(\beta)= \eta_{M_{T},L}(\beta)$.
Since $G_{N_{T},L}(\beta)=G(\beta)$ and $\eta_{M_{T},L}$ is injective, it follows that $G_{N_{T},L}$ is injective, for all $ T\in \T $.
Proving that $\rho_{L}$ is a monomorphism for each $L\in \mathrm{Mod}(\mathcal{S})$.\\
Now we have to show that $\mathbb{G}_1(\eta_{B,L}(f))=\rho_{L}\mathbb{G}_2(f)\xi_B$ for all $f\in \mathrm{Hom}_{\mathrm{Mod}(\mathcal {S})}(F(B),L)$. That is, we have to show that
$$
[\mathbb{G}_{1}\left( \eta_{B,L}(f)\right)]_{T}=\rho_{N_{T},L} [\mathbb{G}_2(f)]_T\xi_{M_{T},B}\,\,\,\,\,\,\forall T\in\mathcal T.$$ 
Let  $ \alpha\in \Hom_{\mathrm{Mod}(\mathcal{R})}(M_{T},B) be $. It follows from the  following commutative diagram
\[
\begin{diagram}
\node{\Hom_{\mathrm{Mod}(\mathcal{S})}(F(B),L)}\arrow{e,t}{\eta_{B,L}}\arrow{s,l}{\Hom_{\mathrm{Mod}(\mathcal{S})}(F(\alpha),L)}
 \node{\Hom_{\mathrm{Mod}(\mathcal{R})}(B,G(L))}\arrow{s,r}{\Hom_{\mathrm{Mod}(\mathcal{R})}(\alpha,G(L))}\\
\node{\Hom_{\mathrm{Mod}(\mathcal{S})}(F(M_{T}),L)}\arrow{e,b}{\eta_{M_{T},L}}
 \node{\Hom_{\mathrm{Mod}(\mathcal{R})}(M_{T},G(L))}
\end{diagram}
\]
that $G(f\circ F(\alpha))=\eta_{M_{T},L}\Big(f\circ F(\alpha)\Big)=\eta_{B,L}(f)\circ \alpha.$
We note that
\[
\Big[\mathbb{G}_{1}\left( \eta_{B,L}(f)\right)\Big]_{T}(\alpha)=\Hom_{\mathrm{Mod}(\mathcal{R})}\Big(M_{T},\eta_{B,L}(f)\Big)(\alpha)=\eta_{B,L}(f)\circ \alpha.
\]
On the other hand,
\begin{eqnarray*}
\Big(\rho_{N_T,L}\circ [\mathbb{G}_{2}(f)]_T\circ\xi_{M_{T},B}\Big)(\alpha)&=& (G_{N_{T},L}\circ\Hom_{\mathrm{Mod}(\mathcal{S})}(F(M_{T}),f))( F_{M_{T},B}(\alpha))\\
&=& (G_{N_{T},L}\circ\Hom_{\mathrm{Mod}(\mathcal{S})}(F(M_{T}),f))(F(\alpha)) \\
&=& G_{N_{T},L}(f\circ F(\alpha)) \\
&=& G( f\circ F(\alpha)).
\end{eqnarray*}
Proving that $(\mathbb{G}_{1},\mathbb{G}_{2})$ is compatible with $ (F,G) $.
\end{proof}

\begin{proof}[Proof of Theorem \ref{Recoll1}.]
We only prove (a), since (b) is similar. Set $N=j_!(M)$ as in \ref{otrobimodulo}, and consider the additive functors $\mathbb{G}_1:\mathrm{Mod}(\mathcal R)\rightarrow \mathrm{Mod}(\mathcal T)$ and
$\mathbb{G}_2:\mathrm{Mod}(\mathcal S)\rightarrow \mathrm{Mod}(\mathcal T)$ as defined in \ref{defofG1} and \ref{defofG2}. Since $j_!$ is a full embedding,
 by \cite[Lemma 2.1]{Chen}, we may assume that the unit
$\epsilon:1\rightarrow j^!j_!$, of the adjoint pair $(j_!,j^!)$, is the identity. In particular, we have that $\epsilon_{M_T}=1_{M_T}$. Thus, from Lemma \ref{RecollG}, the pair $(\mathbb{G}_1,\mathbb{G}_2)$ is 
compatible with $(j_!,j^!)$  and the rest follows from Theorem \ref{RecollJ} and \ref{equivalenceLEOS}.
\end{proof}
We note that recollement can be seen as the gluing of a left recollement and a right recollement. Since $i_*=i_!$ and $j^!=j^*$, it follows that
$\widetilde{i_*}=\widetilde{i_!}$ and $\widetilde{j^!}=\widetilde{j^*}$. For  any $M\in\mathrm{Mod}(\mathcal{R}\otimes \mathcal{T}^{op})$,  consider
the matrix categories 
$\mathbf{\Lambda}:=\left (\begin{smallmatrix}
\mathcal T&0\\
M& \mathcal R
\end{smallmatrix}\right )$ and 
$\mathbf{\Lambda}^!:=\left (\begin{smallmatrix}
\mathcal T&0\\
j_!(M)& \mathcal S
\end{smallmatrix}\right ).$ 
It follows, that if the diagram $\xymatrix{\mathrm{Mod}(\mathcal{C})\ar[r]_{} &  \mathrm{Mod}(\mathcal{S})\ar[r]_{}\ar@<-2ex>[l]_{}\ar@<2ex>[l]_{}  &   \mathrm{Mod}(\mathcal{R})\ar@<-2ex>[l]_{}\ar@<2ex>[l]_{}}$ is a recollement, then there is a recollement
$$\xymatrix{\mathrm{Mod}(\mathcal{C})\ar[r]_{} &  \mathrm{Mod}(\mathbf{\Lambda}^{!})\ar[r]_{}\ar@<-2ex>[l]_{}\ar@<2ex>[l]_{}  &   \mathrm{Mod}(\mathbf{\Lambda}).\ar@<-2ex>[l]_{}\ar@<2ex>[l]_{}}$$
Recall that two additive categories  $\mathcal{C}$ and $\mathcal{D}$ are called Morita equivalent if the functor categories $\mathrm{Mod}(\mathcal{C})$  and $\mathrm{Mod}(\mathcal{D})$ are equivalent.

\begin{corollary}
Let $\mathcal{R}$, $\mathcal{S}$,  and $\mathcal{T}$ additive categories, and $M\in\mathrm{Mod}(\mathcal R\otimes \mathcal T^{op})$. If $\mathcal{R}$ is Morita equivalent to $\mathcal{S}$, then there exists a functor $N\in\mathrm{Mod}(\mathcal{S}\otimes \mathcal{T}^{op})$ making the triangular matrix category 
$\left (\begin{smallmatrix}
\mathcal T&0\\
M& \mathcal{R}
\end{smallmatrix}\right )$ Morita equivalent to $ \left (\begin{smallmatrix}
\mathcal T&0\\
N& \mathcal S
\end{smallmatrix}\right )$.
\end{corollary}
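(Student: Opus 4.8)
The plan is to realize the Morita equivalence by an actual functor, transport the bimodule $M$ along it, and then exhibit an explicit equivalence of the two associated comma categories. So let $F\colon\mathrm{Mod}(\mathcal R)\to\mathrm{Mod}(\mathcal S)$ be an equivalence of categories and set $N:=F(M)$ as in Definition \ref{otrobimodulo}; then $N\in\mathrm{Mod}(\mathcal S\otimes\mathcal T^{op})$ and $N_T=F(M_T)$ for every $T\in\mathcal T$. By Theorem \ref{equivalenceLEOS} together with \ref{defofG1} and \ref{defofG2} there are equivalences $\mathrm{Mod}\big(\left[\begin{smallmatrix}\mathcal T&0\\ M&\mathcal R\end{smallmatrix}\right]\big)\xrightarrow{\sim}\big(\mathrm{Mod}(\mathcal T),\mathbb{G}_1\mathrm{Mod}(\mathcal R)\big)$ and $\mathrm{Mod}\big(\left[\begin{smallmatrix}\mathcal T&0\\ N&\mathcal S\end{smallmatrix}\right]\big)\xrightarrow{\sim}\big(\mathrm{Mod}(\mathcal T),\mathbb{G}_2\mathrm{Mod}(\mathcal S)\big)$, so it suffices to construct an equivalence between the two comma categories on the right.

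For this I would use the natural transformation $\xi:=F_{M,-}\colon\mathbb{G}_1\to\mathbb{G}_2\circ F$ of Lemma \ref{RecollH}(a2). Evaluated at $B\in\mathrm{Mod}(\mathcal R)$ and at $T\in\mathcal T$, the component $\xi_B$ is the map $\mathrm{Hom}_{\mathrm{Mod}(\mathcal R)}(M_T,B)\to\mathrm{Hom}_{\mathrm{Mod}(\mathcal S)}(F(M_T),F(B))$, $f\mapsto F(f)$; since $F$ is fully faithful this is bijective, hence $\xi_B$ is an isomorphism of $\mathcal T$-modules and $\xi$ is a natural isomorphism. Then, exactly as in the construction of $\widetilde F$ recalled before Theorem \ref{RecollJ}, one obtains a functor
\[
\widetilde F\colon\big(\mathrm{Mod}(\mathcal T),\mathbb{G}_1\mathrm{Mod}(\mathcal R)\big)\longrightarrow\big(\mathrm{Mod}(\mathcal T),\mathbb{G}_2\mathrm{Mod}(\mathcal S)\big),\qquad \widetilde F(D,\varphi,B)=(D,\xi_B\circ\varphi,F(B)),
\]
with $\widetilde F(f,g)=(f,F(g))$ on morphisms; well-definedness and functoriality follow from the naturality square of $\xi$ (the same computation as in \cite{Chen} and in the proof of Lemma \ref{RecollG}).

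It remains to check that $\widetilde F$ is an equivalence. Faithfulness is immediate from faithfulness of $F$. For fullness, a morphism $(f,h)\colon\widetilde F(D,\varphi,B)\to\widetilde F(D',\varphi',B')$ has $h=F(g)$ for a unique $g$, and the identity $\xi_{B'}\varphi'f=\mathbb{G}_2(F(g))\xi_B\varphi=\xi_{B'}\mathbb{G}_1(g)\varphi$, together with $\xi_{B'}$ being a monomorphism, forces $\varphi'f=\mathbb{G}_1(g)\varphi$, so $(f,g)$ is a morphism with $\widetilde F(f,g)=(f,h)$. For essential surjectivity, given $(D,\psi,L)$ pick an isomorphism $\theta\colon F(B)\xrightarrow{\sim}L$ (possible since $F$ is dense); then $\widetilde F\big(D,\,\xi_B^{-1}\circ\mathbb{G}_2(\theta)^{-1}\circ\psi,\,B\big)$ is isomorphic to $(D,\psi,L)$ via $(\mathrm{id}_D,\theta)$. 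Composing $\widetilde F$ with the equivalences of Theorem \ref{equivalenceLEOS} yields the asserted Morita equivalence between $\left[\begin{smallmatrix}\mathcal T&0\\ M&\mathcal R\end{smallmatrix}\right]$ and $\left[\begin{smallmatrix}\mathcal T&0\\ N&\mathcal S\end{smallmatrix}\right]$. I do not expect a real obstacle: the only point needing care is the identification of the components of $\xi$ with $f\mapsto F(f)$, which is exactly where full faithfulness of $F$ enters and what makes $\xi$ invertible; the rest is routine bookkeeping in comma categories.
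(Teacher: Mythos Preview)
Your argument is correct and is essentially the intended one: the paper's own proof simply cites \cite[Corollary 4.7]{Chen}, and the machinery you invoke (Definition~\ref{otrobimodulo} to transport $M$, Lemma~\ref{RecollH}(a2) for $\xi$, and the formula for $\widetilde F$ preceding Theorem~\ref{RecollJ}) is exactly the toolkit Chen uses. Your explicit verification that $\xi$ is a natural isomorphism because $F$ is fully faithful, and the direct check of full faithfulness and essential surjectivity of $\widetilde F$, is the straightforward unpacking of that citation.
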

\begin{proof}
The same proof as in \cite[Corollary 4.7]{Chen} works for this setting.
\end{proof}

The following result tell us that under certain conditions we can restrict to the category of finitely presented modules.

\begin{theorem}\label{Recollfinitos}
Let $\mathcal {R,S, C}$ and $\mathcal{T}$ be dualizing $K$-varieties. For $M\in\mathrm{Mod}(\mathcal{R}\otimes_{K}\mathcal{T}^{op})$
such that $M_{T}\in \mathrm{mod}(\mathcal{R})$ and  $M_{R}\in \mathrm{mod}(\mathcal{T}^{op})$ for all $T\in \mathcal{T}$ and $R\in \mathcal{U}$, consider the matrix categories $\mathbf{\Lambda}:=\left(\begin{smallmatrix}
\mathcal{T} &0\\
M& \mathcal{R}
\end{smallmatrix}\right )$  $\mathbf{\Lambda}^!:=\left (\begin{smallmatrix}
\mathcal{T} &0\\
j_{!}(M)& \mathcal{S}
\end{smallmatrix}\right )$, $\mathbf{\Lambda}^*:=\left (\begin{smallmatrix}
\mathcal{T} &0\\
j_{*}(M)& \mathcal{S}
\end{smallmatrix}\right ),$  where the bimodules $j_{!}(M)$ and $j_{\ast}(M)$ are constructed as in \ref{otrobimodulo}. Moreover suppose that $j_{!}(M)_{S},\,\, j_{\ast}(M)_{S}\in \mathrm{mod}(\mathcal{T}^{op})$ for all $S\in \mathcal{S}$.
\begin{enumerate}
\item [(a)]
If the diagram 
$$\xymatrix{\mathrm{mod}(\mathcal{C})\ar@<-2ex>[rr]_{i_{\ast}} & & \mathrm{mod}(\mathcal{S})\ar@<-2ex>[rr]_{j^{!}}\ar@<-2ex>[ll]_{i^{\ast}} & &  \mathrm{mod}(\mathcal{R})\ar@<-2ex>[ll]_{j_{!}}}$$
is a left recollement, then there is a left recollement
$$\xymatrix{\mathrm{mod}(\mathcal{C})\ar@<-2ex>[rr]_{\tilde{i_{\ast}}} & & \mathrm{mod}(\mathbf{\Lambda}^{!})\ar@<-2ex>[rr]_{\tilde{j^{!}}}\ar@<-2ex>[ll]_{\tilde{i^{\ast}}} & &  \mathrm{mod}(\mathbf{\Lambda})\ar@<-2ex>[ll]_{\tilde{j_{!}}}}$$

\item [(b)] If the diagram 
$$\xymatrix{\mathrm{mod}(\mathcal{C})\ar@<2ex>[rr]^{i_{!}} & & \mathrm{mod}(\mathcal{S})\ar@<2ex>[rr]^{j^{\ast}}\ar@<2ex>[ll]^{i^{!}} & &  \mathrm{mod}(\mathcal{R})\ar@<2ex>[ll]^{j_{\ast}}}$$
is a right recollement, then there is a right recollement
$$\xymatrix{\mathrm{mod}(\mathcal{C})\ar@<2ex>[rr]^{\tilde{i_{!}}} & & \mathrm{mod}(\mathbf{\Lambda}^{\ast})\ar@<2ex>[rr]^{\tilde{j^{\ast}}}\ar@<2ex>[ll]^{\tilde{i^{!}}} & & \mathrm{mod}(\mathbf{\Lambda}).\ar@<2ex>[ll]^{\tilde{j_{\ast}}}}$$
\end{enumerate}
\end{theorem}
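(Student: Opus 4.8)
The plan is to deduce Theorem \ref{Recollfinitos} from Theorem \ref{Recoll1} by showing that the recollement (respectively left/right recollement) constructed there between $\mathrm{Mod}(\mathbf{\Lambda}^{!})$ (or $\mathrm{Mod}(\mathbf{\Lambda}^{\ast})$) and $\mathrm{Mod}(\mathbf{\Lambda})$ restricts to the subcategories of finitely presented objects. First I would record that, under the hypotheses, $\mathbf{\Lambda}$, $\mathbf{\Lambda}^{!}$ and $\mathbf{\Lambda}^{\ast}$ are again dualizing $K$-varieties: this is precisely the kind of statement established in \cite{LeOS} for triangular matrix categories built from a bimodule $M$ with $M_{T}\in\mathrm{mod}(\mathcal R)$ and $M_{R}\in\mathrm{mod}(\mathcal T^{op})$ (and similarly for $j_{!}(M)$, $j_{\ast}(M)$, using the extra hypothesis $j_{!}(M)_{S},\,j_{\ast}(M)_{S}\in\mathrm{mod}(\mathcal T^{op})$), so that $\mathrm{mod}(\mathbf{\Lambda})$, $\mathrm{mod}(\mathbf{\Lambda}^{!})$, $\mathrm{mod}(\mathbf{\Lambda}^{\ast})$ are well-behaved abelian categories with the usual duality. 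Via the equivalence of Theorem \ref{equivalenceLEOS}, $\mathrm{mod}(\mathbf{\Lambda})$ corresponds to the full subcategory of $\big(\mathrm{Mod}(\mathcal T),\mathbb{G}_{1}\mathrm{Mod}(\mathcal R)\big)$ consisting of triples $(D,\varphi,A)$ with $D\in\mathrm{mod}(\mathcal T)$, $A\in\mathrm{mod}(\mathcal R)$ (and analogously for the others).

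**Key steps.** Second, I would check that each of the six functors $\widetilde{i^{\ast}},\widetilde{i_{\ast}},\widetilde{i^{!}},\widetilde{j_{!}},\widetilde{j^{!}},\widetilde{j_{\ast}}$ appearing in Theorem \ref{Recoll1} sends finitely presented objects to finitely presented objects. By the explicit description recalled in the excerpt, $\widetilde{j_{!}}$ acts on a triple $(D,\varphi,A)$ by $(D,\xi_{A}\circ\varphi, j_{!}(A))$ and on morphisms componentwise; similarly $\widetilde{j^{!}}$, $\widetilde{i^{\ast}}$, $\widetilde{i_{\ast}}$, etc., are built from the original recollement functors $i^{\ast},i_{\ast},j_{!},j^{!}$ together with $\mathbb{G}_{1},\mathbb{G}_{2}$ and the natural transformations $\xi,\rho$. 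Since by hypothesis the original left (resp. right) recollement already restricts to $\mathrm{mod}(\mathcal C),\mathrm{mod}(\mathcal S),\mathrm{mod}(\mathcal R)$, the $\mathcal R$- or $\mathcal S$-component of each induced functor stays finitely presented; the $\mathcal T$-component is either unchanged (for $\widetilde{j_{!}},\widetilde{j^{!}},\widetilde{i^{\ast}}$) or is obtained by applying $\mathbb{G}_{i}$ (for $\widetilde{i_{\ast}},\widetilde{i^{!}},\widetilde{j_{\ast}}$), so one needs that $\mathbb{G}_{1}$ and $\mathbb{G}_{2}$ preserve finite presentation. This follows from the hypotheses $M_{R}\in\mathrm{mod}(\mathcal T^{op})$ and $j_{!}(M)_{S},\,j_{\ast}(M)_{S}\in\mathrm{mod}(\mathcal T^{op})$: for a finitely presented $\mathcal R$-module $B$, $\mathbb{G}_{1}(B)(T)=\mathrm{Hom}_{\mathrm{Mod}(\mathcal R)}(M_{T},B)$ is a finitely generated $R$-module because $\mathcal R$ is dualizing and $M_{T},B\in\mathrm{mod}(\mathcal R)$, and finite presentation of $\mathbb{G}_{1}(B)$ as a $\mathcal T$-module is obtained from a projective presentation of $B$, using $M_{R}\in\mathrm{mod}(\mathcal T^{op})$ to control $\mathbb{G}_{1}(\mathcal R(R,-))$ — this is the argument already used in \cite{LeOS} (and in \cite{MVOM}) to show $\mathrm{mod}(\mathbf{\Lambda})$ is the image of the finitely presented triples.

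**Conclusion and main obstacle.** Third, once all six functors are seen to preserve finite presentation, their restrictions to the $\mathrm{mod}$-subcategories still form adjoint pairs (adjunction isomorphisms restrict), still satisfy $\widetilde{j^{!}}\widetilde{i_{\ast}}=0$ and, for (R3)/(LR3)/(RR3), the relevant functors remain full embeddings since fullness and faithfulness are inherited by restriction to full subcategories. Hence the diagrams in (a) and (b) are left, respectively right, recollements of $\mathrm{mod}(\mathcal C)$, $\mathrm{mod}(\mathbf{\Lambda}^{!})$ (or $\mathbf{\Lambda}^{\ast}$), $\mathrm{mod}(\mathbf{\Lambda})$. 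I expect the main obstacle to be the verification that $\mathbb{G}_{1}$ and $\mathbb{G}_{2}$ — and the induced functors involving them, in particular $\widetilde{i^{!}}$ and $\widetilde{j_{\ast}}$, whose $\mathcal T$-component mixes $\mathbb{G}_{1}G$ with a kernel/image construction — genuinely land in $\mathrm{mod}(\mathcal T)$; this is where the technical hypotheses on $M_{T}$, $M_{R}$, $j_{!}(M)_{S}$, $j_{\ast}(M)_{S}$ are exactly what is needed, and care is required because $\mathbb{G}_{i}$ is only left exact, so preservation of finite presentation is not automatic and must be checked on a projective presentation as indicated above. The remaining points (restriction of adjunctions, vanishing composite, full embeddings) are routine once the domains and codomains are correctly identified.
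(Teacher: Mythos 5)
Your proposal rests on Theorem \ref{Recoll1}, but the hypothesis of that theorem is a left (right) recollement among the \emph{big} module categories $\mathrm{Mod}(\mathcal{C})$, $\mathrm{Mod}(\mathcal{S})$, $\mathrm{Mod}(\mathcal{R})$, whereas the hypothesis of Theorem \ref{Recollfinitos} is only a recollement among $\mathrm{mod}(\mathcal{C})$, $\mathrm{mod}(\mathcal{S})$, $\mathrm{mod}(\mathcal{R})$. In particular the functors $i^{\ast},i_{\ast},j_{!},j^{!}$ are only given on the finitely presented categories, so there is no Mod-level recollement to restrict; your phrase that the original recollement ``already restricts'' to the $\mathrm{mod}$-categories assumes data that is not in the statement. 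Without an (unproved) extension of the given recollement to the big module categories your first step cannot be carried out, and even granting such an extension you would still face the difficulty you flag but do not resolve: the right adjoints $\widetilde{i^{!}}$ and $\widetilde{j_{\ast}}$ produced at the Mod level have no evident reason to preserve finite presentation.

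The paper avoids both problems by never leaving the finitely presented world: since the varieties are dualizing, $\mathrm{mod}(\mathcal{C})$, $\mathrm{mod}(\mathcal{S})$, $\mathrm{mod}(\mathcal{R})$, $\mathrm{mod}(\mathcal{T})$ are abelian, and Theorem \ref{RecollJ} (the Chen--Zheng comma-category theorem) is applied directly to them. The hypotheses $M_{T}\in\mathrm{mod}(\mathcal{R})$, $M_{R}\in\mathrm{mod}(\mathcal{T}^{op})$ and $j_{!}(M)_{S},\,j_{\ast}(M)_{S}\in\mathrm{mod}(\mathcal{T}^{op})$ are used exactly to see that $j_{!}(M)_{T}=j_{!}(M_{T})\in\mathrm{mod}(\mathcal{S})$ (similarly for $j_{\ast}$), that $\mathbb{G}_{1}$ and $\mathbb{G}_{2}$ restrict to functors $\mathrm{mod}(\mathcal{R})\rightarrow\mathrm{mod}(\mathcal{T})$ and $\mathrm{mod}(\mathcal{S})\rightarrow\mathrm{mod}(\mathcal{T})$, and that \cite[Proposition 6.3]{LeOS} identifies the comma categories $\big(\mathrm{mod}(\mathcal{T}),\mathbb{G}_{1}\mathrm{mod}(\mathcal{R})\big)$ and $\big(\mathrm{mod}(\mathcal{T}),\mathbb{G}_{2}\mathrm{mod}(\mathcal{S})\big)$ with $\mathrm{mod}(\mathbf{\Lambda})$ and $\mathrm{mod}(\mathbf{\Lambda}^{!})$; the compatibility of the restricted pair $(\mathbb{G}_{1},\mathbb{G}_{2})$ with $(j_{!},j^{!})$ is then checked as in the proof of \ref{Recoll1} via Lemma \ref{RecollG}. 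Your remarks about restricting adjunctions and full embeddings are fine as far as they go, but the correct repair of your argument is to rerun the comma-category construction at the $\mathrm{mod}$ level, not to restrict a Mod-level recollement that the hypotheses do not provide.
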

\begin{proof}
First, we note that by the definition \ref{otrobimodulo}, we have that $j_{!}(M)_{T}=j_{!}(M_{T})$; and since $M_{T}\in \mathrm{mod}(\mathcal{R})$ and $j_{!}:\mathrm{mod}(\mathcal{R})\longrightarrow \mathrm{mod}(\mathcal{S})$ we have that $j_{!}(M)_{T}\in \mathrm{mod}(\mathcal{S})$. Similarly $j_{\ast}(M)_{T}\in \mathrm{mod}(\mathcal{S})$.  Then by \cite[Proposition 6.3]{LeOS}, we have equivalences
$$\Big( \mathrm{mod}(\mathcal{T}),\mathbb{G}_{1}\mathrm{mod}(\mathcal{R})\Big)\xrightarrow{\sim} \mathrm{mod}(\left[ \begin{smallmatrix}
\mathcal T & 0 \\
M & \mathcal{R}
\end{smallmatrix} \right] ).$$
and $$ \Big( \mathrm{mod}(\mathcal{T}),\mathbb{G}_{2}\mathrm{Mod}(\mathcal{S})\Big)\xrightarrow{\sim} \mathrm{mod}(\left[ \begin{smallmatrix}
\mathcal T & 0 \\
j_{!}(M) & \mathcal{S}
\end{smallmatrix} \right] ),$$
where
$\mathbb{G}_1:\mathrm{Mod}(\mathcal R)\rightarrow \mathrm{Mod}(\mathcal T)$ and
$\mathbb{G}_2:\mathrm{Mod}(\mathcal S)\rightarrow \mathrm{Mod}(\mathcal T)$ are defined in \ref{defofG1} and \ref{defofG2}. Under our conditions we have that they restrict well to 
$\mathbb{G}_1|_{\mathrm{mod}(\mathcal R)}:\mathrm{mod}(\mathcal R)\rightarrow \mathrm{mod}(\mathcal T)$ and
$\mathbb{G}_2|_{\mathrm{mod}(\mathcal S)}:\mathrm{mod}(\mathcal S)\rightarrow \mathrm{mod}(\mathcal T)$. It can be seen as in the proof of \ref{Recoll1},  that they are compatible with $(j_!,j^!)$  and the rest follows from Theorem \ref{RecollJ}.

\end{proof}

\section{The maps category $\mathrm{maps}(\mathcal C)$}
Assume that  $\mathcal{C}$ is an $R$-variety. The maps category, $\mathrm{maps}(\mathcal{C})$  is defined as follows. 
The objects in $\mathrm{maps}(\mathcal{C})$ are morphisms $(f_{1},A_{1},A_{0}):A_{1}\xrightarrow{f_1}A_{0}$, and the maps are pairs $(h_{1},h_{0}):(f_{1},A_{1},A_{0})\rightarrow (g_{1},B_{1},B_{0})$, such that
the following square commutes
$$\xymatrix{A_{1}\ar[r]^{f_{1}}\ar[d]_{h_{1}} & A_{0}\ar[d]_{h_{0}}\\
B_{1}\ar[r]_{g_{1}} & B_{0}.}$$

In this section, we study the category $\mathrm{maps}(\mathrm{Mod}(\mathcal{C})):=\Big(\mathrm{Mod}(\mathcal{C}),\mathrm{Mod}(\mathcal{C})\Big)$ of maps of the category $\mathrm{Mod}(\mathcal{C})$ and we give in this setting a description of the functor $\widehat{\Theta}:
\Big(\mathrm{Mod}(\mathcal{T}),\mathbb{G}\mathrm{Mod}(\mathcal{U})\Big)\longrightarrow \Big(\mathrm{Mod}(\mathcal{U}^{op}),\overline{\mathbb{G}}\mathrm{Mod}(\mathcal{T}^{op})\Big)$
constructed in \cite[Proposition 4.9]{LeOS} (see \ref{descrimaps}). We also give a description of the projective and injective objects of the category $\mathrm{maps}(\mathrm{mod}(\mathcal{C}))$  when $\mathcal{C}$ is a dualizing variety and we also describe its radical (see  \ref{projinjec}).  Let $\mathcal{C}$ be a  dualizing $K$-variety and consider  the category
$\left[ \begin{smallmatrix}
\mathcal C& 0 \\ 
 \widehat{\mathbbm{Hom}}& \mathcal C
\end{smallmatrix}\right]$ of triangular matrices
with $\widehat{\mathbbm{Hom}}:\mathcal C\otimes \mathcal C^{op}\rightarrow \mathbf{Ab}$ defined as in \cite{LeOS}.  In \cite[Proposition 7.3]{LeOS}, we proved that  $\left[ \begin{smallmatrix}
\mathcal C& 0 \\ 
 \widehat{\mathbbm{Hom}}& \mathcal C
\end{smallmatrix}\right]$  is a dualizing category.  We will  show in this section, that  
the category  $\mathrm{mod}\Big( 
\left[\begin{smallmatrix}
\mathcal{C}& 0 \\ 
 \widehat{\mathbbm{Hom}}& \mathcal{C}
\end{smallmatrix}\right]\Big)$ is equivalent to the category $\mathrm{maps}(\mathrm{mod}(\mathcal{C}))$. Some results in this section are generalizations of results given in the chapter 3 of \cite{ARS}.\\
Finally, inspired by \cite{Ingmar}, we show as an example that  the category  $\left[ \begin{smallmatrix}
K\Delta/\langle\rho\rangle & 0 \\
\widehat{\mathbbm{{Hom}}} & K\Delta/\langle\rho\rangle
\end{smallmatrix} \right]$, where  $K\Delta/\langle\rho\rangle$ is the path category  of the quiver
\[ 
\begin{diagram}
\node{\cdots}\arrow{e,t}{}
  \node{i-1}\arrow{e,t}{\alpha_{i-1}}
   \node{i}\arrow{e,t}{\alpha_{i}}
    \node{i+1}\arrow{e,t}{}
     \node{\cdots}
 \end{diagram}
\] 
defined by  $\Delta=(\Delta_0,\Delta_1)$,   with $\Delta_0=\mathbb  Z$, $\Delta_1=\{\alpha_i:i\rightarrow i+1: i\in\mathbb Z\}$ and  the set of relations $\rho=\{\alpha_{i}\alpha_{i-1}:i\in\mathbb Z\}$, 
is again  a path category.\\
Moreover, we show that the category of $\left[ \begin{smallmatrix}
K\Delta/\langle\rho\rangle & 0 \\
\widehat{\mathbbm{{Hom}}} & K\Delta/\langle\rho\rangle
\end{smallmatrix} \right]$-modules 
is equivalent to the category $\mathrm{maps}(\mathbf{Ch}(\mathrm{Mod} (K)))$, where $\mathbf{Ch}(\mathrm{Mod} (K))$ is the category of chain complexes in $\mathrm{Mod} (K)$.\\

\begin{definition}
Define a functor
$$\xymatrix{\Big(\mathrm{Mod}(\mathcal{C}),\mathrm{Mod}(\mathcal{C})\Big)\ar[rr]^{\overline{\Theta}} && \Big(\mathrm{Mod}(\mathcal{C}^{op}),\mathrm{Mod}(\mathcal{C}^{op})\Big)}$$ 
in objects as $\overline{\Theta}(\xymatrix{A\ar[r]^{f} & B})=
\xymatrix{\mathbb{D}_{\mathcal{C}}(B)\ar[r]^{\mathbb{D}_{\mathcal{C}}(f)} & \mathbb{D}_{\mathcal{C}}(A)}$ and if 
$(\alpha,\beta):(A,f,B)\longrightarrow (A',f',B')$ is a morphism in $\mathrm{maps}(\mathrm{Mod}(\mathcal{C}))$ then $\overline{\Theta}(\alpha,\beta)=(\mathbb{D}_{\mathcal{C}}(\beta),\mathbb{D}_{\mathcal{C}}(\alpha))$. 
\end{definition}

First we have the following result, which tell us that we can identify the functor  $\widehat{\Theta}:
\Big(\mathrm{Mod}(\mathcal{T}),\mathbb{G}\mathrm{Mod}(\mathcal{U})\Big)\longrightarrow \Big(\mathrm{Mod}(\mathcal{U}^{op}),\overline{\mathbb{G}}\mathrm{Mod}(\mathcal{T}^{op})\Big)$
constructed in \cite[Proposition 4.9]{LeOS} with $\overline{\Theta}$.

\begin{proposition}\label{descrimaps}
Let $\mathcal{C}$ be a $K$-variety and $\mathrm{maps}(\mathrm{Mod}(\mathcal{C})):=\Big(\mathrm{Mod}(\mathcal{C}),\mathrm{Mod}(\mathcal{C})\Big)$ the maps category. Let 
$M:=\widehat{\mathbbm{Hom}}\in \mathrm{Mod}(\mathcal{C}\otimes_{K}\mathcal{C}^{op})$ be where $M_{T}=\mathrm{Hom}_{\mathcal{C}}(T,-)\in \mathrm{Mod}(\mathcal{C})$ for $T\in \mathcal{C}^{op}$ and $M_{U}=\mathrm{Hom}_{\mathcal{C}}(-,U)\in \mathrm{Mod}(\mathcal{C}^{op})$ for $U\in \mathcal{C}$ and
consider the induced functors $\mathbb{G}:\mathrm{Mod}(\mathcal C)\rightarrow \mathrm{Mod}(\mathcal C)$, $\overline{\mathbb{G}}:\mathrm{Mod}(\mathcal{C}^{op})\rightarrow \mathrm{Mod}(\mathcal {C}^{op})$ (see \ref{defofG1}). Then, the there exists isomorphisms $J_{1}$ and $J_{2}$ such that the following diagram commutes
$$\xymatrix{\Big(\mathrm{Mod}(\mathcal{C}),\mathbb{G}\mathrm{Mod}(\mathcal{C})\Big)\ar[rr]^{\widehat{\Theta}}& & 
\Big(\mathrm{Mod}(\mathcal{C}^{op}),\overline{\mathbb{G}}\mathrm{Mod}(\mathcal{C}^{op})\Big)\\
\Big(\mathrm{Mod}(\mathcal{C}),\mathrm{Mod}(\mathcal{C})\Big)\ar[rr]^{\overline{\Theta}}\ar[u]^{J_{1}} && \Big(\mathrm{Mod}(\mathcal{C}^{op}),\mathrm{Mod}(\mathcal{C}^{op})\Big)\ar[u]^{J_{2}}}$$
where $\widehat{\Theta}$ is the functor defined in \cite[Proposition 4.9]{LeOS}.
\end{proposition}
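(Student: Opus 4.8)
The plan is to produce $J_1$ and $J_2$ as the isomorphisms of comma categories coming from canonical identifications $\mathbb{G}\cong\mathrm{Id}_{\mathrm{Mod}(\mathcal{C})}$ and $\overline{\mathbb{G}}\cong\mathrm{Id}_{\mathrm{Mod}(\mathcal{C}^{op})}$, and then to check commutativity of the square by unwinding the construction of $\widehat{\Theta}$ from \cite[Proposition 4.9]{LeOS}. The key point is that since $M=\widehat{\mathbbm{Hom}}$ satisfies $M_{T}=\mathrm{Hom}_{\mathcal{C}}(T,-)$ for every $T\in\mathcal{C}$, the functor $M_T$ is the representable $\mathcal{C}$-module at $T$, so by the definition of $\mathbb{G}$ in \ref{defofG1} and Yoneda's lemma
$$\mathbb{G}(B)(T)=\mathrm{Hom}_{\mathrm{Mod}(\mathcal{C})}\big(\mathrm{Hom}_{\mathcal{C}}(T,-),B\big)\cong B(T)$$
for all $B\in\mathrm{Mod}(\mathcal{C})$ and all $T$. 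I would then check this bijection is natural in $T$ and in $B$: naturality in $B$ is immediate, and for $t\in\mathrm{Hom}_{\mathcal{C}}(T,T')$ the transition map $\mathbb{G}(B)(t)=\mathrm{Hom}_{\mathrm{Mod}(\mathcal{C})}(\overline{t},B)$ is induced by $\overline{t}=\mathrm{Hom}_{\mathcal{C}}(t,-)\colon M_{T'}\to M_{T}$, which under the Yoneda identification is exactly $B(t)\colon B(T)\to B(T')$. Hence there is a natural isomorphism $\theta\colon\mathrm{Id}_{\mathrm{Mod}(\mathcal{C})}\xrightarrow{\sim}\mathbb{G}$. Running the same argument with $\overline{M}$, whose relevant components are the representable $\mathcal{C}^{op}$-modules $\mathrm{Hom}_{\mathcal{C}}(-,U)$, yields a natural isomorphism $\overline{\theta}\colon\mathrm{Id}_{\mathrm{Mod}(\mathcal{C}^{op})}\xrightarrow{\sim}\overline{\mathbb{G}}$.

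Next I would invoke the standard fact that a natural isomorphism of functors induces an isomorphism of the corresponding comma categories. Concretely, define $J_1\colon\big(\mathrm{Mod}(\mathcal{C}),\mathrm{Mod}(\mathcal{C})\big)\to\big(\mathrm{Mod}(\mathcal{C}),\mathbb{G}\,\mathrm{Mod}(\mathcal{C})\big)$ by $J_1(A,f,B)=(A,\theta_{B}\circ f,B)$ on objects and $J_1(\alpha,\beta)=(\alpha,\beta)$ on morphisms, and define $J_2$ analogously using $\overline{\theta}$. That $J_1$ is well defined on morphisms is precisely the naturality of $\theta$ (the defining square of a morphism in $\mathrm{maps}(\mathrm{Mod}(\mathcal{C}))$ is carried to the defining square of a morphism in the comma category); it is visibly functorial, and it is bijective on objects and on Hom-sets, with inverse built from $\theta^{-1}$. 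The same holds for $J_2$, so $J_1$ and $J_2$ are isomorphisms of categories.

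It remains to establish commutativity. Recalling the explicit description of $\widehat{\Theta}$ in \cite[Proposition 4.9]{LeOS}, it sends an object $(A,\varphi,B)$ to a triple of the form $(\mathbb{D}_{\mathcal{C}}(B),\widehat{\varphi},\mathbb{D}_{\mathcal{C}}(A))$, where $\widehat{\varphi}$ is obtained from $\varphi$ by applying $\mathbb{D}_{\mathcal{C}}$ together with the canonical isomorphism relating $\mathbb{D}_{\mathcal{C}}\circ\mathbb{G}$ and $\overline{\mathbb{G}}\circ\mathbb{D}_{\mathcal{C}}$ built from the bimodule $\widehat{\mathbbm{Hom}}$, and it sends $(\alpha,\beta)$ to $(\mathbb{D}_{\mathcal{C}}(\beta),\mathbb{D}_{\mathcal{C}}(\alpha))$. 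On morphisms, both $\widehat{\Theta}\circ J_1$ and $J_2\circ\overline{\Theta}$ carry $(\alpha,\beta)$ to $(\mathbb{D}_{\mathcal{C}}(\beta),\mathbb{D}_{\mathcal{C}}(\alpha))$, since $J_1,J_2$ are the identity on morphisms, so they agree. On objects, both send $(A,f,B)$ to a triple with first entry $\mathbb{D}_{\mathcal{C}}(B)$ and third entry $\mathbb{D}_{\mathcal{C}}(A)$, and the remaining task is to verify that the two structure maps $\mathbb{D}_{\mathcal{C}}(B)\to\overline{\mathbb{G}}(\mathbb{D}_{\mathcal{C}}(A))$ coincide, i.e. that the map attached by $\widehat{\Theta}$ to $\theta_B\circ f$ equals $\overline{\theta}_{\mathbb{D}_{\mathcal{C}}(A)}\circ\mathbb{D}_{\mathcal{C}}(f)$. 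This reduces to the commutativity of a single triangle built from $\mathbb{D}_{\mathcal{C}}(\theta_B)$, the comparison isomorphism $\mathbb{D}_{\mathcal{C}}\mathbb{G}\cong\overline{\mathbb{G}}\mathbb{D}_{\mathcal{C}}$ used in \cite{LeOS}, and $\overline{\theta}$.

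I expect this last triangle to be the main obstacle: it requires tracing the isomorphism underlying $\widehat{\Theta}$, which in \cite{LeOS} is assembled from the tensor–hom adjunction for $\widehat{\mathbbm{Hom}}$ and the duality $\mathbb{D}$, and matching it against the two Yoneda identifications $\theta$ and $\overline{\theta}$. No idea beyond Yoneda's lemma and the naturality of $\mathbb{D}_{\mathcal{C}}$ enters, but the diagram chase is the delicate point. I would also remark, to preempt a possible misreading, that here $\mathbb{D}_{\mathcal{C}}$ is used only as the additive functor $\mathrm{Hom}_{R}(-,I(R/r))\colon\mathrm{Mod}(\mathcal{C})\to\mathrm{Mod}(\mathcal{C}^{op})$, and not as the duality restricted to $\mathrm{mod}(\mathcal{C})$, so the statement requires no finiteness hypothesis on the functors involved.
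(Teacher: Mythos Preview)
Your proposal is correct and follows essentially the same approach as the paper: the paper defines $J_{1}$ precisely via the Yoneda isomorphism $\theta_{B,T}^{-1}\colon B(T)\to\mathrm{Hom}_{\mathrm{Mod}(\mathcal{C})}(\mathrm{Hom}_{\mathcal{C}}(T,-),B)$ by setting $\widehat{f}_{T}=\theta_{B,T}^{-1}\circ f_{T}$, exactly as you do, and then reduces commutativity to the same triangle you isolate. The only difference is that the paper actually carries out the verification you flag as the ``main obstacle'': it writes down an explicit inverse $\Psi'_{B}$ to the comparison map $\Psi_{B}\colon\mathbb{D}_{\mathcal{C}}(B)\to\overline{\mathbb{G}}\mathbb{D}_{\mathcal{C}}\mathbb{G}(B)$ from \cite[Proposition~4.7]{LeOS} (namely $[\Psi'_{B}]_{U}(\delta)=\delta_{U}(1_{U})\circ\theta_{B,U}^{-1}$), checks $\Psi'_{B}\Psi_{B}=1$ by a direct Yoneda computation, and then verifies the square $Y_{A,U}\circ[\overline{\mathbb{G}}\mathbb{D}_{\mathcal{C}}(\widehat{f})]_{U}=[\mathbb{D}_{\mathcal{C}}(f)]_{U}\circ[\Psi'_{B}]_{U}$, which is exactly your triangle unpacked componentwise.
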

\begin{proof}
Let us define $J_{1}:\Big(\mathrm{Mod}(\mathcal{C}),\mathrm{Mod}(\mathcal{C})\Big)\longrightarrow \Big(\mathrm{Mod}(\mathcal{C}),\mathbb{G}\mathrm{Mod}(\mathcal{C})\Big)$. For this, consider the Yoneda isomorphism $\theta_{B,T}^{-1}:B(T)\longrightarrow \mathrm{Hom}_{\mathrm{Mod}(\mathcal{C})}(\mathrm{Hom}_{\mathcal{C}}(T,-),B)$. Let $f:A\longrightarrow B\in \mathrm{maps}(\mathrm{Mod}(\mathcal{C}))$ we set
$J_{1}(f):=\widehat{f}:A\longrightarrow \mathbb{G}(B)$ where for 
$T\in \mathcal{C}$ we have that $\widehat{f}_{T}:=\theta_{B,T}^{-1}\circ f_{T}:A(T)\longrightarrow \mathrm{Hom}_{\mathrm{Mod}(\mathcal{C})}(\mathrm{Hom}_{\mathcal{C}}(T,-),B)=\mathbb{G}(B)(T)$.\\ It is easy to see that $J_{1}^{-1}:\Big(\mathrm{Mod}(\mathcal{C}),\mathbb{G}\mathrm{Mod}(\mathcal{C})\Big)\longrightarrow \Big(\mathrm{Mod}(\mathcal{C}),\mathrm{Mod}(\mathcal{C})\Big)$ is defined as follows: for $f:A\longrightarrow \mathbb{G}(B)$ we set  $J_{1}^{-1}(f)=f':A\longrightarrow B$ where for $T\in \mathcal{T}$ we have that
$f_{T}':=\theta_{B,T}\circ f_{T}:A(T)\longrightarrow B(T)$.\\
In a similar way is defined $J_{2}$.\\
Now we define $\xymatrix{\overline{\mathbb{G}}(\mathbb{D}_{\mathcal{C}}\mathbb{G}(B))\ar[r]^{\Psi'_{B}} & \mathbb{D}_{\mathcal{C}}(B)}$ as follows. 
For $U\in \mathcal{C}^{op}$ we take
$$\delta\in \overline{\mathbb{G}}(\mathbb{D}_{\mathcal{C}}(\mathbb{G}(B)))(U)=\mathrm{Hom}_{\mathsf{Mod}(\mathcal{C}^{op})}\Big(\mathrm{Hom}_{\mathcal{C}}(-,U),\,\,\,\mathbb{D}_{\mathcal{C}}\mathbb{G}(B) \Big)$$ and consider the Yoneda isomorphism
$$\mathbb{Y}_{B,U}:\mathrm{Hom}_{\mathsf{Mod}(\mathcal{C}^{op})}\Big(\mathrm{Hom}_{\mathcal{C}}(-,U),\,\,\,\mathbb{D}_{\mathcal{C}}\mathbb{G}(B) \Big)\longrightarrow \mathbb{D}_{\mathcal{C}}\mathbb{G}(B)(U).$$
Thus we define $[\Psi'_{B}]_{U}(\delta)=\delta_{U}(1_{U})\circ \theta^{-1}_{B,U}:B(U)\longrightarrow K,$
where $\theta^{-1}_{B,U}:B(U)\longrightarrow \mathrm{Hom}_{\mathrm{Mod}(\mathcal{C})}(\mathrm{Hom}_{\mathcal{C}}(U,-),B)$ is the Yoneda isomorphism.\\
By \cite[Proposition 4.7]{LeOS}, there exists a morphism of $\mathcal{C}^{op}$-modules $\Psi_{B}:\mathbb{D}_{\mathcal{C}}(B)\rightarrow \overline{\mathbb{G}}\mathbb{D}_{\mathcal{C}}\mathbb{G}(B)$. Hence, for $B\in \mathrm{Mod}(\mathcal{C})$, $U\in \mathcal{C}^{op}$ and  $s\in \mathrm{Hom}_{K}(B(U), K)$, we have a morphism of $\mathcal{C}^{op}$-modules $[\Psi_{B}]_{U}(s):=\mathbb{S}_{B,U}:\mathrm{Hom}_{\mathcal{C}}(-,U)\longrightarrow \mathbb{D}_{\mathcal{C}}\mathbb{G}(B).$
Therefore, we get that the morphism $[\mathbb{S}_{B,U}]_{U}(1_{U}):\mathrm{Hom}_{\mathrm{Mod}(\mathcal{C})}(\mathrm{Hom}_{\mathcal{C}}(U,-),B) \longrightarrow K$
is defined as
$$\Big([\mathbb{S}_{B,U}]_{U}(1_{U})\Big)(\eta):=\Big(s\circ [\eta]_{U}\Big)(1_{U})=s\Big([\eta]_{U}(1_{U})\Big),$$
for every $\eta\in \mathrm{Hom}_{\mathrm{Mod}(\mathcal{C})}(\mathrm{Hom}_{\mathcal{C}}(U,-),B)$.
Then $s=[\mathbb{S}_{B,U}]_{U}(1_{U})\circ \theta_{B,U}^{-1}$.
From this it follows that the following diagram is commutative for every $U\in \mathcal{C}^{op}$
$$(\ast):\quad \xymatrix{\mathbb{D}_{\mathcal{C}}(B)(U)\ar[rr]^{[\Psi_{B}]_{U}}\ar@{=}[d] & &\overline{\mathbb{G}}(\mathbb{D}_{\mathcal{C}}\mathbb{G}(B))(U)\ar[d]^{[\Psi'_{B}]_{U}}\\
\mathbb{D}_{\mathcal{C}}(B)(U)\ar[rr]^{1} &&\mathbb{D}_{\mathcal{C}}(B)(U).}$$
Moreover is easy to see that $[\Psi_{B'}]_{U}$ is an isomorphism.
We conclude that $\Psi_{B}^{-1}=\Psi_{B}'$.\\
Now, given $f:A\longrightarrow \mathbb{G}(B)$ we have $f':A\longrightarrow B$ defined for $T\in \mathcal{C}$ as  $f'_{T}=\theta_{B,T} \circ f_{T}:A(T)\longrightarrow B(T)$ where $\theta_{B,T}:\mathrm{Hom}_{\mathrm{Mod}(\mathcal{C})}(\mathrm{Hom}_{\mathcal{C}}(T,-),B)\longrightarrow B(T)$ is the Yoneda isomorphism. Consider the Yoneda isomorphism
$$Y_{A,U}:\mathrm{Hom}_{\mathrm{Mod}(\mathcal{C}^{op})}\Big(\mathrm{Hom}_{\mathcal{C}}(-,U),\mathbb{D}_{\mathcal{C}}(A)\Big)\longrightarrow \mathbb{D}_{\mathcal{C}}(A)(U).$$
Then the following diagram is commutative
$$(\ast\ast):\quad \xymatrix{\overline{\mathbb{G}}(\mathbb{D}_{\mathcal{C}}\mathbb{G}(B))(U)\ar[d]^{[\Psi'_{B}]_{U}}\ar[rr]^{\overline{[\mathbb{G}}\mathbb{D}_{\mathcal{C}}(f)]_{U}} & & \overline{\mathbb{G}}\mathbb{D}_{\mathcal{C}}(A)(U)\ar[d]^{Y_{A,U}}\\
\mathbb{D}_{\mathcal{C}}(B)(U)\ar[rr]^{[\mathbb{D}_{\mathcal{C}}(f')]_{U}} &  & \mathbb{D}_{\mathcal{C}}(A)(U).}$$
Let us see that the following diagram commutes
$$\xymatrix{\Big(\mathrm{Mod}(\mathcal{C}),\mathbb{G}\mathrm{Mod}(\mathcal{C})\Big)\ar[rr]^{\widehat{\Theta}}& & 
\Big(\mathrm{Mod}(\mathcal{C}^{op}),\overline{\mathbb{G}}\mathrm{Mod}(\mathcal{C}^{op})\Big)\\
\Big(\mathrm{Mod}(\mathcal{C}),\mathrm{Mod}(\mathcal{C})\Big)\ar[rr]^{\overline{\Theta}}\ar[u]^{J_{1}} && \Big(\mathrm{Mod}(\mathcal{C}^{op}),\mathrm{Mod}(\mathcal{C}^{op})\Big).\ar[u]^{J_{2}}}$$
Indeed,  given a map $f:A\longrightarrow B$ we have
$J_{1}(f)=\widehat{f}:A\longrightarrow \mathbb{G}(B),$
and clearly $J_{1}^{-1}(\widehat{f})=(\widehat{f})'=f$. Then by the diagrams $(\ast)$ and $(\ast\ast)$, we have that we have  the following commutative diagram

$$\xymatrix{\mathbb{D}_{\mathcal{C}}(B)(U)\ar[r]^(.4){[\Psi_{B}]_{U}}\ar@{=}[d] & \overline{\mathbb{G}}(\mathbb{D}_{\mathcal{C}}\mathbb{G}(B))(U)\ar[d]^{[\Psi'_{B}]_{U}}\ar[rr]^{\overline{[\mathbb{G}}\mathbb{D}_{\mathcal{C}}(\widehat{f})]_{U}} & & \overline{\mathbb{G}}\mathbb{D}_{\mathcal{C}}(A)(U)\ar[d]^{Y_{A,U}}\\
\mathbb{D}_{\mathcal{C}}(B)(U)\ar@{=}[r] & 
\mathbb{D}_{\mathcal{C}}(B)(U)\ar[rr]^{[\mathbb{D}_{\mathcal{C}}(f)]_{U}} &  & \mathbb{D}_{\mathcal{C}}(A)(U).}$$
We recall that 
$[\widehat{\Theta}(\widehat{f})]_{U}$ is given by the upper composition of the diagram above (see \cite[Proposition 4.9]{LeOS}). By considering the Yoneda isomorphism $$Y_{A,U}:\mathrm{Hom}_{\mathrm{Mod}(\mathcal{C}^{op})}\Big(\mathrm{Hom}_{\mathcal{C}}(-,U),\mathbb{D}_{\mathcal{C}}(A)\Big)\longrightarrow \mathbb{D}_{\mathcal{C}}(A)(U),$$
we have  that  $J_{2}^{-1}(\widehat{\Theta}(\widehat{f}))=\Big(\widehat{\Theta}(\widehat{f})\Big)'$ is such that for $U\in \mathcal{C}^{op}$ 
$$\Big(\widehat{\Theta}(\widehat{f})\Big)'_{U}:=Y_{A,U}\circ [\widehat{\Theta}(\widehat{f})]_{U}=Y_{A,U}\circ \Big( [\overline{\mathbb{G}}\mathbb{D}_{\mathcal{C}}(\widehat{f})]_{U}\circ [\Psi_{B}]_{U}\Big)=[\mathbb{D}_{\mathcal{C}}(f)]_{U}$$
(recall the construction of $J_{1}^{-1}$). Therefore, $J_{2}^{-1}\circ \widehat{\Theta}\circ J_{1}=\overline{\Theta}$ in objects. Is easy to show that the same happens in morphisms. Therefore we conclude that $\widehat{\Theta}\circ J_{1}=J_{2}\circ \overline{\Theta}$.
\end{proof}

\begin{corollary}\label{mapsfini}
Let $\mathcal{C}$ be a dualizing variety. Then with the conditions as in \ref{descrimaps}, we have a commutative diagram with $J_{1}$ and $J_{2}$ isomorphisms
$$\xymatrix{\Big(\mathrm{mod}(\mathcal{C}),\mathbb{G}\mathrm{mod}(\mathcal{C})\Big)\ar[rr]^{\widehat{\Theta}}& & 
\Big(\mathrm{mod}(\mathcal{C}^{op}),\overline{\mathbb{G}}\mathrm{mod}(\mathcal{C}^{op})\Big)\\
\Big(\mathrm{mod}(\mathcal{C}),\mathrm{mod}(\mathcal{C})\Big)\ar[rr]^{\overline{\Theta}}\ar[u]^{J_{1}} && \Big(\mathrm{mod}(\mathcal{C}^{op}),\mathrm{mod}(\mathcal{C}^{op})\Big)\ar[u]^{J_{2}}}$$
\end{corollary}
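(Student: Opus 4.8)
The plan is to deduce the corollary directly from Proposition \ref{descrimaps} by checking that each of the four functors in the square restricts to the finitely presented subcategories and that the restricted $J_{1},J_{2}$ remain isomorphisms; commutativity of the restricted diagram is then automatic. First I would recall that, since $\mathcal{C}$ is a dualizing variety, Definition \ref{dualizinvar} says that $\mathbb{D}_{\mathcal{C}}$ induces a duality between $\mathrm{mod}(\mathcal{C})$ and $\mathrm{mod}(\mathcal{C}^{op})$. Hence for a map $f\colon A\longrightarrow B$ in $\mathrm{maps}(\mathrm{mod}(\mathcal{C}))$ the map $\mathbb{D}_{\mathcal{C}}(f)\colon \mathbb{D}_{\mathcal{C}}(B)\longrightarrow \mathbb{D}_{\mathcal{C}}(A)$ lies in $\mathrm{maps}(\mathrm{mod}(\mathcal{C}^{op}))$, so the functor $\overline{\Theta}$ of Proposition \ref{descrimaps} restricts to a (dual) equivalence $\big(\mathrm{mod}(\mathcal{C}),\mathrm{mod}(\mathcal{C})\big)\longrightarrow \big(\mathrm{mod}(\mathcal{C}^{op}),\mathrm{mod}(\mathcal{C}^{op})\big)$.

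Next I would treat $\mathbb{G}$ and $\overline{\mathbb{G}}$. Here $M=\widehat{\mathbbm{Hom}}$ has $M_{T}=\mathrm{Hom}_{\mathcal{C}}(T,-)$ and $M_{U}=\mathrm{Hom}_{\mathcal{C}}(-,U)$ representable, hence finitely presented, so the hypotheses of \cite[Proposition 6.3]{LeOS} are satisfied and we have equivalences $\big(\mathrm{mod}(\mathcal{C}),\mathbb{G}\mathrm{mod}(\mathcal{C})\big)\xrightarrow{\sim}\mathrm{mod}\left(\left[\begin{smallmatrix}\mathcal{C}&0\\ \widehat{\mathbbm{Hom}}&\mathcal{C}\end{smallmatrix}\right]\right)$ and similarly over $\mathcal{C}^{op}$. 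Moreover, by the Yoneda isomorphism $\theta_{B,T}^{-1}\colon B(T)\longrightarrow \mathrm{Hom}_{\mathrm{Mod}(\mathcal{C})}(\mathrm{Hom}_{\mathcal{C}}(T,-),B)$ used in the proof of Proposition \ref{descrimaps}, one has $\mathbb{G}(B)(T)\cong B(T)$ naturally in $T$, so $\mathbb{G}$ is naturally isomorphic to the identity on $\mathrm{Mod}(\mathcal{C})$; in particular it preserves finite presentation, and the same holds for $\overline{\mathbb{G}}$. Consequently $\widehat{\Theta}$ restricts to a functor $\big(\mathrm{mod}(\mathcal{C}),\mathbb{G}\mathrm{mod}(\mathcal{C})\big)\longrightarrow \big(\mathrm{mod}(\mathcal{C}^{op}),\overline{\mathbb{G}}\mathrm{mod}(\mathcal{C}^{op})\big)$.

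It remains to observe that $J_{1}$ and $J_{2}$ restrict to isomorphisms between the finitely presented comma categories. This is formal: $J_{1}$ sends $(A,f,B)$ with $A,B\in\mathrm{mod}(\mathcal{C})$ to $(A,\widehat{f},\mathbb{G}(B))$, and $\mathbb{G}(B)\in\mathrm{mod}(\mathcal{C})$ by the previous paragraph, while $J_{1}^{-1}$ leaves the underlying objects $A,B$ unchanged; likewise for $J_{2}$. Since $J_{1},J_{2}$ are isomorphisms of categories on the $\mathrm{Mod}$ level, their restrictions are isomorphisms on the $\mathrm{mod}$ level. Finally, commutativity $\widehat{\Theta}\circ J_{1}=J_{2}\circ\overline{\Theta}$ of the restricted square is just the restriction of the commutative square of Proposition \ref{descrimaps}, all composites agreeing already over $\mathrm{Mod}$. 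The only point requiring genuine attention is the behaviour of $\mathbb{G}$ (equivalently of $\widehat{\Theta}$ and $J_{1}$) with respect to finite presentation — handled by the natural isomorphism $\mathbb{G}(B)(T)\cong B(T)$ and \cite[Proposition 6.3]{LeOS} — while everything else is a routine restriction argument.
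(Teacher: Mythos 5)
Your proposal is correct and follows essentially the same route as the paper: both proofs simply restrict the commutative square of Proposition \ref{descrimaps} to the finitely presented comma categories, the paper delegating the fact that $\widehat{\Theta}$ (equivalently $\overline{\Theta}$, $J_{1}$, $J_{2}$) restricts to a citation of \cite[Proposition 6.4]{LeOS}, whereas you verify it directly via the duality $\mathbb{D}_{\mathcal{C}}\colon\mathrm{mod}(\mathcal{C})\to\mathrm{mod}(\mathcal{C}^{op})$ and the Yoneda identification $\mathbb{G}(B)(T)\cong B(T)$. The extra details you supply are accurate, so this is the paper's argument with the cited input made explicit.
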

\begin{proof}
It follows from \ref{descrimaps} and \cite[Proposition 6.4]{LeOS}.
\end{proof}

\begin{proposition}\label{mapsmodlam}
Let $\mathcal{C}$ be a  $K$-variety and consider  the category
$\mathbf{\Lambda}=\left[ \begin{smallmatrix}
\mathcal C& 0 \\ 
 \widehat{\mathbbm{Hom}}& \mathcal C
\end{smallmatrix}\right]$.
\begin{itemize}
\item[(i)] There is an equivalence of categories
$$
\mathrm{Mod}(\mathbf{\Lambda})\xrightarrow{\sim} \mathrm{maps}(\mathrm{Mod}(\mathcal{C}))
$$
\item[(ii)]  If $\mathcal{C}$ is dualizing, there is an equivalence of categories
$$
\mathrm{mod}(\mathbf{\Lambda})\xrightarrow{\sim} \mathrm{maps}(\mathrm{mod}(\mathcal C))
$$
\end{itemize}
\end{proposition}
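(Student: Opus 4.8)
The plan is to deduce both equivalences from Theorem \ref{equivalenceLEOS} together with the identification of the comma category $\big(\mathrm{Mod}(\mathcal{C}),\mathbb{G}\mathrm{Mod}(\mathcal{C})\big)$ with $\mathrm{maps}(\mathrm{Mod}(\mathcal{C}))$ already extracted in \ref{descrimaps}. The first thing I would record is that, for $M=\widehat{\mathbbm{Hom}}$, one has $M_{T}=\mathrm{Hom}_{\mathcal{C}}(T,-)$, so that by the definition of $\mathbb{G}=\mathbb{G}_{1}$ in \ref{defofG1},
$$\mathbb{G}(B)(T)=\mathrm{Hom}_{\mathrm{Mod}(\mathcal{C})}\big(\mathrm{Hom}_{\mathcal{C}}(T,-),B\big)\cong B(T)$$
naturally in $T$ and $B$ by Yoneda's lemma. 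Thus $\mathbb{G}$ is naturally isomorphic to the identity functor of $\mathrm{Mod}(\mathcal{C})$; in particular it is exact and carries $\mathrm{mod}(\mathcal{C})$ into $\mathrm{mod}(\mathcal{C})$.

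For (i), I would apply Theorem \ref{equivalenceLEOS} with $\mathcal{T}=\mathcal{U}=\mathcal{C}$ and $M=\widehat{\mathbbm{Hom}}$ to obtain an equivalence $\mathrm{Mod}(\mathbf{\Lambda})\xrightarrow{\sim}\big(\mathrm{Mod}(\mathcal{C}),\mathbb{G}\mathrm{Mod}(\mathcal{C})\big)$, and then compose it with the inverse of the isomorphism of categories
$$J_{1}\colon\big(\mathrm{Mod}(\mathcal{C}),\mathrm{Mod}(\mathcal{C})\big)\longrightarrow\big(\mathrm{Mod}(\mathcal{C}),\mathbb{G}\mathrm{Mod}(\mathcal{C})\big)$$
constructed in the proof of \ref{descrimaps}, whose components on a map $f\colon A\to B$ are $\theta_{B,T}^{-1}\circ f_{T}$ with $\theta$ the Yoneda isomorphism. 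Concretely, a $\mathbf{\Lambda}$-module corresponds under \ref{equivalenceLEOS} to a triple $(A,\varphi,B)$ with $A,B\in\mathrm{Mod}(\mathcal{C})$ and $\varphi\colon A\to\mathbb{G}(B)$, and $J_{1}^{-1}$ sends it to the object $A\xrightarrow{\,\theta_{B,-}\circ\varphi\,}B$ of $\mathrm{maps}(\mathrm{Mod}(\mathcal{C}))$; the compatibility on morphisms is already contained in \ref{descrimaps}. This gives (i).

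For (ii), I would suppose $\mathcal{C}$ is dualizing and first note that $M_{T}=\mathrm{Hom}_{\mathcal{C}}(T,-)$ is finitely generated projective, hence lies in $\mathrm{mod}(\mathcal{C})$, and dually $M_{U}=\mathrm{Hom}_{\mathcal{C}}(-,U)\in\mathrm{mod}(\mathcal{C}^{op})$. Then \cite[Proposition 6.3]{LeOS} applies and shows that the equivalence of Theorem \ref{equivalenceLEOS} restricts to an equivalence $\mathrm{mod}(\mathbf{\Lambda})\xrightarrow{\sim}\big(\mathrm{mod}(\mathcal{C}),\mathbb{G}\,\mathrm{mod}(\mathcal{C})\big)$ (the comma category on the right makes sense because $\mathbb{G}\cong\mathrm{Id}$ preserves $\mathrm{mod}(\mathcal{C})$). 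By Corollary \ref{mapsfini}, $J_{1}$ restricts to an isomorphism $\big(\mathrm{mod}(\mathcal{C}),\mathrm{mod}(\mathcal{C})\big)\xrightarrow{\sim}\big(\mathrm{mod}(\mathcal{C}),\mathbb{G}\,\mathrm{mod}(\mathcal{C})\big)$, and composing with its inverse yields $\mathrm{mod}(\mathbf{\Lambda})\xrightarrow{\sim}\mathrm{maps}(\mathrm{mod}(\mathcal{C}))$.

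The only point requiring care is the restriction to finitely presented objects in (ii): one needs that a $\mathbf{\Lambda}$-module is finitely presented exactly when both of its ``diagonal'' $\mathcal{C}$-module components are finitely presented, which is precisely what \cite[Proposition 6.3]{LeOS} provides once the projectivity of $M_{T}$ and $M_{U}$ is in hand. Everything else is a formal composition of equivalences and isomorphisms already available in the excerpt.
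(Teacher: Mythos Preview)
Your proof is correct and follows essentially the same approach as the paper: for (i) you compose the equivalence of Theorem \ref{equivalenceLEOS} with the isomorphism $J_{1}$ of Proposition \ref{descrimaps}, and for (ii) you invoke \cite[Proposition 6.3]{LeOS} (after checking $M_{T}\in\mathrm{mod}(\mathcal{C})$ and $M_{U}\in\mathrm{mod}(\mathcal{C}^{op})$) together with Corollary \ref{mapsfini}, exactly as the paper does. Your added remark that $\mathbb{G}\cong\mathrm{Id}$ via Yoneda is a helpful clarification but not strictly needed for the argument.
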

\begin{proof}
(i) Is proved in \cite[Theorem 3.14]{LeOS} that $\mathrm{Mod}(\mathbf{\Lambda})$   is equivalent to the comma category 
 $\Big(\mathrm{Mod}(\mathcal C),\mathbb{G}\mathrm{Mod}(\mathcal{C})\Big)$. Thus by \ref{descrimaps}, the category $\mathrm{Mod}\Big(\left[ \begin{smallmatrix}
\mathcal C& 0 \\ 
 \widehat{\mathbbm{Hom}}& \mathcal C
\end{smallmatrix}\right]\Big)$ is equivalent to the category which objects are morphisms of $\mathcal C$-modules 
$A\xrightarrow{f} B$, with 
 $A,B\in\mathrm{Mod}(\mathcal{C})$. In this way we have the equivalence of categories
 \begin{eqnarray*}\label{maps1}
 {\textswab{F}}\circ J_{1}:\mathrm{maps}(\mathrm{Mod}(\mathcal C))\rightarrow\mathrm{Mod}\Big(\left[ \begin{smallmatrix}
\mathcal C& 0 \\ 
 \widehat{\mathbbm{Hom}}& \mathcal C
\end{smallmatrix}\right]\Big).
\end{eqnarray*}

(ii)  Note that   $\widehat{\mathbbm{Hom}}_{C}=\mathrm{Hom}(C,-)\in\mathrm{mod}(\mathcal C)$  and 
$\widehat{\mathbbm{Hom}}_{C'}=\mathrm{Hom}(-,C')\in\mathrm{mod}(\mathcal C^{op})$, for all $C,C'\in\mathcal C$. Therefore, the equivalence follows from 
the fact that $\mathcal{C}$ is dualizing, \ref{mapsfini}  and \cite[Proposition 6.3]{LeOS}.
\end{proof}

In the following we will write $(C,-)$ and $(-,C)$ instead of $\mathrm{Hom}_{\mathcal{C}}(C,-)$ and $\mathrm{Hom}_{\mathcal{C}}(-,C)$.
 
\begin{proposition}\label{projinjec}
Let $\mathcal{C}$ be a $K$-variety. Then, 
\begin{itemize}
\item[(i)] 
$\mathrm{rad}\Big(\left[ \begin{smallmatrix}
 C_0& 0 \\ 
 \widehat{\mathbbm{Hom}}& C_1
\end{smallmatrix}\right],\left[ \begin{smallmatrix}
 C_0'& 0 \\ 
 \widehat{\mathbbm{Hom}}& C_1'
\end{smallmatrix}\right]\Big)=\left[ \begin{smallmatrix}
 \mathrm{rad}_{\mathcal C}(C_0,C_0')& 0 \\ 
 \mathrm{Hom}_{\mathcal C}(C_0,C_1')& \mathrm{rad}_{\mathcal C}(C_1,C_1')
\end{smallmatrix}\right]$

\item[(ii)]  Suppose that $\mathcal C$ is a dualizing variety.
\item[(a)] The indecomposable projective objects in  $\mathrm{maps}(\mathcal C)$ are objects which isomorphic to: objects of the form
$\Big((C,-),(1_C,-), (C,-)\Big)$ where $C$ is an indecomposable  object in $\mathcal C$; and to $\Big(0,0, (C,-)\Big)$ where $C$ is a indecomposable  object in $\mathcal C$.
\item[(b)]  The indecomposable injective objects in  $\mathrm{maps}(\mathcal C)$ are objects which are isomorphic to: objects of the form$\Big(\mathbb{D}_{\mathcal{C}}(C,-),\mathbb{D}_{\mathcal{C}}(1_C,-), \mathbb{D}_{\mathcal{C}}(C,-)\Big)$ where $C$ is an indecomposable  object in $\mathcal C$; and to $\Big(\mathbb{D}_{\mathcal{C}}(C,-)0,0\Big)$ where $C$ is an indecomposable  object in $\mathcal{C}$.

\end{itemize}

\end{proposition}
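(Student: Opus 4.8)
Part (i) is a direct computation with the definition of the radical, valid in any triangular matrix category. The elementary input is that a morphism $\left[\begin{smallmatrix}t&0\\m&u\end{smallmatrix}\right]$ in $\left[\begin{smallmatrix}\mathcal{T}&0\\M&\mathcal{U}\end{smallmatrix}\right]$ is an isomorphism if and only if $t$ and $u$ are isomorphisms: the converse is immediate from the composition law, and when $t,u$ are invertible a matrix $\left[\begin{smallmatrix}t^{-1}&0\\ \ast&u^{-1}\end{smallmatrix}\right]$ with a suitable lower-left entry inverts it. Given $\phi=\left[\begin{smallmatrix}t&0\\m&u\end{smallmatrix}\right]\in\mathbf{\Lambda}\big(\left[\begin{smallmatrix}C_0&0\\\widehat{\mathbbm{Hom}}&C_1\end{smallmatrix}\right],\left[\begin{smallmatrix}C_0'&0\\\widehat{\mathbbm{Hom}}&C_1'\end{smallmatrix}\right]\big)$ and an arbitrary $\psi=\left[\begin{smallmatrix}s&0\\n&v\end{smallmatrix}\right]$ in the opposite direction, the diagonal entries of $1-\psi\phi$ are $1_{C_0}-st$ and $1_{C_1}-vu$, so $1-\psi\phi$ is invertible precisely when both of these are. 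Testing against the $\psi$ with $n=0=v$ and with $s=0=n$ shows that $\phi\in\mathrm{rad}_{\mathbf{\Lambda}}$ forces $t\in\mathrm{rad}_{\mathcal{C}}(C_0,C_0')$ and $u\in\mathrm{rad}_{\mathcal{C}}(C_1,C_1')$; conversely these two conditions make $1-\psi\phi$ invertible for every $\psi$ and place no constraint on $m$. Since $\widehat{\mathbbm{Hom}}(C_1',C_0)=\mathrm{Hom}_{\mathcal{C}}(C_0,C_1')$ by construction, this is exactly the asserted formula.

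For part (ii)(a) I would work inside $\mathrm{mod}(\mathbf{\Lambda})$ with $\mathbf{\Lambda}=\left[\begin{smallmatrix}\mathcal{C}&0\\\widehat{\mathbbm{Hom}}&\mathcal{C}\end{smallmatrix}\right]$, using the equivalence $\mathrm{mod}(\mathbf{\Lambda})\simeq\mathrm{maps}(\mathrm{mod}(\mathcal{C}))$ of Proposition \ref{mapsmodlam}(ii). Since $\mathbf{\Lambda}$ is a dualizing variety by \cite[Proposition 7.3]{LeOS}, it is Krull--Schmidt with split idempotents, so the indecomposable projective objects of $\mathrm{mod}(\mathbf{\Lambda})$ are exactly the representables $\mathbf{\Lambda}(\lambda,-)$ with $\lambda$ indecomposable in $\mathbf{\Lambda}$ (non-isomorphic $\lambda$ yielding non-isomorphic functors, by Yoneda). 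The indecomposables of $\mathbf{\Lambda}$ are obtained by splitting the orthogonal idempotents $\left[\begin{smallmatrix}1_T&0\\0&0\end{smallmatrix}\right]$ and $\left[\begin{smallmatrix}0&0\\0&1_U\end{smallmatrix}\right]$ of $\left[\begin{smallmatrix}T&0\\\widehat{\mathbbm{Hom}}&U\end{smallmatrix}\right]$, whose images are $\left[\begin{smallmatrix}T&0\\\widehat{\mathbbm{Hom}}&0\end{smallmatrix}\right]$ and $\left[\begin{smallmatrix}0&0\\\widehat{\mathbbm{Hom}}&U\end{smallmatrix}\right]$; as $\mathrm{End}_{\mathbf{\Lambda}}\left[\begin{smallmatrix}C&0\\\widehat{\mathbbm{Hom}}&0\end{smallmatrix}\right]\cong\mathrm{End}_{\mathcal{C}}(C)\cong\mathrm{End}_{\mathbf{\Lambda}}\left[\begin{smallmatrix}0&0\\\widehat{\mathbbm{Hom}}&C\end{smallmatrix}\right]$, these last two matrices (for $C$ indecomposable in $\mathcal{C}$) are the indecomposables of $\mathbf{\Lambda}$. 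It remains to transport the corresponding representables through the equivalence $\mathrm{mod}(\mathbf{\Lambda})\simeq(\mathrm{mod}(\mathcal{C}),\mathbb{G}\,\mathrm{mod}(\mathcal{C}))$ of \cite[Theorem 3.14]{LeOS} followed by $J_1^{-1}$ of \ref{descrimaps}: evaluating $\mathbf{\Lambda}\big(\left[\begin{smallmatrix}C&0\\\widehat{\mathbbm{Hom}}&0\end{smallmatrix}\right],-\big)$ at $\left[\begin{smallmatrix}T&0\\\widehat{\mathbbm{Hom}}&0\end{smallmatrix}\right]$ gives $\mathcal{C}(C,T)$ and at $\left[\begin{smallmatrix}0&0\\\widehat{\mathbbm{Hom}}&U\end{smallmatrix}\right]$ gives $\widehat{\mathbbm{Hom}}(U,C)=\mathrm{Hom}_{\mathcal{C}}(C,U)$, the structure morphism between the two restrictions being the identity modulo the Yoneda identifications, so this representable corresponds to $(C,-)\xrightarrow{1}(C,-)$; similarly $\mathbf{\Lambda}\big(\left[\begin{smallmatrix}0&0\\\widehat{\mathbbm{Hom}}&C\end{smallmatrix}\right],-\big)$ vanishes on the first family of objects and equals $\mathcal{C}(C,-)$ on the second, hence corresponds to $0\to(C,-)$. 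This is precisely the list in (ii)(a).

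Part (ii)(b) then follows formally. By Corollary \ref{mapsfini} the functor $\overline{\Theta}$, being built from the duality $\mathbb{D}_{\mathcal{C}}$ on the dualizing variety $\mathcal{C}$, is a duality $\mathrm{maps}(\mathrm{mod}(\mathcal{C}))\to\mathrm{maps}(\mathrm{mod}(\mathcal{C}^{op}))$, so its quasi-inverse --- the analogous functor for $\mathcal{C}^{op}$ --- carries the indecomposable projectives of $\mathrm{maps}(\mathrm{mod}(\mathcal{C}^{op}))$, described by part (ii)(a), onto the indecomposable injectives of $\mathrm{maps}(\mathrm{mod}(\mathcal{C}))$. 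Unwinding, the projective $(-,C)\xrightarrow{1}(-,C)$ is sent to the map of indecomposable injective $\mathcal{C}$-modules $\mathbb{D}_{\mathcal{C}}(C,-)\xrightarrow{\mathbb{D}_{\mathcal{C}}(1_C,-)}\mathbb{D}_{\mathcal{C}}(C,-)$ and $0\to(-,C)$ to $\mathbb{D}_{\mathcal{C}}(C,-)\to 0$, which is the asserted list. The step I expect to be the real work is the identification closing part (ii)(a): proving carefully that the two representable $\mathbf{\Lambda}$-modules correspond to the displayed objects of $\mathrm{maps}(\mathrm{mod}(\mathcal{C}))$ requires unwinding the explicit form of the equivalence of \cite[Theorem 3.14]{LeOS} together with the Yoneda isomorphisms hidden in the definitions of $\mathbb{G}$ and of $J_1$; the remaining ingredients are a routine idempotent computation, the radical computation of part (i), and a formal application of the duality.
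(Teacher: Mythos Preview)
Your proof is correct and rests on the same ingredients as the paper's: the equivalence $\mathrm{mod}(\mathbf{\Lambda})\simeq\mathrm{maps}(\mathrm{mod}(\mathcal{C}))$, the identification of indecomposable projectives with representables on indecomposable objects, the Krull--Schmidt property, and Yoneda. The organization differs slightly. For (i) the paper simply cites \cite[Proposition~5.4]{LeOS}, whereas you reprove the radical formula directly; your argument is self-contained and correct. For (ii)(a) the paper takes the representable on a \emph{general} object $\left[\begin{smallmatrix}C&0\\\widehat{\mathbbm{Hom}}&C'\end{smallmatrix}\right]$, transports it through $\textswab{F}$ using the explicit description from \cite{LeOS} to obtain the map $\left[\begin{smallmatrix}1\\0\end{smallmatrix}\right]:(C,-)\to(C,-)\amalg(C',-)$, and only then decomposes this object of $\mathrm{maps}(\mathrm{mod}(\mathcal{C}))$ via Krull--Schmidt on $C$ and $C'$. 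You instead split the orthogonal idempotents inside $\mathbf{\Lambda}$ first, isolating the ``corner'' objects $\left[\begin{smallmatrix}C&0\\\widehat{\mathbbm{Hom}}&0\end{smallmatrix}\right]$ and $\left[\begin{smallmatrix}0&0\\\widehat{\mathbbm{Hom}}&C\end{smallmatrix}\right]$, and transport each piece separately. Your route has the mild advantage that the transport of each corner representable is simpler than the general case (one of the two components is zero), so you need less of the explicit machinery of \cite{LeOS}; the paper's route has the advantage that the general description of $\textswab{F}$ on representables is recorded once and reused later (e.g.\ in Section~6). For (ii)(b) both approaches invoke duality; the paper leaves this implicit.
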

\begin{proof}
(i) It follows from \cite[Proposition 5.4]{LeOS}.

(ii) (a)  Let $P=\Big(\left[ \begin{smallmatrix}
 C& 0 \\ 
 \widehat{\mathbbm{Hom}}& C'
\end{smallmatrix}\right],-\Big)$ a projective object in $\mathrm{Mod}\Big(\left[ \begin{smallmatrix}
\mathcal C& 0 \\ 
 \widehat{\mathbbm{Hom}}& \mathcal C
\end{smallmatrix}\right]\Big)$. Consider the object $g:(C,-)\rightarrow \mathbb{G}(\widehat{\mathbbm{Hom}}_C\coprod (C',-))$.  Then, by the equivalence 
$${\textswab{F}}:\mathrm{maps}(\mathrm{Mod}(\mathcal C))\rightarrow\mathrm{Mod}\Big(\left[ \begin{smallmatrix}
\mathcal C& 0 \\ 
 \widehat{\mathbbm{Hom}}& \mathcal C
\end{smallmatrix}\right]\Big)$$
we get by  \cite[Proposition 5.4]{LeOS},
$$\textswab{F}\Big((C,-),g, \widehat{\mathbbm{Hom}}_C\coprod (C',-)\Big)\cong P.$$ 
Moreover, we have the following commutative diagram, where the vertical maps are isomorphisms in $\mathrm{maps}(\mathrm{mod}(\mathcal C))$.

\[
\begin{diagram}
\node{(C,-)}\arrow{e,t}{g}\arrow{s,l}{{(1_C,-)}}
 \node{\mathbb{G}(\widehat{\mathbbm{Hom}}_C\coprod (C',-))}\arrow{s,r}{Y=\{Y_C\}_{C\in\mathcal C} }\\
\node{(C,-)}\arrow{e,t}{\left[ \begin{smallmatrix}
 {(1_C,-)} \\ 
 0
\end{smallmatrix}\right]}
 \node{(C,-)\coprod (C',-).}
\end{diagram}
\]
Since $\mathcal{C}$ is a Krull-Schmidt  category, $C$ and $C'$ decomposes as $C=\coprod_{i=1}^n C_i$ and $C'=\coprod_{j=1}^m C_j'$
such that $\mathrm{End}_{\mathcal{C}}(C_i)$ and $\mathrm{End}_{\mathcal{C}}(C_j')$ are local rings. Thus, we have decompositions
$$\Big((C,-),1_{(C,-)}, (C,-)\Big)=\coprod_{i=1}^n \Big((C_i,-),1_{(C_i,-)}, (C_i,-)\Big)$$
 and 
$$\Big(0,0, (C',-)\Big)=\coprod_{j=1}^m \Big(0,0,(C_j',-)\Big)$$ for which  $\mathrm{End}_{\mathrm{maps}(\mathcal{C})}\Big((0,0,(C_j',-)\Big)\cong \mathrm{End}_{\mathcal{C}}(C_j')$
and  also we have the isomorphism $\mathrm{End}_{\mathrm{maps}(\mathcal{C})}\Big(((C_i,-),1_{(C_i,-)}, (C_i,-))\Big)\cong \mathrm{End}_{\mathcal{C}}(C_i)$.
\end{proof}

\subsection{Example}
Now, we describe a triangular matrix category $\mathbf{\Lambda}$  such that the category $\mathrm{Mod}(\mathbf{\Lambda})$ is equivalent to $\mathrm{maps}(\mathbf{Ch}(\mathrm{Mod} (K)))$.\\
Let $\Delta=(\Delta_0,\Delta_1)$ be the quiver with $\Delta_0=\mathbb  Z$ and  $\Delta_{1}=\{\alpha_i:i\rightarrow i+1\mid i\in\mathbb Z\}$,  with the set of relations $\rho=\{\alpha_{i}\alpha_{i-1}\mid i\in\mathbb Z\}$
 \[
\begin{diagram}
\node{\cdots}\arrow{e,t}{}
  \node{i-1}\arrow{e,t}{\alpha_{i-1}}
   \node{i}\arrow{e,t}{\alpha_{i}}
    \node{i+1}\arrow{e,t}{}
     \node{\cdots}
 \end{diagram}
\]
On the other hand, let $\widetilde{\Delta}=(\widetilde\Delta_0,\widetilde\Delta_1)$  be the quiver with $\widetilde\Delta_0=(\Delta_0\times\{1\})\cup (\Delta_0\times \{2\})$ and 
$$\widetilde\Delta_1=(\Delta_1\times\{1\})\bigcup (\Delta_1\times \{2\})\bigcup\Big\{\beta_i:(i,1)\rightarrow (i,2)\Big\}_{i\in\mathbb{Z} }\bigcup\Big\{\gamma_i:(i,1)\rightarrow (i+1,2)\Big\}_{i\in\mathbb{Z}}$$ with  relations  given by the set $$\widetilde\rho=\Big\{(\alpha_{i+1},1)(\alpha_i,1),\,\, (\alpha_{i+1},2)(\alpha_i,2),\,\,  (\alpha_i,2)\beta_i-\gamma_i,\,\, \beta_{i+1}(\alpha_i,1)-\gamma_i\Big\}_{i\in \mathbb{Z}}$$.

 \[
\begin{diagram}
\node{\cdots}\arrow{e,t}{}
\node{(i-1,1)}\arrow{e,t}{(\alpha_{i-1},1)}\arrow{se,t}{\gamma_{i-1}}\arrow{s,r}{\beta_{i-1}}
  \node{(i,1)}\arrow{e,t}{(\alpha_{i},1)}\arrow{se,t}{\gamma_{i}}\arrow{s,r}{\beta_{i}}
    \node{(i+1,1)}\arrow{e,t}{}\arrow{s,r}{\beta_{i+1}}
     \node{\cdots}\\
\node{\cdots}\arrow{e,t}{}
\node{(i-1,2)}\arrow{e,t}{(\alpha_{i-1},2)}
  \node{(i,2)}\arrow{e,t}{(\alpha_{i},2)}
    \node{(i+1,2)}\arrow{e,t}{}
     \node{\cdots}     
 \end{diagram}
\]
We will show that the category $\mathbf\Lambda=\left[ \begin{smallmatrix}
K\Delta/\langle\rho\rangle & 0 \\
\widehat{\mathbbm{{Hom}}} & K\Delta/\langle\rho\rangle
\end{smallmatrix} \right]$ with $\widehat{\mathbbm{{Hom}}}: K\Delta/\langle\rho\rangle\otimes (K\Delta/\langle\rho\rangle)^{op}\rightarrow \mathrm{Mod}\ K$ is equivalent to the category $K\widetilde\Delta/\langle\widetilde\rho\rangle.$\\
First, we note that we have two inclusion functors 
$$\varPhi_{1},\,\,\varPhi_{2}:K\Delta/\langle \rho\rangle \longrightarrow K\widetilde{\Delta}/\langle \widetilde\rho \rangle$$ defined as follows: for $i\in \Delta$ and $\alpha_{i}:i\longrightarrow i+1$ we set $\varPhi_{1}(i)=(i,1)$ and 
$\varPhi_{1}(\alpha_{i})=(\alpha_{i},1)$; and $\varPhi_{2}(i)=(i,2)$ and $\varPhi_{2}(\alpha_{i})=(\alpha_{i},2)$.\\
Now, we establish a functor  
$$\varPhi:\mathbf\Lambda\rightarrow K\widetilde\Delta/\langle\widetilde\rho\rangle$$  on objects  by
$\varPhi\Big(\left[ \begin{smallmatrix}
 i& 0 \\ 
 \widehat{\mathbbm{Hom}}& j
\end{smallmatrix}\right]\Big)=(i,1)\oplus(j,2)$, for all $i,j\in\mathbb{Z}$.
In order to define $\varPhi$ on morphisms, we note that
$$\mathrm{Hom}_{\mathbf\Lambda}\Big(\left[ \begin{smallmatrix}
 i& 0 \\ 
 \widehat{\mathbbm{Hom}}& j
\end{smallmatrix}\right],\left[ \begin{smallmatrix}
 i'& 0 \\ 
 \widehat{\mathbbm{Hom}}& j'
\end{smallmatrix}\right] \Big)=\left[ \begin{smallmatrix}
 \mathrm{Hom}_{K\Delta/\langle \rho\rangle}(i,i')& 0 \\ 
 \mathrm{Hom}_{K\Delta/\langle \rho\rangle}(i,j')& \mathrm{Hom}_{K\Delta/\langle \rho\rangle}(j,j')
\end{smallmatrix}\right]$$ 
and
$$\mathrm{Hom}_{K\widetilde\Delta/\langle \widetilde\rho\rangle} \Big((i,1)\oplus (j,2),(i',1)\oplus (j',2)\Big)=\left( \begin{smallmatrix}
 \mathrm{Hom}_{K\widetilde\Delta/\langle \widetilde\rho\rangle}((i,1),(i',1))& 0 \\ 
\mathrm{Hom}_{K\widetilde\Delta/\langle \widetilde\rho\rangle}((i,1),(j',2))& \mathrm{Hom}_{k\widetilde\Delta/\langle \widetilde\rho\rangle}((j,2),(j',2))
\end{smallmatrix}\right),$$
for all $i,i', j,j'\in\mathbb Z$.\\
Note that $\mathrm{Hom}_{K\Delta/\langle \rho\rangle}(i,i')=\mathrm{Hom}_{K\Delta/\langle \rho\rangle}(j,j')=\mathrm{Hom}_{K\Delta/\langle \rho\rangle}(i,j')=0$ unless $i'-i$, $j'-j$, $j'-i\in\{0,1\}$, and
$\mathrm{Hom}_{K\widetilde\Delta/\langle \widetilde\rho\rangle}((i,1),(i',1))= 
\mathrm{Hom}_{K\widetilde\Delta/\langle \widetilde\rho\rangle}((i,1),(j',2))= \mathrm{Hom}_{K\widetilde\Delta/\langle \widetilde\rho\rangle}((j,2),(j',2))=0$ unless $i'-i,j'-j,j'-i\in\{0,1\}$.\\
We  will define a morphism of abelian groups  
\begin{equation}\label{isoexample}
\varPhi:\mathrm{Hom}_{\mathbf\Lambda}\Big(\left[ \begin{smallmatrix}
 i& 0 \\ 
 \widehat{\mathbbm{Hom}}& j
\end{smallmatrix}\right],\left[ \begin{smallmatrix}
 i'& 0 \\ 
 \widehat{\mathbbm{Hom}}& j'
\end{smallmatrix}\right] \Big)\rightarrow \mathrm{Hom}_{K\widetilde\Delta/\langle \widetilde\rho\rangle} \Big((i,1)\oplus (j,2),(i',1)\oplus (j',2)\Big), 
\end{equation}
as follows:
 
\begin{itemize}
\item If $j'-i=0$, and $i'-i,j'-j\in \{0,1\}$. Consider
$$\left[ \begin{smallmatrix}
 \xi_{i} & &  0 \\ 
 \theta_{i} & & \eta_{j}
\end{smallmatrix}\right] \in\mathrm{Hom}_{\mathbf{\Lambda}}\Big(\left[ \begin{smallmatrix}
 i& 0 \\ 
 \widehat{\mathbbm{Hom}}& j
\end{smallmatrix}\right],\left[ \begin{smallmatrix}
 i' & 0 \\ 
 \widehat{\mathbbm{Hom}}& i
\end{smallmatrix}\right] \Big)=\left[ \begin{smallmatrix}
 \mathrm{Hom}_{K\Delta/\langle \rho\rangle}(i,i')& 0 \\ 
 \mathrm{Hom}_{K\Delta/\langle \rho\rangle}(i,i)& \mathrm{Hom}_{K\Delta/\langle \rho\rangle}(j,i)
\end{smallmatrix}\right],$$
since $\theta_{i}\in \mathrm{Hom}_{K\Delta/\langle \rho\rangle}(i,i)=K1_{i}$ we have that $\theta_{i}=\lambda 1_{i}$ for some $\lambda\in K$, thus we set
$$\varPhi \Big(\left[ \begin{smallmatrix}
 \xi_{i} & 0 \\ 
 \theta_{i} & \eta_{j}
\end{smallmatrix}\right]\Big)=\left[ \begin{smallmatrix}
 \varPhi_{1}(\xi_{i})& 0 \\ 
 \lambda \beta_{i} & \varPhi_{2}(\eta_{j})
\end{smallmatrix}\right].$$

\item If $j'-i=1$,  and $i'-i,j'-j\in \{0,1\}$.  Consider
$$\left[ \begin{smallmatrix}
 \xi_{i} & &  0 \\ 
 \theta_{i} & & \eta_{j}
\end{smallmatrix}\right] \in\mathrm{Hom}_{\mathbf{\Lambda}}\Big(\left[ \begin{smallmatrix}
 i& 0 \\ 
 \widehat{\mathbbm{Hom}}& j
\end{smallmatrix}\right],\left[ \begin{smallmatrix}
 i'& 0 \\ 
 \widehat{\mathbbm{Hom}}& i+1
\end{smallmatrix}\right] \Big)=\left[ \begin{smallmatrix}
 \mathrm{Hom}_{K\Delta/\langle \rho\rangle}(i,i')& 0 \\ 
 \mathrm{Hom}_{K\Delta/\langle \rho\rangle}(i,i+1)& \mathrm{Hom}_{K\Delta/\langle \rho\rangle}(j,i+1)
\end{smallmatrix}\right],$$
since $\theta_{i}\in \mathrm{Hom}_{K\Delta/\langle \rho\rangle}(i,i+1)=K\alpha_{i}$ we have that $\theta_{i}=\lambda\alpha_{i}$ for some $\lambda\in K$, thus
we set
$$\varPhi \Big(\left[ \begin{smallmatrix}
 \xi_{i} & 0 \\ 
 \theta_{i} & \eta_{j}
\end{smallmatrix}\right]\Big)=\left[ \begin{smallmatrix}
 \varPhi_{1}(\xi_{i})& 0 \\ 
 \lambda \gamma_{i} & \varPhi_{2}(\eta_{j})
\end{smallmatrix}\right].$$

\item If $j'-1\notin \{0,1\}$.  In this case, we set $\varPhi=0$.

\end{itemize}

In order to check that $\varPhi$ is a functor, consider the morphisms
$$\left[ \begin{smallmatrix}
 \xi_{i} & &  0 \\ 
 \theta_{i} & & \eta_{j}
\end{smallmatrix}\right] \in\mathrm{Hom}_{\mathbf{\Lambda}}\Big(\left[ \begin{smallmatrix}
 i& 0 \\ 
 \widehat{\mathbbm{Hom}}& j
\end{smallmatrix}\right],\left[ \begin{smallmatrix}
 i' & 0 \\ 
 \widehat{\mathbbm{Hom}}& j'
\end{smallmatrix}\right] \Big),\,\,\left[ \begin{smallmatrix}
 a_{i'} & &  0 \\ 
 \delta_{i'} & & u_{j'}
\end{smallmatrix}\right] \in\mathrm{Hom}_{\mathbf{\Lambda}}\Big(\left[ \begin{smallmatrix}
 i'& 0 \\ 
 \widehat{\mathbbm{Hom}}& j'
\end{smallmatrix}\right],\left[ \begin{smallmatrix}
 i'' & 0 \\ 
 \widehat{\mathbbm{Hom}}& j''
\end{smallmatrix}\right] \Big)$$

Then, we have that
\begin{align*}
\left[ \begin{smallmatrix}
 a_{i'} & &  0 \\ 
 \delta_{i'} & & u_{j'}
\end{smallmatrix}\right]\left[ \begin{smallmatrix}
 \xi_{i} & &  0 \\ 
 \theta_{i} & & \eta_{j}
\end{smallmatrix}\right] & =\left[ \begin{smallmatrix}
 a_{i'}\xi_{i} & &  0 \\ 
 \delta_{i'}\bullet \xi_{i} +u_{j'}\bullet \theta_{i} 
 && u_{j'}\eta_{j}
\end{smallmatrix}\right]\\
& =\left[ \begin{smallmatrix}
 a_{i'}\xi_{i} & &  0 \\ 
 \delta_{i'}\circ \xi_{i} +u_{j'}\circ \theta_{i} 
 && u_{j'}\eta_{j}
\end{smallmatrix}\right]\in \mathrm{Hom}_{\mathbf{\Lambda}}\Big(\left[ \begin{smallmatrix}
 i& 0 \\ 
 \widehat{\mathbbm{Hom}}& j
\end{smallmatrix}\right],\left[ \begin{smallmatrix}
 i'' & 0 \\ 
 \widehat{\mathbbm{Hom}}& j''
\end{smallmatrix}\right] \Big).
\end{align*}
In order to prove that $\varPhi$ is a functor we have several cases.
These cases are straightforward but tedious. For convenience of the reader we just illustrate some cases:\\

$(\mathbf{I})$ If $j'\notin \{i,i+1\}$;  and if $j''\notin \{i',i'+1\}$ we have that $\mathrm{Hom}_{K\Delta/\langle \rho\rangle}(i,j')=0$ and  $\mathrm{Hom}_{K\Delta/\langle \rho\rangle}(i',j'')=0$. In this cases, we have that $\theta_{i}=0=\delta_{i'}$ and therefore, on one side we have that
$$\varPhi\Big(\left[ \begin{smallmatrix}
 a_{i'} & &  0 \\ 
 0 & & u_{j'}
\end{smallmatrix}\right]\left[ \begin{smallmatrix}
 \xi_{i} & &  0 \\ 
 0 & & \eta_{j}
\end{smallmatrix}\right]\Big)=\varPhi\Big(\left[ \begin{smallmatrix}
 a_{i'}\xi_{i} & &  0 \\ 
 0 & & u_{j'}\eta_{j}
\end{smallmatrix}\right]\Big)=\left[ \begin{smallmatrix}
 \varPhi_{1}(a_{i'}\xi_{i}) & &  0 \\ 
 0 & & \varPhi_{2}(u_{j'}\eta_{j})
\end{smallmatrix}\right].$$
On the other side, we have that
\begin{align*}
\varPhi\Big(\left[ \begin{smallmatrix}
 a_{i'} & &  0 \\ 
 0 & & u_{j'}
\end{smallmatrix}\right]\Big)\varPhi\Big(\left[ \begin{smallmatrix}
 \xi_{i} & &  0 \\ 
 0 & & \eta_{j}
\end{smallmatrix}\right]\Big) & =\left[ \begin{smallmatrix}
 \varPhi_{1}(a_{i'}) & &  0 \\ 
 0 & & \varPhi_{2}(u_{j'})
\end{smallmatrix}\right]\left[ \begin{smallmatrix}
 \varPhi_{1}(\xi_{i}) & &  0 \\ 
 0 & & \varPhi_{2}(\eta_{j})
\end{smallmatrix}\right]\\
& =\left[ \begin{smallmatrix}
 \varPhi_{1}(a_{i'})\varPhi_{1}(\xi_{i}) & &  0 \\ 
 0 & & \varPhi_{2}(u_{j'})\varPhi_{2}(\eta_{j})
\end{smallmatrix}\right]\\
& =\left[ \begin{smallmatrix}
 \varPhi_{1}(a_{i'}\xi_{i}) & &  0 \\ 
 0 & & \varPhi_{2}(u_{j'}\eta_{j})
\end{smallmatrix}\right].
\end{align*}
Then, 
$\varPhi\Big(\left[ \begin{smallmatrix}
 a_{i'} & &  0 \\ 
 \delta_{i'}
 && u_{i}
\end{smallmatrix}\right]\Big)\varPhi\Big(\left[ \begin{smallmatrix}
 \xi_{i} & &  0 \\ 
 \theta_{i} 
 && \eta_{j}
\end{smallmatrix}\right]\Big)=\varPhi\Big(\left[ \begin{smallmatrix}
 a_{i'} & &  0 \\ 
 \delta_{i'}
 && u_{i}
\end{smallmatrix}\right]
\left[ \begin{smallmatrix}
 \xi_{i} & &  0 \\ 
 \theta_{i} 
 && \eta_{j}
\end{smallmatrix}\right]\Big).$\\

$(\mathbf{II})$ Suppose that $j'\in \{i,i+1\}$  and if $j''\in \{i',i'+1\}$. If  $j'=i$ and $j''=i'$.
Consider two morphisms
$$\left[ \begin{smallmatrix}
 \xi_{i} & &  0 \\ 
 \theta_{i} & & \eta_{j}
\end{smallmatrix}\right] \in\mathrm{Hom}_{\mathbf{\Lambda}}\Big(\left[ \begin{smallmatrix}
 i& 0 \\ 
 \widehat{\mathbbm{Hom}}& j
\end{smallmatrix}\right],\left[ \begin{smallmatrix}
 i' & 0 \\ 
 \widehat{\mathbbm{Hom}}& i
\end{smallmatrix}\right] \Big),\,\,\left[ \begin{smallmatrix}
 a_{i'} & &  0 \\ 
 \delta_{i'} & & u_{i}
\end{smallmatrix}\right] \in\mathrm{Hom}_{\mathbf{\Lambda}}\Big(\left[ \begin{smallmatrix}
 i'& 0 \\ 
 \widehat{\mathbbm{Hom}}& i
\end{smallmatrix}\right],\left[ \begin{smallmatrix}
 i'' & 0 \\ 
 \widehat{\mathbbm{Hom}}& i'
\end{smallmatrix}\right] \Big).$$
Then,
$\left[ \begin{smallmatrix}
 a_{i'} & &  0 \\ 
 \delta_{i'} & & u_{i}
\end{smallmatrix}\right]\left[ \begin{smallmatrix}
 \xi_{i} & &  0 \\ 
 \theta_{i} & & \eta_{j}
\end{smallmatrix}\right]=\left[ \begin{smallmatrix}
 a_{i'}\xi_{i} & &  0 \\ 
 \delta_{i'}\bullet \xi_{i} +u_{i}\bullet \theta_{i} 
 && u_{i}\eta_{j}
\end{smallmatrix}\right]=\left[ \begin{smallmatrix}
 a_{i'}\xi_{i} & &  0 \\ 
 \delta_{i'}\circ \xi_{i} +u_{i}\circ \theta_{i} 
 && u_{i}\eta_{j}
\end{smallmatrix}\right]$.\\

$(\mathbf{IIa})$ If $i=i'$ we have that $\xi_{i}=k_{1}1_{i}$, $\theta_{i}=k_{2}1_{i}$, $\delta_{i'}=k_{3}1_{i}$ and $u_{i}=k_{4}1_{i}$ for some $k_{1},k_{2},k_{3},k_{4}\in K$. Then, we get that
$$\left[ \begin{smallmatrix}
 a_{i'}\xi_{i} & &  0 \\ 
 \delta_{i'}\circ \xi_{i} +u_{i}\circ \theta_{i} 
 && u_{i}\eta_{j}
\end{smallmatrix}\right]=\left[ \begin{smallmatrix}
 a_{i'}\xi_{i} & &  0 \\ 
 (k_{1}k_{3}+k_{2}k_{4})1_{i}
 && u_{i}\eta_{j}
\end{smallmatrix}\right]=\left[ \begin{smallmatrix}
 k_{1}a_{i'} & &  0 \\ 
 (k_{1}k_{3}+k_{2}k_{4})1_{i}
 && k_{4}\eta_{j}
\end{smallmatrix}\right].$$ 
On one side, we have that
$$\varPhi(\left[ \begin{smallmatrix}
 a_{i'}\xi_{i} & &  0 \\ 
 \delta_{i'}\circ \xi_{i} +u_{i}\circ \theta_{i} 
 && u_{i}\eta_{j}
\end{smallmatrix}\right])=\left[ \begin{smallmatrix}
 \varPhi_{1}(a_{i'}\xi_{i})& 0 \\ 
(k_{1}k_{3}+k_{2}k_{4})\beta_{i} & \varPhi_{2}(u_{i}\eta_{j})
\end{smallmatrix}\right]=\left[ \begin{smallmatrix}
 k_{1}\varPhi_{1}(a_{i'}) & &  0 \\ 
 (k_{1}k_{3}+k_{2}k_{4})\beta_{i}
 && k_{4}\varPhi_{2}(\eta_{j})
\end{smallmatrix}\right].$$
On the other side, we have that 
$$\varPhi(\left[ \begin{smallmatrix}
 \xi_{i} & &  0 \\ 
 \theta_{i} 
 && \eta_{j}
\end{smallmatrix}\right])=\left[ \begin{smallmatrix}
 \varPhi_{1}(\xi_{i})& 0 \\ 
 k_{2}\beta_{i} & \varPhi_{2}(\eta_{j})
\end{smallmatrix}\right]=\left[ \begin{smallmatrix}
 k_{1}(1_{i},1) & &  0 \\ 
 k_{2}\beta_{i}
 && \varPhi_{2}(\eta_{j})
\end{smallmatrix}\right]$$ and 
$$\varPhi(\left[ \begin{smallmatrix}
 a_{i'} & &  0 \\ 
 \delta_{i'}
 && u_{i}
\end{smallmatrix}\right])=\left[ \begin{smallmatrix}
 \varPhi_{1}(a_{i'})& 0 \\ 
 k_{3}\beta_{i} & \varPhi_{2}(u_{i})
\end{smallmatrix}\right]=\left[ \begin{smallmatrix}
 \varPhi_{1}(a_{i'}) & &  0 \\ 
 k_{3}\beta_{i}
 && k_{4}(1_{i},2)
\end{smallmatrix}\right].$$ 
Therefore, we get that
\begin{align*}
\left[ \begin{smallmatrix}
 \varPhi_{1}(a_{i'})& 0 \\ 
 k_{3}\beta_{i} & k_{4}(1_{i},2)
\end{smallmatrix}\right]\left[ \begin{smallmatrix}
 k_{1}(1_{i},1)& 0 \\ 
 k_{2}\beta_{i} & \varPhi_{2}(\eta_{j})
\end{smallmatrix}\right] & =\left[ \begin{smallmatrix}
k_{1}\varPhi_{1}(a_{i'}) & 0 \\ 
(k_{3}\beta_{i})\circ (k_{1}(1_{i},1)) + (k_{4}(1_{i},2))\circ (k_{2}\beta_{i}) & k_{4}\varPhi_{2}(\eta_{j})
\end{smallmatrix}\right]\\
& = \left[ \begin{smallmatrix}
k_{1}\varPhi_{1}(a_{i'}) & 0 \\ 
(k_{3}k_{1})\beta_{i} + (k_{4}k_{2})\beta_{i} & k_{4}\varPhi_{2}(\eta_{j})
\end{smallmatrix}\right]\\
& = \left[ \begin{smallmatrix}
k_{1}\varPhi_{1}(a_{i'}) & 0 \\ 
(k_{3}k_{1}+k_{4}k_{2})\beta_{i} & k_{4}\varPhi_{2}(\eta_{j})
\end{smallmatrix}\right],
\end{align*}
and thus
$\varPhi\Big(\left[ \begin{smallmatrix}
 a_{i'} & &  0 \\ 
 \delta_{i'}
 && u_{i}
\end{smallmatrix}\right]\Big)\varPhi\Big(\left[ \begin{smallmatrix}
 \xi_{i} & &  0 \\ 
 \theta_{i} 
 && \eta_{j}
\end{smallmatrix}\right]\Big)=\varPhi\Big(\left[ \begin{smallmatrix}
 a_{i'} & &  0 \\ 
 \delta_{i'}
 && u_{i}
\end{smallmatrix}\right]
\left[ \begin{smallmatrix}
 \xi_{i} & &  0 \\ 
 \theta_{i} 
 && \eta_{j}
\end{smallmatrix}\right]\Big).$
Then, $\varPhi:\mathbf{\Lambda}\longrightarrow K\widetilde{\Delta}/\langle \widetilde{\rho}\rangle$ is a functor. Now, it is easy to show that
$$\varPhi:\mathrm{Hom}_{\mathbf\Lambda}\Big(\left[ \begin{smallmatrix}
 i& 0 \\ 
 \widehat{\mathbbm{Hom}}& j
\end{smallmatrix}\right],\left[ \begin{smallmatrix}
 i'& 0 \\ 
 \widehat{\mathbbm{Hom}}& j'
\end{smallmatrix}\right] \Big)\longrightarrow \mathrm{Hom}_{K\widetilde\Delta/\langle \widetilde\rho\rangle} \Big((i,1)\oplus (j,2),(i',1)\oplus (j',2)\Big)$$
is an isomorphism of abelian groups. Since $\varPhi$ is clearly a dense functor we conclude that $\varPhi$ is an equivalence.
Then $\mathrm{Mod}(\mathbf{\Lambda})$ is equivalent to $\mathrm{Mod}( K\widetilde{\Delta}/\langle \widetilde{\rho}\rangle)$. But $\mathrm{Mod}( K\widetilde{\Delta}/\langle \widetilde{\rho}\rangle)$ is the category $\mathrm{maps}(\mathbf{Ch}(\mathrm{Mod}(K)))$, proving our assertion.


\section{Auslander-Reiten translate in the category of maps}

Let $R$ be a commutative ring. Almost split sequences  for dualizing varietes were studied by M. Auslander and Idun Reiten in $\mathrm{mod}(\mathcal C)$ for  a $R$-dualizing variety  $\mathcal C$ as a generalization of the concept of  Almost split sequences  for  $\mathrm{mod}(\Lambda)$ for  an artin $R$-algebra  (see \cite{Auslander2}).  The crucial ingredient is the explicit construction of the
Auslander-Reiten translate $\tau$ by taking the dual of the transpose  $D \mathrm{Tr} M$ of a finitely presented $\mathcal C$-module $M$. The duality  $\mathbb{D}_{\mathcal{C}}:\mathrm{mod}(\mathcal{C})\rightarrow \mathrm{mod}(\mathcal{C}^{op})$ for dualizing varieties $\mathcal{C}$ is given in  definition \ref{dualizinvar} and the transpose $\mathrm{Tr}: \mathrm{mod}(\mathcal{C})\rightarrow \mathrm{mod}(\mathcal{C}^{op})$ is defined as follows: consider the functor
$(-)^{*}: \mathrm{mod}(\mathcal{C})\rightarrow \mathrm{mod}(\mathcal{C}^{op})$ defined by 
$(M)^{*}(C)=(M, (C,-))$, for all  $M\in  \mathrm{mod}(\mathcal{C})$, and $C\in\mathcal C$, then take a  minimal projective resolution
 for $M$: $(X,-)\xrightarrow{(f,-)}(Y,-)\rightarrow M\rightarrow 0$, thus $\mathrm{Tr} M:= \mathrm{Coker} \ ((f,-)^{\ast})$.\\
For that reason in this section, we study the transpose and the dual in the category of maps. In the same way as in the classic case, we have a duality $(-)^{\ast}:\mathrm{proj}\Big( \mathrm{mod}(\mathcal{C}),\mathrm{mod}(\mathcal{C})\Big)\longrightarrow
\mathrm{proj}\Big( \mathrm{mod}(\mathcal{C}^{op}),\mathrm{mod}(\mathcal{C}^{op})\Big)$  between the projectives in the category of maps (see proposition \ref{dualprojmaps}).\\ 
One of the main results in this section is to describe the Auslander-Reiten translate in the category of maps which will be denoted by $\mathrm{Tau}$. In particular, we show that  if $f:C_{1}\longrightarrow C_{2}$ is a morphism in $\mathrm{mod}(\mathcal{C})$ such that there exists exact sequence $\xymatrix{C_{1}\ar[r]^{f} & C_{2}\ar[r] & C_{3}\ar[r] & 0}$ with $C_{3}\neq 0$ and $C_{3}$ not projective. Then $$\mathrm{Tau}(C_{1},f,C_{2})=(\mathbb{D}_{\mathcal{C}^{op}}(Y),\mathbb{D}_{\mathcal{C}^{op}}(g),\mathbb{D}_{\mathcal{C}^{op}}\mathrm{Tr}(C_{3}))$$ for some morphism $g:\mathrm{Tr}(C_{3})\longrightarrow Y$ such that there exists an exact sequence
$\xymatrix{0\ar[r] &\mathbb{D}_{\mathcal{C}^{op}}\mathrm{Tr}(C_{1})\ar[r] & \mathbb{D}_{\mathcal{C}^{op}}(Y)\ar[r]^{\mathbb{D}_{\mathcal{C}^{op}}(g)} & \mathbb{D}_{\mathcal{C}^{op}}\mathrm{Tr}(C_{3})}$ and where $\mathrm{Tr}$ denotes the Auslander-Reiten translate in $\mathrm{mod}(\mathcal{C})$ (see theorem \ref{ARsequncedesc}).\\
In order to have all the ingredients to prove the above result, we consider  $\mathbf{\Lambda}=\left[
\begin{smallmatrix}
T&0 \\
M&U
\end{smallmatrix} \right]$ the matrix triangular category.
Now, we recall the construction of a  functor $(-)^{\ast}: \mathrm{Mod}(\mathbf{\Lambda})\longrightarrow \mathrm{Mod}(\mathbf{\Lambda}^{op})$ which is a generalization of the functor $ \mathrm{Mod}(\Lambda)\longrightarrow \mathrm{Mod}(\Lambda^{op})$ given by 
$M\mapsto \mathrm{Hom}_{\Lambda}(M,\Lambda)$ for all $\Lambda$-modules $M$, where $\Lambda$ is an artin algebra.\\
For each $\mathbf{\Lambda}$-module $M$ define $M^{\ast}:\mathbf{\Lambda}^{op}\longrightarrow \mathbf{Ab}$ by 
$$M^{\ast}\Big(\left[
\begin{smallmatrix}
T&0 \\
M&U
\end{smallmatrix} \right]\Big):=\mathrm{Hom}_{\mathrm{Mod}(\mathbf{\Lambda})}\Big(M,\mathrm{Hom}_{\mathbf{\Lambda}}\Big(\left[
\begin{smallmatrix}
T&0 \\
M&U
\end{smallmatrix} \right],-\Big)\Big)$$
and $M^{\ast}$ is defined in morphisms in the obvious way.  Then we have a contravariant functor 
$(-)^{\ast}: \mathrm{Mod}(\mathbf{\Lambda})\longrightarrow \mathrm{Mod}(\mathbf{\Lambda}^{op})$.

Now, taking into account that we have equivalences given in \cite[Theorem 3.14]{LeOS}
$$\xymatrix{
\Big(\mathrm{Mod}(\mathcal{T}),\mathbb{G}\mathrm{Mod}(\mathcal{U})\Big)\ar[rr]^{\textswab{F}} & & \mathrm{Mod}(\mathbf{\Lambda})}$$
$$\xymatrix{\Big(\mathrm{Mod}(\mathcal{U}^{op}),\overline{\mathbb{G}}\mathrm{Mod}(\mathcal{T}^{op})\Big)\ar[rr]_{\mathbb{T}^{\ast}\circ \overline{\textswab{F}}} & &   \mathrm{Mod}(\mathbf{\Lambda}^{op})}$$
we define a contravariant functor $\Psi:=(\mathbb{T}^{\ast}\circ \overline{\textswab{F}})^{-1}\circ (-)^{\ast}\circ \textswab{F}:\Big(\mathsf{Mod}(\mathcal{T}),\mathbb{G}\mathsf{Mod}(\mathcal{U})\Big)\longrightarrow  \Big(\mathsf{Mod}(\mathcal{U}^{op}),\overline{\mathbb{G}}\mathsf{Mod}(\mathcal{T}^{op})\Big)$
which we will denote also by $(-)^{\ast}$,
such that the following diagram commutes up to a natural equivalence
$$\xymatrix{
\Big(\mathrm{Mod}(\mathcal{T}),\mathbb{G}\mathrm{Mod}(\mathcal{U})\Big)\ar[rr]^{\textswab{F}}\ar[d]_{\Psi=(-)^{\ast}} & & \mathrm{Mod}(\mathbf{\Lambda})\ar[d]^{(-)^{\ast}}\\
\Big(\mathrm{Mod}(\mathcal{U}^{op}),\overline{\mathbb{G}}\mathrm{Mod}(\mathcal{T}^{op})\Big)\ar[rr]_{\mathbb{T}^{\ast}\circ \overline{\textswab{F}}} & &   \mathsf{Mod}(\mathbf{\Lambda}^{op}).}$$

\begin{remark}\label{estrellaP}
It is easy to show that if $P:=\mathrm{Hom}_{\mathbf{\Lambda}}\Big(\left[
\begin{smallmatrix}
T&0 \\
M&U
\end{smallmatrix} \right],-\Big):\mathbf{\Lambda} \rightarrow \mathbf{Ab}$, then
$P^{\ast}:=\mathrm{Hom}_{\mathbf{\Lambda}}\Big(-,\left[
\begin{smallmatrix}
T&0 \\
M&U
\end{smallmatrix} \right]\Big).$
\end{remark}

\begin{remark}\label{projectives}
We recall the following result from \cite{LeOS}:\\
$(a)$  Consider the projective $\mathbf \Lambda$-module, $P:=\mathrm{Hom}_{\mathbf{\Lambda}}\Big(\left[
\begin{smallmatrix}
T&0 \\
M&U
\end{smallmatrix} \right],-\Big):\mathbf{\Lambda}\rightarrow \mathbf{Ab}$ and the morphism of $\mathcal T$-modules 
$g:\mathrm{Hom}_{\mathcal{T}}(T,-)\longrightarrow \mathbb{G}\Big( M_T\amalg \mathrm{Hom}_{\mathcal{U}}(U,-)\Big )$
given by $g:=\Big \{[g]_{T'}: \mathrm{Hom}_{\mathcal{T}}(T,T')\longrightarrow \mathrm{Hom}_{\mathcal {U}}\big(M_{T'},M_{T}\amalg \mathrm{Hom}_{\mathcal{U}}(U,-)\big)\Big \}_{T'\in\mathcal {T}}$, with  $[g]_{T'}(t):=\left[
\begin{smallmatrix}
\overline{t} \\
0
\end{smallmatrix} \right]:M_{T'}\rightarrow M_{T}\amalg \mathrm{Hom}_{\mathcal{U}}(U,-)$ for all $t\in \mathrm{Hom}_{\mathcal{T}}(T,T')$.
Then
$$P\cong \big(\mathrm{Hom}_{\mathcal{T}}(T,-)\big)\underset{g}\amalg \big(M_T\amalg \mathrm{Hom}_{\mathcal{U}}(U,-)\big).$$\\
$(b)$ Consider the projective $\overline{\mathbf {\Lambda}}$-module, $P:=\mathrm{Hom}_{\overline{\mathbf{\Lambda}}}\Big(\left[
\begin{smallmatrix}
U&0 \\
\overline{M}&T
\end{smallmatrix} \right],-\Big):\overline{\mathbf{\Lambda}} \rightarrow \mathbf{Ab}$ and the morphism of $\mathcal{U}^{op}$-modules $\overline{g}:\mathrm{Hom}_{\mathcal{U}^{op}}(U,-)\longrightarrow \overline{\mathbb{G}}\Big( \overline{M}_U\amalg \mathrm{Hom}_{\mathcal{T}^{op}}(T,-)\Big )$
given by $\overline{g}:=\Big \{[\overline{g}]_{U'}: \mathrm{Hom}_{\mathcal{U}^{op}}(U,U')\longrightarrow \mathrm{Hom}_{\mathcal {T}^{op}}\big(\overline{M}_{U'},\overline{M}_{U}\amalg \mathrm{Hom}_{\mathcal{T}^{op}}(T,-)\big)\Big \}_{U'\in\mathcal {U}^{op}}$, with  $[\overline{g}]_{U'}(u^{op}):=\left[
\begin{smallmatrix}
\overline{u^{op}} \\
0
\end{smallmatrix} \right]:\overline{M}_{U'}\rightarrow \overline{M}_{U}\amalg \mathrm{Hom}_{\mathcal{T}^{op}}(T,-)$ for all $u^{op}\in \mathrm{Hom}_{\mathcal{U}^{op}}(U,U')$. Then
$$P\cong \big(\mathrm{Hom}_{\mathcal{U}^{op}}(U,-)\big)\underset{\overline{g}}\amalg \big(\overline{M}_U\amalg \mathrm{Hom}_{\mathcal{T}^{op}}(T,-)\big).$$
We note that since $\overline{u^{op}}:=\overline{u}:M_{U'}\longrightarrow M_{U}$ and $\mathrm{Hom}_{\mathcal{U}^{op}}(U,-)\simeq \mathrm{Hom}_{\mathcal{U}}(-,U)$ and $\mathrm{Hom}_{\mathcal{T}^{op}}(T,-)\simeq \mathrm{Hom}_{\mathcal{T}}(-,T)$
we can think $\overline{g}$ of the following form
$$\overline{g}:\mathrm{Hom}_{\mathcal{U}}(-,U)\longrightarrow \overline{\mathbb{G}}\Big(M_U\amalg \mathrm{Hom}_{\mathcal{T}}(-,T)\Big )$$ given by
$\overline{g}:=\Big \{[\overline{g}]_{U'}: \mathrm{Hom}_{\mathcal{U}}(U',U)\longrightarrow \mathrm{Hom}_{\mathcal {T}^{op}}\big(M_{U'},M_{U}\amalg \mathrm{Hom}_{\mathcal{T}}(-,T)\big)\Big \}_{U'\in\mathcal {U}}$, with  $[\overline{g}]_{U'}(u):=\left[
\begin{smallmatrix}
\overline{u} \\
0
\end{smallmatrix} \right]:M_{U'}\rightarrow M_{U}\amalg \mathrm{Hom}_{\mathcal{T}}(-,T)$ for all $u\in \mathrm{Hom}_{\mathcal{U}}(U',U)$.
\end{remark}

By section 5 in \cite{LeOS}, we know that there exists a functor $\mathbb{F}:\mathrm{Mod}(\mathcal{T})\longrightarrow \mathrm{Mod}(\mathcal{U})$ such that $\mathbb{F}$ is left adjoint to $\mathbb{G}$. That is, there exist a natural bijection
$$\varphi_{A,B}:\mathrm{Hom}_{\mathrm{Mod}(\mathcal{U})}(\mathbb{F}(A),B)\longrightarrow \mathrm{Hom}_{\mathrm{Mod}(\mathcal{T})}(A,\mathbb{G}(B)).$$

\begin{proposition}\label{adjunotro}
Consider the isomorphism of categories given in \cite[Proposition 5.3]{LeOS}
$$H:	\Big(\mathbb{F}(\mathrm{Mod}(\mathcal{T})),\mathrm{Mod}(\mathcal{U})\Big)\longrightarrow \Big(\mathrm{Mod}(\mathcal{T}),\mathbb{G}(\mathrm{Mod}(\mathcal{U}))\Big).$$
and the object $\left[
\begin{smallmatrix}
1_{M_{T}} \\
0
\end{smallmatrix} \right]:\mathbb{F}(\mathrm{Hom}_{\mathcal{T}}(T,-))=M_{T}\longrightarrow M_{T}\amalg \mathrm{Hom}_{\mathcal{U}}(U,-)$ (see \cite[Lemma 5.7]{LeOS}),  in the category $\Big(\mathbb{F}(\mathrm{Mod}(\mathcal{T})),\mathrm{Mod}(\mathcal{U})\Big)$. Then
$H\Big(\left[
\begin{smallmatrix}
1_{M_{T}} \\
0
\end{smallmatrix} \right]\Big)$  corresponds to the object $g:\mathrm{Hom}_{\mathcal{T}}(T,-)\longrightarrow \mathbb{G}\Big( M_T\amalg \mathrm{Hom}_{\mathcal{U}}(U,-)\Big )$ in the category $\Big(\mathrm{Mod}(\mathcal{T}),\mathbb{G}(\mathrm{Mod}(\mathcal{U}))\Big).$

\end{proposition}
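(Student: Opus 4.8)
The plan is to unwind the definition of the isomorphism $H$ from \cite[Proposition 5.3]{LeOS} and reduce the assertion to a short computation involving the adjunction bijection $\varphi$ and Yoneda's lemma. Recall that $H$ sends an object $(\mathbb{F}(A),\psi,B)$ of $\big(\mathbb{F}(\mathrm{Mod}(\mathcal{T})),\mathrm{Mod}(\mathcal{U})\big)$ to the object $\big(A,\varphi_{A,B}(\psi),B\big)$ of $\big(\mathrm{Mod}(\mathcal{T}),\mathbb{G}(\mathrm{Mod}(\mathcal{U}))\big)$, leaving the right-hand component $B$ unchanged. Taking $A=\mathrm{Hom}_{\mathcal{T}}(T,-)$, $B=M_T\amalg \mathrm{Hom}_{\mathcal{U}}(U,-)$ and $\psi=\left[\begin{smallmatrix}1_{M_T}\\0\end{smallmatrix}\right]$ --- which is legitimate since $\mathbb{F}(\mathrm{Hom}_{\mathcal{T}}(T,-))=M_T$ by \cite[Lemma 5.7]{LeOS} --- the problem reduces to showing that the morphism of $\mathcal{T}$-modules $\varphi_{A,B}(\psi)\colon \mathrm{Hom}_{\mathcal{T}}(T,-)\to \mathbb{G}(B)$ coincides with $g$.

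First I would record the fact that the composite
$$\mathrm{Hom}_{\mathrm{Mod}(\mathcal{U})}(M_T,B)=\mathrm{Hom}_{\mathrm{Mod}(\mathcal{U})}(\mathbb{F}(\mathrm{Hom}_{\mathcal{T}}(T,-)),B)\xrightarrow{\ \varphi_{A,B}\ }\mathrm{Hom}_{\mathrm{Mod}(\mathcal{T})}(\mathrm{Hom}_{\mathcal{T}}(T,-),\mathbb{G}(B))\xrightarrow{\ \text{Yoneda}\ }\mathbb{G}(B)(T)=\mathrm{Hom}_{\mathrm{Mod}(\mathcal{U})}(M_T,B)$$
is the identity map; equivalently, the unit of the adjunction $(\mathbb{F},\mathbb{G})$ evaluated at $\mathrm{Hom}_{\mathcal{T}}(T,-)$ corresponds under Yoneda's lemma to $1_{M_T}$. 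This follows directly from the explicit construction of $\mathbb{F}$ and of the adjunction $(\mathbb{F},\mathbb{G})$ on representable functors carried out in section 5 of \cite{LeOS}. Granting it, we get $\big(\varphi_{A,B}(\psi)\big)_T(1_T)=\psi=\left[\begin{smallmatrix}1_{M_T}\\0\end{smallmatrix}\right]$ regarded as an element of $\mathbb{G}(B)(T)=\mathrm{Hom}_{\mathrm{Mod}(\mathcal{U})}(M_T,B)$.

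Finally I would recover all components of $\varphi_{A,B}(\psi)$ from this single value by naturality: for $T'\in\mathcal{T}$ and $t\in\mathrm{Hom}_{\mathcal{T}}(T,T')=\mathrm{Hom}_{\mathcal{T}}(T,-)(T')$, using $t=\mathrm{Hom}_{\mathcal{T}}(T,-)(t)(1_T)$ one obtains
$$\big(\varphi_{A,B}(\psi)\big)_{T'}(t)=\mathbb{G}(B)(t)\Big(\left[\begin{smallmatrix}1_{M_T}\\0\end{smallmatrix}\right]\Big).$$
By the definition of $\mathbb{G}$ recalled in \ref{defofG1}, $\mathbb{G}(B)(t)=\mathrm{Hom}_{\mathrm{Mod}(\mathcal{U})}(\bar t,B)$ is precomposition with $\bar t\colon M_{T'}\to M_T$, so the right-hand side equals $\left[\begin{smallmatrix}1_{M_T}\\0\end{smallmatrix}\right]\circ\bar t=\left[\begin{smallmatrix}\bar t\\0\end{smallmatrix}\right]$, which is exactly $[g]_{T'}(t)$ as described in \ref{projectives}. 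Therefore $\varphi_{A,B}(\psi)=g$ and $H\big(\left[\begin{smallmatrix}1_{M_T}\\0\end{smallmatrix}\right]\big)$ corresponds to $g$. The only step that is not purely formal is the identification of $\varphi$ on representable sources with the Yoneda isomorphism, and I expect this to be the main (though minor) obstacle, since it requires appealing to the concrete description of $\mathbb{F}$ from \cite{LeOS} rather than to abstract nonsense.
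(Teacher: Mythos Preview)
Your proposal is correct and follows essentially the same route as the paper: both reduce to showing $\varphi_{A,B}(\psi)=g$ by identifying the adjunction bijection $\varphi$ on the representable $\mathrm{Hom}_{\mathcal{T}}(T,-)$ with the Yoneda isomorphism, and then computing componentwise that $[\varphi_{A,B}(\psi)]_{T'}(t)=\left[\begin{smallmatrix}\bar t\\0\end{smallmatrix}\right]$. The only cosmetic difference is that the paper first treats the target $M_T$ alone (computing $\varphi(1_{M_T})$) and then decomposes $g=\left[\begin{smallmatrix}g_1\\g_2\end{smallmatrix}\right]$ to match $g_1=\varphi(1_{M_T})$ and $g_2=0$, whereas you work with the full coproduct $B=M_T\amalg\mathrm{Hom}_{\mathcal{U}}(U,-)$ from the start; the paper also cites \cite[Theorem 6.3]{Popescu} for the identification of $\varphi$ with Yoneda, which is precisely the fact you flag as the one non-formal step.
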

\begin{proof}
 Let $h:\mathbb{F}(A)\longrightarrow B$ be an object in $\Big(\mathbb{F}(\mathrm{Mod}(\mathcal{T})),\mathrm{Mod}(\mathcal{U})\Big)$ and consider the bijection
$\varphi_{A,B}:\mathrm{Hom}_{\mathrm{Mod}(\mathcal{U})}(\mathbb{F}(A),B)\longrightarrow \mathrm{Hom}_{\mathrm{Mod}(\mathcal{T})}(A,\mathbb{G}(B)).$\\
By definition, we have that
$H(A,h,B):=(A,\varphi_{A,B}(h),B)$.
Then we have
$$\varphi:=\varphi_{\mathrm{Hom}_{\mathcal{T}}(T,-),M_{T}}:\mathrm{Hom}_{\mathrm{Mod}(\mathcal{U})}(M_{T},M_{T})\longrightarrow \mathrm{Hom}_{\mathrm{Mod}(\mathcal{T})}(\mathrm{Hom}_{\mathcal{T}}(T,-),\mathbb{G}(M_{T}))$$
because $\mathbb{F}(\mathrm{Hom}_{\mathcal{T}}(T,-))=M_{T}$ (see \cite[Lemma 5.7(i)]{LeOS}).
By \cite[Theorem 6.3] {Popescu}, we have that in this case the isomorphism $\varphi$ coincides with the Yoneda isomorphism. Then for $\lambda:M_{T}\longrightarrow M_{T}$, we have that
$\varphi(\lambda):\mathrm{Hom}_{\mathcal{T}}(T,-)\longrightarrow \mathbb{G}(M_{T})$ is such that for $T'\in \mathcal{T}$
$$[\varphi(\lambda)]_{T'}:\mathrm{Hom}_{\mathcal{T}}(T,T')\longrightarrow \mathbb{G}(M_{T})(T')=\mathrm{Hom}(M_{T'},M_{T})$$ is defined as $[\varphi(\lambda)]_{T'}(t)=\Big(\mathbb{G}(M_{T})(t)\Big)(\lambda)=\mathrm{Hom}_{\mathrm{Mod}(\mathcal{U})}(\overline{t},M_{T})(\lambda)=\lambda\circ \overline{t},$
for $t:T\longrightarrow T'$. Then for $1_{M_{T}}:M_{T}\longrightarrow M_{T}$ we have that
$\varphi(1_{M_{T}}):\mathrm{Hom}_{\mathcal{T}}(T,-)\longrightarrow \mathbb{G}(M_{T})$ is such that for $T'\in \mathcal{T}$
$$[\varphi(1_{M_{T}}]_{T'}:\mathrm{Hom}_{\mathcal{T}}(T,T')\longrightarrow \mathbb{G}(M_{T})(T')=\mathrm{Hom}(M_{T'},M_{T})$$ is defined as $[\varphi(1_{M_{T}})]_{T'}(t)=\Big(\mathbb{G}(M_{T})(t)\Big)(1_{M_{T}})=\mathrm{Hom}_{\mathrm{Mod}(\mathcal{U})}(\overline{t},M_{T})(1_{M_{T}})=\overline{t}$.\\
Since $\mathbb{G}\Big( M_T\amalg \mathrm{Hom}_{\mathcal{U}}(U,-)\Big )=\mathbb{G}\Big( M_T\Big)\amalg\mathbb{G}\Big( \mathrm{Hom}_{\mathcal{U}}(U,-)\Big )$, we  can see $g$ as follows
$$g=\left[
\begin{smallmatrix}
g_{1}\\
g_{2}
\end{smallmatrix} \right]:\mathrm{Hom}_{\mathcal{T}}(T,-)\longrightarrow  \mathbb{G}\Big( M_T\Big)\amalg\mathbb{G}\Big( \mathrm{Hom}_{\mathcal{U}}(U,-)\Big ).$$
It is straighforward to show that $g_{1}=\varphi(1_{M_{T}})$ and $g_{2}=0$.
Then 
$$g=\left[
\begin{smallmatrix}
\varphi(1_{M_{T}})\\
0
\end{smallmatrix} \right]=\left[
\begin{smallmatrix}
\varphi(1_{M_{T}})\\
\varphi(0)
\end{smallmatrix} \right]=\varphi\Big(\left[
\begin{smallmatrix}
1_{M_{T}}\\
0
\end{smallmatrix} \right]\Big).$$
Therefore,
$$H\Big(\mathrm{Hom}_{\mathcal{T}}(T,-),\left[
\begin{smallmatrix}
1_{M_{T}}\\
0
\end{smallmatrix} \right],M_{T}\amalg \mathrm{Hom}_{\mathcal{U}}(U,-)\Big)=\Big(\mathrm{Hom}_{\mathcal{T}}(T,-),g,M_{T}\amalg \mathrm{Hom}_{\mathcal{U}}(U,-)\Big).$$

\end{proof}

\begin{lemma}
There is isomorphism of $\mathrm{Mod}(\mathcal{U}^{op})$-modules $\lambda: \mathrm{Hom}_{\mathcal{U}}(-,U)\longrightarrow \mathrm{Hom}_{\mathcal{U}^{op}}(U,-)$ given by
$\lambda_{U'}:\mathrm{Hom}_{\mathcal{U}}(U',U)\longrightarrow 
\mathrm{Hom}_{\mathcal{U}^{op}}(U,U') $ as $\lambda_{U'}(\alpha):=\alpha^{op}$. 
\end{lemma}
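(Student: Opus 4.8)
The plan is to unwind the paper's convention that $\mathcal{U}^{op}$ is made into a preadditive category by setting $\mathcal{U}^{op}(X,Y):=\mathcal{U}(Y,X)$. Under this identification the assignment $\alpha\mapsto\alpha^{op}$ from $\mathrm{Hom}_{\mathcal{U}}(U',U)$ to $\mathrm{Hom}_{\mathcal{U}^{op}}(U,U')$ is, on underlying abelian groups, literally the identity; the superscript $op$ only records that $\alpha$ is now viewed as a morphism of $\mathcal{U}^{op}$. Hence each $\lambda_{U'}$ is automatically a bijection, with inverse again of the form $\beta\mapsto\beta^{op}$, and it is additive because addition of morphisms in $\mathcal{U}^{op}$ agrees with that in $\mathcal{U}$ under the identification. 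So the only real content is naturality.

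For that, I would first observe that the source $\mathrm{Hom}_{\mathcal{U}}(-,U)$, being contravariant in its argument over $\mathcal{U}$, is a covariant functor on $\mathcal{U}^{op}$, hence a $\mathcal{U}^{op}$-module, while the target $\mathrm{Hom}_{\mathcal{U}^{op}}(U,-)$ is the representable $\mathcal{U}^{op}$-module at $U$. Then, for a morphism $u^{op}:U'\to U''$ in $\mathcal{U}^{op}$ arising from $u:U''\to U'$ in $\mathcal{U}$, I would check that the square with horizontal arrows $\lambda_{U'},\lambda_{U''}$, left vertical arrow $\mathrm{Hom}_{\mathcal{U}}(u,U)$ (precomposition with $u$), and right vertical arrow $\mathrm{Hom}_{\mathcal{U}^{op}}(U,u^{op})$ (postcomposition in $\mathcal{U}^{op}$ with $u^{op}$) commutes. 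Evaluated on $\alpha:U'\to U$, one composite yields $u^{op}\circ_{\mathcal{U}^{op}}\alpha^{op}$ and the other yields $(\alpha\circ_{\mathcal{U}}u)^{op}$, and these coincide because composition in $\mathcal{U}^{op}$ is by definition composition in $\mathcal{U}$ in the reverse order.

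There is essentially no obstacle: the statement is a direct consequence of the definition of $\mathcal{U}^{op}$ as a preadditive category, and the only point needing a moment's care is bookkeeping the direction of composition, which the computation above settles. The same argument gives the analogous isomorphism $\mathrm{Hom}_{\mathcal{T}}(-,T)\simeq\mathrm{Hom}_{\mathcal{T}^{op}}(T,-)$ used implicitly in Remark \ref{projectives}(b).
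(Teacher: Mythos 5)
Your proof is correct and is exactly the routine definitional check the paper intends, since its own proof is just the word ``Straightforward'': identify $\mathrm{Hom}_{\mathcal{U}}(U',U)$ with $\mathrm{Hom}_{\mathcal{U}^{op}}(U,U')$ via the convention $\mathcal{U}^{op}(X,Y)=\mathcal{U}(Y,X)$ and verify naturality by tracking the reversal of composition. Nothing further is needed.
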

\begin{proof}
Straightforward.
\end{proof}

\begin{proposition}\label{dualproyec}
Consider the projective objects
$$\xymatrix{\mathrm{Hom}_{\mathcal{T}}(T,-)\ar[r]^(.4){g}& \mathbb{G}(M_{T}\amalg \mathrm{Hom}_{\mathcal{U}}(U,-))}\in \Big(\mathsf{Mod}(\mathcal{T}),\mathbb{G}\mathsf{Mod}(\mathcal{U})\Big)$$
$$\xymatrix{\mathrm{Hom}_{\mathcal{U}}(-,U)\ar[r]^(.4){\overline{g}} & \overline{\mathbb{G}}(M_{U}\amalg \mathrm{Hom}_{\mathcal{T}}(-,T))}\in \Big(\mathsf{Mod}(\mathcal{U}^{op}),\overline{\mathbb{G}}\mathsf{Mod}(\mathcal{T}^{op})\Big)$$ 
as in \ref{projectives}.
Then $g^{\ast}=\overline{g}$, that is:
$$\Big(\mathrm{Hom}_{\mathcal{T}}(T,-)\stackrel{g}{\longrightarrow }\mathbb{G}(M_{T}\amalg \mathrm{Hom}_{\mathcal{U}}(U,-))\Big)^{\ast}\!\!\!\!=\!\!
\mathrm{Hom}_{\mathcal{U}}(-,U)\stackrel{\overline{g}}{\longrightarrow} \overline{\mathbb{G}}(M_{U}\amalg \mathrm{Hom}_{\mathcal{T}}(-,T)).$$
\end{proposition}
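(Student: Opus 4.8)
The plan is to unravel what the functor $(-)^{\ast}$ does to the distinguished projective object of the comma category $\bigl(\mathrm{Mod}(\mathcal{T}),\mathbb{G}\mathrm{Mod}(\mathcal{U})\bigr)$ by tracking it through the chain of identifications set up just above: the equivalence $\textswab{F}\colon\bigl(\mathrm{Mod}(\mathcal{T}),\mathbb{G}\mathrm{Mod}(\mathcal{U})\bigr)\xrightarrow{\sim}\mathrm{Mod}(\mathbf{\Lambda})$, the intrinsic functor $(-)^{\ast}\colon\mathrm{Mod}(\mathbf{\Lambda})\to\mathrm{Mod}(\mathbf{\Lambda}^{op})$, and the equivalence $\mathbb{T}^{\ast}\circ\overline{\textswab{F}}$ back to $\bigl(\mathrm{Mod}(\mathcal{U}^{op}),\overline{\mathbb{G}}\mathrm{Mod}(\mathcal{T}^{op})\bigr)$. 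First I would invoke Remark \ref{projectives}(a) to identify the given object $g\colon\mathrm{Hom}_{\mathcal{T}}(T,-)\to\mathbb{G}\bigl(M_T\amalg\mathrm{Hom}_{\mathcal{U}}(U,-)\bigr)$ with the representable projective $P:=\mathrm{Hom}_{\mathbf{\Lambda}}\bigl(\bigl[\begin{smallmatrix}T&0\\M&U\end{smallmatrix}\bigr],-\bigr)$ under $\textswab{F}$. Then I would apply Remark \ref{estrellaP}, which says that $P^{\ast}=\mathrm{Hom}_{\mathbf{\Lambda}}\bigl(-,\bigl[\begin{smallmatrix}T&0\\M&U\end{smallmatrix}\bigr]\bigr)$; this is the representable projective $\overline{\mathbf{\Lambda}}$-module at $\bigl[\begin{smallmatrix}U&0\\\overline{M}&T\end{smallmatrix}\bigr]$ (using $\mathbf{\Lambda}^{op}\cong\overline{\mathbf{\Lambda}}$). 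Finally I would transport this back through $\mathbb{T}^{\ast}\circ\overline{\textswab{F}}$ and appeal to Remark \ref{projectives}(b) to recognize the result as exactly the object $\overline{g}\colon\mathrm{Hom}_{\mathcal{U}}(-,U)\to\overline{\mathbb{G}}\bigl(M_U\amalg\mathrm{Hom}_{\mathcal{T}}(-,T)\bigr)$.

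The substance of the argument is therefore a bookkeeping computation showing that the representable projective of $\bigl(\mathrm{Mod}(\mathcal{T}),\mathbb{G}\mathrm{Mod}(\mathcal{U})\bigr)$ at the object $T$ (in the $\mathcal{T}$-coordinate) gets sent, under $(-)^{\ast}$, to the representable projective of $\bigl(\mathrm{Mod}(\mathcal{U}^{op}),\overline{\mathbb{G}}\mathrm{Mod}(\mathcal{T}^{op})\bigr)$ at the object $U$ (in the $\mathcal{U}^{op}$-coordinate), and that these two come equipped, under the respective equivalences $\textswab{F}$ and $\mathbb{T}^{\ast}\circ\overline{\textswab{F}}$, with precisely the structure maps $g$ and $\overline{g}$ described explicitly in Remark \ref{projectives}. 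Here I would use Proposition \ref{adjunotro}, which already identifies the structure map $g$ with $H\bigl(\bigl[\begin{smallmatrix}1_{M_T}\\0\end{smallmatrix}\bigr]\bigr)$, i.e.\ with the image of $\varphi(1_{M_T})$ under the Yoneda isomorphism $\varphi=\varphi_{\mathrm{Hom}_{\mathcal{T}}(T,-),M_T}$; the analogous statement for $\overline{g}$ on the $\overline{\mathbf{\Lambda}}$ side follows from the same reasoning applied to $\overline{\mathbb{G}}$ in place of $\mathbb{G}$, together with the isomorphism $\lambda\colon\mathrm{Hom}_{\mathcal{U}}(-,U)\to\mathrm{Hom}_{\mathcal{U}^{op}}(U,-)$ of the preceding lemma. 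The point is that both $g$ and $\overline{g}$ are, up to these canonical identifications, "the identity on the $M$-part and zero on the rest," and the functor $(-)^{\ast}$ — being $\mathrm{Hom}_{\mathbf{\Lambda}}(-,\mathbf{\Lambda})$ in disguise — preserves this shape because it sends the representable at $\bigl[\begin{smallmatrix}T&0\\M&U\end{smallmatrix}\bigr]$ to the representable at the transposed object and is compatible with the decomposition of $\mathbf{\Lambda}$ into its columns.

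I expect the main obstacle to be purely notational: making the chain of equivalences $\textswab{F}$, $(-)^{\ast}$ on $\mathbf{\Lambda}$-modules, $\overline{\textswab{F}}$ and $\mathbb{T}^{\ast}$ interact cleanly, and verifying that the structure map of the image really equals $\overline{g}$ on the nose (not merely up to isomorphism) requires carefully matching coordinates $M_{T'}$ versus $\overline{M}_{U'}$ and the two flavours of Yoneda isomorphism. Concretely, after reducing to representables via Remarks \ref{estrellaP} and \ref{projectives}, one must check componentwise that $[g^{\ast}]_{U'}(u)=\bigl[\begin{smallmatrix}\overline{u}\\0\end{smallmatrix}\bigr]=[\overline{g}]_{U'}(u)$ for all $u\in\mathrm{Hom}_{\mathcal{U}}(U',U)$; this is a direct but slightly tedious diagram chase using the definition of $(-)^{\ast}$ on morphisms and Proposition \ref{adjunotro}. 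Everything else — right-exactness, preservation of projectives, the adjunction $(\mathbb{F},\mathbb{G})$ — is already available from \cite{LeOS} and the earlier parts of this section, so no new homological input is needed.
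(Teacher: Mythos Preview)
Your proposal is correct and follows essentially the same route as the paper: identify $g$ with the representable $P=\mathrm{Hom}_{\mathbf{\Lambda}}\bigl(\bigl[\begin{smallmatrix}T&0\\M&U\end{smallmatrix}\bigr],-\bigr)$ via $\textswab{F}$ and Remark~\ref{projectives}(a), apply Remark~\ref{estrellaP} to get $P^{\ast}=\mathrm{Hom}_{\mathbf{\Lambda}}\bigl(-,\bigl[\begin{smallmatrix}T&0\\M&U\end{smallmatrix}\bigr]\bigr)$, and then check that $\mathbb{T}^{\ast}\circ\overline{\textswab{F}}$ applied to $\overline{g}$ gives the same thing. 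The paper's proof is in fact terser than yours---it declares the final identification ``straightforward'' rather than carrying out the componentwise verification you outline---so your anticipated diagram chase is more detail than strictly needed, and note that the conclusion is really an isomorphism (as the paper writes $\simeq$) rather than a literal equality, since $(-)^{\ast}$ on the comma category is defined only up to the equivalences $\textswab{F}$ and $\mathbb{T}^{\ast}\circ\overline{\textswab{F}}$.
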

\begin{proof}
Consider the equivalences given in \cite[Theorem 3.14]{LeOS}, and the induced by the functor $\mathbb{T}:\mathbf{\Lambda}^{op}\longrightarrow \overline{\mathbf{\Lambda}}$ given in \cite[Proposition 4.3]{LeOS}, 
$$\textswab{F}:\Big( \mathsf{Mod}(\mathcal{T}), \mathbb{G}\mathsf{Mod}(\mathcal{U})\Big) \longrightarrow 
\mathrm{Mod}(\mathbf{\Lambda}),\quad \overline{\textswab{F}}:\Big( \mathsf{Mod}(\mathcal{U}^{op}), \overline{\mathbb{G}}\mathsf{Mod}(\mathcal{T}^{op})\Big) \longrightarrow 
\mathrm{Mod}(\overline{\mathbf{\Lambda}})$$
and $\mathbb{T}^{\ast}:\mathrm{Mod}(\overline{\mathbf{\Lambda}})\longrightarrow \mathrm{Mod}(\mathbf{\Lambda}^{op}).$\\
Since $P=\textswab{F}\Big(\mathrm{Hom}_{\mathcal{T}}(T,-)\stackrel{g}{\longrightarrow} \mathbb{G}(M_{T}\amalg \mathrm{Hom}_{\mathcal{U}}(U,-))\Big)=\mathrm{Hom}_{\mathbf{\Lambda}}\Big(\left[
\begin{smallmatrix}
T&0 \\
M&U
\end{smallmatrix} \right],-\Big):\mathbf{\Lambda} \rightarrow \mathbf{Ab}$, then
$P^{\ast}:=\mathrm{Hom}_{\mathbf{\Lambda}}\Big(-,\left[
\begin{smallmatrix}
T&0 \\
M&U
\end{smallmatrix} \right]\Big) \in \mathrm{Mod}(\mathbf{\Lambda}^{op})$ (see \ref{estrellaP}).
We also have that 
$$\overline{\textswab{F}}\Big(\mathrm{Hom}_{\mathcal{U}}(-,U)\longrightarrow \overline{\mathbb{G}}(M_{U}\amalg \mathrm{Hom}_{\mathcal{T}}(-,T))\Big):=\mathrm{Hom}_{\overline{\mathbf{\Lambda}}}\Big(\left[
\begin{smallmatrix}
U&0 \\
\overline{M}&T 
\end{smallmatrix} \right],-\Big).$$
It is straightforward to see that
 $\mathbb{T}^{\ast}\Big(\overline{\textswab{F}}\Big(\mathrm{Hom}_{\mathcal{U}}(-,U)\longrightarrow \overline{\mathbb{G}}(M_{U}\amalg \mathrm{Hom}_{\mathcal{T}}(-,T))\Big)\Big)\simeq \mathrm{Hom}_{\mathbf{\Lambda}}\Big(-,\left[
\begin{smallmatrix}
T&0 \\
M&U
\end{smallmatrix} \right]\Big).$ We conclude that
$\mathrm{Hom}_{\overline{\mathbf{\Lambda}}}\Big(\left[
\begin{smallmatrix}
U&0 \\
\overline{M}&T 
\end{smallmatrix} \right],-\Big)\circ \mathbb{T}\simeq 
\mathrm{Hom}_{\mathbf{\Lambda}}\Big(,-\left[
\begin{smallmatrix}
T &0 \\
M & U
\end{smallmatrix} \right]\Big).$
\end{proof}

\begin{proposition}\label{descripcionmor}
Consider the morphism between projectives in the comma category $\Big( \mathrm{Mod}(\mathcal{T}), \mathbb{G}\mathrm{Mod}(\mathcal{U})\Big)$ given by the diagram
$$\xymatrix{\mathrm{Hom}_{\mathcal{T}}(T,-)\ar[rrr]^{\alpha}\ar[d]^{f}  & & & \mathrm{Hom}_{\mathcal{T}}(T',-)\ar[d]_{f'}\\
\mathbb{G}(M_{T}\amalg \mathrm{Hom}_{\mathcal{U}}(U,-))\ar[rrr]^{\mathbb{G}(\beta)}
 & & &   \mathbb{G}(M_{T'}\amalg \mathrm{Hom}_{\mathcal{U}}(U',-)).}$$
 Then $\beta={\left[
\begin{smallmatrix}
\mathbb{F}(\alpha) & a_{12} \\
0 & \mathrm{Hom}_{\mathcal{U}}(u,-)
\end{smallmatrix} \right]}$ and via the functor $(-)^{\ast}: \Big( \mathrm{Mod}(\mathcal{T}), \mathbb{G}\mathrm{Mod}(\mathcal{U})\Big)\longrightarrow 
 \Big( \mathrm{Mod}(\mathcal{U}^{op}), \overline{\mathbb{G}}\mathrm{Mod}(\mathcal{T}^{op})\Big)$ the previous morphism corresponds to 
 $$\xymatrix{\mathrm{Hom}_{\mathcal{U}}(-,U')\ar[rrr]^{\overline{\alpha}=\mathrm{Hom}_{\mathcal{U}}(-,u)}
\ar[d]^{\overline{f'}}  & & & \mathrm{Hom}_{\mathcal{U}}(-,U)\ar[d]_{\overline{f}}  \\
\overline{\mathbb{G}}(M_{U'}\amalg \mathrm{Hom}_{\mathcal{T}}(-,T'))\ar[rrr]^{\overline{\mathbb{G}}\Big({\left[
\begin{smallmatrix}
\overline{u} & & b_{12}\\
 0 & & \mathrm{Hom}_{\mathcal{T}}(-,t)
\end{smallmatrix} \right]}\Big)} 
 &  & &  \overline{\mathbb{G}}(M_{U}\amalg \mathrm{Hom}_{\mathcal{T}}(-,T))}$$
 in the category $ \Big( \mathrm{Mod}(\mathcal{U}^{op}), \overline{\mathbb{G}}\mathrm{Mod}(\mathcal{T}^{op})\Big)$
 with
 $b_{12}:=\Psi^{-1}_{}(\Theta_{}(a_{12}))$ where the morphisms
 $\Theta_{}:\mathrm{Hom}_{\mathrm{Mod}(\mathcal{U})}\Big(\mathrm{Hom}_{\mathcal{U}}(U,-),M_{T'}\Big)\longrightarrow M(U,T')$ and \\
 $\Psi_{}:\mathrm{Hom}_{\mathrm{Mod}(\mathcal{T}^{op})}\Big(\mathrm{Hom}_{\mathcal{T}}(-,T'),M_{U}\Big)\longrightarrow 
M(U,T')$ are the Yoneda Isomorphisms.
\end{proposition}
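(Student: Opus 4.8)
The plan is to reduce the computation to the level of the triangular matrix category $\mathbf{\Lambda}$ and its opposite, using the equivalences $\textswab{F}$, $\overline{\textswab{F}}$ and $\mathbb{T}^{\ast}$ that intertwine $(-)^\ast$ with the functor $\Psi$, together with the explicit description of the projectives in \ref{dualproyec} and \ref{projectives}. First I would identify the given morphism of projectives $(f,\mathbb G(\beta))\colon P\to P'$ in $\big(\mathrm{Mod}(\mathcal T),\mathbb G\mathrm{Mod}(\mathcal U)\big)$ with a morphism $\textswab{F}(f,\mathbb G(\beta))\colon \mathrm{Hom}_{\mathbf\Lambda}\big(\left[\begin{smallmatrix}T&0\\ M&U\end{smallmatrix}\right],-\big)\to \mathrm{Hom}_{\mathbf\Lambda}\big(\left[\begin{smallmatrix}T'&0\\ M&U'\end{smallmatrix}\right],-\big)$. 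By Yoneda (in $\mathrm{Mod}(\mathbf\Lambda)$) such a morphism is given by an element of $\mathrm{Hom}_{\mathbf\Lambda}\big(\left[\begin{smallmatrix}T'&0\\ M&U'\end{smallmatrix}\right],\left[\begin{smallmatrix}T&0\\ M&U\end{smallmatrix}\right]\big)=\left[\begin{smallmatrix}\mathcal T(T',T)&0\\ M(U,T')&\mathcal U(U',U)\end{smallmatrix}\right]$, i.e.\ a triple $(\alpha, a_{12}, u)$ where I write $\alpha\in\mathcal T(T',T)$ (inducing $\mathbb F(\alpha)$ via \ref{adjunotro} and $\overline\alpha=\mathrm{Hom}_{\mathcal U}(-,u)$ dually), $a_{12}\in M(U,T')$, and $u\in\mathcal U(U',U)$. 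Unwinding the equivalence of \cite[Theorem 3.14]{LeOS} as recalled in \ref{projectives}(a) then gives exactly the claimed shape $\beta=\left[\begin{smallmatrix}\mathbb F(\alpha)&a_{12}\\ 0&\mathrm{Hom}_{\mathcal U}(u,-)\end{smallmatrix}\right]$ of the second component, where the vanishing of the lower-left entry reflects that $g$ has the block form $\left[\begin{smallmatrix}\varphi(1_{M_T})\\ 0\end{smallmatrix}\right]$ established in \ref{adjunotro}.

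Next I would apply $(-)^\ast$, which by the commutativity of the defining square (up to natural equivalence) sends $\textswab{F}(f,\mathbb G(\beta))$ to $(\mathbb T^\ast\circ\overline{\textswab{F}})(\Psi(f,\mathbb G(\beta)))$; and by Remark \ref{estrellaP} (or rather its evident functoriality) the contravariant functor $(-)^\ast$ on the Hom-functors is computed on morphisms by precomposition, so $\big(\textswab{F}(f,\mathbb G(\beta))\big)^\ast$ corresponds under Yoneda to the \emph{same} triple $(\alpha,a_{12},u)$, now read as an element of $\mathrm{Hom}_{\mathbf\Lambda^{op}}$ between the dual projectives — equivalently, via $\mathbb T$, to the element $(u,\ ?,\ \alpha)\in\mathrm{Hom}_{\overline{\mathbf\Lambda}}\big(\left[\begin{smallmatrix}U&0\\ \overline M&T\end{smallmatrix}\right],\left[\begin{smallmatrix}U'&0\\ \overline M&T'\end{smallmatrix}\right]\big)$. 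Pulling this back through $\overline{\textswab{F}}$ and \ref{projectives}(b) produces a morphism of the form $\big(\overline\alpha=\mathrm{Hom}_{\mathcal U}(-,u),\ \overline{\mathbb G}(\overline\beta)\big)$ with $\overline\beta=\left[\begin{smallmatrix}\overline u&b_{12}\\ 0&\mathrm{Hom}_{\mathcal T}(-,t)\end{smallmatrix}\right]$ for some $t$ and some $b_{12}$ coming from the middle entry; here $t$ is forced by $\alpha$ exactly as $u$ is forced by itself, and one checks directly from \ref{dualproyec} that $t=\alpha$ (so that $\mathrm{Hom}_{\mathcal T}(-,t)=\overline\alpha$ on the relevant factor).

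The one genuinely computational point — and the step I expect to be the main obstacle — is the identification $b_{12}=\Psi^{-1}(\Theta(a_{12}))$. The entry $a_{12}$ lives in $M(U,T')$, which by the Yoneda isomorphism $\Theta$ is $\mathrm{Hom}_{\mathrm{Mod}(\mathcal U)}(\mathrm{Hom}_{\mathcal U}(U,-),M_{T'})$, i.e.\ a component of the map $\beta$ realized inside $\mathbb G$; on the opposite side $b_{12}$ must live in $\mathrm{Hom}_{\mathrm{Mod}(\mathcal T^{op})}(\mathrm{Hom}_{\mathcal T}(-,T'),M_{U})$, which by the Yoneda isomorphism $\Psi$ is again $M(U,T')$ (now viewed through $\overline M$). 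The content is that the passage $a_{12}\mapsto b_{12}$ is precisely the composite $\Psi^{-1}\circ\Theta$, and this follows by tracking the element $a_{12}$ through the definitions of $\textswab{F}$, $\overline{\textswab{F}}$, $\mathbb T^\ast$ and the bimodule identification $\overline M_{U'}(T)=M(U',T)$ from \cite{LeOS}. Concretely I would write the defining commutative square for $\textswab{F}(f,\mathbb G(\beta))$ on the object $\left[\begin{smallmatrix}T'&0\\ M&U'\end{smallmatrix}\right]$, apply $(-)^\ast$ entrywise, and compare with the defining square for $\overline{\textswab{F}}$ on the dual object; the two Yoneda identifications $\Theta$ and $\Psi$ are exactly what is needed to match the middle entries, and naturality in $T'$ and $U$ takes care of the rest. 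The verification that $\overline\alpha=\mathrm{Hom}_{\mathcal U}(-,u)$ (rather than something involving $\alpha$) and that the outer component of the dual morphism is $\overline{f'}\to\overline f$ with the arrows reversed is immediate from contravariance of $(-)^\ast$ and the description of $\overline{\textswab{F}}$.
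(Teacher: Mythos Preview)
Your proposal is correct and follows essentially the same route as the paper: translate the morphism of projectives through $\textswab{F}$ to a morphism of representables in $\mathrm{Mod}(\mathbf{\Lambda})$, read it via Yoneda as an element $\left[\begin{smallmatrix}t&0\\ m&u\end{smallmatrix}\right]$ of the appropriate Hom-set in $\mathbf{\Lambda}$, apply $(-)^\ast$ to obtain $\mathrm{Hom}_{\mathbf{\Lambda}}(-,\left[\begin{smallmatrix}t&0\\ m&u\end{smallmatrix}\right])$, and identify this through $\mathbb{T}$ and $\overline{\textswab{F}}$ with the displayed morphism in $\big(\mathrm{Mod}(\mathcal U^{op}),\overline{\mathbb G}\mathrm{Mod}(\mathcal T^{op})\big)$. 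The paper carries out exactly these steps, first using the adjunction $H$ of \ref{adjunotro} to pin down the block form of $\beta$ (forcing $a_{11}=\mathbb F(\alpha)$ and $a_{21}=0$), then \emph{defining} the candidate $\overline\beta$ and explicitly verifying both that it gives a commutative square on the $\overline{\mathbf\Lambda}$-side and that $\overline\alpha\amalg\overline\beta\simeq\mathrm{Hom}_{\overline{\mathbf\Lambda}}\big(\left[\begin{smallmatrix}u^{op}&0\\ m&t^{op}\end{smallmatrix}\right],-\big)$, before matching it with $(\alpha\amalg\beta)^\ast$ through $\mathbb T$. One small point of care: in the statement $a_{12}$ denotes the morphism $\mathrm{Hom}_{\mathcal U}(U,-)\to M_{T'}$, not the element $m=\Theta(a_{12})\in M(U,T')$, and likewise $\alpha$ is the natural transformation while $t\in\mathcal T(T',T)$ is its Yoneda preimage; keeping these distinct is precisely what makes the formula $b_{12}=\Psi^{-1}(\Theta(a_{12}))$ meaningful.
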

\begin{proof}
Consider $\beta:M_{T}\amalg \mathrm{Hom}_{\mathcal{U}}(U,-)\longrightarrow M_{T'}\amalg \mathrm{Hom}_{\mathcal{U}}(U',-)$ and the following commutative diagram
$$\xymatrix{\mathrm{Hom}_{\mathcal{T}}(T,-)\ar[rrr]^{{\alpha}}\ar[d]^{f}  & & & \mathrm{Hom}_{\mathcal{T}}(T',-)\ar[d]_{f'}\\
\mathbb{G}(M_{T}\amalg \mathrm{Hom}_{\mathcal{U}}(U,-))\ar[rrr]^{\mathbb{G}(\beta)}
 & & &   \mathbb{G}(M_{T'}\amalg \mathrm{Hom}_{\mathcal{U}}(U',-))}$$
By adjunction and \ref{adjunotro}, we have the following commutative diagram
$$\xymatrix{M_{T}=\mathbb{F}(\mathrm{Hom}_{\mathcal{T}}(T,-))\ar[d]^{{\left[
\begin{smallmatrix}
1 \\
0
\end{smallmatrix}\right]}}\ar[r]^{\mathbb{F}(\alpha)} & M_{T'}=\mathbb{F}(\mathrm{Hom}_{\mathcal{T}}(T',-))\ar[d]^{{\left[
\begin{smallmatrix}
1 \\
0
\end{smallmatrix}\right]}} \\
M_{T}\amalg \mathrm{Hom}_{\mathcal{U}}(U,-)\ar[r]^{\beta}
 &  M_{T'}\amalg \mathrm{Hom}_{\mathcal{U}}(U',-).}$$
Since $\beta=\left[
\begin{smallmatrix}
a_{11}& a_{12} \\
a_{21} & a_{22}
\end{smallmatrix} \right]$ where $a_{11}:M_{T}\longrightarrow M_{T'}$, $a_{12}:\mathrm{Hom}_{\mathcal{U}}(U,-)\longrightarrow M_{T'}$, $a_{21}:M_{T}\longrightarrow \mathrm{Hom}_{\mathcal{U}}(U',-)$ and $a_{22}:\mathrm{Hom}_{\mathcal{U}}(U,-)\longrightarrow
\mathrm{Hom}_{\mathcal{U}}(U',-)$ we have that
$$\left[
\begin{smallmatrix}
a_{11}& a_{12} \\
a_{21} & a_{22}
\end{smallmatrix}\right] \left[
\begin{smallmatrix}
1 \\
0
\end{smallmatrix}\right] =\left[
\begin{smallmatrix}
\mathbb{F}(\alpha) \\
0
\end{smallmatrix}\right].$$
Therefore, $a_{11}=\mathbb{F}(\alpha)$ and $a_{21}=0$. By Yoneda, $a_{12}$ is determined by an element $m\in M(U,T')$ and $a_{22}:=\mathrm{Hom}_{\mathcal{U}}(u,-):\mathrm{Hom}_{\mathcal{U}}(U,-)\longrightarrow
\mathrm{Hom}_{\mathcal{U}}(U',-)$ and $\alpha=\mathrm{Hom}_{\mathcal{T}}(t,-):\mathrm{Hom}_{\mathcal{T}}(T,-)\longrightarrow
\mathrm{Hom}_{\mathcal{T}}(T',-)$ for some $u:U'\longrightarrow U$ and $t:T'\longrightarrow T$.\\
We define
$$\overline{\beta}=\left[
\begin{smallmatrix}
\overline{u} & b_{12} \\
0 &  \mathrm{Hom}_{\mathcal{T}}(-,t)
\end{smallmatrix}\right]:M_{U'}\amalg \mathrm{Hom}_{\mathcal{T}}(-,T')\longrightarrow M_{U}\amalg \mathrm{Hom}_{\mathcal{T}}(-,T)$$ where $\overline{u}:M_{U'}\longrightarrow M_{U}$, $\mathrm{Hom}_{\mathcal{T}}(-,t):\mathrm{Hom}_{\mathcal{T}}(-,T')\longrightarrow
\mathrm{Hom}_{\mathcal{T}}(-,T)$ and  $b_{12}:\mathrm{Hom}_{\mathcal{T}}(-,T')\longrightarrow M_{U}$ is defined as $b_{12}:=\Psi^{-1}_{}(\Theta_{}(a_{12}))$ where the morphisms
 $\Theta_{}:\mathrm{Hom}_{\mathrm{Mod}(\mathcal{U})}\Big(\mathrm{Hom}_{\mathcal{U}}(U,-),M_{T'}\Big)\longrightarrow M(U,T')$ and\\ 
 $\Psi_{}:\mathrm{Hom}_{\mathrm{Mod}(\mathcal{T}^{op})}\Big(\mathrm{Hom}_{\mathcal{T}}(-,T'),M_{U}\Big)\longrightarrow 
M(U,T')$ are the Yoneda Isomorphisms.\\
We assert that the following diagram commutes
$$\xymatrix{M_{U'}=\overline{\mathbb{F}}(\mathrm{Hom}_{\mathcal{U}}(-,U'))\ar[d]^{{\left[
\begin{smallmatrix}
1 \\
0
\end{smallmatrix}\right]}}\ar[rr]^{\overline{\mathbb{F}}(\mathrm{Hom}_{\mathcal{U}}(-,u))=\overline{u}} & & M_{U}=\overline{\mathbb{F}}(\mathrm{Hom}_{\mathcal{U}}(-,U))\ar[d]_{{\left[
\begin{smallmatrix}
1 \\
0
\end{smallmatrix}\right]}}\\
 M_{U'}\amalg \mathrm{Hom}_{\mathcal{T}}(-,T')\ar[rr]^{\overline{\beta}} 
 & &  M_{U}\amalg \mathrm{Hom}_{\mathcal{T}}(-,T).}$$
Indeed,  $\left[
\begin{smallmatrix}
\overline{u}& b_{12} \\
0 & \mathrm{Hom}_{\mathcal{T}}(-,t)
\end{smallmatrix}\right]\left[
\begin{smallmatrix}
1 \\
0
\end{smallmatrix}\right]=\left[
\begin{smallmatrix}
\overline{u} \\
0
\end{smallmatrix}\right]=\left[
\begin{smallmatrix}
1 \\
0
\end{smallmatrix}\right]\circ \overline{u}$ (see \cite[Lemma 5.8(i)]{LeOS}) .
Then by adjunction we have the following commutative diagram 
 $$\xymatrix{\mathrm{Hom}_{\mathcal{U}}(-,U')\ar[rrr]^{\overline{\alpha}=\mathrm{Hom}_{\mathcal{U}}(-,u)}
\ar[d]^{\overline{f'}}  & & & \mathrm{Hom}_{\mathcal{U}}(-,U)\ar[d]_{\overline{f}}  \\
\overline{\mathbb{G}}(M_{U'}\amalg \mathrm{Hom}_{\mathcal{T}}(-,T'))\ar[rrr]^{\overline{\mathbb{G}}\Big({\left[
\begin{smallmatrix}
\overline{u} & & b_{12}\\
 0 & & \mathrm{Hom}_{\mathcal{T}}(-,t)
\end{smallmatrix} \right]}\Big)} 
 &  & &  \overline{\mathbb{G}}(M_{U}\amalg \mathrm{Hom}_{\mathcal{T}}(-,T))}$$
Via the functor $\overline{\textswab{F}}:\Big( \mathrm{Mod}(\mathcal{U}^{op}), \overline{\mathbb{G}}\mathrm{Mod}(\mathcal{T}^{op})\Big) \longrightarrow 
\mathrm{Mod}(\overline{\mathbf{\Lambda}})$ we have the morphism
in $\mathrm{Mod}(\overline{\mathbf{\Lambda}})$:
$$\overline{\alpha}\amalg \overline{\beta}:
\mathrm{Hom}_{\mathcal{U}}(-,U')\amalg_{\overline{f'}} (M_{U'}\amalg \mathrm{Hom}_{\mathcal{T}}(-,T'))\longrightarrow
\mathrm{Hom}_{\mathcal{U}}(-,U)\amalg_{\overline{f}}  (M_{U}\amalg \mathrm{Hom}_{\mathcal{T}}(-,T))$$
where for $\left[\begin{smallmatrix}
U_{1} & 0 \\
\overline{M} & T_{1} \\
\end{smallmatrix} \right]\in \overline{\mathbf{\Lambda}}$, we have $\Big[\overline{\alpha}\amalg \overline{\beta}\Big]_{\left[\begin{smallmatrix}
U_{1} & 0 \\
\overline{M} & T_{1} \\
\end{smallmatrix} \right]}=\overline{\alpha}_{U_{1}}\amalg \overline{\beta}_{T_{1}}$ with $\overline{\beta}_{T_{1}}=\left[
\begin{smallmatrix}
[\overline{u}]_{T_{1}} & [b_{12}]_{T_{1}} \\
0 &  [\mathrm{Hom}_{\mathcal{T}}(-,t)]_{T_{1}}
\end{smallmatrix}\right]$. Now, is straightforward to show that there is an isomorphism
$$\overline{\alpha}\amalg \overline{\beta}\simeq
\mathrm{Hom}_{\overline{\mathbf{\Lambda}}}\Big(
\left[
\begin{smallmatrix}
u^{op} &0 \\
 m & t^{op}
\end{smallmatrix} \right],-\Big): \mathrm{Hom}_{\overline{\mathbf{\Lambda}}}\Big(\left[
\begin{smallmatrix}
U' &0 \\
\overline{M}&T'
\end{smallmatrix} \right],-\Big)\longrightarrow 
\mathrm{Hom}_{\overline{\mathbf{\Lambda}}}\Big(\left[
\begin{smallmatrix}
U &0 \\
\overline{M}&T
\end{smallmatrix} \right],-\Big),
$$
where $\left[
\begin{smallmatrix}
u^{op} &0 \\
 m & t^{op}
\end{smallmatrix} \right]:\left[
\begin{smallmatrix}
U &0 \\
\overline{M}&T
\end{smallmatrix} \right]\longrightarrow \left[
\begin{smallmatrix}
U' &0 \\
\overline{M}&T'
\end{smallmatrix} \right]$ is a morphism in $\overline{\mathbf{\Lambda}}$.\\

On the other side, considering $\textswab{F}:\Big( \mathrm{Mod}(\mathcal{T}), \mathbb{G}\mathrm{Mod}(\mathcal{U})\Big) \longrightarrow 
\mathrm{Mod}(\mathbf{\Lambda})$, we have $\textswab{F}(\alpha,\beta)=\alpha\amalg \beta$ where
$$\alpha\amalg \beta:
\mathrm{Hom}_{\mathcal{T}}(T,-)\amalg_{f} (M_{T}\amalg \mathrm{Hom}_{\mathcal{U}}(U,-))\longrightarrow
\mathrm{Hom}_{\mathcal{T}}(T',-)\amalg_{f'}  (M_{T'}\amalg \mathrm{Hom}_{\mathcal{U}}(U',-)).$$
Therefore
$$\alpha\amalg \beta=
\mathrm{Hom}_{\mathbf{\Lambda}}\Big(\left[
\begin{smallmatrix}
t &0 \\
m & u
\end{smallmatrix} \right],-\Big):\mathrm{Hom}_{\mathbf{\Lambda}}\Big(\left[
\begin{smallmatrix}
T &0 \\
M & U
\end{smallmatrix} \right],-\Big)\longrightarrow 
\mathrm{Hom}_{\mathbf{\Lambda}}\Big(\left[
\begin{smallmatrix}
T' &0 \\
M & U'
\end{smallmatrix} \right],-\Big)$$
where $\left[
\begin{smallmatrix}
t &0 \\
m & u
\end{smallmatrix} \right]:\left[
\begin{smallmatrix}
T' &0 \\
M & U'
\end{smallmatrix} \right]\longrightarrow \left[
\begin{smallmatrix}
T &0 \\
M & U
\end{smallmatrix} \right]$ with $m:=\Theta(a_{12})\in M(U,T')$.\\
Hence, $(\alpha\amalg \beta)^{\ast}=\mathrm{Hom}_{\mathbf{\Lambda}}\Big(-,\left[
\begin{smallmatrix}
t &0 \\
m & u
\end{smallmatrix} \right]\Big):
\mathrm{Hom}_{\mathbf{\Lambda}}\Big(-,\left[
\begin{smallmatrix}
T' &0 \\
M & U'
\end{smallmatrix} \right]\Big)\longrightarrow 
\mathrm{Hom}_{\mathbf{\Lambda}}\Big(-,\left[
\begin{smallmatrix}
T &0 \\
M & U
\end{smallmatrix} \right]\Big).$
It is easy to show that $(\alpha\amalg \beta)^{\ast}\simeq (\overline{\alpha}\amalg \overline{\beta})\circ \mathbb{T}$, and this proves the proposition.
\end{proof}

Given an abelian category $\mathcal{A}$ let us denote by $\mathrm{proj}(\mathcal{A})$ the full subcategory of finitely geneerated objects.

\begin{proposition}\label{funtorestrellita}
Let us denote by $\mathrm{proj}\Big(\Big( \mathrm{Mod}(\mathcal{T}), \mathbb{G}\mathrm{Mod}(\mathcal{U})\Big)\Big)$
the category of finitely generated projective objects in $\Big( \mathrm{Mod}(\mathcal{T}), \mathbb{G}\mathrm{Mod}(\mathcal{U})\Big)$. 
\begin{enumerate}
\item [(a)]
Then we have a duality
$$(-)^{\ast}:\mathrm{proj}\Big(\Big( \mathrm{Mod}(\mathcal{T}), \mathbb{G}\mathrm{Mod}(\mathcal{U})\Big)\Big)\longrightarrow
\mathrm{proj}\Big(\Big( \mathrm{Mod}(\mathcal{U}^{op}), \overline{\mathbb{G}}\mathrm{Mod}(\mathcal{T}^{op})\Big)\Big)$$

\item [(b)] Suppose that $\mathcal{U}$ and $\mathcal{T}$ are dualizing varieties and $M_{T}\in \mathrm{mod}(\mathcal{U})$ and $M_{U}\in \mathrm{mod}(\mathcal{T}^{op})$ for all $U\in \mathcal{U}$ and $T\in \mathcal{T}^{op}$. Then  we have a duality
$$(-)^{\ast}:\mathrm{proj}\Big(\Big( \mathrm{mod}(\mathcal{T}), \mathbb{G}\mathrm{mod}(\mathcal{U})\Big)\Big)\longrightarrow
\mathrm{proj}\Big(\Big( \mathrm{mod}(\mathcal{U}^{op}), \overline{\mathbb{G}}\mathrm{mod}(\mathcal{T}^{op})\Big)\Big)$$
\end{enumerate}

\end{proposition}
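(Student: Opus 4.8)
The plan is to deduce both parts from the fact that, for the triangular matrix category $\mathbf{\Lambda}=\left[\begin{smallmatrix}\mathcal{T}&0\\ M&\mathcal{U}\end{smallmatrix}\right]$, the functor $(-)^{\ast}\colon\mathrm{Mod}(\mathbf{\Lambda})\rightarrow\mathrm{Mod}(\mathbf{\Lambda}^{op})$ recalled above restricts to a duality between the finitely generated projective objects on either side, and then to transport this duality along the equivalences $\textswab{F}$, $\overline{\textswab{F}}$ and the isomorphism $\mathbb{T}^{\ast}$ used to define $\Psi$. Since $\Psi=(\mathbb{T}^{\ast}\circ\overline{\textswab{F}})^{-1}\circ(-)^{\ast}\circ\textswab{F}$, once $(-)^{\ast}$ on $\mathrm{Mod}(\mathbf{\Lambda})$ is understood on projectives, the rest is formal.

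For the first point I would argue as follows. Every finitely generated projective $\mathbf{\Lambda}$-module is a direct summand of a finite coproduct of representable functors $\mathrm{Hom}_{\mathbf{\Lambda}}(L,-)$, $L\in\mathbf{\Lambda}$. By \ref{estrellaP} we have $\mathrm{Hom}_{\mathbf{\Lambda}}(L,-)^{\ast}\cong\mathrm{Hom}_{\mathbf{\Lambda}}(-,L)$, and the Yoneda lemma applied in $\mathrm{Mod}(\mathbf{\Lambda}^{op})$ (or \ref{estrellaP} for $\mathbf{\Lambda}^{op}$) gives $\big(\mathrm{Hom}_{\mathbf{\Lambda}}(-,L)\big)^{\ast}\cong\mathrm{Hom}_{\mathbf{\Lambda}}(L,-)$. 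Since $(-)^{\ast}$ is additive and carries direct summands to direct summands, it maps $\mathrm{proj}(\mathrm{Mod}(\mathbf{\Lambda}))$ into $\mathrm{proj}(\mathrm{Mod}(\mathbf{\Lambda}^{op}))$; the analogously defined functor on $\mathbf{\Lambda}^{op}$ is a quasi-inverse, because the natural isomorphisms exhibited on representables extend by additivity to all finitely generated projectives. Hence $(-)^{\ast}$ is a duality on finitely generated projectives. Now $\mathbb{T}\colon\mathbf{\Lambda}^{op}\rightarrow\overline{\mathbf{\Lambda}}$ is an isomorphism of categories, so $\mathbb{T}^{\ast}$ is an equivalence sending representables to representables, and $\textswab{F}$, $\overline{\textswab{F}}$ are equivalences of abelian categories, hence preserve projectivity and finite generation. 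Composing, $\Psi$ restricts to a duality $\mathrm{proj}\big((\mathrm{Mod}(\mathcal{T}),\mathbb{G}\mathrm{Mod}(\mathcal{U}))\big)\rightarrow\mathrm{proj}\big((\mathrm{Mod}(\mathcal{U}^{op}),\overline{\mathbb{G}}\mathrm{Mod}(\mathcal{T}^{op}))\big)$, which is (a). Concretely, by \ref{dualproyec} this duality sends the indecomposable projective $g\colon\mathrm{Hom}_{\mathcal{T}}(T,-)\rightarrow\mathbb{G}(M_{T}\amalg\mathrm{Hom}_{\mathcal{U}}(U,-))$ to $\overline{g}\colon\mathrm{Hom}_{\mathcal{U}}(-,U)\rightarrow\overline{\mathbb{G}}(M_{U}\amalg\mathrm{Hom}_{\mathcal{T}}(-,T))$, and acts on morphisms as described in \ref{descripcionmor}.

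For (b) the key observation is that, under the hypotheses $M_{T}\in\mathrm{mod}(\mathcal{U})$ and $M_{U}\in\mathrm{mod}(\mathcal{T}^{op})$ for all $T,U$, the standard projectives and their duals stay inside the finitely presented comma categories, since $M_{T}\amalg\mathrm{Hom}_{\mathcal{U}}(U,-)\in\mathrm{mod}(\mathcal{U})$ and $M_{U}\amalg\mathrm{Hom}_{\mathcal{T}}(-,T)\in\mathrm{mod}(\mathcal{T}^{op})$. As $\mathcal{T}$ and $\mathcal{U}$ are dualizing, \cite[Proposition 6.3]{LeOS} gives that $\textswab{F}$ restricts to an equivalence $(\mathrm{mod}(\mathcal{T}),\mathbb{G}\mathrm{mod}(\mathcal{U}))\xrightarrow{\sim}\mathrm{mod}(\mathbf{\Lambda})$, and likewise $\overline{\textswab{F}}$ on the $\overline{\mathbf{\Lambda}}$-side; moreover a finitely generated projective $\mathbf{\Lambda}$-module, being a summand of a finite coproduct of representables, is finitely presented, so $\mathrm{proj}\big((\mathrm{mod}(\mathcal{T}),\mathbb{G}\mathrm{mod}(\mathcal{U}))\big)$ and $\mathrm{proj}\big((\mathrm{Mod}(\mathcal{T}),\mathbb{G}\mathrm{Mod}(\mathcal{U}))\big)$ consist of the same objects, and similarly on the other side. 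Hence the duality of (a) restricts to the asserted duality between the finitely generated projectives of the small comma categories. The well-definedness and functoriality of $(-)^{\ast}$, together with the identifications of $g^{\ast}$ and of morphisms between projectives, are already supplied by \ref{estrellaP}, \ref{dualproyec} and \ref{descripcionmor}, so the only point requiring real care here is the bookkeeping in (b): checking that the two finiteness hypotheses on $M$ are exactly what keeps the canonical projectives and their images under $(-)^{\ast}$ within the finitely presented comma categories, and that $\mathbb{T}^{\ast}$ and $\overline{\textswab{F}}$ preserve finite generation of projectives. I do not expect any obstacle beyond this assembly.
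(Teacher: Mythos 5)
Your proposal is correct and follows essentially the same route as the paper: reduce to the fact that $(-)^{\ast}$ gives a duality $\mathrm{proj}(\mathrm{Mod}(\mathbf{\Lambda}))\rightarrow\mathrm{proj}(\mathrm{Mod}(\mathbf{\Lambda}^{op}))$ and transport it along the equivalences $\textswab{F}$, $\overline{\textswab{F}}$ and $\mathbb{T}^{\ast}$, then for (b) use that finitely generated projectives are finitely presented together with the restricted equivalences of \cite[Proposition 6.3]{LeOS}. The only difference is that where the paper simply cites \cite{Auslander2} (p.~337) for the duality on projectives, you supply the standard Yoneda/representable argument, which is a harmless elaboration.
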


\begin{proof}.\\
$(a)$ It is known that  the funtor $(-)^{\ast}$ restricts to a duality $(-)^{\ast}:\mathrm{proj}(\mathrm{Mod}(\mathbf{\Lambda}))\longrightarrow \mathrm{proj}(\mathrm{Mod}(\mathbf{\Lambda}^{op}))$ (see \cite{Auslander2} in page 337). Since 
$\Big( \mathrm{Mod}(\mathcal{T}), \mathbb{G}\mathsf{Mod}(\mathcal{U})\Big)\simeq \mathrm{Mod}(\mathbf{\Lambda})$ and 
$\Big( \mathrm{Mod}(\mathcal{U}^{op}), \overline{\mathbb{G}}\mathrm{Mod}(\mathcal{T}^{op})\Big)\simeq \mathrm{Mod}(\mathbf{\Lambda}^{op})$ we have the result.\\
$(b)$.  Since $\mathrm{proj}(\mathrm{Mod}(\mathbf{\Lambda}))\subseteq \mathrm{mod}(\mathbf{\Lambda})$, we have that
$\mathrm{proj}(\mathrm{mod}(\mathbf{\Lambda}))=\mathrm{proj}(\mathrm{Mod}(\mathbf{\Lambda}))$. Since $\Big( \mathrm{mod}(\mathcal{T}), \mathbb{G}\mathrm{mod}(\mathcal{U})\Big)\simeq \mathrm{mod}(\mathbf{\Lambda})$ and 
$\Big( \mathrm{mod}(\mathcal{U}^{op}), \overline{\mathbb{G}}\mathrm{mod}(\mathcal{T}^{op})\Big)\simeq \mathrm{mod}(\mathbf{\Lambda}^{op})$. The result follows from (a).

\end{proof}

In the following we will write $\mathcal{C}(C,-)$ and $\mathcal{C}(-,C)$ instead of $\mathrm{Hom}_{\mathcal{C}}(C,-)$ and $\mathrm{Hom}_{\mathcal{C}}(-,C)$.

\begin{proposition}\label{dualprojmaps}
Let $M:=\widehat{\mathbbm{Hom}}\in \mathrm{Mod}(\mathcal{C}\otimes\mathcal{C}^{op})$ and consider the induced functor $\mathbb{G}:\mathrm{Mod}(\mathcal C)\rightarrow \mathrm{Mod}(\mathcal C)$ and the duality
$$(-)^{\ast}:\mathrm{proj}\Big(\Big( \mathrm{Mod}(\mathcal{C}), \mathbb{G}\mathrm{Mod}(\mathcal{C})\Big)\Big)\longrightarrow
\mathrm{proj}\Big(\Big( \mathrm{Mod}(\mathcal{C}^{op}), \overline{\mathbb{G}}\mathrm{Mod}(\mathcal{C}^{op})\Big)\Big)$$
given in \ref{funtorestrellita} and the isomorphisms $J_{1}$ and $J_{2}$ given in \ref{descrimaps}. Then we have an induced duality, 
$(-)^{\ast}:\xymatrix{\mathrm{proj}\Big(\mathrm{mod}(\mathcal{C}),\mathrm{mod}(\mathcal{C})\Big)\ar[rr] && \mathrm{proj}\Big(\mathrm{mod}(\mathcal{C}^{op}),\mathrm{mod}(\mathcal{C}^{op})\Big)}$ defined as $J_{2}^{-1}\circ (-)^{\ast}\circ J_{1}$, which will be denoted as $(-)^{\ast}$.
Moreover, maps of the form $\xymatrix{\mathcal{C}(C_{1},-)\ar[r]^(.4){\left[
{{\begin{smallmatrix}
1 \\
0
\end{smallmatrix}}} \right]} & \mathcal{C}(C_{1},-)\amalg \mathcal{C}(C_{2},-)}$ are projective in $\Big(\mathrm{mod}(\mathcal{C}),\mathrm{mod}(\mathcal{C})\Big)$ and 
$$\Big(\xymatrix{\mathcal{C}(C_{1},-)\ar[r]^(.4){\left[
{{\begin{smallmatrix}
1 \\
0
\end{smallmatrix}}} \right]} & \mathcal{C}(C_{1},-)\amalg \mathcal{C}(C_{2},-)}\Big)^{\ast}=\xymatrix{\mathcal{C}(-,C_{2})\ar[r]^(.35){\left[
{{\begin{smallmatrix}
1 \\
0
\end{smallmatrix}}} \right]} & \mathcal{C}(-,C_{2})\amalg \mathcal{C}(-,C_{1}).}$$
\end{proposition}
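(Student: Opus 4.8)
The plan is to reduce everything to results already established. First I would note that the existence of the duality $(-)^{\ast}\colon\mathrm{proj}\big(\mathrm{mod}(\mathcal C),\mathrm{mod}(\mathcal C)\big)\to\mathrm{proj}\big(\mathrm{mod}(\mathcal C^{op}),\mathrm{mod}(\mathcal C^{op})\big)$ is essentially forced: by Corollary \ref{mapsfini} the isomorphisms $J_1\colon(\mathrm{mod}(\mathcal C),\mathrm{mod}(\mathcal C))\to(\mathrm{mod}(\mathcal C),\mathbb G\,\mathrm{mod}(\mathcal C))$ and $J_2\colon(\mathrm{mod}(\mathcal C^{op}),\mathrm{mod}(\mathcal C^{op}))\to(\mathrm{mod}(\mathcal C^{op}),\overline{\mathbb G}\,\mathrm{mod}(\mathcal C^{op}))$ are equivalences (indeed isomorphisms) of categories, so they restrict to equivalences between the respective subcategories of finitely generated projectives; composing with the duality of Proposition \ref{funtorestrellita}(b), applied with $\mathcal T=\mathcal U=\mathcal C$ and $M=\widehat{\mathbbm{Hom}}$ (whose hypotheses $M_T=\mathcal C(T,-)\in\mathrm{mod}(\mathcal C)$ and $M_U=\mathcal C(-,U)\in\mathrm{mod}(\mathcal C^{op})$ hold, as already observed in the proof of Proposition \ref{mapsmodlam}(ii)), gives the stated duality $J_2^{-1}\circ(-)^{\ast}\circ J_1$. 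One should check that $J_1$ and $J_2$ really do send finitely generated projectives to finitely generated projectives; this follows because they are isomorphisms of categories commuting with the equivalences $\textswab F$, $\overline{\textswab F}$ onto $\mathrm{Mod}(\mathbf\Lambda)$, $\mathrm{Mod}(\overline{\mathbf\Lambda})$, and projectivity and finite generation are categorical.

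Next I would identify the projective objects in the maps category. Under the equivalence $\textswab F\circ J_1$ of Proposition \ref{mapsmodlam}, a map $\mathcal C(C_1,-)\xrightarrow{\left[\begin{smallmatrix}1\\0\end{smallmatrix}\right]}\mathcal C(C_1,-)\amalg\mathcal C(C_2,-)$ corresponds, via Proposition \ref{adjunotro} (with $\mathcal T=\mathcal U=\mathcal C$, $T=C_1$, $U=C_2$, using $\mathbb F(\mathcal C(C_1,-))=M_{C_1}=\mathcal C(C_1,-)$), to the object $g\colon\mathcal C(C_1,-)\to\mathbb G(M_{C_1}\amalg\mathcal C(C_2,-))$ that represents the indecomposable projective $\mathbf\Lambda$-module $P=\mathrm{Hom}_{\mathbf\Lambda}\big(\left[\begin{smallmatrix}C_1&0\\\widehat{\mathbbm{Hom}}&C_2\end{smallmatrix}\right],-\big)$ recalled in Remark \ref{projectives}(a). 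Hence these maps are projective in $(\mathrm{mod}(\mathcal C),\mathrm{mod}(\mathcal C))$. (One should remark that together with the maps $0\to\mathcal C(C,-)$ — the image of $\big(0,0,(C,-)\big)$ — these exhaust the indecomposable projectives, as in Proposition \ref{projinjec}(ii)(a), though only the displayed shape is needed here.)

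For the computation of the dual I would invoke Proposition \ref{dualproyec} directly: with $T=C_1$, $U=C_2$ it states $g^{\ast}=\overline g$, where $\overline g\colon\mathcal C(-,C_2)\to\overline{\mathbb G}\big(M_{C_2}\amalg\mathcal C(-,C_1)\big)$ is the projective object of Remark \ref{projectives}(b); by Proposition \ref{adjunotro} applied on the opposite side (or by the explicit description $[\overline g]_{C'}(u)=\left[\begin{smallmatrix}\overline u\\0\end{smallmatrix}\right]$ together with Lemma \ref{adjunotro}'s analogue), $\overline g$ corresponds under $J_2^{-1}$ to the map $\mathcal C(-,C_2)\xrightarrow{\left[\begin{smallmatrix}1\\0\end{smallmatrix}\right]}\mathcal C(-,C_2)\amalg\mathcal C(-,C_1)$. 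Transporting this equality along $J_1$, $J_2$ yields exactly
$$\Big(\mathcal C(C_1,-)\xrightarrow{\left[\begin{smallmatrix}1\\0\end{smallmatrix}\right]}\mathcal C(C_1,-)\amalg\mathcal C(C_2,-)\Big)^{\ast}=\Big(\mathcal C(-,C_2)\xrightarrow{\left[\begin{smallmatrix}1\\0\end{smallmatrix}\right]}\mathcal C(-,C_2)\amalg\mathcal C(-,C_1)\Big),$$
as claimed.

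The only genuinely delicate point, and the step I expect to be the main obstacle, is bookkeeping the identifications: one must check that $J_1$ carries the specific map $\left[\begin{smallmatrix}1\\0\end{smallmatrix}\right]\colon\mathcal C(C_1,-)\to\mathcal C(C_1,-)\amalg\mathcal C(C_2,-)$ to the specific representing morphism $g$ of Remark \ref{projectives}(a) (this is a Yoneda-isomorphism chase using the explicit formula $\widehat f_T=\theta^{-1}_{B,T}\circ f_T$ from Proposition \ref{descrimaps}), and that the parallel identification holds for $J_2$ on the opposite side. Once these two matchings are in place, the statement is an immediate consequence of Proposition \ref{dualproyec}; no further computation is required beyond confirming that the $\left[\begin{smallmatrix}1\\0\end{smallmatrix}\right]$-shaped map on the $\mathcal C^{op}$ side is $J_2^{-1}$ of $\overline g$, which is the mirror of the first matching.
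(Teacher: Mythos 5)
Your plan is correct and follows essentially the same route as the paper: identify $J_{1}\big(\left[\begin{smallmatrix}1\\0\end{smallmatrix}\right]\big)$ with the canonical projective $g$ of Remark \ref{projectives} via the Yoneda isomorphism, apply Proposition \ref{dualproyec} to get $g^{\ast}=\overline{g}$, and identify $J_{2}^{-1}(\overline{g})$ with the $\left[\begin{smallmatrix}1\\0\end{smallmatrix}\right]$-shaped map on the $\mathcal{C}^{op}$ side. The ``delicate bookkeeping'' you defer is precisely what the paper carries out with its commutative Yoneda diagrams, so nothing essential is missing.
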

\begin{proof}
Let us consider $\xymatrix{\mathcal{C}(C_{1},-)\ar[r]^(.35){f=\left[
{{\begin{smallmatrix}
1 \\
0
\end{smallmatrix}}} \right]} & \mathcal{C}(C_{1},-)\amalg \mathcal{C}(C_{2},-)}$ and the Yoneda isomorphism
$$Y_{C'}:\mathcal{C}(C_{1},C')\amalg \mathcal{C}(C_{2},C')\longrightarrow \mathbb{G}\Big(\mathcal{C}(C_{1},-)\amalg \mathcal{C}(C_{2},-)\Big)(C').$$
We assert that the morphism
$$J_{1}(f):=\widehat{f}:\mathcal{C}(C_{1},-)\longrightarrow 
\mathbb{G}\Big(\mathcal{C}(C_{1},-)\amalg \mathcal{C}(C_{2},-)\Big)$$ is a projective in the category $\Big(\mathrm{Mod}(\mathcal{C}),\mathbb{G}(\mathrm{Mod}(\mathcal{C}))\Big)$.
Indeed, using that $M:=\widehat{\mathbbm{Hom}}\in \mathrm{Mod}(\mathcal{C}\otimes\mathcal{C}^{op})$ and the descripcion given in \ref{projectives}, we have that the projective $g$ given in \ref{projectives}, coincides with
$\widehat{f}$.
That is, for each $C'\in \mathcal{C}$, we have the following commutative diagram
$$\xymatrix{\mathcal{C}(C_{1},C')\ar[rr]^(.4){f_{C'}=\left[
{{\begin{smallmatrix}
1_{\mathcal{C}(C_{1},C')} \\
0
\end{smallmatrix}}} \right]}\ar@{=}[d]  & & \mathcal{C}(C_{1},C')\amalg \mathcal{C}(C_{2},C')\ar[d]^{Y_{C'}}\\
\mathcal{C}(C_{1},C')\ar[rr]^(.4){g_{C'}=\widehat{f}_{C'}} & &
\mathbb{G}\Big(\mathcal{C}(C_{1},C')\amalg \mathcal{C}(C_{2},C')\Big).}$$

Since $\mathbb{Y}:=\{Y_{C'}\}_{C'\in \mathcal{C}}: \mathcal{C}(C_{1},-)\amalg \mathcal{C}(C_{2},-)\longrightarrow \mathbb{G}\Big(\mathcal{C}(C_{1},-)\amalg \mathcal{C}(C_{2},-)\Big)$ defines a morphism of $\mathcal{C}$-modules, we have the commutative diagram
$$(\ast):\xymatrix{\mathcal{C}(C_{1},-)\ar[rr]^{\left[
{{\begin{smallmatrix}
1 \\
0
\end{smallmatrix}}} \right]}\ar@{=}[d]  & & \mathcal{C}(C_{1},-)\amalg \mathcal{C}(C_{2},-)\ar[d]^{\mathbb{Y}}\\
\mathcal{C}(C_{1},-)\ar[rr]^{g=\widehat{f}} & &
\mathbb{G}\Big(\mathcal{C}(C_{1},-)\amalg \mathcal{C}(C_{2},-)\Big).}$$
Similarly, we have the commutative diagram
$$\xymatrix{\mathcal{C}(-,C_{2})\ar[rr]^{\left[
{{\begin{smallmatrix}
1 \\
0
\end{smallmatrix}}} \right]}\ar@{=}[d]  & & \mathcal{C}(-,C_{2})\amalg \mathcal{C}(-,C_{1})\ar[d]^{\overline{\mathbb{Y}}}\\
\mathcal{C}(-,C_{2})\ar[rr]^{\overline{g}} & &
\mathbb{G}\Big(\mathcal{C}(-,C_{2})\amalg \mathcal{C}(-,C_{1})\Big).}$$
Therefore we have that $J_{2}^{-1}(\overline{g})=\xymatrix{\mathcal{C}(-,C_{2})\ar[r]^(.4){\left[
{{\begin{smallmatrix}
1 \\
0
\end{smallmatrix}}} \right]} &  \mathcal{C}(-,C_{2})\amalg \mathcal{C}(-,C_{1})}$. Therefore, by \ref{dualproyec} we have the following equalities

\begin{align*}
\Big(\xymatrix{\mathcal{C}(C_{1},-)\ar[r]^(.4){\left[
{{\begin{smallmatrix}
1 \\
0
\end{smallmatrix}}} \right]} &  \mathcal{C}(C_{1},-)\amalg \mathcal{C}(C_{2},-)}\Big)^{\ast} & = \Big(\xymatrix{\mathcal{C}(C_{1},-)\ar[r]^(.35){g} & \mathbb{G}(M_{C_{1}}\amalg \mathcal{C}(C_{2},-))}\Big)^{\ast}\\
& =\xymatrix{\mathcal{C}(-,C_{2})\ar[r]^(.35){\overline{g}} & \overline{\mathbb{G}}(M_{C_{2}}\amalg \mathcal{C}(-,C_{1}))}\\
& =\xymatrix{\mathcal{C}(-,C_{2})\ar[r]^(.4){\left[
{{\begin{smallmatrix}
1 \\
0
\end{smallmatrix}}} \right]} &  \mathcal{C}(-,C_{2})\amalg \mathcal{C}(-,C_{1}).}
\end{align*}
\end{proof}

\begin{proposition}\label{morphismdual}
Let  $M:=\widehat{\mathbbm{Hom}}\in \mathrm{Mod}(\mathcal{C}\otimes\mathcal{C}^{op})$ be  and consider the induced functor $\mathbb{G}:\mathrm{Mod}(\mathcal C)\rightarrow \mathrm{Mod}(\mathcal C)$ and a map between projectives in the category $\Big(\mathrm{Mod}(\mathcal{C}),\mathrm{Mod}(\mathcal{C})\Big)$
$$\xymatrix{\mathcal{C}(C_{1},-)\ar[r]^{a_{11}}\ar[d]^{\left[
{{\begin{smallmatrix}
1 \\
0
\end{smallmatrix}}} \right]}  & \mathcal{C}(C_{1}',-)\ar[d]^{\left[
{{\begin{smallmatrix}
1 \\
0
\end{smallmatrix}}} \right]} \\
\mathcal{C}(C_{1},-)\amalg \mathcal{C}(C_{2},-)\ar[r]^{\beta} & \mathcal{C}(C_{1}',-)\amalg \mathcal{C}(C_{2}',-)}$$
where $\beta=\left[
{{\begin{smallmatrix}
 a_{11} &  a_{12}\\
 0  & a_{22}
\end{smallmatrix}}} \right].$ Then, 
applying $(-)^{\ast}$ we get the following map in the category $\Big(\mathrm{Mod}(\mathcal{C}^{op}),\mathrm{Mod}(\mathcal{C}^{op})\Big)$
$$\xymatrix{\mathcal{C}(-,C_{2}')\ar[r]^{a_{22}^{\ast}}\ar[d]^{\left[
{{\begin{smallmatrix}
1 \\
0
\end{smallmatrix}}} \right]}  & \mathcal{C}(-,C_{2})\ar[d]^{\left[
{{\begin{smallmatrix}
1 \\
0
\end{smallmatrix}}} \right]} \\
\mathcal{C}(-,C_{2}')\amalg \mathcal{C}(-,C_{1}')\ar[r]^{\overline{\beta}} & \mathcal{C}(-,C_{2})\amalg \mathcal{C}(-,C_{1})}$$
where $\overline{\beta}=\left[
{{\begin{smallmatrix}
 a_{22}^{\ast} & a_{12}^{\ast}\\
 0  & a_{11}^{\ast}
\end{smallmatrix}}} \right]$.
\end{proposition}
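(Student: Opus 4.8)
The plan is to derive Proposition~\ref{morphismdual} from the general morphism-level description of the duality $(-)^{\ast}$ given in Proposition~\ref{descripcionmor}, specialized to $\mathcal T=\mathcal U=\mathcal C$ and $M:=\widehat{\mathbbm{Hom}}$, combined with the isomorphisms $J_1,J_2$ of Proposition~\ref{descrimaps} and the object-level computation in Proposition~\ref{dualprojmaps}. First I would record the dictionary for $M=\widehat{\mathbbm{Hom}}$: one has $M_{C}=\mathcal C(C,-)\in\mathrm{Mod}(\mathcal C)$, $M_{C'}=\mathcal C(-,C')\in\mathrm{Mod}(\mathcal C^{op})$ and $M(C,C')=\mathcal C(C',C)$, and the functor $\mathbb F$ of Section~5 of \cite{LeOS} satisfies $\mathbb F(\mathcal C(C,-))=\mathcal C(C,-)$ and acts as the identity on a morphism $\mathcal C(t,-)$ between representables under this identification. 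Via $J_1$ and the identifications of projectives made in Proposition~\ref{dualprojmaps}, the given square in $\mathrm{maps}(\mathrm{Mod}(\mathcal C))$ corresponds to a morphism of projective objects in $\bigl(\mathrm{Mod}(\mathcal C),\mathbb G\mathrm{Mod}(\mathcal C)\bigr)$ from $g\colon\mathcal C(C_1,-)\to\mathbb G(M_{C_1}\amalg\mathcal C(C_2,-))$ to $g'\colon\mathcal C(C_1',-)\to\mathbb G(M_{C_1'}\amalg\mathcal C(C_2',-))$ with components $a_{11}$ on top and $\mathbb G(\beta)$ on the bottom, where $\beta=\left[\begin{smallmatrix}a_{11}&a_{12}\\0&a_{22}\end{smallmatrix}\right]$.

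Next I would apply Proposition~\ref{descripcionmor} to this morphism. It gives that $(-)^{\ast}$ sends it to the square in $\bigl(\mathrm{Mod}(\mathcal C^{op}),\overline{\mathbb G}\mathrm{Mod}(\mathcal C^{op})\bigr)$ with top map $\overline\alpha$ and bottom $\overline{\mathbb G}(\overline\beta)$, where $\overline\beta=\left[\begin{smallmatrix}\overline u&b_{12}\\0&\mathcal C(-,t)\end{smallmatrix}\right]$ and $b_{12}=\Psi^{-1}(\Theta(a_{12}))$ for the Yoneda isomorphisms $\Theta,\Psi$ of that proposition. Writing $a_{11}=\mathcal C(t,-)$, $a_{22}=\mathcal C(u,-)$ and $a_{12}=\mathcal C(s,-)$ by Yoneda, with $t\colon C_1'\to C_1$, $u\colon C_2'\to C_2$, $s\colon C_1'\to C_2$, one gets $\overline\alpha=\mathcal C(-,u)$, and since on representables the duality $(-)^{\ast}$ of $\mathrm{mod}(\mathcal C)$ sends $\mathcal C(x,-)$ to $\mathcal C(-,x)$, this yields $\overline u=a_{22}^{\ast}$ and $\mathcal C(-,t)=a_{11}^{\ast}$. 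For the off-diagonal entry, $\Theta(a_{12})=[a_{12}]_{C_2}(1_{C_2})=s\in\mathcal C(C_1',C_2)=M(C_2,C_1')$, whence $b_{12}=\Psi^{-1}(s)=\mathcal C(-,s)=a_{12}^{\ast}$. Translating the result back through $J_2^{-1}$ and using the description of $g^{\ast}$ from Propositions~\ref{dualproyec} and \ref{dualprojmaps} for the column maps $\left[\begin{smallmatrix}1\\0\end{smallmatrix}\right]$, this is exactly the claimed square with $\overline\beta=\left[\begin{smallmatrix}a_{22}^{\ast}&a_{12}^{\ast}\\0&a_{11}^{\ast}\end{smallmatrix}\right]$.

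The essentially formal content is the bookkeeping: one must track the contravariance, so that under $(-)^{\ast}$ the two diagonal blocks of $\beta$ get interchanged (the first turning into the second and conversely), and check that for $M=\widehat{\mathbbm{Hom}}$ the Yoneda maps $\Theta,\Psi$ are literally evaluation at identities, so that $b_{12}$ is the Yoneda dual $a_{12}^{\ast}$. Beyond this, Proposition~\ref{morphismdual} is just the morphism-level refinement of the object-level statement in Proposition~\ref{dualprojmaps}, obtained by feeding the square into Proposition~\ref{descripcionmor}. The main (mild) obstacle is verifying the identity $\mathbb F(\mathcal C(t,-))=\mathcal C(t,-)$ and the compatibility of $J_1,J_2$ with these representable projectives, but both follow from the explicit formulas of \cite{LeOS} recalled in \ref{defofG1} and \ref{projectives}.
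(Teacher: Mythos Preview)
Your proposal is correct and follows essentially the same route as the paper: apply Proposition~\ref{descripcionmor} with $\mathcal T=\mathcal U=\mathcal C$ and $M=\widehat{\mathbbm{Hom}}$, identify the off-diagonal entry $b_{12}=\Psi^{-1}(\Theta(a_{12}))$ with $a_{12}^{\ast}$ via the Yoneda isomorphisms, and then invoke Propositions~\ref{dualprojmaps} and~\ref{descrimaps} to handle the column maps and the passage through $J_1,J_2$. The paper's proof is terser but structurally identical; your added bookkeeping (writing $a_{11}=\mathcal C(t,-)$, $a_{22}=\mathcal C(u,-)$, $a_{12}=\mathcal C(s,-)$ and tracking each entry) is exactly what is implicit there.
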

\begin{proof}
By \ref{descripcionmor}, we obtain the equality
$b_{12}:=\Psi^{-1}_{}(\Theta_{}(a_{12}))$ where the morphisms
 $\Theta_{}:\mathrm{Hom}_{\mathrm{Mod}(\mathcal{C})}\Big(\mathrm{Hom}_{\mathcal{C}}(C_{2},-),M_{C_{1}'}\Big)\longrightarrow M_{C_{1}'}(C_{2})=\mathcal{C}(C_{1}',C_{2})$ and\\ 
 $\Psi_{}:\mathrm{Hom}_{\mathrm{Mod}(\mathcal{C}^{op})}\Big(\mathrm{Hom}_{\mathcal{C}}(-,C_{1}'),M_{C_{2}}\Big)=\mathrm{Hom}_{\mathrm{Mod}(\mathcal{C}^{op})}\Big(\mathrm{Hom}_{\mathcal{C}}(-,C_{1}'),\mathcal{C}(-,C_{2})\Big)\longrightarrow \mathcal{C}(C_{1}',C_{2})$ are the Yoneda Isomorphisms. Then we conclude that $b_{12}=a_{12}^{\ast}$, the rest of the proof follows from \ref{dualprojmaps} and \ref{descrimaps}.
\end{proof}

\begin{proposition}\label{projectivecover}
Let $\mathcal{C}$ be an abelian  category with projective covers and let $f:A\longrightarrow B$ be a morphism in $\mathcal{C}$.
\begin{enumerate}
\item [(i)] If  $\mathrm{Coker}(f)\neq 0$. Construct the following diagram
$$\xymatrix{P_{0}\ar[r]^{\left[
{{\begin{smallmatrix}
1 \\
0
\end{smallmatrix}}} \right]}\ar[d]^{\alpha} & P_{0}\oplus Q_{0}\ar[r]^{[0,1]}\ar[d]^{\gamma} & Q_{0}\ar[r]\ar[d]^{\beta}\ar@{-->}[dl]_{\beta'} & 0\\
A\ar[r]_{f}\ar[d] & B\ar[r]_{\pi}\ar[d] & C\ar[r]\ar[d]& 0\\
0 & 0 & 0}$$
where $\alpha:P_{0}\longrightarrow A$ and $\beta:Q_{0}\longrightarrow C$ are projective covers $\beta':Q_{0}\rightarrow B$ is the induced morphism by the projectivity of $Q_{0}$ and $\gamma=(f\alpha,\beta')$. Then the morphism $(\alpha,\gamma):\Big(P_{0},\left[
{{\begin{smallmatrix}
1 \\
0
\end{smallmatrix}}} \right], P_{0}\oplus Q_{0}\Big)\longrightarrow
\Big(A,f,B \Big)$ given by the following diagram
$$\xymatrix{\underset{P_{0}\oplus Q_{0}}{\overset{P_{0}}{\Big\downarrow}}\ar[r]^{(\alpha,\gamma)} & \underset{B}{\overset{A}{\Big\downarrow}}\ar[r]^{} & 0}$$ is a projective cover of the object $f:A\longrightarrow B$ in the category $\mathrm{maps}(\mathcal{C})$.

\item [(ii)] If $\mathrm{Coker}(f)=0$. Consider the following diagram
$$\xymatrix{P_{0}\ar[r]^{1}\ar[d]_{\alpha} & P_{0}\ar[d]^{f\alpha}\\
A\ar[r]_{f}\ar[d] & B\ar[d]\\
0 & 0}$$
where $\alpha:P_{0}\longrightarrow A$ is a projective cover. Then, the morphism $(\alpha,f\alpha):(P_{0},1,P_{0})\longrightarrow (A,f,B)$ is a projective cover of $(A,f,B)$.
\end{enumerate}
\end{proposition}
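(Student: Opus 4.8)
The plan is to check, for each of the two cases, the two defining properties of a projective cover: that the source object lies in $\mathrm{proj}(\mathrm{maps}(\mathcal C))$, and that the displayed epimorphism is \emph{right minimal}, i.e. that every endomorphism $(\varphi_1,\varphi_0)$ of the source with $(\alpha,\gamma)\circ(\varphi_1,\varphi_0)=(\alpha,\gamma)$ (resp. $(\alpha,f\alpha)\circ(\varphi_1,\varphi_0)=(\alpha,f\alpha)$) is an automorphism. I will use three preliminary facts. (1) Since $\mathrm{maps}(\mathcal C)$ is the category of functors on the quiver $\bullet\to\bullet$ with values in $\mathcal C$, kernels, cokernels and exactness are computed componentwise; in particular a morphism $(h_1,h_0)$ is an epimorphism exactly when $h_1$ and $h_0$ are. (2) For an epimorphism $p\colon P\to M$ with $P$ projective, being a projective cover is equivalent to being right minimal; I would recall the short proof (if $\mathrm{Ker}\,p$ is superfluous and $p\varphi=p$, then $\mathrm{Im}\,\varphi+\mathrm{Ker}\,p=P$, so $\varphi$ is an epimorphism, splits by projectivity of $P$, and $\mathrm{Ker}\,\varphi\subseteq\mathrm{Ker}\,p$ is a superfluous direct summand, hence $0$; conversely a right minimal $p$ cannot be factored through a proper subobject of $P$). (3) The projective covers $\alpha\colon P_0\to A$ and $\beta\colon Q_0\to C$ are epimorphisms and right minimal, and $\beta'$ exists because $Q_0$ is projective and $\pi$ is an epimorphism.

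For (i), first I would show $\mathbf P:=\big(P_0,\left[\begin{smallmatrix}1\\0\end{smallmatrix}\right],P_0\oplus Q_0\big)$ is projective: a morphism $\mathbf P\to(X_1,g,X_0)$ is a pair $(h_1\colon P_0\to X_1,\ h_0\colon P_0\oplus Q_0\to X_0)$ with $h_0\left[\begin{smallmatrix}1\\0\end{smallmatrix}\right]=gh_1$, and writing $h_0=(h_0',h_0'')$ this says $h_0'=gh_1$; hence $\mathrm{Hom}_{\mathrm{maps}(\mathcal C)}(\mathbf P,-)$ is naturally isomorphic to the direct sum of $(X_1,g,X_0)\mapsto\mathrm{Hom}_{\mathcal C}(P_0,X_1)$ and $(X_1,g,X_0)\mapsto\mathrm{Hom}_{\mathcal C}(Q_0,X_0)$, which is exact by (1) and projectivity of $P_0,Q_0$. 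Next, $(\alpha,\gamma)$ is a morphism since $\gamma\left[\begin{smallmatrix}1\\0\end{smallmatrix}\right]=f\alpha$ by the definition of $\gamma$, and it is an epimorphism: $\alpha$ is, and a diagram chase on the commutative ladder with exact rows $P_0\to P_0\oplus Q_0\to Q_0\to 0$ and $A\xrightarrow{f}B\xrightarrow{\pi}C\to 0$ (exactness at $B$ meaning $\mathrm{Im}\,f=\mathrm{Ker}\,\pi$), using that $\alpha$ and $\beta$ are epimorphisms, shows $\gamma$ is an epimorphism; then apply (1). Finally, for right minimality take $(\varphi_1,\varphi_0)\in\mathrm{End}_{\mathrm{maps}(\mathcal C)}(\mathbf P)$ with $\alpha\varphi_1=\alpha$ and $\gamma\varphi_0=\gamma$. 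By (3), $\varphi_1$ is an automorphism of $P_0$. Commutativity in $\mathrm{maps}(\mathcal C)$ gives $\varphi_0\left[\begin{smallmatrix}1\\0\end{smallmatrix}\right]=\left[\begin{smallmatrix}1\\0\end{smallmatrix}\right]\varphi_1$, so writing $\varphi_0=\left[\begin{smallmatrix}a&b\\c&d\end{smallmatrix}\right]$ forces $a=\varphi_1$ and $c=0$. From $\gamma\varphi_0=\gamma$ with $\gamma=(f\alpha,\beta')$ and $\alpha\varphi_1=\alpha$ one gets $f\alpha b+\beta'd=\beta'$, and applying $\pi$ (with $\pi f=0$, $\pi\beta'=\beta$) gives $\beta d=\beta$, so $d$ is an automorphism of $Q_0$ by (3). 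An upper triangular $\left[\begin{smallmatrix}\varphi_1&b\\0&d\end{smallmatrix}\right]$ with invertible diagonal is invertible, so $\varphi_0$, and hence $(\varphi_1,\varphi_0)$, is an automorphism. Thus $(\alpha,\gamma)$ is a right minimal epimorphism with projective source, i.e. a projective cover.

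For (ii), with $f$ an epimorphism, $(P_0,1,P_0)$ is projective by the same Hom-functor computation (a morphism out of it is determined by $h_1\colon P_0\to X_1$, so $\mathrm{Hom}_{\mathrm{maps}(\mathcal C)}((P_0,1,P_0),(X_1,g,X_0))\cong\mathrm{Hom}_{\mathcal C}(P_0,X_1)$ is exact), and $(\alpha,f\alpha)$ is a morphism whose components $\alpha$ and $f\alpha$ are both epimorphisms (as $f$ and $\alpha$ are), hence an epimorphism by (1). If $(\varphi_1,\varphi_0)$ is an endomorphism with $\alpha\varphi_1=\alpha$ and $f\alpha\varphi_0=f\alpha$, commutativity forces $\varphi_0=\varphi_1$ and then (3) makes $\varphi_1$ an automorphism; so $(\alpha,f\alpha)$ is right minimal, hence a projective cover. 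The formal content is routine; the points that need care are fact (2) and the statement that $\gamma$ is an epimorphism in (i), both standard (the five lemma for the latter), while the matrix computation in (i) is the only actual calculation and is carried out above in outline.
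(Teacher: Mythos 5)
Your proof is correct and follows essentially the same route as the paper: the heart of the argument is the identical matrix computation showing that an endomorphism $(\varphi_1,\varphi_0)$ compatible with $(\alpha,\gamma)$ is upper triangular with $\varphi_1$ and $d$ invertible (the latter obtained by composing with $\pi$ and using minimality of $\beta$), hence an automorphism. You additionally spell out the points the paper leaves as ``easy to show''---projectivity of $\big(P_0,\left[\begin{smallmatrix}1\\0\end{smallmatrix}\right],P_0\oplus Q_0\big)$ via the Hom-functor decomposition, surjectivity of $\gamma$, and the equivalence of right minimality with the projective-cover property---which is fine and does not change the approach.
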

\begin{proof}
Let us see that $(\alpha,\gamma)$ is minimal. Indeed, let 
$$(\ast):\quad (\theta_{1},\theta_{2}):\Big(P_{0},\left[
{{\begin{smallmatrix}
1 \\
0
\end{smallmatrix}}} \right], P_{0}\oplus Q_{0}\Big)\longrightarrow \Big(P_{0},\left[
{{\begin{smallmatrix}
1 \\
0
\end{smallmatrix}}} \right], P_{0}\oplus Q_{0}\Big)$$ such that
$(\alpha,\gamma)(\theta_{1},\theta_{2})=(\alpha,\gamma)$. Then
$\alpha\theta_{1}=\alpha$ and $\gamma\theta_{2}=\gamma$. Since $\alpha$ is projective cover, we have that it is minimal and hence
$\theta_{1}$ is an isomorphism. Now, since $(\ast)$ is a morphism in the category of maps, we have the following commutative diagram
$$\xymatrix{P_{0}\ar[r]^{\left[
{{\begin{smallmatrix}
1 \\
0
\end{smallmatrix}}} \right]}\ar[d]^{\theta_{1}} & P_{0}\oplus Q_{0}\ar[d]^{\theta_{2}} \\
P_{0}\ar[r]^{\left[
{{\begin{smallmatrix}
1 \\
0
\end{smallmatrix}}} \right]}\ar[r] & P_{0}\oplus Q_{0}}$$
If $\theta_{2}=\left[
{{\begin{smallmatrix}
a_{11} & a_{12} \\
a_{21} & a_{22}
\end{smallmatrix}}} \right]$ we have that
$\left[
{{\begin{smallmatrix}
a_{11} & a_{12} \\
a_{21} & a_{22}
\end{smallmatrix}}} \right]\left[
{{\begin{smallmatrix}
1 \\
0
\end{smallmatrix}}} \right]=\left[
{{\begin{smallmatrix}
\theta_{1} \\
0
\end{smallmatrix}}} \right]$. Therefore, we conclude that $a_{11}=\theta_{1}$ and $a_{21}=0$. Now, $(0,\beta)\theta_{2}=
\pi \gamma\theta_{2}=\pi\gamma=(0,\beta)$ and then $(0,\beta)\left[
{{\begin{smallmatrix}
\theta_{1} & a_{12} \\
0 & a_{22}
\end{smallmatrix}}} \right]=(0,\beta a_{22})=(0,\beta).$ Hence, 
$\beta=\beta a_{22}$ and since $\beta$ is minimal, we have that $a_{22}$ is an isomorphism. Since $a_{12}:Q_{0}\longrightarrow P_{0}$, we have the morphism $a_{11}^{-1}a_{12}a_{22}^{-1}:Q_{0}\longrightarrow P_{0}$. Now it is easy to show that
$\theta_{2}^{-1}=\left[
{{\begin{smallmatrix}
a_{11}^{-1},  & & a_{11}^{-1}a_{12}a_{22}^{-1} \\
0 & & a_{22}^{-1}
\end{smallmatrix}}} \right]$. Hence $(\theta_{1},\theta_{2})$ is an isomorphism, proving that $(\alpha,\gamma)$ is minimal. Now, it is easy to show that
$\Big(P_{0},\left[
{{\begin{smallmatrix}
1 \\
0
\end{smallmatrix}}} \right], P_{0}\oplus Q_{0}\Big)$ is a projective object in $\mathrm{maps}(\mathcal C)$, then we conclude that $(\alpha,\gamma)$ is a projective cover.\\
(ii) Similar to (i).
\end{proof}



We define the transpose which is needed to get the Auslander-Reiten translate.
\begin{definition}\label{transposedef}(Transpose)
Let $f:A\longrightarrow \mathbb{G}(B)$ and object in $\Big( \mathrm{mod}(\mathcal{T}), \mathbb{G}\mathrm{mod}(\mathcal{U})\Big)$. Consider a minimal projective presentation
$$\xymatrix{\underset{\mathbb{G}(M_{T}\amalg \mathrm{Hom}_{\mathcal{U}}(U,-)}{\overset{\mathrm{Hom}_{\mathcal{T}}(T,-)}{\Big\downarrow}}\ar[r]^{(\alpha,\beta)} & \underset{\mathbb{G}(M_{T'}\amalg \mathrm{Hom}_{\mathcal{U}}(U',-)}{\overset{\mathrm{Hom}_{\mathcal{T}}(T',-)}{\Big\downarrow}}\ar[r]^{} & \underset{\mathbb{G}(B)}{\overset{A}{\Big\downarrow}}\ar[r] & 0}$$
By applying  the funtor $(-)^{\ast}$ given in \ref{funtorestrellita}, we define $\overline{\mathrm{TR}}(A,f,B):=\mathrm{Coker}((\alpha,\beta)^{\ast})$.
\end{definition}

Now, we define the Auslander-Reiten translate.
\begin{definition}(Auslander-Reiten translate)
For an object $f:A\longrightarrow \mathbb{G}(B)$ in $\Big( \mathrm{mod}(\mathcal{T}), \mathbb{G}\mathrm{mod}(\mathcal{U})\Big)$  we construct the exact sequence as in 	\ref{transposedef}
$$\xymatrix{Q^{\ast}\ar[r]^{(\alpha,\beta)^{\ast}} & P^{\ast}\ar[r]^{} & \mathrm{Coker}((\alpha,\beta)^{\ast})\ar[r] & 0.}$$
Considering the duality
$\widehat{\Theta'}: \Big( \mathrm{mod}(\mathcal{U}^{op}), \overline{\mathbb{G}}\mathrm{mod}(\mathcal{T}^{op})\Big)\longrightarrow \Big( \mathrm{mod}(\mathcal{T}), \mathbb{G}\mathrm{mod}(\mathcal{U})\Big) $
given in \cite[Proposition 6.10]{LeOS}, we define the Auslander-Reiten translate  
$\mathrm{Tau}:\Big( \mathrm{mod}(\mathcal{T}), \mathbb{G}\mathrm{mod}(\mathcal{U})\Big)\longrightarrow \Big( \mathrm{mod}(\mathcal{T}), \mathbb{G}\mathrm{mod}(\mathcal{U})\Big)$
as 
$$\mathrm{Tau}(A,f,B):=\widehat{\Theta'}\Big(\mathrm{Coker}((\alpha,\beta)^{\ast})\Big).$$
\end{definition}

Now, we are able to describe that Auslander-Reiten translate in the category $\mathrm{maps}(\mathrm{mod}(\mathcal C))$.\\
Let $\mathcal{C}$ be a dualizing $K$-variety. It is well known that under this conditions $\mathcal{C}$ is  Krull-Schmidt (see  page 318 in \cite{AuslanderRep1}) and therefore we have the every $\mathcal{C}$-module in $\mathrm{mod}(\mathcal{C})$ has a minimal projective presentation (see for example \cite[Lemma 2.1]{MOSS}). Then we  can define the transpose in $\mathrm{mod}(\mathcal{C})$ which we will denote by $\mathrm{Tr}$. That is, $\mathrm{Tr}:\mathrm{mod}(\mathcal{C})\rightarrow \mathrm{mod}(\mathcal{C}^{op})$.\\

\begin{theorem}\label{ARsequncedesc}
Let $\mathcal{C}$ be a dualizing and consider the equivalence $\mathrm{mod}(\mathbf{\Lambda})\xrightarrow{\sim} \mathrm{maps}(\mathrm{mod}(\mathcal C))$ given in \ref{mapsmodlam}(ii)  and the duality $\mathbb{D}_{\mathcal{C}^{op}}:\mathrm{mod}(\mathcal{C}^{op})\longrightarrow \mathrm{mod}(\mathcal{C})$. Let $f:C_{1}\longrightarrow C_{2}$ be a morphism in $\mathrm{mod}(\mathcal{C})$ such that there exists exact sequence $\xymatrix{C_{1}\ar[r]^{f} & C_{2}\ar[r] & C_{3}\ar[r] & 0}$ with $C_{3}\neq 0$ and $C_{3}$ not projective. Then 
 $$\mathrm{Tau}(C_{1},f,C_{2})=(\mathbb{D}_{\mathcal{C}^{op}}(Y),\mathbb{D}_{\mathcal{C}^{op}}(g),\mathbb{D}_{\mathcal{C}^{op}}\mathrm{Tr}(C_{3}))$$ for some morphism $g:\mathrm{Tr}(C_{3})\longrightarrow Y$ such that there exists an exact sequence
$$\xymatrix{0\ar[r] &\mathbb{D}_{\mathcal{C}^{op}}\mathrm{Tr}(C_{1})\ar[r] & \mathbb{D}_{\mathcal{C}^{op}}(Y)\ar[r]^{\mathbb{D}_{\mathcal{C}^{op}}(g)} & \mathbb{D}_{\mathcal{C}^{op}}\mathrm{Tr}(C_{3})}$$

\end{theorem}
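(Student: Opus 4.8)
The plan is to transport the computation of $\mathrm{Tau}$ along the equivalence $\mathrm{mod}(\mathbf{\Lambda})\simeq\mathrm{maps}(\mathrm{mod}(\mathcal C))$ of Proposition~\ref{mapsmodlam}(ii), using the explicit description of minimal projective covers in $\mathrm{maps}(\mathcal C)$ from Proposition~\ref{projectivecover} and the explicit action of $(-)^{\ast}$ on maps between projectives from Propositions~\ref{dualprojmaps} and \ref{morphismdual}. First I would fix a minimal projective presentation $\mathcal{C}(X,-)\xrightarrow{(f_1,-)}\mathcal{C}(Y_0,-)\to C_3\to 0$ of the (nonprojective, nonzero) cokernel $C_3$, and a minimal projective presentation $\mathcal C(X',-)\to\mathcal C(Y_0',-)\to C_1\to 0$ of $C_1$. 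By Proposition~\ref{projectivecover}(i), applied to $f:C_1\to C_2$ with $\mathrm{Coker}(f)=C_3\neq 0$, a projective cover of the object $(C_1,f,C_2)$ in $\mathrm{maps}(\mathrm{mod}(\mathcal C))$ has the shape $\bigl(\mathcal C(Y_0',-),\,\bigl[\begin{smallmatrix}1\\0\end{smallmatrix}\bigr],\,\mathcal C(Y_0',-)\amalg \mathcal C(Y_0,-)\bigr)$; iterating one step further (taking the projective cover of the kernel of this cover) produces a minimal projective presentation $(Q^\bullet)\xrightarrow{(\alpha,\beta)}(P^\bullet)\to (C_1,f,C_2)\to 0$ in which every term is a direct sum of the canonical projectives $\bigl(\mathcal C(C,-),\bigl[\begin{smallmatrix}1\\0\end{smallmatrix}\bigr],\mathcal C(C,-)\amalg\mathcal C(C',-)\bigr)$ and $\bigl(0,0,\mathcal C(C,-)\bigr)$ of Proposition~\ref{projinjec}(ii)(a), with the ``top'' row being exactly the presentation of $C_1$ and the ``bottom-minus-top'' datum being controlled by the presentation of $C_3$.

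Next I would apply the transpose functor $\overline{\mathrm{TR}}$ of Definition~\ref{transposedef}, i.e. apply $(-)^{\ast}$ to $(\alpha,\beta)$ and take the cokernel. Here the key input is Proposition~\ref{morphismdual} (together with \ref{dualprojmaps}): a map between the canonical projectives given by a block matrix $\beta=\bigl[\begin{smallmatrix}a_{11}&a_{12}\\0&a_{22}\end{smallmatrix}\bigr]$ is sent to the map given by $\overline\beta=\bigl[\begin{smallmatrix}a_{22}^{\ast}&a_{12}^{\ast}\\0&a_{11}^{\ast}\end{smallmatrix}\bigr]$, i.e. the roles of the ``$C_1$-part'' $a_{11}$ and the ``$C_3$-part'' $a_{22}$ are interchanged under dualization, and the domain/codomain objects $\mathcal C(C,-)$ get replaced by $\mathcal C(-,C)$. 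Tracking this through the two-step presentation, the cokernel $\overline{\mathrm{TR}}(C_1,f,C_2)$ in $\bigl(\mathrm{mod}(\mathcal U^{op}),\overline{\mathbb G}\mathrm{mod}(\mathcal T^{op})\bigr)$ — equivalently in $\mathrm{maps}(\mathrm{mod}(\mathcal C^{op}))$ via \ref{descrimaps} — turns out to be the object whose ``$C_3$-half'' computes $\mathrm{Coker}$ of the dual of the presentation of $C_3$, namely $\mathrm{Tr}(C_3)$, and whose ``$C_1$-half'' computes $\mathrm{Tr}(C_1)$; concretely one gets a map $h:\mathrm{Tr}(C_3)\to Y$ fitting in an exact sequence $\mathrm{Tr}(C_3)\xrightarrow{h}Y\to \mathrm{Tr}(C_1)\to 0$ (the surjection onto $\mathrm{Tr}(C_1)$ reflecting that $C_1$ sits on top). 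I would read off $Y$ and $h$ from the explicit block form and record this as the description of $\overline{\mathrm{TR}}$.

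Finally I would apply the duality $\widehat{\Theta'}:\bigl(\mathrm{mod}(\mathcal U^{op}),\overline{\mathbb G}\mathrm{mod}(\mathcal T^{op})\bigr)\to\bigl(\mathrm{mod}(\mathcal T),\mathbb G\mathrm{mod}(\mathcal U)\bigr)$ of \cite[Proposition 6.10]{LeOS}, which under the maps-category identifications \ref{descrimaps}/\ref{mapsfini} is just the componentwise $K$-dual $\overline\Theta$ of Proposition~\ref{descrimaps}, i.e. $(A\xrightarrow{u}B)\mapsto(\mathbb D_{\mathcal C^{op}}(B)\xrightarrow{\mathbb D_{\mathcal C^{op}}(u)}\mathbb D_{\mathcal C^{op}}(A))$. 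Applying $\overline\Theta$ to the object $\mathrm{Tr}(C_3)\xrightarrow{h}Y$ gives $\mathbb D_{\mathcal C^{op}}(Y)\xrightarrow{\mathbb D_{\mathcal C^{op}}(h)}\mathbb D_{\mathcal C^{op}}\mathrm{Tr}(C_3)$, which (setting $g:=h$ up to the identifications) is exactly $\mathrm{Tau}(C_1,f,C_2)$; and dualizing the exact sequence $\mathrm{Tr}(C_3)\to Y\to\mathrm{Tr}(C_1)\to 0$ with the exact duality $\mathbb D_{\mathcal C^{op}}$ yields the asserted left-exact sequence $0\to\mathbb D_{\mathcal C^{op}}\mathrm{Tr}(C_1)\to\mathbb D_{\mathcal C^{op}}(Y)\to\mathbb D_{\mathcal C^{op}}\mathrm{Tr}(C_3)$. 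The main obstacle I anticipate is the bookkeeping in the second paragraph: one must verify that the minimal projective presentation of $(C_1,f,C_2)$ in the maps category really does decompose so that the off-diagonal block $a_{12}$ and the diagonal blocks assemble, after applying $(-)^{\ast}$ and taking cokernels, into precisely the presentation whose cokernel is $\mathrm{Tr}(C_3)$ together with the comparison map to $\mathrm{Tr}(C_1)$ — this requires care with minimality (a nonprojective $C_3$ is used here so that no summand is superfluous) and with the naturality of the Yoneda identifications $\Theta,\Psi$ used in \ref{descripcionmor}, but each individual step is routine given the propositions already established.
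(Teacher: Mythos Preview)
Your proposal is correct and follows essentially the same route as the paper: build the minimal projective presentation of $(C_1,f,C_2)$ in $\mathrm{maps}(\mathrm{mod}(\mathcal C))$ from the presentations of $C_1$ and $C_3$ via Proposition~\ref{projectivecover}, apply $(-)^{\ast}$ using the block-matrix description of Proposition~\ref{morphismdual}, read off $\overline{\mathrm{TR}}(C_1,f,C_2)=(\mathrm{Tr}(C_3),g,Y)$ together with the right-exact column $\mathrm{Tr}(C_3)\to Y\to\mathrm{Tr}(C_1)\to 0$, and then dualize with $\overline{\Theta}\simeq\mathbb D_{\mathcal C^{op}}$ as in \ref{descrimaps}. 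The paper makes the bookkeeping you flag explicit by writing out the $3\times 3$ diagram whose first two columns are the split sequences $Q_i^{\ast}\hookrightarrow Q_i^{\ast}\oplus P_i^{\ast}\twoheadrightarrow P_i^{\ast}$, which is exactly the verification you anticipated needing.
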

\begin{proof}
Since $\mathcal{C}$ is an additive category with finite coproducts  and with splitting idempotenst, we have that every finitely presented projective $\mathcal{C}$-module  $P$ is of the form $\mathrm{Hom}_{\mathcal{C}}(C,-)$ for some object $C\in \mathcal{C}$ (see \cite{AuslanderRep1} ). Then in all what follows whenever we write a projective $\mathcal{C}$-module $P$, we mean a projective module of the form $\mathrm{Hom}_{\mathcal{C}}(C,-)$ for some object $C\in \mathcal{C}$.\\
Let $f:C_{1}\longrightarrow C_{2}$ a morphism in $\mathrm{mod}(\mathcal{C})$ and $C_{3}=\mathrm{Coker}(f)$, following \ref{projectivecover}, we construct a minimal projective presentation of $(C_{1},f,C_{2})$
$$\xymatrix{P_{1}\ar[d]^{\left[
{{\begin{smallmatrix}
1 \\
0
\end{smallmatrix}}} \right]}\ar[r]^{\lambda_{1}} & P_{0}\ar[r]^{\lambda_{0}}\ar[d]^{\left[
{{\begin{smallmatrix}
1 \\
0
\end{smallmatrix}}} \right]}& C_{1}\ar[r]\ar[d]^{f} & 0\\
P_{1}\oplus Q_{1}\ar[r]^{^{\left[
{{\begin{smallmatrix}
\lambda_{1} &  a\\
0 & b
\end{smallmatrix}}} \right]}} & P_{0}\oplus Q_{0}\ar[r]^{\gamma_{0}} & C_{2}\ar[r] & 0, }$$
where $$\xymatrix{P_{1}\ar[r]^{\lambda_{1}} & P_{0}\ar[r]^{\lambda_{0}}& C_{1}\ar[r] & 0},\quad  \xymatrix{Q_{1}\ar[r]^{b} & Q_{0}\ar[r]^{c}& C_{3}\ar[r] & 0}$$
are minimal projective presentation of $C_{1}$ and $C_{3}$ respectively.\\
Applying $(-)^{\ast}$ in the category $\mathrm{maps}(\mathrm{mod}(\mathcal C))$ we get
$$\xymatrix{(P_{0}\rightarrow P_{0}\oplus Q_{0})^{\ast}\ar[r] & (P_{1}\rightarrow P_{1}\oplus Q_{1})^{\ast}\ar[r] & \mathrm{TR}(C_{1},f,C_{2})\ar[r] & 0} $$
By \ref{morphismdual}, the last exact sequence is represented by the following diagram
$$\xymatrix{Q_{0}^{\ast}\ar[r]^{b^{\ast}}\ar[d]_{\left[
{{\begin{smallmatrix}
1 \\
0
\end{smallmatrix}}} \right]} & Q_{1}^{\ast}\ar[r]\ar[d]^{\left[
{{\begin{smallmatrix}
1 \\
0
\end{smallmatrix}}} \right]} & \mathrm{Tr}(C_{3})\ar[d]^{g}\ar[r] & 0\\
Q_{0}^{\ast}\oplus P_{0}^{\ast}\ar[r]^{\left[
{{\begin{smallmatrix}
b^{\ast} & a^{\ast} \\
0 & \lambda_{1}^{\ast}
\end{smallmatrix}}} \right]} & Q_{1}^{\ast}\oplus P_{1}^{\ast}\ar[r] & Y\ar[r]  & 0.}$$
where by definition $\overline{\mathrm{TR}}(C_{1},f,C_{2})=(\mathrm{Tr}(C_{3}),g,Y)$ with $\mathrm{Tr}$ the transpose in $\mathrm{mod}(\mathcal{C})$.
Then we can complete to the diagram
$$\xymatrix{Q_{0}^{\ast}\ar[r]^{b^{\ast}}\ar[d]_{\left[
{{\begin{smallmatrix}
1 \\
0
\end{smallmatrix}}} \right]}  & Q_{1}^{\ast}\ar[r]\ar[d]^{\left[
{{\begin{smallmatrix}
1 \\
0
\end{smallmatrix}}} \right]} & \mathrm{Tr}(C_{3})\ar[d]^{g}\ar[r] & 0\\
Q_{0}^{\ast}\oplus P_{0}^{\ast}\ar[r]^{\left[
{{\begin{smallmatrix}
b^{\ast} & a^{\ast} \\
0 & \lambda_{1}^{\ast}
\end{smallmatrix}}} \right]}\ar[d]_{\left[
{{\begin{smallmatrix}
0  & 1\\
\end{smallmatrix}}} \right]} & Q_{1}^{\ast}\oplus P_{1}^{\ast}\ar[r]\ar[d]^{\left[
{{\begin{smallmatrix}
0 & 1 \\
\end{smallmatrix}}} \right]} & Y\ar[r]\ar[d]^{h} & 0\\
P_{0}^{\ast}\ar[r]^{\lambda_{1}^{\ast}}\ar[d] & P_{1}^{\ast}\ar[r]\ar[d] & \mathrm{Tr}(C_{1})\ar[r]\ar[d] & 0\\ 
0 & 0 & 0}$$
Applying the duality $\widehat{\Theta}:\Big(\mathrm{Mod}(\mathcal{C}^{op}),\overline{\mathbb{G}}(\mathrm{Mod}(\mathcal{C}^{op})\Big)\longrightarrow \Big(\mathrm{Mod}(\mathcal{C}),\overline{\mathbb{G}}(\mathrm{Mod}(\mathcal{C})\Big)$ (see \cite[Proposition 4.9]{LeOS} and \ref{descrimaps}) we get 

$$\xymatrix{ & 0\ar[d] & 0\ar[d] & 0\ar[d]\\
0\ar[r] & \mathbb{D}_{\mathcal{C}^{op}}\mathrm{Tr}(C_{1})\ar[r]\ar[d]^{\mathbb{D}_{\mathcal{C}^{op}}(h)} & \mathbb{D}_{\mathcal{C}^{op}}(P_{1}^{\ast})\ar[r]\ar[d] & \mathbb{D}_{\mathcal{C}^{op}}(P_{0}^{\ast})\ar[d]\\
0\ar[r] & \mathbb{D}_{\mathcal{C}^{op}}(Y)\ar[r]\ar[d]^{\mathbb{D}_{\mathcal{C}^{op}}(g)} & \mathbb{D}_{\mathcal{C}^{op}}(Q_{1}^{\ast}\oplus P_{1}^{\ast})\ar[r]\ar[d] &
\mathbb{D}_{\mathcal{C}^{op}}(Q_{0}^{\ast}\oplus P_{0}^{\ast})\ar[d]\\
0\ar[r] & \mathbb{D}_{\mathcal{C}^{op}}(\mathrm{Tr}(C_{3}))\ar[r] & \mathbb{D}_{\mathcal{C}^{op}}(Q_{1}^{\ast})\ar[r] & \mathbb{D}_{\mathcal{C}^{op}}(Q_{0}^{\ast})}$$
Therefore, we get that the Auslander-Reiten translation of $(C_{1},f,C_{2})$ is the map $(\mathbb{D}_{\mathcal{C}^{op}}(Y),\mathbb{D}_{\mathcal{C}^{op}}(g),\mathbb{D}_{\mathcal{C}^{op}}\mathrm{Tr}(C_{3}))$.

\end{proof}

\begin{remark}
Since the minimal projective presentation of $C_{2}$ is a direct summand of
$$\xymatrix{P_{1}\oplus Q_{1}\ar[r]^{^{\left[
{{\begin{smallmatrix}
\lambda_{1} &  a\\
0 & b
\end{smallmatrix}}} \right]}} & P_{0}\oplus Q_{0}\ar[r]^{\gamma_{0}} & C_{2}\ar[r] & 0, }$$
It can be seen that $Y\simeq \mathrm{Tr}(C_{2})\oplus Z$ for some $Z$. Therefore $\mathbb{D}_{\mathcal{C}^{op}}(Y)\simeq \mathbb{D}_{\mathcal{C}^{op}}\mathrm{Tr}(C_{2})\oplus \mathbb{D}_{\mathcal{C}^{op}}(Z)$.
\end{remark}

\section{Almost Split Sequences in the maps category}
Dualizing $K$-categories were introduced by Auslander and Reiten as a generalization of artin $K$-algebras (see \cite{Auslander2}). It is well-known that the existence of almost split sequences is quite useful in the representation theory of artin algebras. A $K$-category $\mathcal{A}$ being dualizing ensures that the category $\mathrm{mod}(\mathcal{A})$ of finitely presented functors in $\mathrm{Mod}(\mathcal{A})$ has almost split sequences (see theorem 7.1.3 in \cite{Reiten}). From a given dualizing $K$-category $\mathcal{A}$ there are some known constructions of dualizing $K$-categories such as $\mathrm{mod}(\mathcal{A})$, the functorially finite Krull-Schmidt categories of $\mathcal{A}$, residue categories $\mathcal{A}/(1_{A})$ of $\mathcal{A}$ module the ideal $(1_{A})$ of $\mathcal{A}$ generated by the identity morphism $1_{A}$ of an object $A\in \mathcal{A}$ and the category $C^{b}(\mathrm{mod}(\mathcal{A}))$ of bounded complexes over $\mathrm{mod}(\mathcal{A})$ (see \cite{Bautista}).\\
Let $\mathcal C$ be a dualizing $K$-variety, and $\mathbf{\Lambda}=\left[\begin{smallmatrix}
 \mathcal C& 0 \\ 
 \widehat{\mathbbm{Hom}}& \mathcal C
\end{smallmatrix}\right]$.
Now, we consider almost split sequences  in $\mathrm{mod}(\mathbf{\Lambda})$ that arise from almost split sequences in $\mathrm{mod}(\mathcal C)$. That is,  we consider almost split sequences in $\mathrm{mod}(\mathbf{\Lambda})\simeq \mathrm{maps}(\mathrm{mod}(\mathcal{C}))$ of the form
\begin{equation*}
0\rightarrow (N_{1},g,N_{2})\xrightarrow {(j_2, \ j_1)}(E_{1},h,E_{2})%
\xrightarrow{(\pi_1, \ \pi_2)}(M_{1},f,M_{2})\rightarrow 0\text{,}
\end{equation*}%
such that $(M_{1},f,M_{2})$ is one of the following cases $%
(M,1_{M},M),(M,0,0),(0,0,M),$  with $M$ a non projective indecomposable $\mathcal C $-module, and 
 $(N_{1},g,N_{2})$ is one of the following cases $%
(N,1_{N},N),(N,0,0),(0,0,N)$, with $N$ a non injective indecomposable $\mathcal C $-module.\\
The following which is a generalization of  \cite[Theorem 3.1(a), Theorem 3.2 (a)]{MVOM}.

\begin{proposition}\label{ARseq1}
Let $\mathcal{C}$ be a dualizing $K$-variety.
\begin{itemize}
\item[(1)] Let  $0\rightarrow \tau M\xrightarrow{j}E\xrightarrow{\pi}M\rightarrow 0$  be
an  almost   split sequence of $\mathcal C $-modules. Then the exact sequences in $\mathrm{maps}(\mathrm{mod}(\mathcal C))$:
\begin{itemize}
\item[(i)] $0\rightarrow (\tau M,0,0)\xrightarrow{(j,\ 0)}(E,\pi,M )
\xrightarrow{(\pi,\ 1_M)}(M, 1_{M},M)\rightarrow 0$,

\item[(ii)] $0\rightarrow (\tau M,1_{\tau M},\tau M)
\xrightarrow {(1_{\tau
M},\ j)}(\tau M,j,E)\xrightarrow {(0, \ \pi)}(0,0,M)\rightarrow 0$,
\end{itemize}

are almost split.

\item[(2)] Let $0\rightarrow N\xrightarrow{j}E\xrightarrow {\pi}\tau^{-1}N\rightarrow 0$
an almost split sequence of $\mathcal C$-modules.
 Then the exact sequences in $\mathrm{maps}(\mathrm{mod}(\mathcal C))$:

\begin{itemize}
\item[(i)] $0\rightarrow (N,1_N,N)\xrightarrow{(1_N,\ j)}(N,j,E)%
\xrightarrow{(0,\ \pi)}(0,0,\tau^{-1}N)\rightarrow 0$

\item[(ii)] $0\rightarrow (N,0,0)\xrightarrow {(j, \ 0)} (E,\pi, \tau^{-1}N)
\xrightarrow {(\pi, \ 1_{\tau^{-1}N})} (\tau^{-1}N,1_{\tau^{-1}N},\tau^{-1}N)\rightarrow 0$
\end{itemize}

are almost split.

\end{itemize}

\end{proposition}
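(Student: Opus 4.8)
The plan is to verify directly the defining properties of an almost split sequence: the sequence is exact and non-split, the left-hand term has local endomorphism ring, the left map is left almost split, and the right map is right almost split. Since $\mathcal{C}$ is dualizing, $\mathbf{\Lambda}=\left[\begin{smallmatrix}\mathcal{C}&0\\ \widehat{\mathbbm{Hom}}&\mathcal{C}\end{smallmatrix}\right]$ is a dualizing category by \cite[Proposition 7.3]{LeOS}, so $\mathrm{mod}(\mathbf{\Lambda})$ has almost split sequences, and by \ref{mapsmodlam}(ii) we may work inside $\mathrm{maps}(\mathrm{mod}(\mathcal{C}))$. All four sequences in the statement are split-exact pointwise in $\mathrm{mod}(\mathcal{C})$, hence exact in the map category; they are visibly non-split (e.g. in (1)(i) a splitting would force a splitting of $\pi$). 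For the endomorphism rings: $\mathrm{End}(\tau M,0,0)\cong\mathrm{End}_{\mathcal{C}}(\tau M)$ and $\mathrm{End}(\tau M,1_{\tau M},\tau M)\cong\mathrm{End}_{\mathcal{C}}(\tau M)$ as computed in the proof of \ref{projinjec}, and $\tau M$ indecomposable non-injective has local endomorphism ring since $\mathcal{C}$ is Krull--Schmidt; dually for the sequences in (2).

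The core of the argument is the almost split property of the maps. I would exploit the adjunctions and full embeddings relating $\mathrm{mod}(\mathcal{C})$ to $\mathrm{maps}(\mathrm{mod}(\mathcal{C}))$: the embeddings $M\mapsto(M,0,0)$, $M\mapsto(0,0,M)$, $M\mapsto(M,1_M,M)$ and their one-sided adjoints (cokernel, kernel, the evaluation functors $(A,f,B)\mapsto A$, $(A,f,B)\mapsto B$). For instance, in (1)(i), given a map $(u_1,u_2)\colon(E,\pi,M)\to(X_1,\varphi,X_2)$ that is not a split monomorphism, I must produce a factorization through $(\pi,1_M)\colon(E,\pi,M)\to(M,1_M,M)$. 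The point is that $\pi\colon E\to M$ is right almost split in $\mathrm{mod}(\mathcal{C})$, and a morphism out of the map object $(E,\pi,M)$ that fails to split reduces, via the commuting square, to data to which the almost split property of $0\to\tau M\to E\to M\to 0$ in $\mathrm{mod}(\mathcal{C})$ applies; one chases the resulting lift back up the square. The dual cases (1)(ii) and (2)(i),(ii) follow by the same mechanism, using that $j$ is left almost split, or by applying the duality $\overline{\Theta}$ of \ref{descrimaps} which exchanges $(M,0,0)\leftrightarrow(0,0,\mathbb{D}M)$ and $(M,1_M,M)\leftrightarrow(\mathbb{D}M,1,\mathbb{D}M)$ and converts almost split sequences in $\mathrm{maps}(\mathrm{mod}(\mathcal{C}))$ into almost split sequences in $\mathrm{maps}(\mathrm{mod}(\mathcal{C}^{op}))$; thus (2) is obtained from (1) applied to $\mathcal{C}^{op}$.

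An alternative, perhaps cleaner route is to identify the right-hand term's almost split sequence directly: since $\mathrm{mod}(\mathbf{\Lambda})$ has almost split sequences, there is a unique almost split sequence ending in each indecomposable non-projective object such as $(M,1_M,M)$, $(0,0,M)$; it then suffices to check that the exhibited sequence is exact, non-split, with indecomposable end terms, and that the middle term has no direct summand mapping isomorphically — then minimality plus uniqueness forces it to be the almost split sequence. Here one needs the projective cover description of \ref{projectivecover} and the transpose/$\mathrm{Tau}$ computation of \ref{ARsequncedesc} to confirm that $\tau_{\mathbf{\Lambda}}(M,1_M,M)$ and $\tau_{\mathbf{\Lambda}}(0,0,M)$ are $(\tau M,0,0)$ and $(\tau M,1_{\tau M},\tau M)$ respectively — indeed \ref{ARsequncedesc} with $f=1_M$ (so $C_3=0$) handles the degenerate cases and pins down the translate.

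The main obstacle is bookkeeping rather than conceptual: one must carefully handle the degenerate objects $(M,0,0)$, $(0,0,M)$ which are not "maps of nonzero objects'' and check that the factorization property survives the passage through the (non-exact, only one-sided adjoint) embedding and projection functors between $\mathrm{mod}(\mathcal{C})$ and $\mathrm{maps}(\mathrm{mod}(\mathcal{C}))$. Concretely, the delicate point is verifying right almost splitness of $(\pi,1_M)$ in case (1)(i) and left almost splitness of $(1_{\tau M},j)$ in (1)(ii): one must rule out that an arbitrary non-iso endomorphism-test map splits, and this requires unwinding that a section in the map category would restrict to a section in $\mathrm{mod}(\mathcal{C})$ of $\pi$ (resp. a retraction of $j$). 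I expect this to go through by the standard diagram chase, using only that $\mathcal{C}$ is Krull--Schmidt and that $M$, $\tau M$ are indecomposable.
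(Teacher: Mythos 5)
Your main route is essentially the paper's: one checks directly, by a diagram chase, that the right-hand maps are right almost split, using that $\pi$ is right almost split (resp.\ $j$ is left almost split, or $j=\mathrm{Ker}(\pi)$) in $\mathrm{mod}(\mathcal{C})$, notes non-splitness, and obtains (2) by duality. So the strategy is sound, but three points in your sketch need repair. First, the condition you state for (1)(i) is backwards: right almost splitness of $(\pi,1_M)$ means factoring every non-split-epi test map $(q_1,q_2)\colon(X_1,f,X_2)\to(M,1_M,M)$ \emph{into} the right-hand term through $(\pi,1_M)$, not factoring maps out of $(E,\pi,M)$; and the one genuinely non-trivial observation, which your write-up only states in the trivial direction, is the converse implication: if the component $q_1$ had a section $s\colon M\to X_1$, then $(s,fs)$ would be a section of $(q_1,q_2)$ (using $q_2f=q_1$), so $q_1$ is not a split epi, hence $q_1=\pi h$ for some $h$, and $(h,q_2)$ is a morphism of maps giving the desired factorization; the analogous point for (1)(ii) is that $q_1=0$ and the lift $u$ of $q_2$ satisfies $\pi(uf)=0$, so $uf$ factors through $j=\mathrm{Ker}(\pi)$. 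Second, the sequences are not ``split-exact pointwise'': in (1)(i) the first components form the almost split sequence itself, which is exact but non-split; componentwise exactness is all you need and all that holds. Third, your alternative route is not available as stated: Theorem \ref{ARsequncedesc} explicitly assumes $C_3=\mathrm{Coker}(f)\neq 0$ and non-projective, so it says nothing about $f=1_M$ (nor does it directly give $\mathrm{Tau}(0,0,M)$ in the form you want); the paper instead obtains $\tau(M,1_M,M)=(\tau M,0,0)$ as a byproduct of the direct verification, and only in Proposition \ref{ARseq2} does it compute translates via the minimal projective presentations of \ref{minimalmapscer} together with \ref{morphismdual} and \ref{firstermARseq}. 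With the first point corrected and the alternative route dropped (or restricted to where \ref{ARsequncedesc} applies), your argument coincides with the paper's proof.
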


\begin{proof}
(1) (i) Since $\pi :E\rightarrow M$ does not splits, the map $(\pi
,1_{M}):(E,\pi,M )\rightarrow (M,1_{M},M)$ does not split. Let $
(q_{1},q_{2}):(X_{1},f,X_{2})\rightarrow (M,1_{M},M)$ be a map that is not a splittable epimorphism. Then $q_{2}f=1_{M}q_{1}=q_{1}$.

We claim that $q_{1}$ is not a splittable epimorphism. Indeed, if $q_{1}$ is a
splittable epimorphism, then there exists a morphism $s:M\rightarrow X_{1}$,
such that $q_{1}s=1_{M}$. Thus, we have a morphism $(s,fs):(M,1_{M},M)\rightarrow (X_1,f, X_2)$  and we get that
 $(q_1,q_2)\circ (s,fs)= (1_M,1_M)=1_{(M,1_M,M)}: (M,1_M,M)\rightarrow (M,1_{M},M)$  and hence $(q_{1},q_{2}):(X_{1},X_{2},f)\rightarrow
(M,M,1_{M})$ is a splittable epimorphism which is a contradiction.\\
Since $\pi :E\rightarrow M$ is a right almost split morphism, there exists a
map $h:X_{1}\rightarrow E$ such that $\pi h=q_{1}$, and $q_{2}f=q_{1}=\pi h$.
Thus, we have a morphism $(h,q_2):(X_1,f,X_2)\rightarrow (E,\pi,M)$,  and the following commutative  diagram 
\begin{equation*}
\begin{diagram}
\node{}
 \node{(X_1,f,X_2)}\arrow{s,r}{(q_1,\ q_2)}\arrow{sw,l}{(h,\ q_2)}\\
 \node{(E,\pi,M)}\arrow{e,t}{(\pi,\  1_M)}\node{(M,1_M,M)}
 \end{diagram}
\end{equation*}
That is, we get a lifting $(h,\
q_{2}):(X_{1},f,X_{2})\rightarrow (E,\pi, M )$ of $(q_{1},q_{2})$ and we have proved that $(\pi,1_{M})$ is right almost split and thus
$\tau (M,1_{M},M)=(\tau M,0,0)$.\\
(ii). Let $(q_{1},q_{2}):(X_{1},f,X_{2})\rightarrow (0,0,M)$ be a map that is not a splittable epimorphism. Then $q_{2}:X_{2}\rightarrow M$ is not a splittable epimorphism and $q_{1}=0$. Since $\pi:E\rightarrow M$ is a right almost split epimorphism, there exists $u:X_{2}\rightarrow E$ such that $q_{2}=\pi u$. Then $\pi(u f)=(\pi u )f=q_{2}f=0 q_{1}=0$ and since $j=\mathrm{Ker}(\pi)$ we have that there exists a morphism $v:X_{1}\rightarrow \tau (M)$ such that $uf= jv$.
Therefore we wet a map $(v,u):(X_{1},f,X_{2})\rightarrow (\tau M,j,E)$ and we have that $(0,\pi)(v,u)=(0,q_{2})=(q_{1},q_{2})$. Proving that $(0,\pi)$ is right almost split.\\
\bigskip
(2) follows by duality.
\end{proof}

\begin{proposition}\label{minimalmapscer}
 Given a minimal projective presentation $P_{1}\xrightarrow{d_1}P_{0}
\xrightarrow {d_0}M\rightarrow 0$ of $M$ in $\mathrm{mod}(\mathcal{C})$,  we have that the following diagram
$$\xymatrix{P_{1}\ar[r]^{d_{1}}\ar[d]_{\left[
{{\begin{smallmatrix}
1 \\
0
\end{smallmatrix}}} \right]}& P_{0}\ar[r]^{d_{0}}\ar[d]^{1} & M\ar[r]\ar[d] & 0\\
P_{1}\amalg P_{0}\ar[r]^{[d_{1},1]} & P_{0}\ar[r] & 0\ar[r] & 0}$$
is a minimal projective presentation of $M\rightarrow 0$ in the category $\mathrm{maps}(\mathrm{mod}(\mathcal{C}))$.
\end{proposition}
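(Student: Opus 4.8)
The plan is to obtain the statement by two successive applications of Proposition~\ref{projectivecover}, viewing $(M,0,0)$ as the object $M\xrightarrow{0}0$ of $\mathrm{maps}(\mathrm{mod}(\mathcal C))$ (which has projective covers since $\mathcal C$, hence $\mathrm{mod}(\mathcal C)$, is dualizing). First I would apply Proposition~\ref{projectivecover}(ii) to this object: since $\mathrm{Coker}(M\xrightarrow{0}0)=0$ and $d_0\colon P_0\to M$ is a projective cover of $M$ (this is where minimality of the given presentation of $M$ enters), it gives that $(d_0,0)\colon(P_0,1_{P_0},P_0)\longrightarrow(M,0,0)$ is a projective cover in $\mathrm{maps}(\mathrm{mod}(\mathcal C))$; in particular $(P_0,1_{P_0},P_0)$ is projective, which is also part of Proposition~\ref{projectivecover}. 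This identifies the right-hand square of the displayed diagram as a projective cover.

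Next I would compute the kernel of $(d_0,0)$. Kernels in $\mathrm{maps}(\mathrm{mod}(\mathcal C))$ are formed componentwise: writing $\Omega M:=\ker(d_0)\subseteq P_0$ one has $\ker(P_0\xrightarrow{0}0)=P_0$, and since the structure map of the kernel object is the corestriction of $1_{P_0}$, the kernel of $(d_0,0)$ is $(\Omega M,\iota,P_0)$, where $\iota\colon\Omega M\hookrightarrow P_0$ is the inclusion. Now $\mathrm{Coker}(\iota)=P_0/\Omega M\cong M$ via $d_0$, which is nonzero (the case $M=0$ being trivial), so Proposition~\ref{projectivecover}(i) applies to $(\Omega M,\iota,P_0)$. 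The relevant data are: a projective cover $\alpha\colon P_1\to\Omega M$ of $\Omega M$, which exists and satisfies $\iota\alpha=d_1$ — again by minimality of the presentation of $M$; the projective cover $d_0\colon P_0\to M\cong\mathrm{Coker}(\iota)$; and a lift $\beta'\colon P_0\to P_0$ of $d_0$ along the projection $P_0\to P_0/\Omega M$, for which one may simply take $\beta'=1_{P_0}$. With these choices $\gamma=(\iota\alpha,\beta')=[\,d_1,\,1_{P_0}\,]\colon P_1\amalg P_0\to P_0$, so Proposition~\ref{projectivecover}(i) yields that $(\alpha,[d_1,1])\colon\big(P_1,\left[\begin{smallmatrix}1\\0\end{smallmatrix}\right],P_1\amalg P_0\big)\longrightarrow(\Omega M,\iota,P_0)$ is a projective cover, and that $\big(P_1,\left[\begin{smallmatrix}1\\0\end{smallmatrix}\right],P_1\amalg P_0\big)$ is projective.

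Finally, composing this projective cover with the kernel inclusion $(\iota,1_{P_0})\colon(\Omega M,\iota,P_0)\hookrightarrow(P_0,1_{P_0},P_0)$ gives exactly the morphism $(d_1,[d_1,1])$ of the statement, because $\iota\alpha=d_1$. Since a minimal projective presentation of an object in an abelian category with projective covers is, by definition, its projective cover followed by the projective cover of the kernel of that cover, the displayed diagram is a minimal projective presentation of $(M,0,0)$ in $\mathrm{maps}(\mathrm{mod}(\mathcal C))$, and one only has to check along the way that the two ladders commute, which is immediate.

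I expect the only real difficulty to be bookkeeping rather than mathematics: one must keep track of the three structure maps defining each object of the maps category, verify the componentwise kernel computation, and make the harmless normalization $\beta'=1_{P_0}$ so that $\gamma$ comes out as the row $[d_1,1_{P_0}]$ precisely as written. The single genuine input is that minimality of $P_1\xrightarrow{d_1}P_0\xrightarrow{d_0}M\to0$ forces both $d_0$ and the corestriction $P_1\to\Omega M$ of $d_1$ to be projective covers, which is exactly what allows Proposition~\ref{projectivecover} to produce projective covers at each stage.
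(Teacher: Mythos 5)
Your proposal is correct and follows essentially the same route as the paper's proof: identify $(P_0,1_{P_0},P_0)\to(M,0,0)$ as the projective cover, compute its kernel $(\Omega M,\iota,P_0)$ componentwise, apply Proposition \ref{projectivecover}(i) (with $\beta'=1_{P_0}$, so $\gamma=[d_1,1]$) to cover that kernel, and paste. The only cosmetic difference is that you justify the first projective cover by invoking Proposition \ref{projectivecover}(ii), where the paper simply asserts it as easy.
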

\begin{proof}
Is easy to see that $1:P_{0}\rightarrow P_{0}$ is a minimal projective cover of $M\rightarrow 0$.
We have the following commutative diagram
$$\xymatrix{0\ar[r] & K\ar[r]^{u_{1}}\ar[d]_{u_{1}}& P_{0}\ar[r]^{d_{0}}\ar[d]^{1} & M\ar[r]\ar[d] & 0\\
0\ar[r] & P_{0}\ar[r]^{1} & P_{0}\ar[r] & 0\ar[r] & 0}$$
Making the construction of \ref{projectivecover}, we get the diagram
$$\xymatrix{P_{1}\ar[r]^{\left[
{{\begin{smallmatrix}
1 \\
0
\end{smallmatrix}}} \right]}\ar[d]^{\alpha_{1}} & P_{1}\oplus P_{0}\ar[r]^{[0,1]}\ar[d]^{[d_{1},1]} & P_{0}\ar[r]\ar[d]^{d_{0}} & 0\\
K\ar[r]_{u_{1}}\ar[d] & P_{0}\ar[r]_{d_{0}}\ar[d] & M\ar[r]\ar[d]& 0\\
0 & 0 & 0}$$
Therefore, pasting the projective covers we get the diagram
$$\xymatrix{P_{1}\ar[r]^{d_{1}}\ar[d]_{\left[
{{\begin{smallmatrix}
1 \\
0
\end{smallmatrix}}} \right]}& P_{0}\ar[r]^{d_{0}}\ar[d]^{1} & M\ar[r]\ar[d] & 0\\
P_{1}\amalg P_{0}\ar[r]^{[d_{1},1]} & P_{0}\ar[r] & 0\ar[r] & 0.}$$
\end{proof}

\begin{proposition}\label{firstermARseq}
Let $\mathcal{C}$ be a dualizing $R$-variety for some commutative artin ring $R$. Let $C$ be an indecomposable non projective object and 
Let $$\eta: \xymatrix{0\ar[r] & DTrC\ar[r]^{f} & B\ar[r]^{g} & C\ar[r] & 0}$$ a non split exact sequence such that every non isomorphism $C\rightarrow C$ factors through $g$. Then $\eta$ is an almost split sequence.
\end{proposition}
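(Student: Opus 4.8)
The statement is the classical characterization of almost split sequences adapted to the dualizing variety setting: it says that a non-split short exact sequence ending in an indecomposable non-projective $C$, whose right-hand map $g$ is right almost split (every non-isomorphism $C\to C$ factors through $g$), is in fact the almost split sequence ending at $C$. The plan is to reduce this to the existence theorem for almost split sequences in $\mathrm{mod}(\mathcal C)$, which holds because $\mathcal C$ is dualizing (see the theorem of Auslander--Reiten cited in the excerpt, i.e. $\mathrm{mod}(\mathcal C)$ has almost split sequences), together with the standard uniqueness argument.

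First I would invoke the existence of an almost split sequence $\eta':0\to DTr C\xrightarrow{f'} B'\xrightarrow{g'} C\to 0$ in $\mathrm{mod}(\mathcal C)$; this is legitimate since $C$ is indecomposable non-projective and $\mathcal C$ is a dualizing $R$-variety, so $\mathrm{mod}(\mathcal C)$ is a dualizing variety with almost split sequences, and the left term of an almost split sequence ending at $C$ is forced to be $DTr C$. Next I would compare $\eta$ with $\eta'$: since $g'$ is right almost split and $g$ is not a splittable epimorphism (because $\eta$ is non-split and $C$ is indecomposable), $g$ factors through $g'$, say $g=g'\circ h$ with $h:B\to B'$. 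Dually — using that $g$ itself has the property that every non-isomorphism $C\to C$ factors through it — the map $g'$ is not a splittable epimorphism (otherwise $\eta'$ would split, contradicting that it is almost split), so $g'$ factors through $g$, say $g'=g\circ h'$ with $h':B'\to B$. Then $g\circ(h'h)=g'h=g$ and $g'\circ(hh')=g$. The composites $h'h$ and $hh'$ restrict to endomorphisms of $DTr C$ (the common kernel), and a Fitting/Nakayama-type argument using that $\mathrm{End}(DTr C)$ is a semiperfect ring (as $\mathcal C$ is Hom-finite over the artin ring $R$, so $\mathrm{End}$ of any finitely presented functor is an artin algebra) shows these restricted endomorphisms are either nilpotent or isomorphisms; the non-split hypothesis rules out the nilpotent case, hence $h'h$ and $hh'$ are automorphisms, so $h$ is an isomorphism and $\eta\cong\eta'$. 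Therefore $\eta$ is almost split.

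Alternatively, and perhaps more cleanly, I would argue directly: one shows $f=\ker g$ is left almost split by the usual diagram chase. Given a non-split monomorphism, or more precisely a map $u:DTr C\to X$ that is not a splittable monomorphism, one must lift it along $f$; this follows formally once $g$ is right almost split and $C$ is indecomposable by the standard equivalence (for a short exact sequence with indecomposable end terms, $g$ right almost split $\iff$ $f$ left almost split $\iff$ the sequence is almost split), which is proved in, e.g., Auslander--Reiten--Smal\o\ Chapter V and whose proof is purely formal and goes through verbatim in any abelian length-type category with the relevant finiteness; here $\mathrm{mod}(\mathcal C)$ qualifies since $\mathcal C$ is dualizing. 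So the core of the write-up is: (i) note $C$ indecomposable non-projective and $\eta$ non-split $\Rightarrow$ $g$ is not a retraction; (ii) $g$ right almost split plus $\eta$ non-split $\Rightarrow$ $f$ left almost split, by the formal lemma; (iii) conclude $\eta$ is almost split, and identify the left term with $DTr C$ by uniqueness.

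The main obstacle is not conceptual but a matter of citing the right formal lemma in the functor-category context: one needs that the equivalence ``$g$ right almost split $\Leftrightarrow$ the sequence is almost split'' (for a non-split sequence with indecomposable ends) is available in $\mathrm{mod}(\mathcal C)$. Since $\mathcal C$ is a dualizing $R$-variety, $\mathrm{mod}(\mathcal C)$ is an abelian Krull--Schmidt category in which endomorphism rings of objects are artin algebras (hence semiperfect), so the classical proof applies; I would make this explicit by referencing the Auslander--Reiten theory for dualizing varieties already recalled in Section 2 and in \cite{Auslander2}, \cite{Reiten}. The remaining steps are short diagram chases of the type already carried out in the proof of Proposition \ref{ARseq1}, so I would keep them brief.
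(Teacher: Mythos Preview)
There is a genuine gap. The hypothesis says only that every non-isomorphism $C\to C$ factors through $g$; it does \emph{not} say that $g$ is right almost split (i.e.\ that every non-retraction $X\to C$ factors through $g$). In your first approach you correctly factor $g$ through $g'$, but then you assert that $g':B'\to C$ factors through $g$ ``since $g'$ is not a splittable epimorphism''. The hypothesis on $g$ grants no such factorization for maps with domain $B'\neq C$, so this step is unjustified. Your second approach has the same defect: you invoke the formal equivalence ``$g$ right almost split $\Leftrightarrow$ the sequence is almost split'', but you have not established that $g$ is right almost split --- that is exactly the conclusion to be proved, not the hypothesis.

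The missing ingredient is precisely why the left term being $DTrC$ is part of the statement. The Auslander--Reiten formula $\mathrm{Ext}^1(C,DTrC)\cong D\,\underline{\mathrm{End}}(C)$, valid for dualizing $R$-varieties, shows that $\mathrm{Ext}^1(C,DTrC)$ has simple socle as a right $\mathrm{End}(C)$-module, because $\underline{\mathrm{End}}(C)$ is local. The hypothesis on $\eta$ says exactly that the class $[\eta]$ is annihilated by $\mathrm{rad}\,\mathrm{End}(C)$ under pullback, i.e.\ $[\eta]$ lies in this simple socle; the almost split sequence $[\eta']$ lies there too, both are nonzero, hence they differ by a unit of $\mathrm{End}(C)$ and $\eta\cong\eta'$. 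This is the argument in \cite[V.2, p.~147]{ARS} to which the paper simply defers (its entire proof is the sentence ``The proof of \cite[2.1]{ARS} in page 147 can be adapted for this setting''). Your comparison-with-$\eta'$ strategy is salvageable, but only after inserting this Ext/socle step; the Fitting-type argument you sketch comes too late, since without it you never obtain the map $h'$ in the first place.
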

\begin{proof}
The proof of \cite[2.1]{ARS} in page 147, can be adapted for this setting.
\end{proof}

\begin{proposition}\label{firstterm}
Let $\mathcal{C}$ be a dualizing $K$-variety and $0\longrightarrow A\longrightarrow B\longrightarrow C\longrightarrow 0$ an almost split sequence in $\mathrm{mod}(\mathcal{C})$. Then $A\simeq \mathrm{DTr}(C)$.
\end{proposition}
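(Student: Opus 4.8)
The statement asserts that in an almost split sequence $0\to A\to B\to C\to 0$ in $\mathrm{mod}(\mathcal C)$ for a dualizing $K$-variety $\mathcal C$, the left-hand term is necessarily $\mathbb D_{\mathcal C}\mathrm{Tr}(C)$. The plan is to reduce this to \ref{firstermARseq}, which already guarantees that $\mathbb D_{\mathcal C}\mathrm{Tr}(C)$ \emph{does} sit as the left term of some almost split sequence ending in $C$ (the sequence $\eta$), and then invoke uniqueness of almost split sequences. First I would recall that since $\mathcal C$ is a dualizing $K$-variety, $\mathrm{mod}(\mathcal C)$ is a dualizing variety (the Theorem after Definition \ref{dualizinvar}), hence a Krull--Schmidt abelian category with enough projectives and injectives; in particular almost split sequences, when they exist, are unique up to isomorphism. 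The indecomposability of $C$ and the fact that $C$ is non-projective (which holds because $C$ is the cokernel term of an almost split sequence, so it is indecomposable and non-projective by the standard properties of such sequences) are exactly the hypotheses needed to apply \ref{firstermARseq}.

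\textbf{Key steps.} (1) Note $C$ is indecomposable and non-projective: this is immediate from the definition of almost split sequence, since the right-hand term of an almost split sequence is always indecomposable non-projective. (2) Apply \ref{firstermARseq} with this $C$: one must exhibit a non-split exact sequence $\eta\colon 0\to \mathbb D_{\mathcal C}\mathrm{Tr}(C)\to B'\to C\to 0$ with the property that every non-isomorphism $C\to C$ factors through the epimorphism $B'\to C$. The existence of such an $\eta$ is precisely the content of the construction of the Auslander--Reiten translate via $\mathbb D\mathrm{Tr}$ for dualizing varieties (this is the classical Auslander--Reiten construction in $\mathrm{mod}(\mathcal C)$, carried out using the duality $\mathbb D_{\mathcal C}$ and the transpose $\mathrm{Tr}$ recalled at the start of Section 6); by \ref{firstermARseq} this $\eta$ is then an almost split sequence ending in $C$. (3) Invoke uniqueness: the given sequence $0\to A\to B\to C\to 0$ and $\eta$ are both almost split sequences with the same right-hand term $C$, hence they are isomorphic as short exact sequences, and in particular $A\cong \mathbb D_{\mathcal C}\mathrm{Tr}(C)$.

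\textbf{Main obstacle.} The delicate point is step (2): producing the sequence $\eta$ with left term exactly $\mathbb D_{\mathcal C}\mathrm{Tr}(C)$ and verifying its right-almost-split property, i.e.\ that every non-isomorphism $C\to C$ factors through its right-hand map. In the artin algebra case this is the classical computation using the functorial isomorphism $\overline{\mathrm{Hom}}_{\mathcal C}(C,-)\cong \mathbb D\,\mathrm{Ext}^1_{\mathcal C}(-,\mathbb D\mathrm{Tr} C)$ together with the fact that $\mathrm{End}_{\mathcal C}(C)$ is local (so $\overline{\mathrm{End}}_{\mathcal C}(C)$ has simple socle as a module over itself); the element of $\mathrm{Ext}^1$ corresponding to a generator of this socle defines $\eta$. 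For a dualizing $K$-variety the same argument goes through because $\mathrm{End}_{\mathcal C}(C)$ is an artin algebra (as $\mathcal C$ is Hom-finite and $C$ indecomposable) and $\mathrm{mod}(\mathcal C)$ has the requisite duality; one can cite \cite{Auslander2} and the adaptation of \cite[2.1]{ARS} already used in \ref{firstermARseq}. So in fact the bulk of the work is absorbed into \ref{firstermARseq}, and the proof of \ref{firstterm} itself is short: it is the combination of ``$\eta$ is almost split'' (from \ref{firstermARseq}) with uniqueness of almost split sequences in the Krull--Schmidt category $\mathrm{mod}(\mathcal C)$.

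\begin{proof}
Since $0\to A\to B\to C\to 0$ is an almost split sequence, $C$ is indecomposable and non-projective. As $\mathcal C$ is a dualizing $K$-variety, $\mathrm{mod}(\mathcal C)$ is a dualizing variety, hence a Krull--Schmidt abelian category; in particular almost split sequences ending at a fixed object are unique up to isomorphism. By the classical Auslander--Reiten construction for dualizing varieties (using the duality $\mathbb D_{\mathcal C}$ and the transpose $\mathrm{Tr}$), there is a non-split exact sequence
$$\eta\colon\quad 0\longrightarrow \mathbb D_{\mathcal C}\mathrm{Tr}(C)\longrightarrow B'\longrightarrow C\longrightarrow 0$$
through whose right-hand epimorphism every non-isomorphism $C\to C$ factors; by \ref{firstermARseq} the sequence $\eta$ is almost split. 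Now the given sequence and $\eta$ are both almost split sequences ending at $C$, so by uniqueness they are isomorphic as short exact sequences. Comparing left-hand terms yields $A\simeq \mathbb D_{\mathcal C}\mathrm{Tr}(C)$.
\end{proof}
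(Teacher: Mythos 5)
Your argument is correct and amounts to the same route as the paper, whose own ``proof'' is simply a citation of Reiten's Proposition 7.1.4: you unpack exactly the standard argument behind that citation, namely constructing a non-split sequence $\eta$ with left term $\mathbb{D}_{\mathcal{C}^{op}}\mathrm{Tr}(C)$ via the Auslander--Reiten formula for dualizing varieties, checking through \ref{firstermARseq} that $\eta$ is almost split, and concluding by uniqueness of almost split sequences ending at the indecomposable non-projective $C$. The only cosmetic remark is that $\mathrm{Tr}(C)$ lies in $\mathrm{mod}(\mathcal{C}^{op})$, so the duality applied should be written $\mathbb{D}_{\mathcal{C}^{op}}$, in line with the paper's usage of $\mathrm{DTr}$ elsewhere.
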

\begin{proof}
See \cite[Proposition 7.1.4]{Reiten} in page 90.
\end{proof}

We know that $\mathbf{\Lambda}=\left[\begin{smallmatrix}
 \mathcal C& 0 \\ 
 \widehat{\mathbbm{Hom}}& \mathcal C
\end{smallmatrix}\right]$ is a dualizing $K$-variety (see \cite[Proposition 6.10 ]{LeOS}) if $\mathcal{C}$ is dualizing; and therefore by \ref{firstterm}, we have that the first term of an almost split sequence in
$\mathrm{maps}(\mathrm{mod}(\mathcal{C}))\simeq \mathrm{mod}(\mathbf{\Lambda})$ (see \ref{mapsmodlam}) is determined by the ending term.\\
The following which is a generalization of  \cite[Theorem 3.1(b), Theorem 3.2 (b)]{MVOM}.

\begin{proposition} \label{ARseq2}
\begin{itemize}
\item[(i)] Let  $0\rightarrow \tau M\xrightarrow{j}E\xrightarrow{\pi}M\rightarrow 0$  be
an  almost   split sequence in  $\mathrm{mod}(\mathcal{C})$. Given a minimal projective resolution $P_{1}\xrightarrow{d_1}P_{0}
\xrightarrow {d_0}M\rightarrow 0$, we obtain a commutative diagram:
$$\xymatrix{0\ar[r] & \tau(M)\ar[rr]^{j}\ar@{=}[d]&&  E\ar[rr]^{\pi}\ar[d]^{f}  & & M\ar[r]\ar[d]^{h} & 0\\
0\ar[r]  & \tau(M)\ar[rr]^{u=\mathbb{D}_{\mathcal{C}^{op}}(q)} & & 
\mathbb{D}_{\mathcal{C}^{op}}(P_{1}^{\ast})\ar[rr]^{\mathbb{D}_{\mathcal{C}^{op}}(d_{1}^{\ast})} & & \mathbb{D}_{\mathcal{C}^{op}}(P_{0}^{\ast}).}$$
Then the exact sequence
$$\xymatrix{0\ar[r] & \mathbb{D}_{\mathcal{C}^{op}}(P_{1}^{\ast})\ar[rr]^{\left[
{{\begin{smallmatrix}
1 \\
0
\end{smallmatrix}}} \right]}\ar[d]_{\mathbb{D}_{\mathcal{C}^{op}}(d_{1}^{\ast})} & & \mathbb{D}_{\mathcal{C}^{op}}(P_{1}^{\ast})\amalg M\ar[rr]^{[0,1]}\ar[d]^{[\mathbb{D}_{\mathcal{C}^{op}}(d_{1}^{\ast}),h]} & & M\ar[r]\ar[d] & 0\\
0\ar[r] & \mathbb{D}_{\mathcal{C}^{op}}(P_{0}^{\ast})\ar[rr]^{1} & &\mathbb{D}_{\mathcal{C}^{op}}(P_{0}^{\ast})\ar[rr] && 0\ar[r] & 0}$$
is an almost split sequence in $\mathrm{maps}(\mathrm{mod}(\mathcal C))$.

\item[(ii)] Let $0\rightarrow N\xrightarrow{j}E\xrightarrow {\pi}\tau^{-1}N\rightarrow 0$
an almost split sequence in $\mathrm{mod}(\mathcal{C})$. Given a minimal injective resolution  $0\rightarrow N\xrightarrow{q_0}I_{0}\xrightarrow{q_1}I_{1}$ , we obtain a commutative
diagram
$$\xymatrix{ & (\mathbb{D}_{\mathcal{C}}(I_{0}))^{\ast}\ar[rr]^{
(\mathbb{D}_{\mathcal{C}}(q_{1}))^{\ast}}\ar[d]_{v} && (\mathbb{D}_{\mathcal{C}}(I_{0}))^{\ast}\ar[rr]^{s}\ar[d]^{\overline{v}} & & \tau^{-1}(N)\ar@{=}[d]\\
0\ar[r] & N\ar[rr]^{j} && E\ar[rr]^{\pi} & & \tau^{-1}(N)\ar[r] & 0}$$
Then the exact sequence
$$\xymatrix{0\ar[r] & 0\ar[rr]\ar[d] & & (\mathbb{D}_{\mathcal{C}}(I_{0}))^{\ast}\ar[rr]^{1}\ar[d]^{\left[
{{\begin{smallmatrix}
(\mathbb{D}_{\mathcal{C}}(q_{1}))^{\ast}\\
v
\end{smallmatrix}}} \right]} & & (\mathbb{D}_{\mathcal{C}}(I_{0}))^{\ast}\ar[r]\ar[d]^{(\mathbb{D}_{\mathcal{C}}(q_{1}))^{\ast}} & 0\\
0\ar[r] & N\ar[rr]_{{\left[
{{\begin{smallmatrix}
0\\
1
\end{smallmatrix}}} \right]}} & & (\mathbb{D}_{\mathcal{C}}(I_{1}))^{\ast}\amalg N\ar[rr]_{{\left[
{{\begin{smallmatrix}
1, & 0\\
\end{smallmatrix}}} \right]}} & & (\mathbb{D}_{\mathcal{C}}(I_{1}))^{\ast}\ar[r] & 0}$$
is an almost split sequence in $\mathrm{maps}(\mathrm{mod}(\mathcal C))$.

\end{itemize}
\end{proposition}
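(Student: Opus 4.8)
The plan is to prove (i) directly, identifying the displayed sequence as the almost split sequence ending at $(M,0,0)$ in $\mathrm{maps}(\mathrm{mod}(\mathcal{C}))\simeq\mathrm{mod}(\mathbf{\Lambda})$ (see \ref{mapsmodlam}), and then to obtain (ii) by applying (i) to $\mathcal{C}^{op}$ and transporting the result through the duality $\overline{\Theta}$ of \ref{descrimaps}. For the construction of the commutative diagram in (i): since $\mathcal{C}$ is dualizing, $\tau M=\mathbb{D}_{\mathcal{C}^{op}}(\mathrm{Tr}\,M)$, so applying the exact duality $\mathbb{D}_{\mathcal{C}^{op}}$ to the defining exact sequence $P_0^{\ast}\xrightarrow{d_1^{\ast}}P_1^{\ast}\xrightarrow{q}\mathrm{Tr}\,M\to 0$ yields the exact lower row $0\to\tau M\xrightarrow{u=\mathbb{D}_{\mathcal{C}^{op}}(q)}\mathbb{D}_{\mathcal{C}^{op}}(P_1^{\ast})\xrightarrow{\mathbb{D}_{\mathcal{C}^{op}}(d_1^{\ast})}\mathbb{D}_{\mathcal{C}^{op}}(P_0^{\ast})$, in which $\mathbb{D}_{\mathcal{C}^{op}}(P_1^{\ast})$ and $\mathbb{D}_{\mathcal{C}^{op}}(P_0^{\ast})$ are injective $\mathcal{C}$-modules. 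Injectivity of $\mathbb{D}_{\mathcal{C}^{op}}(P_1^{\ast})$ lets one extend $u$ along the monomorphism $j$ to $f\colon E\to\mathbb{D}_{\mathcal{C}^{op}}(P_1^{\ast})$ with $fj=u$, and then $\mathbb{D}_{\mathcal{C}^{op}}(d_1^{\ast})f$ vanishes on $\mathrm{Im}(j)=\mathrm{Ker}(\pi)$, hence factors as $h\pi$ for a unique $h$. Exactness of the displayed sequence is then immediate, the source row being the split sequence $0\to\mathbb{D}_{\mathcal{C}^{op}}(P_1^{\ast})\to\mathbb{D}_{\mathcal{C}^{op}}(P_1^{\ast})\amalg M\to M\to 0$ and the target row $0\to\mathbb{D}_{\mathcal{C}^{op}}(P_0^{\ast})\xrightarrow{1}\mathbb{D}_{\mathcal{C}^{op}}(P_0^{\ast})\to 0\to 0$; that the two arrows are morphisms of maps is routine.

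Next I would identify the first term as $\mathrm{Tau}(M,0,0)$. The object $(M,0,0)$ is indecomposable, since $\mathrm{End}_{\mathrm{maps}}((M,0,0))=\mathrm{End}_{\mathcal{C}}(M)$ is local, and it is not projective, since by \ref{projinjec} every nonzero projective of $\mathrm{maps}(\mathrm{mod}(\mathcal{C}))$ has nonzero target component while $(M,0,0)$ has target $0$. By \ref{minimalmapscer} a minimal projective presentation of $(M,0,0)$ is $\bigl(P_1\xrightarrow{\left[\begin{smallmatrix}1\\0\end{smallmatrix}\right]}P_1\amalg P_0\bigr)\to\bigl(P_0\xrightarrow{1}P_0\bigr)\to(M,0,0)\to 0$; dualizing the projective terms by \ref{dualprojmaps} and the connecting morphism by \ref{morphismdual}, and taking the cokernel as in \ref{transposedef}, one obtains $\overline{\mathrm{TR}}(M,0,0)\cong\bigl(P_0^{\ast}\xrightarrow{d_1^{\ast}}P_1^{\ast}\bigr)$ up to sign. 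Applying $\widehat{\Theta'}$, which under the identifications of \ref{descrimaps} sends $(A\xrightarrow{g}B)$ to $\bigl(\mathbb{D}_{\mathcal{C}^{op}}(B)\xrightarrow{\mathbb{D}_{\mathcal{C}^{op}}(g)}\mathbb{D}_{\mathcal{C}^{op}}(A)\bigr)$ exactly as in the proof of \ref{ARsequncedesc}, gives $\mathrm{Tau}(M,0,0)\cong(\mathbb{D}_{\mathcal{C}^{op}}(P_1^{\ast}),\mathbb{D}_{\mathcal{C}^{op}}(d_1^{\ast}),\mathbb{D}_{\mathcal{C}^{op}}(P_0^{\ast}))$, the first term of the displayed sequence; in particular this object is $\mathrm{DTr}(M,0,0)$ by \ref{firstterm}. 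This transpose bookkeeping — keeping $(-)^{\ast}$, $\mathbb{D}_{\mathcal{C}}$, $\mathbb{D}_{\mathcal{C}^{op}}$ and the matrix descriptions of \ref{morphismdual} consistent — is the step I expect to be the main obstacle.

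It then remains to verify that the displayed sequence is non-split and satisfies the factorization property, so that \ref{firstermARseq} applies in the dualizing variety $\mathrm{maps}(\mathrm{mod}(\mathcal{C}))$. If the sequence split, the epimorphism would admit a section $(\sigma,0)$ with $\sigma=\left[\begin{smallmatrix}\ast\\1_M\end{smallmatrix}\right]$, forcing $h=-\mathbb{D}_{\mathcal{C}^{op}}(d_1^{\ast})(\ast)$; thus $h$ would factor through $\mathbb{D}_{\mathcal{C}^{op}}(d_1^{\ast})$, and a chase using $fj=u$, $u$ monic and $\pi j=0$ would produce a retraction of $j$, contradicting that $0\to\tau M\to E\to M\to 0$ is almost split. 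For any non-isomorphism $(\phi,0)\colon(M,0,0)\to(M,0,0)$, i.e. $\phi\in\mathrm{rad}\,\mathrm{End}_{\mathcal{C}}(M)$, the fact that $\pi$ is right almost split gives $g\colon M\to E$ with $\pi g=\phi$, whence $h\phi=h\pi g=\mathbb{D}_{\mathcal{C}^{op}}(d_1^{\ast})(fg)$, and $\bigl(\left[\begin{smallmatrix}-fg\\\phi\end{smallmatrix}\right],0\bigr)$ factors $(\phi,0)$ through the epimorphism. Hence the displayed sequence is almost split by \ref{firstermARseq}, proving (i).

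Finally, for (ii) I would apply (i) to the dualizing $K$-variety $\mathcal{C}^{op}$ and the indecomposable non-projective $\mathcal{C}^{op}$-module $\mathbb{D}_{\mathcal{C}}(N)$ — non-projective because $N$ is non-injective — whose minimal projective presentation $\mathbb{D}_{\mathcal{C}}(I_1)\xrightarrow{\mathbb{D}_{\mathcal{C}}(q_1)}\mathbb{D}_{\mathcal{C}}(I_0)\to\mathbb{D}_{\mathcal{C}}(N)\to 0$ is the image under $\mathbb{D}_{\mathcal{C}}$ of the minimal injective copresentation $0\to N\xrightarrow{q_0}I_0\xrightarrow{q_1}I_1$. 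Transporting the resulting almost split sequence through the duality $\overline{\Theta}\colon\mathrm{maps}(\mathrm{mod}(\mathcal{C}^{op}))\to\mathrm{maps}(\mathrm{mod}(\mathcal{C}))$ of \ref{descrimaps} — which sends almost split sequences to almost split sequences and satisfies $\mathbb{D}_{\mathcal{C}^{op}}\mathbb{D}_{\mathcal{C}}=\mathrm{id}$ — identifies the end term $(\mathbb{D}_{\mathcal{C}}(N),0,0)$ with $(0,0,N)$, the first term with $((\mathbb{D}_{\mathcal{C}}(I_0))^{\ast},(\mathbb{D}_{\mathcal{C}}(q_1))^{\ast},(\mathbb{D}_{\mathcal{C}}(I_1))^{\ast})$, and the remaining data with the commutative diagram of (ii), completing the proof.
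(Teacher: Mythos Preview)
Your argument for (i) is correct and follows the same line as the paper: build the comparison diagram via injectivity of $\mathbb{D}_{\mathcal{C}^{op}}(P_1^{\ast})$, compute $\mathrm{Tau}(M,0,0)$ through \ref{minimalmapscer}, \ref{dualprojmaps}, \ref{morphismdual} and \ref{descrimaps}, and then invoke \ref{firstermARseq} with exactly the factorization $\bigl[\begin{smallmatrix}-fg\\ \phi\end{smallmatrix}\bigr]$ the paper uses. Your non-split verification is in fact more careful than the paper's, which simply asserts that $h\neq 0$ forces non-splitting; your chase showing that a section would yield a retraction of $j$ is the argument actually needed there.

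The only genuine difference is in (ii): the paper writes ``Similar to (i)'', meaning one repeats the whole argument dually, whereas you apply (i) to $\mathcal{C}^{op}$ with $M'=\mathbb{D}_{\mathcal{C}}(N)$ and transport through the duality $\overline{\Theta}$ of \ref{descrimaps}. This is a legitimate and slightly cleaner route; just be attentive when matching the output with the displayed sequence in (ii), since $\overline{\Theta}$ is contravariant (so the end term $(\mathbb{D}_{\mathcal{C}}(N),0,0)$ becomes the first term $(0,0,N)$) and the two instances of $(-)^{\ast}$ live over $\mathcal{C}^{op}$ and $\mathcal{C}$ respectively. With that bookkeeping in place your identification of the terms goes through.
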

\begin{proof} (i).
Let  $0\rightarrow \tau M\xrightarrow{j}E\xrightarrow{\pi}M\rightarrow 0$  be
an  almost   split sequence of $\mathcal C$-modules in $\mathrm{mod}(\mathcal{C})$  and consider  a minimal projective presentation $P_{1}\xrightarrow{d_1}P_{0} \xrightarrow {d_0}M\rightarrow 0$ for $M$. Then we get $\xymatrix{P_{0}^{\ast}\ar[r]^{d_{1}^{\ast}} & P_{1}^{\ast}\ar[r]^{q}& \mathrm{Tr}(M)\ar[r] & 0}$ and applying $\mathbb{D}_{\mathcal{C}^{op}}$ we get
$$\xymatrix{0\ar[r]  & \mathbb{D}_{\mathcal{C}^{op}}\mathrm{Tr}(M)\ar[rr]^{u=\mathbb{D}_{\mathcal{C}^{op}}(q)} & &
\mathbb{D}_{\mathcal{C}^{op}}(P_{1}^{\ast})\ar[rr]^{\mathbb{D}_{\mathcal{C}^{op}}(d_{1}^{\ast})}  & & \mathbb{D}_{\mathcal{C}^{op}}(P_{0}^{\ast}).}$$
Since $\mathbb{D}_{\mathcal{C}^{op}}(P_{1}^{\ast})$ is injective there exists a map
$f:E\rightarrow \mathbb{D}_{\mathcal{C}^{op}}(P_{1}^{\ast})$ such that $fj=u$. Then we have the following commutative diagram
$$(\ast):\xymatrix{0\ar[r] & \tau(M)\ar[rr]^{j}\ar@{=}[d]&&  E\ar[rr]^{\pi}\ar[d]^{f}  & & M\ar[r]\ar[d]^{h} & 0\\
0\ar[r]  & \tau(M)\ar[rr]^{u=\mathbb{D}_{\mathcal{C}^{op}}(q)} & & 
\mathbb{D}_{\mathcal{C}^{op}}(P_{1}^{\ast})\ar[rr]^{\mathbb{D}_{\mathcal{C}^{op}}(d_{1}^{\ast})} & & \mathbb{D}_{\mathcal{C}^{op}}(P_{0}^{\ast})}$$
We note that $h\neq 0$ since the upper exact sequence does not split.\\
Next, we will show that the following diagram defines an almost split sequence
$$(\ast\ast):\xymatrix{0\ar[r] & \mathbb{D}_{\mathcal{C}^{op}}(P_{1}^{\ast})\ar[rr]^{\left[
{{\begin{smallmatrix}
1 \\
0
\end{smallmatrix}}} \right]}\ar[d]_{\mathbb{D}_{\mathcal{C}^{op}}(d_{1}^{\ast})} & & \mathbb{D}_{\mathcal{C}^{op}}(P_{1}^{\ast})\amalg M\ar[rr]^{[0,1]}\ar[d]^{[\mathbb{D}_{\mathcal{C}^{op}}(d_{1}^{\ast}),h]} & & M\ar[r]\ar[d] & 0\\
0\ar[r] & \mathbb{D}_{\mathcal{C}^{op}}(P_{0}^{\ast})\ar[rr]^{1} & &\mathbb{D}_{\mathcal{C}^{op}}(P_{0}^{\ast})\ar[rr] && 0\ar[r] & 0}$$
Indeed, since $h\neq 0$ we have that exact sequence in the category $\mathrm{maps}(\mathrm{mod}(\mathcal{C}))$ does not split. By \ref{minimalmapscer}, we have  that the following diagram
$$\xymatrix{P_{1}\ar[r]^{d_{1}}\ar[d]_{\left[
{{\begin{smallmatrix}
1 \\
0
\end{smallmatrix}}} \right]}& P_{0}\ar[r]^{d_{0}}\ar[d]^{1} & M\ar[r]\ar[d] & 0\\
P_{1}\amalg P_{0}\ar[r]^{[d_{1},1]} & P_{0}\ar[r] & 0\ar[r] & 0}$$
is a minimal projective presentation of $M\rightarrow 0$.\\
Consider the diagram $\xymatrix{P_{1}\ar[r]^{d_{1}}\ar[d]_{\left[
{{\begin{smallmatrix}
1 \\
0
\end{smallmatrix}}} \right]}& P_{0}\ar[d]^{[1,0]} \\
P_{1}\amalg P_{0}\ar[r]^{\left[
{{\begin{smallmatrix}
d_{1} & 1 \\
0 & 0
\end{smallmatrix}}} \right]}& P_{0}\amalg 0.}$\\
By \ref{morphismdual}, applying $(-)^{\ast}$ we have the diagram
$$\xymatrix{0\ar[r]\ar[d] & P_{0}^{\ast}\ar[d]^{\left[
{{\begin{smallmatrix}
1 \\
0 
\end{smallmatrix}}} \right]}\\
P_{0}^{\ast}\ar[r]^{\left[
{{\begin{smallmatrix}
1 \\
d_{1}^{\ast}
\end{smallmatrix}}} \right]} & P_{0}^{\ast}\amalg P_{1}^{\ast}.}$$
We have the following exact sequence
$\xymatrix{
P_{0}^{\ast}\ar[r]^{\left[
{{\begin{smallmatrix}
1 \\
d_{1}^{\ast}
\end{smallmatrix}}} \right]} & P_{0}^{\ast}\amalg P_{1}^{\ast}
\ar[r]^(.55){[d_{1}^{\ast},-1]} & P_{1}^{\ast}\ar[r] & 0.}$
Then we have the exact sequence in $\mathrm{maps}(\mathrm{mod}(\mathcal{C}))$
$$\xymatrix{0\ar[r]\ar[d] & P_{0}^{\ast}\ar[r]^{1}\ar[d]^{\left[
{{\begin{smallmatrix}
1 \\
0
\end{smallmatrix}}} \right]}  & P_{0}^{\ast}\ar[d]^{d_{1}^{\ast}}\ar[r] & 0\\
P_{0}^{\ast}\ar[r]^{\left[
{{\begin{smallmatrix}
1 \\
d_{1}^{\ast}
\end{smallmatrix}}} \right]} & P_{0}^{\ast}\amalg P_{1}^{\ast}
\ar[r]^{[d_{1}^{\ast},-1]} & P_{1}^{\ast}\ar[r]& 0}$$
Therefore the $\overline{\mathrm{TR}}(M,0,0)=(P_{0}^{\ast},d_{1}^{\ast},P_{1}^{\ast})$.
Now by \ref{descrimaps},  applying duality $D=\overline{\Theta}$ in the maps category, we  have that $D(P_{0}^{\ast}\rightarrow P_{1}^{\ast})$ is given by the map $\mathbb{D}_{\mathcal{C}^{op}}(d_{1}^{\ast}):\mathbb{D}_{\mathcal{C}^{op}}(P_{1}^{\ast})\rightarrow \mathbb{D}_{\mathcal{C}^{op}}(P_{0}^{\ast})$.\\
In this way, we conclude that almost split sequences in the category $\mathrm{maps}(\mathrm{mod}(\mathcal{C}))$ that have ending term $M\rightarrow 0$ must have first term $\mathbb{D}_{\mathcal{C}^{op}}(d_{1}^{\ast}):\mathbb{D}_{\mathcal{C}^{op}}(P_{1}^{\ast})\rightarrow \mathbb{D}_{\mathcal{C}^{op}}(P_{0}^{\ast})$. Now, by \ref{firstermARseq} in order to show that the diagram $(\ast\ast)$ define an almost exact sequence in $\mathrm{maps}(\mathrm{mod}(\mathcal{C}))$ is enough to see that every not isomorphism $(\alpha,0):(M,0,0)\longrightarrow (M,0,0)$
factors through $([0\,\,1],0):\Big(\mathbb{D}_{\mathcal{C}^{op}}(P_{1}^{\ast})\amalg M, [\mathbb{D}_{\mathcal{C}^{op}}(d_{1}^{\ast}),h],\mathbb{D}_{\mathcal{C}^{op}}(P_{0}^{\ast})\Big)\longrightarrow (M,0,0)$.\\
Indeed, we have that $\alpha:M\rightarrow M$ is not an isomorphism. Since $0\rightarrow \tau M\xrightarrow{j}E\xrightarrow{\pi}M\rightarrow 0$ is
an  almost   split sequence of $\mathcal C$-modules, we have that there exists $\alpha':M\longrightarrow E$ such that $\alpha=\pi \alpha'$. Considering the exact diagram $(\ast)$, we have that 
$$h\circ\alpha=\mathbb{D}_{\mathcal{C}^{op}}(d_{1}^{\ast})\circ f\circ \alpha'.$$
Since $[\mathbb{D}_{\mathcal{C}^{op}}(d_{1}^{\ast}),h]\left[
{{\begin{smallmatrix}
-f\alpha'\\
\alpha
\end{smallmatrix}}} \right]=h\circ\alpha-\mathbb{D}_{\mathcal{C}^{op}}(d_{1}^{\ast})\circ f\circ \alpha'=0$, we have the following diagram is commutative
$$\xymatrix{M\ar[rr]^{\left[
{{\begin{smallmatrix}
-f\alpha'\\
\alpha
\end{smallmatrix}}} \right]}\ar[d] & &\mathbb{D}_{\mathcal{C}^{op}}(P_{1}^{\ast})\amalg M\ar[rr]^{[0,1]}\ar[d]^{[\mathbb{D}_{\mathcal{C}^{op}}(d_{1}^{\ast}),h]} & & M\ar[d]^{0}\\
0\ar[rr] & & \mathbb{D}_{\mathcal{C}^{op}}(P_{0}^{\ast})\ar[rr] & & 0.}$$ Now, since 
$[0,1]\left[
{{\begin{smallmatrix}
-f\alpha'\\
\alpha
\end{smallmatrix}}} \right]=\alpha$, we conclude that the last diagram is the same as the morphism $(\alpha,0):(M,0,0)\longrightarrow (M,0,0)$, proving the required condition. Therefore, by \ref{firstermARseq} we conclude that the diagram $(\ast\ast)$ defines an almost split sequence in the category $\mathrm{maps}(\mathrm{mod}(\mathcal{C})$.\\
$(ii)$  Similar to $(i)$.
\end{proof}

Now, we define a functor which will give us a relation between almost split sequences in $\mathrm{mod}(\mathrm{mod}(\mathcal{C}))$ and almost split sequences in $\mathrm{mod}(\mathbf{\Lambda})$.

\begin{definition}\label{mapsmodmod}
Let $\Phi:\mathrm{maps}(\mathrm{mod}(\mathcal{C}))\longrightarrow \mathrm{mod}(\mathrm{mod}(\mathcal{C})^{op})$
given by
$$\Phi(\xymatrix{A_{1}\ar[r]^{f} & A_{0})}=\mathrm{Coker}\Big(\xymatrix{(-,A_{1})\ar[r]^{(-,f)} & (-,A_{0})}\Big).$$
\end{definition}

We can see now that the functor $\Phi $ preserves almost split sequences.

The following is a generalization of \cite[Theorem 3.4]{MVOM}

\begin{theorem} \label{ARseq3}
Let 
\[
 0\rightarrow (N_{1},g,N_{2})\xrightarrow{(j_1, \ j_2)}(E_{1},h,E_{2})%
\xrightarrow{(p_1, \ p_2)}(M_{1},f,M_{2})\rightarrow 0
\]
be an almost split sequence in $\mathrm{maps}(\mathrm{mod}(\mathcal{C}))$, such that $g,f,$ are neither splittable epimorphisms nor
splittable monomorphisms. Then the exact sequence
\[
 0\rightarrow G%
\xrightarrow{\rho}H\xrightarrow{\sigma}F\rightarrow 0
\]
 obtained from the commutative diagram:
\begin{equation*}
\begin{diagram}\dgARROWLENGTH=.7em \node{0}\arrow{s} \node{0}\arrow{s}
\node{0}\arrow{s} \node{}\\
\node{(-,N_1)}\arrow{s,l}{(-,j_1)}\arrow{e,t}{(-,g)}
\node{(-,N_2)}\arrow{s,l}{(-,j_2)}\arrow{e,t}{}
\node{G}\arrow{s,l}{\rho}\arrow{e,t}{} \node{0}\\
\node{(-,E_1)}\arrow{s,l}{(-,p_1)}\arrow{e,t}{(-,h)}
\node{(-,E_2)}\arrow{s,l}{(-,p_2)}\arrow{e,t}{}
\node{H}\arrow{s,l}{\sigma}\arrow{e,t}{} \node{0}\\
\node{(-,M_1)}\arrow{s}\arrow{e,t}{(-,f)}
\node{(-,M_2)}\arrow{s}\arrow{e,t}{} \node{F}\arrow{s}\arrow{e,t}{}
\node{0}\\ \node{0} \node{0} \node{0} \node{} \end{diagram}
\end{equation*}
is an almost split sequence in $ \mathrm{mod}(\mathrm{mod}(\mathcal{C})^{op})$.
\end{theorem}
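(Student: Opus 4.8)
The plan is to exploit the well-known fact (due to Auslander) that the functor $\Phi$ of Definition \ref{mapsmodmod} is, up to the equivalence $\mathrm{mod}(\mathbf{\Lambda})\simeq\mathrm{maps}(\mathrm{mod}(\mathcal C))$ of \ref{mapsmodlam}, essentially the cokernel functor $\mathrm{Mod}(\mathbf{\Lambda})\to\mathrm{mod}(\mathrm{mod}(\mathcal C)^{op})$ sending a $\mathbf{\Lambda}$-module to the $\mathrm{mod}(\mathcal C)^{op}$-module presented by it. More precisely, for a map $(A_1,f,A_0)$ viewed as the $\mathbf{\Lambda}$-module $P_1\amalg P_0$, the right-exact functor $\Phi$ kills exactly the morphisms that factor through a suitable projective, so $\Phi$ induces an equivalence between the stable category $\underline{\mathrm{maps}}(\mathrm{mod}(\mathcal C))$ (modulo the maps factoring through objects of the form $(A,1_A,A)$ and $(0,0,A)$, i.e. the projectives of \ref{projinjec}) and $\mathrm{mod}(\mathrm{mod}(\mathcal C)^{op})$. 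The first step, then, is to make this identification precise: show $\Phi$ is full, dense, right exact, preserves projectives-to-zero in the appropriate sense, and has kernel exactly the ideal of maps factoring through projective objects of $\mathrm{maps}(\mathrm{mod}(\mathcal C))$.

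Granting that, the second step is the general principle that a functor which is the cokernel/projective-stabilization functor transports almost split sequences to almost split sequences. Concretely: given the almost split sequence $0\to(N_1,g,N_2)\to(E_1,h,E_2)\to(M_1,f,M_2)\to0$ in $\mathrm{maps}(\mathrm{mod}(\mathcal C))$, I would first check that the hypothesis that $g$ and $f$ are neither split epi nor split mono guarantees that $\Phi(M_1,f,M_2)=F$ and $\Phi(N_1,g,N_2)=G$ are nonzero and non-projective in $\mathrm{mod}(\mathrm{mod}(\mathcal C)^{op})$ — this is where the hypothesis is used, since a split mono $f$ would make $F$ projective and a split epi would make $F$ have a projective summand that obstructs the argument. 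Then I would verify that $\sigma:H\to F$ is not a split epi: if it were, the section would lift (using right-exactness of $\Phi$ and projectivity of the relevant objects) to a section of $(p_1,p_2)$ modulo a map through a projective, contradicting minimality/almost-splitness. Finally, to see $\sigma$ is right almost split, take any non-split-epi $\tau:X\to F$ with $X\in\mathrm{mod}(\mathrm{mod}(\mathcal C)^{op})$; lift $X$ to a map object $(X_1,\phi,X_0)$ with $\Phi(X_1,\phi,X_0)=X$, lift $\tau$ to a morphism $(X_1,\phi,X_0)\to(M_1,f,M_2)$ in the category of maps (possible after modifying by a projective because $\Phi$ is full with the stated kernel), observe this lift is not a split epi, hence factors through $(p_1,p_2)$, and apply $\Phi$ to get the desired factorization through $\sigma$. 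By \ref{firstterm} applied inside the dualizing variety $\mathrm{mod}(\mathcal C)^{op}$ (using \ref{mapsmodlam} and the dualizing property from the excerpt) the first term is then automatically $\mathrm{DTr}(F)\cong G$, so it suffices to produce a non-split short exact sequence ending in $F$ which is right almost split, which is exactly what the diagram gives.

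The main obstacle I anticipate is the bookkeeping in the lifting step: $\Phi$ is only right exact and its kernel is a nonzero ideal, so a morphism $X\to F$ in $\mathrm{mod}(\mathrm{mod}(\mathcal C)^{op})$ need not lift on the nose to a morphism of map-objects, only up to the ideal of maps factoring through projectives; one must check that such an ambiguity does not destroy the ``non-split-epi'' property nor the resulting factorization. This is handled by choosing projective presentations carefully — take $(X_1,\phi,X_0)$ to be built from a projective presentation of $X$ as in \ref{projectivecover} — and by noting that adding a map through a projective object of $\mathrm{maps}(\mathrm{mod}(\mathcal C))$ changes $\tau$ by zero after applying $\Phi$. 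A secondary, purely technical point is verifying exactness of the displayed $3\times3$ diagram and that $\rho,\sigma$ are well-defined, which follows from the snake lemma once the rows are recognized as projective presentations; I would dispatch that quickly. Dually one gets that $\rho$ is left almost split, but by \ref{firstterm} it is enough to have established the right almost split property together with non-splitness, so I would not belabor the dual side.
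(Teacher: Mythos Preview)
Your proposal is correct and follows the standard strategy. The paper itself does not give an argument here: its entire proof reads ``Same proof given in \cite[Theorem 3.4]{MVOM} works for this setting,'' deferring to Mart\'inez-Villa and Ort\'iz-Morales. Your outline --- identify $\Phi$ as the projective-stabilization functor inducing a full dense functor whose kernel is the ideal of maps factoring through the projectives described in \ref{projinjec}, then lift a non-split-epi $X\to F$ to the maps category via a projective presentation, factor there using the right almost split property of $(p_1,p_2)$, and push back down --- is precisely the shape of the argument in \cite{MVOM}, so you have reconstructed what the paper cites rather than found an alternative route. Your remark about the bookkeeping in the lifting step (the ambiguity modulo maps through projectives) is the one place that requires care, and your suggested fix (choose $(X_1,\phi,X_0)$ coming from a projective presentation of $X$) is the correct one.
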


\begin{proof}
Same proof given in \cite[Theorem 3.4]{MVOM} works for this setting.
\end{proof}

\section{Functorially finite subcategories}
Let $\mathcal{C}$ be an arbitrary category.  Let $\mathcal{X}$ be a subcategory of $\mathcal{C}$.  A morphism $f:X\longrightarrow M$ in $\mathcal{C}$ with $X\in \mathcal{X}$ is a $\textbf{right}$ $\mathcal{X}$-$\textbf{approximation}$ of $M$  if $\mathrm{Hom}_{\mathcal{C}}(Z,X)\longrightarrow \mathrm{Hom}_{\mathcal{C}}(Z,M)$ is surjective for every $Z\in \mathcal{X}$. Dually, a morphism $g:M\longrightarrow X$  with $X\in \mathcal{X}$ is a $\textbf{left}$ $\mathcal{X}$-$\textbf{approximation}$ if $\mathrm{Hom}_{\mathcal{C}}(X,Z)\longrightarrow \mathrm{Hom}_{\mathcal{C}}(M,Z)$ is surjective for every $Z\in \mathcal{X}$.\\
A subcategory $\mathcal{X}$  of $\mathcal{C}$ is $\textbf{contravariantly}$ ($\textbf{covariantly}$) $\textbf{finite}$ in $\mathcal{C}$ if every object $M\in \mathcal{C}$ has a right (left) $\mathcal{X}$-approximation; and $\mathcal{C}$ is $\textbf{functorially}$ $\textbf{finite}$ if it is both
contravariantly and covariantly finite.
\subsection{Functorially finite subcategories in $\mathbf{mod}(\mathbf{\Lambda})$ and  a result of Smal\o}
In this subsection we prove a result that generalizes the given by S. O. {Smal\o}   in \cite[Theorem 2.1]{Smalo} and we will see some implications that it give us respect the category $\mathrm{mod}(\mathcal{C})$ for a dualizing variety $\mathcal{C}$.

\begin{theorem}\label{covacomacat}	
Let $\mathcal{A}$ and $\mathcal{B}$ be abelian categories and $G:\mathcal{B}\longrightarrow \mathcal{A}$ a covariant functor.
Consider the comma category $(G(\mathcal{B}),\mathcal{A})$ and $\mathcal{X}\subseteq \mathcal{A}$ and
$\mathcal{Y}\subseteq \mathcal{B}$ subcategories containing the zero object. We denote by $\mathcal{D}_{\mathcal{X}}^{\mathcal{Y}}$ the full subcategory of $(G(\mathcal{B}),\mathcal{A})$ whose objects are: the morphisms $g:G(B)\longrightarrow A$  with $B\in \mathcal{Y}$ and $A\in \mathcal{X}$.
Then $\mathcal{D}_{\mathcal{X}}^{\mathcal{Y}}$ is covariant finite in  $(G(\mathcal{B}),\mathcal{A})$ if and only if $\mathcal{X}\subseteq \mathcal{A}$ and
$\mathcal{Y}\subseteq \mathcal{B}$ are covariant  finite subcategories.
\end{theorem}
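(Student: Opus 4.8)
The plan is to prove both implications by tracking approximations through the comma-category construction. Write an object of $(G(\mathcal B),\mathcal A)$ as a triple $(B,g,A)$ with $g:G(B)\to A$, and recall that a morphism $(B,g,A)\to(B',g',A')$ is a pair $(\beta,\alpha)$ making the obvious square commute. I would first record the two ``inclusion'' functors and their interaction: the functor $Z:\mathcal A\to (G(\mathcal B),\mathcal A)$, $A\mapsto (0,0,A)$, and the functor $W:\mathcal B\to (G(\mathcal B),\mathcal A)$, $B\mapsto (B,1_{G(B)},G(B))$, together with the projections $p:(B,g,A)\mapsto B$ and $q:(B,g,A)\mapsto A$. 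One checks the adjunction-type identities $\mathrm{Hom}((0,0,A),(B',g',A'))\cong\mathrm{Hom}_{\mathcal A}(A,A')$ and $\mathrm{Hom}((B,g,A),(B',1,G(B')))\cong\mathrm{Hom}_{\mathcal B}(B,B')$; these will be the bookkeeping tools that transport approximations back and forth. Note $\mathcal D_{\mathcal X}^{\mathcal Y}$ contains all $(0,0,X)$ with $X\in\mathcal X$ and all $(Y,1,G(Y))$ with $Y\in\mathcal Y$ (using that $\mathcal X,\mathcal Y$ contain $0$ and that $G(0)=0$).

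For the ``only if'' direction, assume $\mathcal D_{\mathcal X}^{\mathcal Y}$ is covariantly finite. Given $A\in\mathcal A$, take a left $\mathcal D_{\mathcal X}^{\mathcal Y}$-approximation $(0,0,A)\to (B_0,g_0,A_0)$ of the object $(0,0,A)$; I claim the $\mathcal A$-component $A\to A_0$, followed by testing against objects $(0,0,X)$, $X\in\mathcal X$, exhibits $A\to A_0$ as a left $\mathcal X$-approximation once we know $A_0\in\mathcal X$. The subtlety is that $A_0$ need not lie in $\mathcal X$ a priori — it does, because objects of $\mathcal D_{\mathcal X}^{\mathcal Y}$ have $\mathcal A$-component in $\mathcal X$ by definition. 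Then $\mathrm{Hom}_{\mathcal A}(A_0,X)\cong\mathrm{Hom}((B_0,g_0,A_0),(0,0,X))$ and $\mathrm{Hom}_{\mathcal A}(A,X)\cong\mathrm{Hom}((0,0,A),(0,0,X))$, and surjectivity of the approximation gives surjectivity on $\mathrm{Hom}(-,X)$, so $\mathcal X$ is covariantly finite in $\mathcal A$. For $\mathcal Y$: given $B\in\mathcal B$, take a left $\mathcal D_{\mathcal X}^{\mathcal Y}$-approximation of $(B,1_{G(B)},G(B))$, say $(B,1,G(B))\to (B_1,g_1,A_1)$ with $B$-component $\beta:B\to B_1\in\mathcal Y$; testing against objects $(Y,1,G(Y))$ with $Y\in\mathcal Y$ and using the Hom-identity above shows $\beta$ is a left $\mathcal Y$-approximation.

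For the ``if'' direction — which I expect to be the main obstacle — assume $\mathcal X,\mathcal Y$ are covariantly finite and let $(B,g,A)\in(G(\mathcal B),\mathcal A)$ be arbitrary. First choose a left $\mathcal Y$-approximation $\beta:B\to Y$ with $Y\in\mathcal Y$. Now I need an object $A\to X$ of $\mathcal X$ through which both the ``$A$-data'' and the image $G(Y)\xrightarrow{} A'$ must factor; the natural move is to form the pushout (in $\mathcal A$) of $G(B)\xrightarrow{g}A$ along $G(\beta):G(B)\to G(Y)$, obtaining $G(Y)\to P$ and $A\to P$, and then take a left $\mathcal X$-approximation $P\to X$ with $X\in\mathcal X$. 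Set $h:=(G(Y)\to P\to X)$; the candidate approximation is $(\beta,\alpha):(B,g,A)\to (Y,h,X)$ where $\alpha:A\to P\to X$. Verifying this is a left $\mathcal D_{\mathcal X}^{\mathcal Y}$-approximation means: given any $(Y',h',X')\in\mathcal D_{\mathcal X}^{\mathcal Y}$ and a morphism $(\beta',\alpha'):(B,g,A)\to(Y',h',X')$, I must factor it through $(\beta,h,X)$. Factor $\beta'=u\beta$ for $u:Y\to Y'$ (left $\mathcal Y$-approximation); then $h'\circ G(u)\circ G(\beta)=h'\circ G(\beta')=\alpha'\circ g$, while also $\alpha'\circ g$ equals $\alpha'$ composed with $g$, so the pair $(h'\circ G(u),\alpha')$ is compatible with the pushout cone and induces $w:P\to X'$; finally factor $w=v\circ(P\to X)$ using the left $\mathcal X$-approximation $P\to X$, and check $(u,v)$ is the desired factorization. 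The delicate points are that the pushout exists and behaves well (it does, since $\mathcal A$ is abelian) and that all the commuting squares assemble — this is a diagram chase but a nontrivial one, and I would lay out the pushout universal property carefully rather than wave at it. This completes covariant finiteness of $\mathcal D_{\mathcal X}^{\mathcal Y}$.
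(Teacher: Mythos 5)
Your ``if'' direction is exactly the paper's argument: push out $g$ along $G$ of a left $\mathcal{Y}$-approximation $\alpha_B:B\to Y$, take a left $\mathcal{X}$-approximation of the pushout, and verify the approximation property of the resulting $(\alpha_B,\ \cdot\ ):(B,g,A)\to (Y,h,X)$ via the universal property of the pushout; that part is correct as you laid it out and coincides with the paper's proof step for step.

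The ``only if'' direction, however, contains a concrete error in the $\mathcal{Y}$-half. You assert that $\mathcal{D}_{\mathcal{X}}^{\mathcal{Y}}$ contains every object $(Y,1_{G(Y)},G(Y))$ with $Y\in\mathcal{Y}$, and you propose to test the left approximation of $(B,1_{G(B)},G(B))$ against such objects. But $(Y,1_{G(Y)},G(Y))$ lies in $\mathcal{D}_{\mathcal{X}}^{\mathcal{Y}}$ only when $G(Y)\in\mathcal{X}$, which is not among the hypotheses, so these are not legitimate test objects. Relatedly, the Hom-identity you record, $\mathrm{Hom}\big((B,g,A),(B',1_{G(B')},G(B'))\big)\cong\mathrm{Hom}_{\mathcal{B}}(B,B')$, is false in general: a morphism into $(B',1_{G(B')},G(B'))$ is a pair $(\beta,\alpha)$ subject to $\alpha\circ g=G(\beta)$, and such an $\alpha$ need neither exist nor be unique for a given $\beta$. (The adjunction that does hold goes the other way: maps out of $(B,1_{G(B)},G(B))$ into any $(Y',h',X')$ are in bijection with $\mathrm{Hom}_{\mathcal{B}}(B,Y')$, since $\alpha$ is forced to be $h'G(\beta)$.) The repair is easy and is essentially what the paper does: test against $(Y',0,0)$, which does lie in $\mathcal{D}_{\mathcal{X}}^{\mathcal{Y}}$ because $0\in\mathcal{X}$; given $\gamma:B\to Y'$, the pair $(\gamma,0)$ is a morphism into $(Y',0,0)$, and factoring it through the approximation yields $\gamma=\psi\circ\beta$ as required. (The paper takes the source to be $(B,0,0)$ rather than $(B,1_{G(B)},G(B))$; either source works once the test objects are of the form $(Y',0,0)$.) Your $\mathcal{X}$-half, approximating $(0,0,A)$ and testing against $(0,0,X)$, is sound, although the claimed isomorphism $\mathrm{Hom}\big((B_0,g_0,A_0),(0,0,X)\big)\cong\mathrm{Hom}_{\mathcal{A}}(A_0,X)$ is also overstated (only those $\alpha$ with $\alpha\circ g_0=0$ occur); what you actually need is surjectivity onto $\mathrm{Hom}_{\mathcal{A}}(A,X)$ after precomposition with the approximation's $\mathcal{A}$-component, and that part of your argument survives unchanged.
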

\begin{proof}
$(\Longleftarrow)$. 
Let $g:G(B)\longrightarrow A$ an object in $(G(\mathcal{B}),\mathcal{A})$.
Since $\mathcal{Y}$ is covariant finite, there exists an $\mathcal{Y}$-left approximation $\alpha_{B}:B\longrightarrow Y_{B}$. Then we have the following pushout diagram in $\mathcal{A}$
$$(\ast):\xymatrix{G(B)\ar[r]^{G(\alpha_{B})}\ar[d]_{g} & G(Y_{B})\ar[d]^{g'}\\
A\ar[r]_{\delta} & C}$$
Since $\mathcal{X}$ is covariant finite in $\mathcal{A}$ we have a
 $\mathcal{X}$-left approximation $\beta_{C}:C\longrightarrow X_{C}$. Then we have the following commutative diagram
 $$\xymatrix{G(B)\ar[r]^{G(\alpha_{B})}\ar[d]_{g} & G(Y_{B})\ar[d]^{\beta_{C}g'}\\
A\ar[r]_{\beta_{C}\delta} & X_{C}}$$
Then, we have the object $(Y_{B},\beta_{C}g',X_{C})\in \mathcal{D}_{\mathcal{X}}^{\mathcal{Y}}$. We assert that
$(\alpha_{B},\beta_{C}\delta):(B,g,A)\longrightarrow (Y_{B},\beta_{C}g',X_{C})$ is a $\mathcal{D}_{\mathcal{X}}^{\mathcal{Y}}$-left approximation of $(B,g,A)$.\\
Indeed, let $(\lambda,\varphi):(B,g,A)\longrightarrow (B',f,A')$ a morphism in  $(G(\mathcal{B}),\mathcal{A})$ with $B'\in \mathcal{Y}$ and $A'\in \mathcal{X}$.\\ 
Since $\lambda:B\longrightarrow B'$ is a morphism in $\mathcal{B}$ with $B'\in \mathcal{B}$, there exists a morphism $\lambda':Y_{B}\longrightarrow B'$ such that $\lambda=\lambda'\circ \alpha_{B}$. Then we get the following commutative diagram
$$\xymatrix{G(B)\ar[r]^{G(\alpha_{B})}\ar[d]_{g} & G(Y_{B})\ar[d]^{fG(\lambda')}\\
A\ar[r]^{\varphi} &A'}$$
Since the diagram $(\ast)$ is pushout, there exists a morphism
$\sigma:C\longrightarrow A'$ such that $\sigma g'=fG(\lambda')$ and $\sigma\delta=\varphi$. Since $A'\in \mathcal{X}$ and $\beta_{C}:C\longrightarrow X_{C}$ is a $\mathcal{X}$-left approximation of $C$, there exists a morphism $\sigma':X_{C}\longrightarrow A'$ such that $\sigma=\sigma'\beta_{C}$. We have the diagram
$$\xymatrix{G(B)\ar[r]^{G(\alpha_{B})}\ar[d]_{g} & G(Y_{B})\ar[d]^{\beta_{C}g'}\ar[r]^{G(\lambda')} & G(B')\ar[d]^{f}\\
A\ar[r]_{\beta_{C}\delta} & X_{C}\ar[r]_{\sigma'} & A'}$$
which is commutative since
$fG(\lambda')=\sigma g'=\sigma'\beta_{C}g'$. Moreover we have that $\sigma'\beta_{C}\delta=\sigma\delta=\varphi$ and since 
$\lambda=\lambda'\circ \alpha_{B}$ we have that $G(\lambda)=G(\lambda')G(\alpha_{B})$. Therefore, the morphism $(\lambda,\varphi)$ factor through the morphism $(\alpha_{B},\beta_{C}\delta)$. Proving that $\mathcal{D}_{\mathcal{X}}^{\mathcal{Y}}$ is covariant finite in  $(G(\mathcal{B}),\mathcal{A})$.\\
$(\Longrightarrow)$. Let us suppose that $\mathcal{D}_{\mathcal{X}}^{\mathcal{Y}}$ is covariant finite in  $(G(\mathcal{B}),\mathcal{A})$. Let $B$ be and object in $\mathcal{B}$. Consider the object $g:G(B)\longrightarrow 0$ in $(G(\mathcal{B}),\mathcal{A})$. Since $\mathcal{D}_{\mathcal{X}}^{\mathcal{Y}}$ is covariant finite, there exists and object $\beta:G(Y)\longrightarrow X$ with $Y\in \mathcal{Y}$ and $X\in \mathcal{X}$ and a morphism $(\lambda,\theta):(B,0,0)\longrightarrow (Y,\beta,X)$ which is a left $\mathcal{D}_{\mathcal{X}}^{\mathcal{Y}}$-approximation. We assert that $\lambda:B\rightarrow  Y$ is a left $\mathcal{Y}$-approximation. Indeed, let $\gamma:B\rightarrow Y'$ a morphism in $\mathcal{B}$ with $Y'\in \mathcal{Y}$. Then we have the following commutative diagram
$$\xymatrix{G(B)\ar[r]^{G(\gamma)}\ar[d]^{0} & G(Y')\ar[d]^{0}\\
0\ar[r] & 0.}$$
Since $(\lambda,\theta):(B,0,0)\longrightarrow (Y,\beta,X)$ is a left $\mathcal{D}_{\mathcal{X}}^{\mathcal{Y}}$-approximation, there exists $(\psi,\tau):(Y,\beta,X)\longrightarrow (Y',0,0)$ such that $(\gamma,0)=(\psi,\tau)\circ (\lambda,\theta)$. Therefore, we get that
$\gamma=\psi\lambda$, proving that  $\lambda:B\rightarrow  Y$ is a left $\mathcal{Y}$-approximation. Thus, $\mathcal{Y}$ is covariantly finite in $\mathcal{B}$. Similarly, $\mathcal{X}$ is covariantly finite in $\mathcal{A}$.
\end{proof}

\begin{theorem}\label{biyeccovafini}
Let $\mathcal{A}$ and $\mathcal{B}$ be abelian categories and $F:\mathcal{A}\longrightarrow \mathcal{B}$, $G:\mathcal{B}\longrightarrow \mathcal{A}$ a covariant functors such that $G$ is left adjoint to $F$. Consider the comma category $(\mathcal{B},F(\mathcal{A}))$ and $\mathcal{X}\subseteq \mathcal{A}$, 
$\mathcal{Y}\subseteq \mathcal{B}$ subcategories containing the zero object. We denote by $\mathcal{C}_{\mathcal{X}}^{\mathcal{Y}}$ the full subcategory of $(\mathcal{B},F(\mathcal{A}))$ whose objects are: the morphisms $f:B \longrightarrow F(A)$  with $B\in \mathcal{Y}$ and $A\in \mathcal{X}$.
Then $\mathcal{C}_{\mathcal{X}}^{\mathcal{Y}}$ is funtorially finite in  $(\mathcal{B},F(\mathcal{A}))$ if and only if $\mathcal{X}\subseteq \mathcal{A}$ and
$\mathcal{Y}\subseteq \mathcal{B}$ are functorially finite.
\end{theorem}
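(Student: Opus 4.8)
The statement combines two halves: a contravariant-finiteness result and a covariant-finiteness result, and the conclusion is their conjunction (\emph{functorially finite}). The plan is to reduce everything to the companion result just proved, Theorem \ref{covacomacat}, together with a dual version of it, by exploiting the adjunction $(G,F)$ to convert between the two shapes of comma category, $(\mathcal B, F(\mathcal A))$ and $(G(\mathcal B), \mathcal A)$. First I would record the dual of Theorem \ref{covacomacat}: in the comma category $(\mathcal B, F(\mathcal A))$, the full subcategory $\mathcal C_{\mathcal X}^{\mathcal Y}$ of objects $f:B\to F(A)$ with $B\in\mathcal Y$, $A\in\mathcal X$ is \emph{contravariantly} finite in $(\mathcal B, F(\mathcal A))$ if and only if $\mathcal X$ is contravariantly finite in $\mathcal A$ and $\mathcal Y$ is contravariantly finite in $\mathcal B$. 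This is proved by the formally dual argument: given $f:B\to F(A)$, take a right $\mathcal X$-approximation $\beta_A\colon X_A\to A$, pull back $f$ along $F(\beta_A)$ to get $f'\colon B'\to F(X_A)$ (using that $F$, being a right adjoint, is left exact and hence preserves the pullback / kernel data needed), then take a right $\mathcal Y$-approximation $Y_{B'}\to B'$ of $B'$ and compose; one checks the resulting object of $\mathcal C_{\mathcal X}^{\mathcal Y}$ is a right approximation by the universal property of the pullback, exactly mirroring the pushout argument in the proof of Theorem \ref{covacomacat}. The converse is again obtained by testing against objects $(B,0,0)$ and $(0,0,A)$.

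Next, for the \emph{covariant} (left-approximation) direction in the category $(\mathcal B, F(\mathcal A))$, I would use the adjunction to transport the problem to the other comma category. By \cite[Proposition 5.3]{LeOS}, or rather by the elementary fact that $G\dashv F$ induces an isomorphism of categories $\big(G(\mathcal B),\mathcal A\big)\cong\big(\mathcal B, F(\mathcal A)\big)$ sending $(B,g\colon G(B)\to A, A)$ to $(B, \varphi_{B,A}(g)\colon B\to F(A), A)$, the subcategory $\mathcal C_{\mathcal X}^{\mathcal Y}\subseteq(\mathcal B,F(\mathcal A))$ corresponds under this isomorphism precisely to the subcategory $\mathcal D_{\mathcal X}^{\mathcal Y}\subseteq(G(\mathcal B),\mathcal A)$ of Theorem \ref{covacomacat} (the conditions $B\in\mathcal Y$, $A\in\mathcal X$ are preserved, since the isomorphism is the identity on the $\mathcal B$- and $\mathcal A$-coordinates). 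Since an equivalence of categories preserves and reflects covariant finiteness of subcategories, Theorem \ref{covacomacat} then gives: $\mathcal C_{\mathcal X}^{\mathcal Y}$ is covariantly finite in $(\mathcal B,F(\mathcal A))$ if and only if $\mathcal X$ and $\mathcal Y$ are covariantly finite.

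Finally I would assemble the pieces. If $\mathcal X$ and $\mathcal Y$ are functorially finite, then they are both contravariantly and covariantly finite; by the dual of Theorem \ref{covacomacat} (first paragraph) $\mathcal C_{\mathcal X}^{\mathcal Y}$ is contravariantly finite in $(\mathcal B,F(\mathcal A))$, and by the transported Theorem \ref{covacomacat} (second paragraph) it is covariantly finite; hence it is functorially finite. Conversely, if $\mathcal C_{\mathcal X}^{\mathcal Y}$ is functorially finite, apply the "only if" halves of the two results to conclude that $\mathcal X$ and $\mathcal Y$ are each contravariantly and covariantly finite, i.e. functorially finite.

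\textbf{Main obstacle.} The routine bookkeeping is all in Theorem \ref{covacomacat} and its dual; the only genuinely delicate point is the dual statement's approximation construction, where one must be sure that $F$ (as the right adjoint) carries the relevant pullback square in $\mathcal B$ well enough for the universal property to be invoked — this is where left exactness of $F$ is used, in contrast to the pushout argument for Theorem \ref{covacomacat} which needs no exactness of $G$ at all. One should also be slightly careful that the category isomorphism $\big(G(\mathcal B),\mathcal A\big)\cong\big(\mathcal B, F(\mathcal A)\big)$ used in the second paragraph really does restrict to an isomorphism $\mathcal D_{\mathcal X}^{\mathcal Y}\cong\mathcal C_{\mathcal X}^{\mathcal Y}$, but since it is the identity on the outer coordinates and only reinterprets the structure morphism via the adjunction bijection $\varphi_{B,A}$, this is immediate. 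Everything else is a formal combination.
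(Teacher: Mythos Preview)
Your approach is essentially the same as the paper's: use the adjunction bijection $\varphi_{B,A}$ to get an isomorphism of comma categories $(G(\mathcal B),\mathcal A)\cong(\mathcal B,F(\mathcal A))$ restricting to $\mathcal D_{\mathcal X}^{\mathcal Y}\cong\mathcal C_{\mathcal X}^{\mathcal Y}$, transport the covariant-finiteness statement of Theorem~\ref{covacomacat} across it, and combine with the formal dual of Theorem~\ref{covacomacat} (contravariant finiteness in $(\mathcal B,F(\mathcal A))$ via pullbacks) to obtain functorial finiteness both ways. One small clarification: in the dual argument the pullback is formed in $\mathcal B$ (of $f\colon B\to F(A)$ along $F(\beta_A)$), so no exactness of $F$ is actually needed---your remark about left exactness is harmless but superfluous.
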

\begin{proof}
We have isomorphism
$$\varphi_{B,A}:\mathrm{Hom}_{\mathcal{A}}(G(B),A)\longrightarrow \mathrm{Hom}_{\mathcal{B}}(B,F(A))$$
This defines an isomorphism between the comma categories
$$\Phi:\Big(G(\mathcal{B}),\mathcal{A}\Big)\longrightarrow \Big(\mathcal{B},F(\mathcal{A})\Big)$$
Denote by $\mathcal{D}_{\mathcal{X}}^{\mathcal{Y}}$ the full subcategory of $(G(\mathcal{B}),\mathcal{A})$ whose objects are: the morphisms $g:G(B) \longrightarrow A $  with $B\in \mathcal{Y}$ and $A\in \mathcal{X}$ and $\mathcal{C}_{\mathcal{X}}^{\mathcal{Y}}$ the full subcategory of $(\mathcal{B},F(\mathcal{A}))$ whose objects are: the morphisms $f:B\longrightarrow F(A)$  with $B\in \mathcal{Y}$ and $A\in \mathcal{X}$.
Then we have an isomorphism
$$\Phi:\mathcal{D}_{\mathcal{X}}^{\mathcal{Y}}\longrightarrow \mathcal{C}_{\mathcal{X}}^{\mathcal{Y}}$$
By \ref{covacomacat}, and its dual we get that $\mathcal{D}_{\mathcal{X}}^{\mathcal{Y}}$ is covariant finite  in 
$\Big(G(\mathcal{B}),\mathcal{A}\Big)$ and  $\mathcal{C}_{\mathcal{X}}^{\mathcal{Y}}$ is contravariantly finite in $\Big(\mathcal{B},F(\mathcal{A})\Big)$. Then, via the isomorphism $\Phi$ we get that $\mathcal{C}_{\mathcal{X}}^{\mathcal{Y}}$ is funtorially finite in  $(\mathcal{B},F(\mathcal{A}))$.
\end{proof}

The following result that generalizes the given by S. O. {Smal\o}   in \cite[Theorem 2.1]{Smalo}.

\begin{corollary}\label{Smalogeneral}
Let $\mathcal{U}$ and $\mathcal{T}$ additive categories and $M\in \mathrm{Mod}(\mathcal{U}\otimes \mathcal{T}^{op})$ and consider $\mathbf{\Lambda}=\left[ \begin{smallmatrix}
 \mathcal T& 0 \\ 
 M& \mathcal U
\end{smallmatrix}\right]$. If $\mathcal{X}\subseteq \mathcal{U}$ and  
$\mathcal{Y}\subseteq \mathcal{T}$  are  subcategories, denote by  $(\mathcal Y,  \mathbb{G} \mathcal X)$ the full subcategory of $\Big(\mathrm{Mod}(\mathcal T), \mathbb{G}\mathrm{Mod}(\mathcal U)\Big)$ whose objects
are morphisms of $\mathcal T$-modules  $f:B\longrightarrow \mathbb{G}(A)$ with  $B\in \mathcal{Y}$ and $A\in \mathcal{X}$. Then $(\mathcal Y,  \mathbb{G} \mathcal X)$ is a covariantly (contravariantly, functorially) finite subcategory of $\mathrm{Mod}\Big(\left[ \begin{smallmatrix}
 \mathcal T& 0 \\ 
 M& \mathcal U
\end{smallmatrix}\right]\Big)$ if and only if  $\mathcal{Y}\subseteq \mathcal{T}$ and $\mathcal{X}\subseteq \mathcal{U}$ are covariantly (contravariantly, functorially) finite.
\end{corollary}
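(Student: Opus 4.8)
\textbf{Proof proposal for Corollary \ref{Smalogeneral}.} The plan is to reduce the statement to Theorem \ref{biyeccovafini} (and its proof, which already combines Theorem \ref{covacomacat} with its dual via an adjunction isomorphism) by supplying the missing functor-category input. First I would recall from Theorem \ref{equivalenceLEOS} and Remark \ref{defofG1} that there is an equivalence of categories
$$\textswab{F}:\Big(\mathrm{Mod}(\mathcal{T}),\mathbb{G}\mathrm{Mod}(\mathcal{U})\Big)\xrightarrow{\sim}\mathrm{Mod}(\mathbf{\Lambda}),$$
and moreover, by section 5 of \cite{LeOS}, that the functor $\mathbb{G}:\mathrm{Mod}(\mathcal{U})\to\mathrm{Mod}(\mathcal{T})$ has a left adjoint $\mathbb{F}:\mathrm{Mod}(\mathcal{T})\to\mathrm{Mod}(\mathcal{U})$. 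Since functorial finiteness (and covariant, resp. contravariant finiteness) of a subcategory is preserved and reflected by an equivalence of ambient categories, it suffices to prove the statement with $\mathrm{Mod}(\mathbf{\Lambda})$ replaced by $\Big(\mathrm{Mod}(\mathcal{T}),\mathbb{G}\mathrm{Mod}(\mathcal{U})\Big)$ and $(\mathcal{Y},\mathbb{G}\mathcal{X})$ understood as the corresponding full subcategory there.

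Next I would set $\mathcal{A}=\mathrm{Mod}(\mathcal{U})$, $\mathcal{B}=\mathrm{Mod}(\mathcal{T})$, $F=\mathbb{G}$ and $G=\mathbb{F}$ (so that $G$ is left adjoint to $F$, matching the hypothesis of Theorem \ref{biyeccovafini}), and take the subcategories $\mathcal{X}\subseteq\mathrm{Mod}(\mathcal{U})$ and $\mathcal{Y}\subseteq\mathrm{Mod}(\mathcal{T})$ to be the full subcategories obtained by closing the given $\mathcal{X}\subseteq\mathcal{U}$, $\mathcal{Y}\subseteq\mathcal{T}$ under the Yoneda embeddings $P:\mathcal{U}\to\mathrm{Mod}(\mathcal{U})$, $P:\mathcal{T}\to\mathrm{Mod}(\mathcal{T})$ (adding the zero object, which is harmless). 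Here one must be a little careful: the statement as phrased speaks of subcategories of the \emph{small} categories $\mathcal{U}$ and $\mathcal{T}$, while Theorems \ref{covacomacat} and \ref{biyeccovafini} are about subcategories of abelian categories; since $P$ is fully faithful and $\mathbb{G}P(U)\cong M_{U}$-type functors behave compatibly, covariant/contravariant finiteness of $\mathcal{X}$ in $\mathcal{U}$ is equivalent to covariant/contravariant finiteness of the image $P(\mathcal{X})$ inside $\mathrm{Mod}(\mathcal{U})$ when restricted to representable objects, and likewise for $\mathcal{Y}$. I would make this identification explicit, noting that the comma subcategory $(\mathcal{Y},\mathbb{G}\mathcal{X})$ of the corollary is exactly the subcategory $\mathcal{C}^{\mathcal{Y}}_{\mathcal{X}}$ of Theorem \ref{biyeccovafini} for these choices. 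With all identifications in place, Theorem \ref{biyeccovafini} yields the ``functorially finite'' case, and the ``covariantly finite'' case follows from the $(\Longleftarrow)$ direction of Theorem \ref{covacomacat} together with the isomorphism $\Phi:\mathcal{D}^{\mathcal{Y}}_{\mathcal{X}}\to\mathcal{C}^{\mathcal{Y}}_{\mathcal{X}}$ used in its proof; the ``contravariantly finite'' case follows dually, applying the dual of Theorem \ref{covacomacat}.

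The main obstacle I expect is not the abstract comma-category machinery, which is already packaged in Theorems \ref{covacomacat} and \ref{biyeccovafini}, but rather verifying cleanly that finiteness of $\mathcal{X}$ as a subcategory of the small category $\mathcal{U}$ (in the sense of the approximation definitions recalled at the start of Section 8) corresponds precisely to finiteness of the associated subcategory of $\mathrm{Mod}(\mathcal{U})$ in the sense used by those theorems — in other words, bridging the ``small category'' and ``module category'' formulations of functorial finiteness. I would handle this by spelling out that a right (left) $\mathcal{X}$-approximation in $\mathcal{U}$ induces, via $P$, a right (left) $P(\mathcal{X})$-approximation in $\mathrm{Mod}(\mathcal{U})$ of a representable functor, and conversely, using that $P$ is full and faithful and that $\mathbb{G}$ and $\mathbb{F}$ send representables to the expected objects by Remark \ref{defofG1} and Proposition \ref{Rproposition1}(3). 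Once that translation is recorded, the proof is a direct citation of Theorem \ref{biyeccovafini} and Theorem \ref{covacomacat} together with the equivalence $\textswab{F}$, with no further computation needed.
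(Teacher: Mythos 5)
Your overall skeleton coincides with the paper's: pass through the equivalence $\textswab{F}:\big(\mathrm{Mod}(\mathcal{T}),\mathbb{G}\mathrm{Mod}(\mathcal{U})\big)\simeq \mathrm{Mod}(\mathbf{\Lambda})$ of Theorem \ref{equivalenceLEOS}, invoke the left adjoint $\mathbb{F}$ of $\mathbb{G}$, and then quote Theorem \ref{covacomacat}, its dual, and Theorem \ref{biyeccovafini}. The problem is the extra ``bridging'' step you insert. Read consistently with its own description of the objects of $(\mathcal{Y},\mathbb{G}\mathcal{X})$ --- morphisms $f:B\to\mathbb{G}(A)$ with $B$ a $\mathcal{T}$-module and $A$ a $\mathcal{U}$-module (note $\mathbb{G}(A)$ is meaningless if $A$ is an object of $\mathcal{U}$) --- and consistently with the Smal\o{} theorem being generalized, the corollary takes $\mathcal{X}$ and $\mathcal{Y}$ to be subcategories of $\mathrm{Mod}(\mathcal{U})$ and $\mathrm{Mod}(\mathcal{T})$, and the finiteness in the conclusion is finiteness in those module categories; the ``$\mathcal{X}\subseteq\mathcal{U}$'' in the statement is a notational slip. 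With that reading no Yoneda closure is needed: the result is literally an instance of \ref{covacomacat}, its dual and \ref{biyeccovafini} with $\mathcal{A}=\mathrm{Mod}(\mathcal{U})$, $\mathcal{B}=\mathrm{Mod}(\mathcal{T})$, $F=\mathbb{G}$, $G=\mathbb{F}$, which is exactly the paper's (two-line) proof.

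Under your literal reading, the step you yourself flag as the ``main obstacle'' is a genuine gap, not a routine verification: covariant (or contravariant) finiteness of $\mathcal{X}$ inside the small category $\mathcal{U}$ is not equivalent to covariant finiteness of $P(\mathcal{X})$ inside $\mathrm{Mod}(\mathcal{U})$; it only yields approximations of representable modules, whereas the hypotheses of Theorems \ref{covacomacat} and \ref{biyeccovafini} require left/right approximations of arbitrary objects of the ambient abelian categories. This matters concretely: in the $(\Longleftarrow)$ direction of \ref{covacomacat} the approximation of an object $g:G(B)\to A$ is built from a left $\mathcal{X}$-approximation of the pushout object $C$, which is in general not representable, so ``finiteness restricted to representables'' does not feed the construction. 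Already $\mathcal{X}=\mathcal{U}$ shows the asserted equivalence fails: $\mathcal{U}$ is trivially functorially finite in itself, yet the representable $\mathcal{U}$-modules are in general neither covariantly nor contravariantly finite in $\mathrm{Mod}(\mathcal{U})$. So either adopt the module-category reading, in which case your argument collapses to the paper's direct citation, or, if you insist on subcategories of $\mathcal{U}$ and $\mathcal{T}$, you are proving a different statement and the proposed bridge does not exist.
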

\begin{proof}
By section 5 in \cite{LeOS}, we have left adjoint $\mathbb{F}:\mathrm{Mod}(\mathcal{T})\longrightarrow \mathrm{Mod}(\mathcal{U})$ to $\mathbb{G}$ and by \cite[Theorem 3.14]{LeOS}, there is equivalence  $\Big(\mathrm{Mod}(\mathcal T), \mathbb{G}\mathrm{Mod}(\mathcal U)\Big)\simeq \mathrm{Mod}\Big(\left[ \begin{smallmatrix}
 \mathcal T& 0 \\ 
 M& \mathcal U
\end{smallmatrix}\right]\Big)$. The result follows from \ref{covacomacat} its dual and \ref{biyeccovafini}.
\end{proof}

\begin{corollary}
Let $\mathcal{U}$ and $\mathcal{T}$ dualizing varieties and $M\in \mathrm{Mod}(\mathcal{U}\otimes \mathcal{T}^{op})$  such that $M_{T}\in \mathrm{mod}(\mathcal{U})$ and $M_{U}\in \mathrm{mod}(\mathcal{T})$ for all $U\in \mathcal{U}$ and $T\in \mathcal{T}$. Consider $\mathbf{\Lambda}=\left[ \begin{smallmatrix}
 \mathcal T& 0 \\ 
 M& \mathcal U
\end{smallmatrix}\right]$. If $\mathcal{X}\subseteq \mathcal{U}$ and  
$\mathcal{Y}\subseteq \mathcal{T}$  are  subcategories, denote by  $(\mathcal Y,  \mathbb{G} \mathcal X)$ the full subcategory of $\Big(\mathrm{mod}(\mathcal T), \mathbb{G}\mathrm{mod}(\mathcal U)\Big)$ whose objects
are morphisms of $\mathcal T$-modules  $f:B\longrightarrow \mathbb{G}(A)$ with  $B\in \mathcal{Y}$ and $A\in \mathcal{X}$. Then $(\mathcal Y,  \mathbb{G} \mathcal X)$ is a covariantly (contravariantly, functorially) finite subcategory of $\mathrm{mod}\Big(\left[ \begin{smallmatrix}
 \mathcal T& 0 \\ 
 M& \mathcal U
\end{smallmatrix}\right]\Big)$ if and only if  $\mathcal{Y}\subseteq \mathcal{T}$ and $\mathcal{X}\subseteq \mathcal{U}$ are covariantly (contravariantly, functorially) finite.
\end{corollary}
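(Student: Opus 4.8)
The plan is to deduce this corollary directly from its ``$\mathrm{Mod}$-version'' Corollary~\ref{Smalogeneral} by restricting everything to finitely presented objects, exactly as the passage from \ref{Recoll1} to \ref{Recollfinitos} does. First I would invoke \cite[Proposition~6.3]{LeOS} together with the hypothesis $M_{T}\in\mathrm{mod}(\mathcal U)$ and $M_{U}\in\mathrm{mod}(\mathcal T)$ to obtain the equivalence of categories
\[
\Big(\mathrm{mod}(\mathcal T),\mathbb{G}\,\mathrm{mod}(\mathcal U)\Big)\xrightarrow{\ \sim\ }\mathrm{mod}\Big(\left[\begin{smallmatrix}\mathcal T&0\\ M&\mathcal U\end{smallmatrix}\right]\Big),
\]
which is the finitely presented analogue of \cite[Theorem~3.14]{LeOS}; under this equivalence the subcategory $(\mathcal Y,\mathbb{G}\mathcal X)$ of $\mathrm{mod}(\mathbf\Lambda)$ corresponds to the full subcategory of the comma category whose objects are $f\colon B\to\mathbb{G}(A)$ with $B\in\mathcal Y$, $A\in\mathcal X$. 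So the corollary reduces to proving that this comma-type subcategory is covariantly (contravariantly, functorially) finite in $\big(\mathrm{mod}(\mathcal T),\mathbb{G}\,\mathrm{mod}(\mathcal U)\big)$ if and only if $\mathcal Y$ and $\mathcal X$ are covariantly (contravariantly, functorially) finite in $\mathrm{mod}(\mathcal T)$ and $\mathrm{mod}(\mathcal U)$.

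Next I would check that the two abstract tools used in \ref{Smalogeneral} survive the restriction. Theorem~\ref{covacomacat} is purely formal --- it works for any abelian categories and any covariant functor $G$ --- so applying it with $\mathcal A=\mathrm{mod}(\mathcal U)$, $\mathcal B=\mathrm{mod}(\mathcal T)$ and $G=\mathbb{G}|_{\mathrm{mod}(\mathcal U)}$ gives the contravariantly-finite and covariantly-finite statements once one knows that $\mathbb{G}$ restricts to a functor $\mathrm{mod}(\mathcal U)\to\mathrm{mod}(\mathcal T)$; this is precisely the content guaranteed by the hypotheses $M_T\in\mathrm{mod}(\mathcal U)$, $M_U\in\mathrm{mod}(\mathcal T)$ as already noted in the proof of \ref{Recollfinitos}. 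For the functorially finite case I would use Theorem~\ref{biyeccovafini}: by section~5 of \cite{LeOS} the functor $\mathbb{F}$ is left adjoint to $\mathbb{G}$, and under the stated hypotheses $\mathbb{F}$ likewise restricts to $\mathbb{F}\colon\mathrm{mod}(\mathcal T)\to\mathrm{mod}(\mathcal U)$ (again as in \ref{Recollfinitos}, since $\mathbb{F}$ sends representable functors to $M_T$'s and is right exact), so the isomorphism of comma categories $\Phi$ of Theorem~\ref{biyeccovafini} restricts to an isomorphism between the relevant finitely presented subcategories. Combining these, exactly as in the proof of \ref{Smalogeneral}, yields the corollary.

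The main point requiring care --- and the place where the dualizing hypotheses on $\mathcal U$ and $\mathcal T$ are genuinely used --- is verifying that $\mathbb{G}$ and $\mathbb{F}$ do restrict to the subcategories of finitely presented functors, and that the equivalence $\big(\mathrm{mod}(\mathcal T),\mathbb{G}\,\mathrm{mod}(\mathcal U)\big)\simeq\mathrm{mod}(\mathbf\Lambda)$ of \cite[Proposition~6.3]{LeOS} is available; but all of this is already established in the discussion surrounding \ref{Recollfinitos}, so the present proof is a short assembly rather than a new argument. I would therefore write: ``By section~5 of \cite{LeOS}, $\mathbb{F}$ is left adjoint to $\mathbb{G}$, and arguing as in the proof of \ref{Recollfinitos} both functors restrict to the finitely presented subcategories; moreover by \cite[Proposition~6.3]{LeOS} we have $\big(\mathrm{mod}(\mathcal T),\mathbb{G}\,\mathrm{mod}(\mathcal U)\big)\simeq\mathrm{mod}(\mathbf\Lambda)$. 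The result now follows from \ref{covacomacat}, its dual and \ref{biyeccovafini} exactly as in the proof of \ref{Smalogeneral}.''
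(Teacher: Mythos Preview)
Your proposal is correct and follows essentially the same route as the paper: invoke \cite[Proposition~6.3]{LeOS} for the equivalence $\big(\mathrm{mod}(\mathcal T),\mathbb{G}\,\mathrm{mod}(\mathcal U)\big)\simeq\mathrm{mod}(\mathbf\Lambda)$, use the restricted adjoint pair, and then appeal to \ref{covacomacat}, its dual, and \ref{biyeccovafini}. The only cosmetic difference is that the paper cites \cite[Proposition~6.2]{LeOS} directly for the adjoint pair $(\mathbb{F}^{\ast},\mathbb{G}^{\ast})$ on finitely presented modules, whereas you argue the restriction by hand from section~5 of \cite{LeOS} and the proof of \ref{Recollfinitos}.
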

\begin{proof}
By \cite[Proposition 6.3]{LeOS} we have an equivalence $\Big(\mathrm{mod}(\mathcal T), \mathbb{G}\mathrm{mod}(\mathcal U)\Big)\simeq \mathrm{mod}\Big(\left[ \begin{smallmatrix}
 \mathcal T& 0 \\ 
 M& \mathcal U
\end{smallmatrix}\right]\Big)$. We also have an adjoint pair  $(\mathbb{F}^{\ast},\mathbb{G}^{\ast})$ by \cite[Proposition 6.2]{LeOS}. Therefore the result follows from \ref{covacomacat} its dual and \ref{biyeccovafini}. 
\end{proof}
The following is the generalization of a result of {Smal\o} \cite[Corollary 2.2]{Smalo}.
\begin{corollary}
Let $\mathcal{U}$ and $\mathcal{T}$ dualizing varieties and $M\in \mathrm{Mod}(\mathcal{U}\otimes \mathcal{T}^{op})$  such that $M_{T}\in \mathrm{mod}(\mathcal{U})$ and $M_{U}\in \mathrm{mod}(\mathcal{T})$ for all $U\in \mathcal{U}$ and $T\in \mathcal{T}$. Consider $\mathbf{\Lambda}=\left[ \begin{smallmatrix}
 \mathcal T& 0 \\ 
 M& \mathcal U
\end{smallmatrix}\right]$. If $\mathcal{X}\subseteq \mathcal{U}$ and  
$\mathcal{Y}\subseteq \mathcal{T}$  are  functorially finite which are closed under extensions. Then $(\mathcal Y,  \mathbb{G} \mathcal X)$ is functorially finite subcategory of $\mathrm{mod}\Big(\left[ \begin{smallmatrix}
 \mathcal T& 0 \\ 
 M& \mathcal U
\end{smallmatrix}\right]\Big)$  which is closed under extensions and moreover $(\mathcal Y,  \mathbb{G} \mathcal X)$ has almost split sequences.
\end{corollary}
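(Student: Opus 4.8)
The plan is to deduce this last corollary directly from the preceding corollary together with standard facts about functorially finite subcategories closed under extensions in dualizing varieties. First I would invoke the previous corollary: since $\mathcal{X}\subseteq\mathcal{U}$ and $\mathcal{Y}\subseteq\mathcal{T}$ are functorially finite in $\mathrm{mod}(\mathcal{U})$ and $\mathrm{mod}(\mathcal{T})$ respectively, the subcategory $(\mathcal{Y},\mathbb{G}\mathcal{X})$ is functorially finite in $\mathrm{mod}(\mathbf{\Lambda})\simeq\bigl(\mathrm{mod}(\mathcal T),\mathbb{G}\mathrm{mod}(\mathcal U)\bigr)$. Then the remaining two assertions are (i) that $(\mathcal{Y},\mathbb{G}\mathcal{X})$ is closed under extensions, and (ii) that it has almost split sequences. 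Claim (ii) is the formal consequence of (i) plus functorial finiteness: it is a theorem of Auslander–Smalø that a functorially finite subcategory of $\mathrm{mod}(\Lambda)$ (here $\mathrm{mod}(\mathbf{\Lambda})$, which by \cite[Proposition 7.3]{LeOS} is a dualizing variety, hence its finitely presented category behaves like $\mathrm{mod}$ of an artin algebra) that is closed under extensions has almost split sequences; I would cite this and the fact (used already in Section 7, e.g. around \ref{firstterm}) that $\mathbf{\Lambda}$ is dualizing so that the Auslander–Reiten theory applies.

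So the real content to verify is (i): closure under extensions. Here I would argue componentwise. Given a short exact sequence $0\to(B',f',A')\to(B,f,A)\to(B'',f'',A'')\to 0$ in $\bigl(\mathrm{mod}(\mathcal T),\mathbb{G}\mathrm{mod}(\mathcal U)\bigr)$ with the two outer terms in $(\mathcal{Y},\mathbb{G}\mathcal{X})$, one gets by evaluating at the two ``coordinates'' (i.e. applying the two projection-type exact functors from the comma category to $\mathrm{mod}(\mathcal T)$ and to $\mathrm{mod}(\mathcal U)$, using that these are exact) short exact sequences $0\to B'\to B\to B''\to 0$ in $\mathrm{mod}(\mathcal T)$ and $0\to A'\to A\to A''\to 0$ in $\mathrm{mod}(\mathcal U)$; since $\mathcal{Y}$ and $\mathcal{X}$ are closed under extensions, $B\in\mathcal{Y}$ and $A\in\mathcal{X}$, whence $(B,f,A)\in(\mathcal{Y},\mathbb{G}\mathcal{X})$. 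One technical point to be careful about: short exact sequences in the comma category $\bigl(\mathrm{mod}(\mathcal T),\mathbb{G}\mathrm{mod}(\mathcal U)\bigr)$ are exactly those that are ``short exact in each coordinate'' — this is a standard fact about comma categories of additive functors between abelian categories (and here $\mathbb{G}$ is left exact, being a right adjoint by Section 5 of \cite{LeOS}, which is what one needs), so I would either quote it or give the one-line diagram-chase justifying it.

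I expect the main obstacle, such as it is, to be purely bookkeeping: making sure the equivalence $\bigl(\mathrm{mod}(\mathcal T),\mathbb{G}\mathrm{mod}(\mathcal U)\bigr)\simeq\mathrm{mod}(\mathbf{\Lambda})$ transports ``closed under extensions'' faithfully (it does, since equivalences of abelian categories preserve short exact sequences) and citing the correct form of the Auslander–Smalø existence-of-almost-split-sequences theorem in the generality of dualizing varieties rather than artin algebras — the paper already sets this up in Section 7, so the reference to \cite{Auslander2} (or to \cite{Reiten}, Theorem 7.1.3, cited earlier) together with the functorial finiteness and extension-closure should close the argument. The proof itself would therefore be short: essentially ``combine the previous corollary, the componentwise description of exact sequences in the comma category, and the Auslander–Smalø theorem.''
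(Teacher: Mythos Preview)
Your proposal is correct and follows essentially the same approach as the paper: functorial finiteness from the preceding corollary, componentwise verification that $(\mathcal{Y},\mathbb{G}\mathcal{X})$ is closed under extensions, and then an appeal to a general existence theorem for almost split sequences. The only notable difference is the reference for that last step: the paper cites \cite[Corollary 3.5]{Liu} (Liu--Ng--Paquette), which gives the result directly for Krull--Schmidt functorially finite extension-closed subcategories, rather than the classical Auslander--Smal\o\ theorem you mention; the paper also makes the Krull--Schmidt hypothesis explicit by invoking \cite[Proposition 6.10]{LeOS} to ensure $\mathbf{\Lambda}$ is dualizing and hence $\mathrm{mod}(\mathbf{\Lambda})$ is Krull--Schmidt.
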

\begin{proof}
By \cite[Proposition 6.10]{LeOS}, it follows that  $\mathbf{\Lambda}$ is a dualizing $R$-variety and therefore $\mathrm{mod}(\mathbf{\Lambda})$ is Krull-Schmidt. Since  $\mathcal{X}\subseteq \mathrm{mod}(\mathcal{U})$ and  
$\mathcal{Y}\subseteq \mathrm{mod}(\mathcal{T})$   are closed under extensions, this also holds for  $(\mathcal  Y, \mathbb{G}\mathcal X)$.  Thus, $(\mathcal  Y, \mathbb{G}\mathcal X)$  is a Krull-Schmidt subcategory of $(\mathrm{mod}(\mathcal T), \mathbb{G}\mathrm{mod}(\mathcal U))\simeq \mathrm{mod}(\mathbf{\Lambda})$, and the rest of the proof  follows from \cite[Corollary 3.5]{Liu}. 
\end{proof}

\begin{corollary}\label{functor0}
Let $\mathcal{C}$ be a dualizing $K$-variety and  $\mathcal X\subset \mathrm{mod}(\mathcal C)$ be a subcategory.  Denote by
$\mathrm{maps}(\mathcal X)$ the full subcategory of  $\mathrm{maps}(\mathrm{mod}(\mathcal C))$ whose objects
are morphisms of $\mathcal C$-modules  $f:B\longrightarrow A$ with  $A, B\in \mathcal X$. Then $\mathcal X$ is  contravariantly (covariantly, functorially) finite
in  $\mathrm{mod}(\mathcal C)$ if and only if  $\mathrm{maps}(\mathcal X)$ is contravariantly (covariantly, functorially) finite in   $\mathrm{maps}(\mathrm{mod}(\mathcal C))$.
\end{corollary}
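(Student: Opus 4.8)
The strategy is to recognize $\mathrm{maps}(\mathrm{mod}(\mathcal C))$ as a comma category over $\mathrm{mod}(\mathcal C)$ and then to apply the comma‑category form of Smal\o's theorem already proved in \ref{covacomacat}, its dual, and \ref{biyeccovafini}. Concretely, I would set $M:=\widehat{\mathbbm{Hom}}\in\mathrm{Mod}(\mathcal C\otimes\mathcal C^{op})$; since $\widehat{\mathbbm{Hom}}_{C}=\mathrm{Hom}_{\mathcal C}(C,-)$ and $\widehat{\mathbbm{Hom}}_{C'}=\mathrm{Hom}_{\mathcal C}(-,C')$ are representable, they lie in $\mathrm{mod}(\mathcal C)$ and $\mathrm{mod}(\mathcal C^{op})$ respectively, so the hypotheses needed in \ref{mapsmodlam}(ii) and in the dualizing version of \ref{Smalogeneral} hold; moreover by Section~5 of \cite{LeOS} the induced functor $\mathbb{G}$ (and its restriction to finitely presented modules) has a left adjoint $\mathbb{F}$. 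Recall from \ref{descrimaps} the isomorphism of categories $J_{1}\colon \mathrm{maps}(\mathrm{mod}(\mathcal C))=(\mathrm{mod}(\mathcal C),\mathrm{mod}(\mathcal C))\xrightarrow{\ \sim\ }(\mathrm{mod}(\mathcal C),\mathbb{G}\,\mathrm{mod}(\mathcal C))$, sending a map $f\colon B\to A$ to $\widehat{f}\colon B\to\mathbb{G}(A)$, and that the latter comma category is equivalent to $\mathrm{mod}(\mathbf{\Lambda})$ with $\mathbf{\Lambda}=\left[\begin{smallmatrix}\mathcal C&0\\ \widehat{\mathbbm{Hom}}&\mathcal C\end{smallmatrix}\right]$.

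The key step is to check that $J_{1}$ carries $\mathrm{maps}(\mathcal X)$ isomorphically onto the full subcategory $(\mathcal X,\mathbb{G}\mathcal X)$ of $(\mathrm{mod}(\mathcal C),\mathbb{G}\,\mathrm{mod}(\mathcal C))$ whose objects are the morphisms $g\colon B\to\mathbb{G}(A)$ with $A,B\in\mathcal X$; here $\mathcal X$ plays simultaneously the roles of the ``$\mathcal Y$''‑slot and the ``$\mathcal X$''‑slot in the notation of \ref{Smalogeneral}. This is immediate on objects: an object of $\mathrm{maps}(\mathcal X)$ is a map $f\colon B\to A$ with $A,B\in\mathcal X$, and $J_{1}(f)=\widehat{f}\colon B\to\mathbb{G}(A)$ is then an object of $(\mathcal X,\mathbb{G}\mathcal X)$; conversely $J_{1}^{-1}$ sends $(\mathcal X,\mathbb{G}\mathcal X)$ back into $\mathrm{maps}(\mathcal X)$, so since $J_{1}$ is an isomorphism of categories it restricts to an isomorphism between the two full subcategories. (One should assume, as usual and as required by \ref{covacomacat}, that $\mathcal X$ is additive and contains the zero object.)

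Finally I would invoke \ref{covacomacat}, its dual, and \ref{biyeccovafini} — applied exactly as in the corollary following \ref{Smalogeneral}, with $\mathcal U=\mathcal T=\mathcal C$, $M=\widehat{\mathbbm{Hom}}$ and both subcategory parameters equal to $\mathcal X$ — to conclude that $(\mathcal X,\mathbb{G}\mathcal X)$ is contravariantly (covariantly, functorially) finite in $(\mathrm{mod}(\mathcal C),\mathbb{G}\,\mathrm{mod}(\mathcal C))\simeq\mathrm{mod}(\mathbf{\Lambda})$ if and only if $\mathcal X\subseteq\mathrm{mod}(\mathcal C)$ is contravariantly (covariantly, functorially) finite — the two conditions ``$\mathcal Y$‑side finite'' and ``$\mathcal X$‑side finite'' collapsing to one. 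Transporting this equivalence of conditions back along the isomorphism $J_{1}$, which preserves and reflects all three finiteness notions since they are categorical, gives precisely the assertion of the corollary. The only genuine point of care is the bookkeeping of the two sides of the comma category in the previous paragraph — making sure the source of a map goes to the first slot and its target to the argument of $\mathbb{G}$ — but this is routine once $J_{1}$ is spelled out, and I do not expect any real obstacle.
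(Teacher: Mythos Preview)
Your proposal is correct and follows precisely the intended route: the paper leaves this corollary without explicit proof because it is the specialization of the preceding Smal\o-type results (\ref{covacomacat}, its dual, \ref{biyeccovafini}, and the $\mathrm{mod}$-version of \ref{Smalogeneral}) to the case $\mathcal U=\mathcal T=\mathcal C$, $M=\widehat{\mathbbm{Hom}}$, with both subcategory parameters set equal to $\mathcal X$, transported along the identification $J_{1}$ of \ref{mapsfini}. Your write-up is in fact more detailed than what the paper supplies, but the strategy is the same.
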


\begin{corollary}\label{functorially2}
Let $R$ be a commutative ring and  $\Lambda$  be  an artin $R$-algebra. Consider  $\mathcal{C}=\mathrm{mod}(\Lambda)$ and  $\mathcal X\subset \mathrm{mod}(\mathcal C)$ be a subcategory. Then $\mathcal X$ is  contravariantly (covariantly, functorially) finite
in  $\mathrm{mod}(\mathcal C)$ if and only if  $\mathrm{maps}(\mathcal X)$ is contravariantly (covariantly, functorially) finite in   $\mathrm{maps}(\mathrm{mod}(\mathcal C))$.
\end{corollary}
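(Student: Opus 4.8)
The plan is to deduce this statement as an immediate specialization of Corollary \ref{functor0}. The only nontrivial point to check is that the category $\mathcal{C}=\mathrm{mod}(\Lambda)$ satisfies the hypothesis of Corollary \ref{functor0}, i.e. that it is a dualizing $R$-variety; everything else is then a quotation of that corollary with $K=R$.

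First I would recall the classical fact, due to Auslander and Reiten (see \cite{Auslander2}), that for an artin $R$-algebra $\Lambda$ the category $\mathrm{mod}(\Lambda)$ of finitely generated $\Lambda$-modules is a dualizing $R$-variety: it is skeletally small, additive, idempotents split, it is $\mathrm{Hom}$-finite over the commutative artin ring $R$, and the ordinary duality $\mathbb{D}=\mathrm{Hom}_{R}(-,I(R/r))$ induces a duality between $\mathrm{mod}(\mathrm{mod}(\Lambda))$ and $\mathrm{mod}(\mathrm{mod}(\Lambda)^{op})$. (Note that the hypothesis ``artin $R$-algebra'' already forces $R$ to be a commutative artin ring, so there is nothing else to verify about $R$.) Hence $\mathcal{C}=\mathrm{mod}(\Lambda)$ meets the hypotheses of Corollary \ref{functor0}.

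Applying Corollary \ref{functor0} to this $\mathcal{C}$ yields at once that a subcategory $\mathcal{X}\subseteq \mathrm{mod}(\mathcal{C})=\mathrm{mod}(\mathrm{mod}(\Lambda))$ is contravariantly (covariantly, functorially) finite in $\mathrm{mod}(\mathcal{C})$ if and only if $\mathrm{maps}(\mathcal{X})$ is contravariantly (covariantly, functorially) finite in $\mathrm{maps}(\mathrm{mod}(\mathcal{C}))$, which is exactly the assertion. For the reader's convenience I would also recall the chain of reductions underlying Corollary \ref{functor0}: by \ref{mapsmodlam}(ii) there is an equivalence $\mathrm{mod}(\mathbf{\Lambda})\xrightarrow{\sim}\mathrm{maps}(\mathrm{mod}(\mathcal{C}))$ for $\mathbf{\Lambda}=\left[ \begin{smallmatrix} \mathcal{C}& 0 \\ \widehat{\mathbbm{Hom}}& \mathcal{C}\end{smallmatrix}\right]$, under which (via the isomorphisms $J_{1},J_{2}$ of \ref{descrimaps}) the subcategory $\mathrm{maps}(\mathcal{X})$ corresponds to $(\mathcal{X},\mathbb{G}\mathcal{X})$; since $\widehat{\mathbbm{Hom}}_{C}=(C,-)\in\mathrm{mod}(\mathcal{C})$ and $\widehat{\mathbbm{Hom}}_{C'}=(-,C')\in\mathrm{mod}(\mathcal{C}^{op})$ are representable, the bimodule hypotheses of the dualizing version of Corollary \ref{Smalogeneral} are satisfied, and that corollary delivers the desired equivalence of finiteness conditions. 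There is no real obstacle here: the substantive work is entirely contained in Corollary \ref{Smalogeneral} and Corollary \ref{functor0}, and the only thing requiring attention is the harmless bookkeeping check that $\widehat{\mathbbm{Hom}}$ has representable, hence finitely presented, components.
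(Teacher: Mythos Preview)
Your proposal is correct and matches the paper's approach: the paper gives no proof for this corollary, indicating it is an immediate specialization of Corollary~\ref{functor0} once one knows that $\mathrm{mod}(\Lambda)$ is a dualizing $R$-variety for an artin $R$-algebra $\Lambda$. Your additional unpacking of the chain of reductions behind Corollary~\ref{functor0} is accurate but more detail than the paper itself provides.
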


\subsection{Functorially  finite  subcategories in  $\mathbf{mod}(\mathbf{mod}(\mathcal{C}))$}
Let $\mathcal{C}$ be a dualizing variety. By  Corollary \ref{functor0} there is a close relation between contravariantly, covariantly and functorially  subcategories  in $\mathrm{mod}(\mathrm{mod}\Lambda))$ and  contravariantly, covariantly and functorially  subcategories  in
$\mathrm{maps}(\mathrm{mod}(\mathcal{C}))$.\\
Now, consider the matrix category  $\mathbf{\Lambda}:=\left[
\begin{smallmatrix}
\mathcal{C}  & 0 \\
\widehat{\mathbbm{Hom}}  & \mathcal{C}
\end{smallmatrix}
\right] $. There is an equivalence of categories
$$\Psi: \mathrm{mod}\Big(\left[
\begin{smallmatrix}
\mathcal{C}  & 0 \\
\widehat{\mathbbm{Hom}}  & \mathcal{C}
\end{smallmatrix}
\right] \Big) \rightarrow  \mathrm{maps}(\mathrm{mod}(\mathcal{C})
).$$ 
Recall, we have the functor (see \ref{mapsmodmod})
$$\Phi:\mathrm{maps}(\mathrm{mod}(\mathcal{C}))\longrightarrow \mathrm{mod}(\mathrm{mod}(\mathcal{C})^{op})$$
given by $\Phi(\xymatrix{A_{1}\ar[r]^{f} & A_{0})}=\mathrm{Coker}\Big(\xymatrix{(-,A_{1})\ar[r]^{(-,f)} & (-,A_{0})}\Big).$\\
Now, we list some  ways to get  functorially  finite subcategories  in $\mathrm{maps}(\mathrm{mod}\Lambda
)$.  Obviously $\Psi(\mathcal X)$  is functorially finite in $\mathrm{maps}( \mathrm{mod}(\mathcal{C}))$ if $\mathcal X$ is  a functorially finite subcategory in $\mathrm{mod}\Big(\left[
\begin{smallmatrix}
\mathcal{C}  & 0 \\
\widehat{\mathbbm{Hom}}  & \mathcal{C}
\end{smallmatrix}
\right] \Big)$.
In this part we will see that some properties like: contravariantly,
covariantly, functorially finite subcategories of $\mathrm{maps}(\mathrm{mod}(
\mathcal{C})),$ are preserved by the functor $\Phi$.\\
The following result is a generalization of a result in \cite{MVOM}.

\begin{theorem}
Let $\mathscr C\subset \mathrm{maps}(\mathrm{mod}(\mathcal{C}))$ be a
subcategory. Then the following statements hold:

\begin{itemize}
\item[(a)] If $\mathscr{C}$ is contravariantly finite in $\mathrm{maps}(\mathrm{mod}(\mathcal{C}))$, then $\Phi(\mathscr{C})$ is a contravariantly finite subcategory of $\mathrm{mod}(\mathrm{mod}(\mathcal{C})^{op})$.

\item[(b)] If $\mathscr{C}$ is covariantly finite in $\mathrm{maps}(\mathrm{mod}(\mathcal{C}))$, then $\Phi(\mathscr{C})$ is a covariantly finite subcategory of $\mathrm{mod}(\mathrm{mod}(\mathcal{C})^{op})$.

\item[(c)] If $\mathscr{C}$ is functorially finite in $\mathrm{maps}(\mathrm{mod}(\mathcal{C}))$, then $\Phi(\mathscr{C})$ is a functorially finite subcategory of $\mathrm{mod}(\mathrm{mod}(\mathcal{C})^{op})$.
\end{itemize}
\end{theorem}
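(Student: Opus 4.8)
The plan is to derive all three statements from two structural properties of the functor $\Phi$: that it is \emph{dense} and \emph{full}. First I would record the standing hypotheses: since $\mathcal C$ is a dualizing $K$-variety, $\mathrm{mod}(\mathcal C)$ is again a dualizing variety (Theorem in this excerpt), hence abelian, Krull--Schmidt, and with projective covers, so $\mathrm{mod}(\mathrm{mod}(\mathcal C)^{op})$ is abelian and the representables $\mathrm{Hom}_{\mathcal C}(-,A)$ with $A\in\mathrm{mod}(\mathcal C)$ form a projective generating set. For density: an object $G\in\mathrm{mod}(\mathrm{mod}(\mathcal C)^{op})$ admits a projective presentation $\mathrm{Hom}_{\mathcal C}(-,A_1)\to\mathrm{Hom}_{\mathcal C}(-,A_0)\to G\to 0$, and by Yoneda this map is $(-,f)$ for a unique $f\colon A_1\to A_0$ in $\mathrm{mod}(\mathcal C)$; thus $G\cong\Phi(A_1\xrightarrow{f}A_0)$. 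For fullness: given $\alpha\colon\Phi(f)\to\Phi(k)$ with $f=(A_1\to A_0)$ and $k=(Y_1\to Y_0)$, I would lift $\alpha$ across the two projective presentations by the comparison lemma (projectivity of the representables plus exactness of the presentation of $\Phi(k)$), obtaining a chain map $\big((-,\beta_1),(-,\beta_0)\big)$ over $\alpha$; Yoneda converts this into a morphism $(\beta_1,\beta_0)\colon f\to k$ of $\mathrm{maps}(\mathrm{mod}(\mathcal C))$ whose induced map of cokernels is $\alpha$, i.e. $\Phi(\beta_1,\beta_0)=\alpha$.

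With these two facts, part (a) is a three-line argument. Let $\mathscr C$ be contravariantly finite and $G\in\mathrm{mod}(\mathrm{mod}(\mathcal C)^{op})$; by density write $G=\Phi(f)$, pick a right $\mathscr C$-approximation $(h_1,h_0)\colon X\to f$ with $X\in\mathscr C$, and put $\eta:=\Phi(h_1,h_0)\colon\Phi(X)\to G$. To see $\eta$ is a right $\Phi(\mathscr C)$-approximation, take $\alpha\colon\Phi(X')\to G$ with $X'\in\mathscr C$; by fullness lift $\alpha$ to $(\beta_1,\beta_0)\colon X'\to f$, factor it through the approximation as $(\beta_1,\beta_0)=(h_1,h_0)\circ(\gamma_1,\gamma_0)$, and apply $\Phi$ to get $\alpha=\eta\circ\Phi(\gamma_1,\gamma_0)$. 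Part (b) is the formal dual, the only delicate point being the lift of a morphism $G\to\Phi(k)$ to a morphism of maps: this is done exactly as in the fullness step, first lifting on the degree-zero term $A_0\to Y_0$ by projectivity of $(-,A_0)$, then on the degree-one term using that the relevant composite vanishes in $\Phi(k)$ together with projectivity of $(-,A_1)$. Part (c) then follows immediately from (a) and (b).

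The main obstacle is precisely the fullness/lifting lemma for $\Phi$, since it is what allows approximations to be transported back and forth between $\mathrm{maps}(\mathrm{mod}(\mathcal C))$ and $\mathrm{mod}(\mathrm{mod}(\mathcal C)^{op})$; the rest is routine diagram chasing. This is the same mechanism used in \cite{MVOM} for module categories over an artin algebra, and that argument adapts verbatim once $\mathrm{mod}(\mathcal C)$ is known to be abelian, Krull--Schmidt, and to have projective covers --- all guaranteed here by $\mathcal C$ being dualizing, together with \cite[Cor. 4.13]{AuslanderRep1}. I would therefore organize the write-up by first stating and proving the ``dense and full'' lemma for $\Phi$, and then running the short approximation arguments above for (a), (b), (c).
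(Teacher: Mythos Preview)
Your proposal is correct and follows essentially the same approach as the paper, which simply refers to \cite[Theorem 3.8]{MVOM}: you have spelled out precisely that argument (density and fullness of $\Phi$ via Yoneda and the comparison lemma, then transport of approximations), and you correctly identify that the only adaptation needed is that $\mathrm{mod}(\mathcal C)$ is abelian, Krull--Schmidt, and has projective covers, all of which hold because $\mathcal C$ is dualizing.
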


\begin{proof}
Same proof as in \cite[Theorem 3.8]{MVOM}
\end{proof}

\begin{remark}
We can define the functor $\Phi':\mathrm{maps}(\mathrm{mod}(\mathcal{C})
)\rightarrow \mathrm{mod}(\mathrm{mod}(\mathcal{C}))$ as:
\begin{equation*}
\Phi':(A_{1}\xrightarrow{f}A_{0})=\mathrm{Coker}((A_{0},-)\xrightarrow{(f,-)}(A_{1},-)).
\end{equation*}
The same properties: contravariantly,
covariantly, functorialy finite subcategories of $\mathrm{maps}(\mathrm{mod}(\mathcal{C})),$ are preserved by the functor $\Phi'$.
\end{remark}

On the other hand, if $\mathcal X$  is a functorially finite subcategory of $\mathrm{mod} (\mathcal{C})$, we get that $\mathrm{maps}(\mathcal X)$ is a functorially finite  subcategory of $\mathrm{maps}(\mathrm{mod}(\mathcal{C}))$, by Corollary \ref{functor0}. Thus  we have a way to get contravariantly (covariantly, functorially) finite subcategories in $\mathrm{maps}(\mathrm{mod} (\mathcal{C}))$  from  the ones  of $\mathrm{mod} (\mathcal{C})$, which of course are in bijective correspondence with ones in $\mathrm{mod}\Big(\left[
\begin{smallmatrix}
\mathcal{C}  & 0 \\
\widehat{\mathbbm{Hom}}  & \mathcal{C}
\end{smallmatrix}
\right]\Big)$ by the equivalence $\Psi$. Thus, we have induced maps: 
$$\xymatrix{{\left \{\text{functorially finite subcategories } \mathscr{C}\subset \mathrm{maps}( \mathrm{mod}(\mathcal{C}))
\right \}}\ar[d]^{\underset{\Phi(\mathscr{C})}{\overset{\mathscr{C}}{\big\downarrow}}}\\
{\left \{\text{functorially finite subcategories } \mathscr{E}\subset \mathrm{mod}( \mathrm{mod}(\mathcal{C})^{op})
\right \}}\ar[d]^{\underset{\mathrm{maps}(\mathscr{E})}{\overset{\mathscr{E}}{\big\downarrow}}}\\
{\left \{\text{functorially finite subcategories } \mathscr{F}\subset \mathrm{maps}( \mathrm{mod}( \mathrm{mod}(\mathcal{C})^{op}))
\right \}}}$$
Finally,  we have the following examples of functorially finite subcategories of the
category $\mathrm{maps}(\mathrm{mod}(\mathcal{C}))$.\\
We denote by $\mathfrak {emaps}(\mathrm{mod}(\mathcal{C}))$  the full subcategory of $\mathrm{maps}(\mathrm{mod}(\mathcal{C}))$ consisting of all  maps $(M_{1},f,M_{2})$, such that $f$ is an epimorphism and by $\mathfrak {mmaps}(\mathrm{mod}(\mathcal{C}))$  the full subcategory of $\mathrm{maps}(\mathrm{mod}(\mathcal{C}) )$ consisting of all  maps $(M_{1},f,M_{2})$, such that $f$ is an monomorphism.

\begin{proposition}\label{monoepifun}
The categories $\mathfrak {emaps}(\mathrm{mod}(\mathcal{C}))$ and $\mathfrak {mmaps}(\mathrm{mod}(\mathcal{C}))$ are functorially finite in $\mathrm{maps}(\mathrm{mod}(\mathcal{C})).$
\end{proposition}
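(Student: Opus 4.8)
The plan is to write down, for an arbitrary object $(A_1,f,A_0)$ of $\mathrm{maps}(\mathrm{mod}(\mathcal C))$, explicit left and right approximations by epimorphisms and by monomorphisms, using that $\mathcal C$ is dualizing: $\mathrm{mod}(\mathcal C)$ is then abelian (it has pseudokernels), it has enough projectives and projective covers, and dually enough injectives and injective envelopes, and all of the objects so produced remain finitely presented.

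First I would treat $\mathfrak{emaps}(\mathrm{mod}(\mathcal C))$. For the right approximation, factor $f=\iota\circ f'$ with $f'\colon A_1\to\mathrm{Im}(f)$ an epimorphism and $\iota\colon\mathrm{Im}(f)\hookrightarrow A_0$ a monomorphism; then $(1_{A_1},\iota)\colon(A_1,f',\mathrm{Im}(f))\to(A_1,f,A_0)$ is a right $\mathfrak{emaps}$-approximation. Indeed, for any morphism $(h_1,h_0)\colon(B_1,g,B_0)\to(A_1,f,A_0)$ with $g$ an epimorphism one has $\mathrm{Im}(h_0)=\mathrm{Im}(h_0g)=\mathrm{Im}(fh_1)\subseteq\mathrm{Im}(f)$, so $h_0=\iota\,\bar h_0$, and since $\iota$ is monic the pair $(h_1,\bar h_0)$ is a morphism into $(A_1,f',\mathrm{Im}(f))$ through which $(h_1,h_0)$ factors. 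For the left approximation, choose an epimorphism $p\colon P\to A_0$ with $P$ a finitely generated projective; then $e=[f,\,p]\colon A_1\oplus P\to A_0$ is an epimorphism and $(\iota_1,1_{A_0})\colon(A_1,f,A_0)\to(A_1\oplus P,e,A_0)$ is a left $\mathfrak{emaps}$-approximation: given $(h_1,h_0)\colon(A_1,f,A_0)\to(B_1,g,B_0)$ with $g$ an epimorphism, lift $h_0p$ through the epimorphism $g$ using projectivity of $P$ to obtain the second component $P\to B_1$ of the factoring morphism, the remaining commutativities amounting exactly to $gh_1=h_0f$.

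Next I would obtain the monomorphism case. One option is to repeat the argument dually: for the left $\mathfrak{mmaps}$-approximation use the coimage factorization $f=\bar f\circ\pi$ with $\pi$ epic and $\bar f$ monic, giving $(\pi,1_{A_0})\colon(A_1,f,A_0)\to\bigl(A_1/\mathrm{Ker}(f),\bar f,A_0\bigr)$, and for the right $\mathfrak{mmaps}$-approximation take a monomorphism $q\colon A_1\hookrightarrow I$ into an injective and form $\Bigl(A_1,\left[\begin{smallmatrix} f \\ q \end{smallmatrix}\right],A_0\oplus I\Bigr)\to(A_1,f,A_0)$, using injectivity of $I$ to extend along monomorphisms in the verification. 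A more economical route is to invoke the duality $\overline{\Theta}\colon\mathrm{maps}(\mathrm{mod}(\mathcal C))\to\mathrm{maps}(\mathrm{mod}(\mathcal C^{op}))$ of \ref{descrimaps} and \ref{mapsfini}, which is induced by $\mathbb D_{\mathcal C}$ and hence carries $\mathfrak{mmaps}(\mathrm{mod}(\mathcal C))$ isomorphically onto $\mathfrak{emaps}(\mathrm{mod}(\mathcal C^{op}))$; since functorial finiteness is preserved under dualities and $\mathcal C^{op}$ is again a dualizing $K$-variety, the first part applied to $\mathcal C^{op}$ finishes the proof.

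The verifications are all elementary diagram chases, so I would keep them brief; the one point that genuinely uses the hypothesis on $\mathcal C$, and that I would be most careful about, is checking that each approximating object actually lies in $\mathrm{maps}(\mathrm{mod}(\mathcal C))$, i.e. that the images, coimages, projective covers and injective envelopes entering the constructions are again finitely presented — which is exactly what the dualizing assumption on $\mathcal C$ guarantees.
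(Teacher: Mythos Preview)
Your proposal is correct and follows the standard explicit construction (image factorization for the right $\mathfrak{emaps}$-approximation, adding a projective cover to force an epimorphism for the left $\mathfrak{emaps}$-approximation, and the dual or $\overline{\Theta}$-duality for $\mathfrak{mmaps}$); this is precisely the argument the paper defers to by citing \cite[Theorem~3.12]{MVOM} and asserting it carries over to the dualizing-variety setting. Your care in noting that the dualizing hypothesis is what keeps images, projective covers and injective envelopes inside $\mathrm{mod}(\mathcal C)$ is exactly the point that needs checking when transporting the artin-algebra proof.
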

\begin{proof}
The proof given in  \cite[Theorem 3.12]{MVOM} works for this setting.
\end{proof}

\footnotesize

\vskip3mm \noindent Alicia Le\'on Galeana:\\ Facultad de Ciencias, Universidad  Aut\'onoma del Estado de M\'exico\\
T\'oluca, M\'exico.\\
{\tt alicialg@hotmail.com}

\vskip3mm \noindent Martin Ort\'iz Morales:\\ Facultad de Ciencias, Universidad  Aut\'onoma del Estado de M\'exico\\
T\'oluca, M\'exico.\\
{\tt mortizmo@uaemex.mx}

\vskip3mm \noindent Valente Santiago Vargas:\\ Departamento de Matem\'aticas, Facultad de Ciencias, Universidad Nacional Aut\'onoma de M\'exico\\
Circuito Exterior, Ciudad Universitaria,
C.P. 04510, Ciudad de M\'exico, MEXICO.\\ {\tt valente.santiago.v@gmail.com}

\end{document}